\numberwithin{equation}{section}
\newcommand{\C}{\mathscr{C}}
\newcommand{\E}{\mathcal{E}}
\newcommand{\F}{\mathcal{F}}
\newcommand{\G}{\mathcal{G}}
\renewcommand{\L}{\mathcal{L}}
\newcommand{\N}{\mathbb{N}}
\newcommand{\R}{\mathbb{R}}
\newcommand{\W}{\mathbb{W}}
\newcommand{\loc}{{\rm loc}}
\newcommand{\dist}{{\mbox{\normalfont dist}}}
\newcommand{\bequ}{\begin{equation}}
\newcommand{\nequ}{\end{equation}}
\newcommand{\PV}{\mbox{\normalfont P.V.}}
\newcommand{\Haus}{\mathcal{H}}
\DeclareMathOperator{\diam}{diam}
\DeclareMathOperator{\supp}{supp}
\DeclareMathOperator{\Tail}{Tail}
\DeclareMathOperator{\DG}{DG}
\def\Xint#1{\mathchoice
{\XXint\displaystyle\textstyle{#1}}%
{\XXint\textstyle\scriptstyle{#1}}%
{\XXint\scriptstyle\scriptscriptstyle{#1}}%
{\XXint\scriptscriptstyle\scriptscriptstyle{#1}}%
\!\int}
\def\XXint#1#2#3{{\setbox0=\hbox{$#1{#2#3}{\int}$ }
\vcenter{\hbox{$#2#3$ }}\kern-.6\wd0}}
\def\dashint{\Xint-}
\theoremstyle{plain}
\newtheorem{definition}{Definition}[section]
\newtheorem{theorem}[definition]{Theorem}
\newtheorem{proposition}[definition]{Proposition}
\newtheorem{lemma}[definition]{Lemma}
\newtheorem{corollary}[definition]{Corollary}
\theoremstyle{definition}
\renewcommand{\le}{\leqslant}
\renewcommand{\ge}{\geqslant}
\def\Xint#1{\mathchoice
{\XXint\displaystyle\textstyle{#1}}%
{\XXint\textstyle\scriptstyle{#1}}%
{\XXint\scriptstyle\scriptscriptstyle{#1}}%
{\XXint\scriptscriptstyle\scriptscriptstyle{#1}}%
\!\int}
\def\XXint#1#2#3{{\setbox0=\hbox{$#1{#2#3}{\int}$ }
\vcenter{\hbox{$#2#3$ }}\kern-.6\wd0}}
\def\dashint{\Xint-}
\title[Regularity results and Harnack inequalities for nonlocal problems]{Regularity results and Harnack inequalities for minimizers and solutions of nonlocal problems:\\
a unified approach via fractional~De~Giorgi classes}
\author[Matteo Cozzi]{}
\subjclass[2010]{49N60, 47G20, 35R11, 35D10, 35B65}
\keywords{Nonlocal energies, nonlinear integral operators, fractional~De~Giorgi classes, H\"older continuity, Harnack inequality, improved Caccioppoli inequality}
\begin{document}

\maketitle

\centerline{\scshape Matteo Cozzi}
\medskip
{\footnotesize
 \centerline{BGSMath Barcelona Graduate School of Mathematics}
 \centerline{and}
 \centerline{Universitat Polit\`ecnica de Catalunya}
 \centerline{Departament de Matem\`atiques}
 \centerline{Diagonal 647, E-08028 Barcelona (Spain)}
 \centerline{\href{mailto:matteo.cozzi@upc.edu}{\tt matteo.cozzi@upc.edu}}
}

\bigskip

\begin{abstract}
We study energy functionals obtained by adding a possibly discontinuous potential to an interaction term modeled upon a~Gagliardo-type fractional seminorm. We prove that minimizers of such non-differentiable functionals are locally bounded, H\"older continuous, and that they satisfy a suitable Harnack inequality. Hence, we provide an extension of celebrated results of~M.~Giaquinta and~E.~Giusti to the nonlocal setting. To do this, we introduce a particular class of fractional Sobolev functions, reminiscent of that considered by~E.~De~Giorgi in his seminal paper of~1957. The flexibility of these classes allows us to also establish regularity of solutions to rather general nonlinear integral equations.
\end{abstract}

\tableofcontents

\section{Introduction}

The aim of the present paper is to provide a rather general and comprehensive basic regularity theory for various problems connected to the minimization of nonlocal elliptic energy functionals. The simplest class of problems that we address here can be briefly described as follows.

\subsection{Description of the main results}

Let~$n \in \N$,~$s \in (0, 1)$ and~$p \in (1, +\infty)$. Let~$F: \R \to \R$ be a bounded measurable function. Given a bounded domain~$\Omega \subset \R^n$, we consider a minimizer~$u$ of the functional
\begin{equation} \label{Ffuncdef}
\G_s(v) := (1 - s) [v]_{W^{s, p}(\R^n)}^p + \int_{\R^n} F(v(x)) \, dx,
\end{equation}
within all functions~$v \in W^{s, p}(\R^n)$ that are equal to some given bounded function~$u_0$ outside of~$\Omega$.\footnote{Note that any well-defined notion of minimizer of the functional~$\G_s$ as defined in~\eqref{Ffuncdef} should involve some kind of restriction of~$\G_s$ to the set~$\Omega$, when comparing the energy of~$u$ to those of its competitors. Otherwise, in fact,~$\G_s$ might be always infinite, as a consequence of the prescribed values~$u_0$ outside of~$\Omega$. We will be more precise on the concept of minimizers that we take into account in Section~\ref{mainsec}. For the moment, we take the liberty of being slightly inaccurate and not worrying much about technicalities.} Here,~$[ \, \cdot \,]_{W^{s, p}(\R^n)}$ denotes the standard Gagliardo seminorm of the fractional Sobolev space~$W^{s, p}(\R^n)$. The main question that we positively answer in this work is the following:
$$
\mbox{\itshape Is it true that~$u$ is locally bounded and H\"older continuous inside~$\Omega$?}
$$

In their important paper~\cite{GG82},~M.~Giaquinta and~E.~Giusti studied the regularity of the minimizers of the energy
\begin{equation} \label{G1funcdef}
\mathcal{G}_1(v) := \| \nabla v \|_{L^p(\R^n)}^p + \int_{\R^n} F(v(x)) \, dx,
\end{equation}
and of more general functionals with~$L^p$ gradient structure. In particular, they proved interior~$L^\infty$ and~$C^\alpha$ estimates for the minimizers of~\eqref{G1funcdef}, when~$F$ is bounded (or even subcritical at infinity). One of the most prominent aspects of their work is that the potential term~$F$ may be discontinuous, and thus their theory can be applied to various non-differentiable functionals that arise for instance in connection with minimal surfaces, fluid dynamics, and free boundary problems.

Here, we provide an extension of these results to the nonlocal functional~$\mathcal{G}_s$ displayed in~\eqref{Ffuncdef} and to more general variants. Following an idea of~L.~Caffarelli,~C.~H.~Chan and~A.~Vasseur \cite{CCV11}, we prove that the minimizers of~$\mathcal{G}_s$ satisfy an~\emph{improved} fractional~Caccioppoli inequality and therefore belong to a particular subset of~$W^{s, p}$. We then show that the elements of this subset---which we call a~\emph{fractional~De~Giorgi class}, in honor of the one introduced by~E.~De~Giorgi in~\cite{DeG57}---are locally bounded, H\"older continuous functions that satisfy a Harnack estimate. By interpolating the technique of~\cite{CCV11} with a suitable isoperimetric-type inequality (that is proved to hold in~$W^{s, p}$ for large~$s$), we obtain estimates which are uniform as the differentiability order~$s$ goes to~$1^-$. We stress that our results are new even in the quadratic case~$p = 2$.

\medskip

As said previously, one key point of our analysis is that no differentiability is assumed on the potential~$F$. Instead, when~$F$ is, say, of class~$C^1$, we can differentiate the functional~$\G_s$ and deduce that~$u$ is a weak solution of the Euler-Lagrange equation
\begin{equation} \label{Lsp=F'}
\L_{s, p} u = f(u) \quad \mbox{in } \Omega,
\end{equation}
where~$f = F'$ and~$\L_{s, p}$ is formally defined in the principal value sense by
$$
\L_{s, p} u(x) := (1 - s) \, \PV \int_{\R^n} \frac{|u(x) - u(y)|^{p - 2} (u(x) - u(y))}{|x - y|^{n + sp}} \, dy,
$$
up to an irrelevant positive factor.

The nonlinear singular integral operator~$\L_{s, p}$ is often called~\emph{fractional~p-Laplacian} and has been extensively studied in the last few years. A first,~\emph{localized} version of~$\L_{s, p}$ has been originally introduced by~H.~Ishii and~G.~Nakamura in~\cite{IN10}, where the authors dealt with the solvability (in the viscosity sense) of the associated Dirichlet problem and established a connection with the classical~$p$-Laplace operator in the limit as~$s \rightarrow 1^-$. The first interior H\"older continuity results for~$\L_{s, p}$ have been obtained by~A.~Di~Castro,~T.~Kuusi and~G.~Palatucci in~\cite{DKP16}, concerning weak solutions of equations with vanishing right-hand side, and by~E.~Lindgren in~\cite{Lin16}, for viscosity solutions of equations with a bounded right-hand side. Global regularity for weak solutions of the Dirichlet problem has been then proved by~A.~Iannizzotto,~S.~Mosconi and~M.~Squassina in~\cite{IMS16}. The H\"older continuity of first derivatives is, to the best of our knowledge, still unknown. See the paper~\cite{BL17} by~L.~Brasco and~E.~Lindgren for regularity results in fractional Sobolev spaces of higher order.

Of course, when~$p = 2$ the operator~$\L_{s, p}$ boils down to the well-known fractional Laplacian
$$
(-\Delta)^s u(x) := (1 - s) \, \PV \int_{\R^n} \frac{u(x) - u(y)}{|x - y|^{n + 2 s}} \, dy.
$$
In this case, the theory is much more developed. See e.g.~the works~\cite{BK05,Kas07a,Kas09,Kas11} of~R.~Bass and~M.~Kassmann and~\cite{Sil06,CS09,CS11} of~L.~Caffarelli and~L.~Silvestre for regularity results for weak and viscosity solutions, respectively. See also~\cite{RS14,RS16} by~X.~Ros-Oton and~J.~Serra in relation to the fine boundary behavior of solutions to Dirichlet problems.

With the help of fractional~De~Giorgi classes, we can address here also the regularity of solutions to~\eqref{Lsp=F'} and to equations driven by more general integral operators. Under some mild hypotheses on the growth of~$f$, we show that the solutions of~\eqref{Lsp=F'} are locally bounded, H\"older continuous, and satisfy a Harnack-type inequality. In this way, we provide a full extension of the~De~Giorgi regularity theory to this nonlocal nonlinear setting.

\subsection{Motivations and applications}

Our principal motivation for studying this regularity problem comes from the couple of papers~\cite{CV15,CV16} where~E.~Valdinoci and the author prove the existence of a particular class of minimizers for an energy similar to~\eqref{Ffuncdef}, connected to phase-separation phenomena. There,~$p = 2$ and the term~$F$ is a double-well potential essentially modeled upon the functions
\begin{equation} \label{Fexpl}
F_d(u) := |1 - u^2|^d, \quad \mbox{with } d > 0.
\end{equation}
In local settings, potentials of this kind were considered for instance by~L.~Caffarelli and~A.~Cordoba in~\cite{CC95,CC06}.

The strategy followed in~\cite{CV15,CV16} for the construction of those minimizers relies on several considerations that involve only the energy functional. The corresponding~Euler-Lagrange equation is never used, if not for recovering minimal continuity properties of its solutions, via the already known regularity theory summarized in the previous subsection. This forced us there to stick with differentiable potentials, as such as~$F_d$ with~$d > 1$.

With the aid of the results obtained in the present paper, one could perform the same construction for a wider class of~$F$'s, as for instance the whole range of explicit double-well potentials considered in~\eqref{Fexpl}.\footnote{Of course, the potentials~$F_d$ introduced in~\eqref{Fexpl} are not bounded. However, the arguments presented in~\cite{CV15,CV16} always involve functions that assume values between~$-1$ and~$1$. Therefore, the local boundedness of~$F_d$ is enough in this case.}

As a matter of fact, the most adopted model in the context of phase-separation phenomena is that obtained by taking~$d = 2$ in~\eqref{Fexpl}---which indeed gives a~$C^\infty$ potential. This particular choice leads to the so-called fractional Allen-Cahn equation
$$
- (-\Delta)^s u = u - u^3,
$$
and to questions related to the nonlocal version of a famous conjecture by E. De~Giorgi. See~\cite{CS05,SV09,CC10,CC14,CS15} for progresses in this direction and~\cite{GG98,AC00,Sav09,DKW11,FV11,FV16} for the state-of-the-art results on the conjecture in the classical case, formally obtained by taking~$s = 1$. However, we believe that the analysis of the whole gamut of models given by~\eqref{Fexpl} might still be very interesting. It would be particularly important to understand the way in which the minimizers approach the pure states~$\pm 1$---either asymptotically or with the formation of free-boundaries---in relation to the parameters~$s$ and~$d$. For~$d = 2$ this has been accomplished in~\cite{CS05,PSV13,CS15}. See instead~\cite{CC06} for related results in the local case.

Of course, the regularity results provided here apply to much more general potentials~$F$. Other important classes of examples are given by
\begin{equation} \label{Fexamples}
\begin{aligned}
F^{(1)}(u) & := \chi_{(0, +\infty)}(u),\\
F^{(2)}(u) & := \chi_{(-1, 1)}(u),\\
F^{(3)}(u) & := \lambda_1 \chi_{(-\infty, 0)}(u) + \lambda_2 \chi_{(0, +\infty)}(u),
\end{aligned}
\end{equation}
with~$\lambda_1 \ne \lambda_2$. All these choices lead to nonlocal variants of functionals connected to free boundary problems and often arising in fluid dynamics. In the classical case, they have been diffusely studied by~H.~Alt,~L.~Caffarelli and~A.~Friedman in e.g.~\cite{AC81,ACF84}. In nonlocal settings, functionals of this kind have been considered by~L.~Caffarelli, J.-M.~Roquejoffre and~O.~Savin~\cite{CRS10b}.

\subsection{Strategy of the proof}

Our approach to the proof of the regularity of minimizers of the energy introduced in~\eqref{Ffuncdef} follows, in its most general lines, the one developed by~M.~Giaquinta and~E.~Giusti in~\cite{GG82} for functionals such as~$\mathcal{G}_1$ in~\eqref{G1funcdef}.

With the aid of Widman's hole-filling technique~\cite{Wid71} and of a suitable iteration lemma, the authors showed that any minimizer~$u$ of~$\G_1$ satisfies a family of Caccioppoli inequalities. More precisely, given any~$k \in \R$, the upper truncation~$(u - k)_-$ satisfies
\begin{equation} \label{caccintro}
\begin{aligned}
& \| \nabla (u - k)_- \|_{L^p(B_r(x_0))}^p \\
& \hspace{30pt} \le H \left[ \frac{1}{(R - r)^p} \| (u - k)_- \|_{L^p(B_R(x_0))}^p + d^p \left| B_R(x_0) \cap \{ u < k \} \right| \right],
\end{aligned}
\end{equation}
for any~$x_0 \in \Omega$, any~$0 < r < R < \dist(x_0, \partial \Omega)$ and some constants~$d \ge 0$,~$H \ge 1$. And analogously for the lower truncation~$(u - k)_+$.

A slightly simpler version of~\eqref{caccintro} was first obtained by E.~De~Giorgi in his pioneering work~\cite{DeG57}, where he singled out such inequality as the object encoding all the information about the H\"older continuity of the minimizers. The set of functions satisfying~\eqref{caccintro}---and more general inequalities of the same type---is now typically called a~\emph{De~Giorgi class}, in his honor.

The way one usually proceeds to prove that the elements of De~Giorgi classes are H\"older continuous functions is through the application of a so-called~\emph{growth lemma}. In its most basic formulation, the growth lemma tells that there exists a small universal constant~$\delta > 0$ such that
$$
\begin{dcases}
u \mbox{ satisfies } \eqref{caccintro},\\
u \ge 0 \mbox{ in } B_R(x_0),\\
\dfrac{\left| B_R(x_0) \cap \{ u \ge 1 \} \right|}{|B_R|} \ge \frac{1}{2},\\
d \le \delta,
\end{dcases}
\quad \Longrightarrow \quad u \ge \delta \mbox{ in } B_{\frac{R}{2}}(x_0),
$$
where~$d \ge 0$ is the constant multiplying the last summand on the right-hand side of~\eqref{caccintro}. By scaling and iterating this result on concentric balls of halving radii, at each step one obtains that either the lower bound of~$u$ increases of a small but universal quantity, or, thanks to a completely specular statement, the upper bound decreases of the same quantity. The quantification of this fact yields the desired H\"older continuity of~$u$.

The proof of the growth lemma is usually split into two sublemmata. First, one shows that if the superlevel set~$B_R(x_0) \cap \{ u \ge 2 \delta \}$ occupies a large portion of the ball~$B_R(x_0)$---say, having measure larger than~$(1 - \eta) |B_R|$, with~$\eta$ small and independent of~$\delta$---, then~$u \ge \delta$ in~$B_{R/2}(x_0)$. At a second stage, one checks that the assumption on the size of the measure of the~$2 \delta$-superlevel set is verified, provided~$\delta$ is chosen small enough. This last step is essentially a consequence of the following isoperimetric inequality for level sets of Sobolev functions:
\begin{equation} \label{isoplevset}
\Big[ \left| B_1 \cap \{ v \le 0 \} \right| \left| B_1 \cap \{ v \ge 1 \} \right| \Big]^{\frac{n - 1}{n}} \le C \| \nabla v \|_{L^p(B_1)} \left| B_1 \cap \{ 0 < v < 1 \} \right|^{\frac{p - 1}{p}},
\end{equation}
for some constant~$C \ge 1$ depending only on~$n$ and~$p$. We stress that~\eqref{isoplevset} holds for any~$v \in W^{1, p}(B_1)$, not only for minimizers. This inequality is due to~De~Giorgi and is already contained in~\cite{DeG57}. See also~\cite[Lemma~7.2 or~7.4]{Giu03},~\cite{CV12} or Section~\ref{sDGsec} here for more elementary proofs.\footnote{Notice that the exponents to which the two factors on the left-hand side of~\eqref{isoplevset} are raised may be slightly different in the works~\cite{DeG57,Giu03,CV12}. We refer to Lemma~\ref{DGisolem} here for a proof of~\eqref{isoplevset} (and more general inequalities) in this exact fashion.}

\medskip

In order to adapt this strategy to the minimizers of functional~$\G_s$ in~\eqref{Ffuncdef}, we mainly need to establish two things: a Caccioppoli-type inequality and an isoperimetric lemma such as~\eqref{isoplevset}.

Caccioppoli inequalities for the solutions of~\eqref{Lsp=F'} have already been obtained by many authors (see for instance~\cite{DKP16,KMS15a,BP16}). In our setting, they may be written, e.g.~for lower truncations, as
\begin{equation} \label{1scaccintro}
\begin{aligned}
& (1 - s) [(u - k)_-]_{W^{s, p}(B_r(x_0))}^p \\
& \hspace{20pt} \le H \Bigg[ \frac{R^{(1 - s) p}}{(R - r)^p} \| (u - k)_- \|_{L^p(B_R(x_0))}^p + d^p \left| B_R(x_0) \cap \{ u < k \} \right| \\
& \hspace{20pt} \quad + (1 - s) \frac{R^{n + s p}}{(R - r)^{n + s p}} \| (u - k)_- \|_{L^1(B_R(x_0))} \int_{\R^n \setminus B_r(x_0)} \frac{(u(x) - k)_-^{p - 1}}{|x|^{n + s p}} \, dx \Bigg],
\end{aligned}
\end{equation}
where, as before, we denote by~$[\, \cdot \,]_{W^{s, p}(U)}$ the standard Gagliardo seminorm for fractional Sobolev functions over a measurable set~$U \subseteq \R^n$, that is
\begin{equation} \label{GagspU}
[v]_{W^{s, p}(U)} := \left( \int_{U} \int_{U} \frac{|v(x) - v(y)|^p}{|x - y|^{n + s p}} \, dx dy \right)^{\frac{1}{p}}, \quad \mbox{for } v \in W^{s, p}(U).
\end{equation}
It can be checked that inequality~\eqref{1scaccintro} also holds true for the minimizers of~\eqref{Ffuncdef}. Notice the presence of an additional~\emph{tail} term on the third line of~\eqref{1scaccintro}, that takes into account the nonlocality of the functional~$\G_s$ or of the operator~$\L_{s,p}$. Moreover, the constants~$H$ and~$d$ are independent of~$s$ (the term~$d$ is essentially the~$L^\infty$ norm of~$F$ or~$F'$). Consequently, by the results of e.g.~\cite{BBM01}, we see that inequality~\eqref{1scaccintro} correctly approaches~\eqref{caccintro}, in the limit as~$s \rightarrow 1^-$. We further observe that the idea of deducing regularity properties from a fractional Caccioppoli inequality as~\eqref{1scaccintro} has been first considered by~G.~Mingione in~\cite{M11}. In this work, the author uses such approach in a local setting, and therefore the inequality considered there does not take into account any tail term. Nevertheless, the underlying philosophy is the same. 

In light of these facts, one might be tempted to consider the functions that satisfy~\eqref{1scaccintro} as elements of the fractional analogue of~De~Giorgi classes, and prove their H\"older continuity. This brings us to the second key ingredient: the~De~Giorgi isoperimetric inequality~\eqref{isoplevset}. As observed by~L.~Caffarelli and~A.~Vasseur~\cite{CV12} in the case~$p = 2$, formula~\eqref{isoplevset} \emph{``may be considered as a quantitative version of the fact that a function with jump discontinuity cannot be in~$H^1$''}. But functions with such discontinuity features may well belong to fractional Sobolev spaces: for instance, characteristic functions of sets with smooth boundaries are in~$W^{s, p}$, if~$s p < 1$. In fact, they play a central role in the theory of nonlocal perimeters recently developed by~L.~Caffarelli,~J.-M.~Roquejoffre and~O.~Savin in~\cite{CRS10a}. Therefore, the hopes of finding an appropriate generalization of~\eqref{isoplevset} to fractional Sobolev spaces are low, at least for small~$s$.

\medskip

While for~$s$ close to~$1$ we are able to partially extend inequality~\eqref{isoplevset} to~$W^{s, p}$ (see Proposition~\ref{sDGlemprop}) and therefore reproduce the strategy outlined before with no substantial modifications---thus proving a regularity result which is uniform as~$s \rightarrow 1^-$---, the case of a small~$s$ seems to require a different approach. Indeed, in this situation one needs to find an inequality providing the same information of~\eqref{isoplevset}, and holding at least for the minimizers of~$\G_s$ and the solutions of~\eqref{Lsp=F'}, instead of all functions in a Sobolev space. Inspired by~\cite{CCV11}, where the authors develop a regularity theory for parabolic equations driven by nonlocal operators with general kernels, we propose the following definition: a function~$u$ belongs to a~\emph{fractional~De~Giorgi class} in a bounded domain~$\Omega$ if and only if it satisfies the improved Caccioppoli inequality
\begin{equation} \label{2scaccintro}
\begin{aligned}
& (1 - s) \Bigg[ [(u - k)_-]_{W^{s, p}(B_r(x_0))}^p + \iint_{B_r(x_0)^2} \frac{(u(y) - k)_+^{p - 1} (u(x) - k)_-}{|x - y|^{n + s p}} \, dx dy \Bigg] \\
& \hspace{20pt} \le H \Bigg[ \frac{R^{(1 - s) p}}{(R - r)^p} \| (u - k)_- \|_{L^p(B_R(x_0))}^p + d^p \left| B_R(x_0) \cap \{ u < k \} \right| \\
& \hspace{20pt} \quad + (1 - s) \frac{R^{n + s p}}{(R - r)^{n + s p}} \| (u - k)_- \|_{L^1(B_R(x_0))} \int_{\R^n \setminus B_r(x_0)} \frac{(u(x) - k)_-^{p - 1}}{|x|^{n + s p}} \, dx \Bigg],
\end{aligned}
\end{equation}
for any~$k \in \R$,~$x_0 \in \Omega$,~$0 < r < R < \dist (x_0, \partial \Omega)$, and similarly for the upper truncations of~$u$.

Observe that, unlike in~\eqref{1scaccintro}, we have now two terms on the left-hand side. The newly added quantity
\begin{equation} \label{addquantity}
(1 - s) \iint_{B_r(x_0)^2} \frac{(u(y) - k)_+^{p - 1} (u(x) - k)_-}{|x - y|^{n + s p}} \, dx dy,
\end{equation}
is precisely the one that carries with itself all the information that we are missing not having an inequality as~\eqref{isoplevset} at hand. Indeed, when e.g.~$k = 1/2$, one can bound this term from below by
$$
\frac{1 - s}{C r^{n + s p}} \, |B_r(x_0) \cap \{ u \le 0 \}| |B_r(x_0) \cap \{ u \ge 1 \}|,
$$
with~$C \ge 1$ depending only on~$n$ and~$p$. The above quantity is similar to the one appearing on the left-hand side of~\eqref{isoplevset}. For functions in fractional~De~Giorgi classes, it can be bounded in terms of the right-hand side of~\eqref{2scaccintro}, and this fact is the much needed replacement for inequality~\eqref{isoplevset}.

In addition, it is not hard to show that, for instance when~$u \in W^{1, p}(B_r(x_0))$, the quantity in~\eqref{addquantity} vanishes as~$s \rightarrow 1^-$. Therefore, our definition of fractional~De~Giorgi classes as given by inequality~\eqref{2scaccintro} is consistent with the classical notion, in the limit as~$s \rightarrow 1^-$.

The main goal of the paper is to prove that the elements of fractional~De~Giorgi classes are locally bounded and H\"older continuous functions. Once we have this, the problem of establishing regularity for the minimizers of the functional~$\G_s$ defined in~\eqref{Ffuncdef} and the solutions of equation~\eqref{Lsp=F'} is reduced to show that these critical points belong to those classes.

\medskip

In the next section we give the definitions of the objects that we take into consideration, in their greater generality, and we present the rigorous statements of the results already discussed up to here for the simplified model governed by the energy functional~\eqref{Ffuncdef}.

\section{Precise formulation of the setting and of the main results} \label{mainsec}

Let~$n \in \N$,~$s \in (0, 1)$ and~$p \in (1, +\infty)$ be fixed parameters.

Let~$K: \R^n \times \R^n \to [0, +\infty]$ be a measurable function satisfying
\begin{equation} \label{Ksimm}
K(x, y) = K(y, x) \quad \mbox{for a.a.~} x, y \in \R^n,
\end{equation}
and
\begin{equation} \label{Kell}
\frac{(1 - s) \chi_{B_{r_0}}(x - y)}{\Lambda |x - y|^{n + s p}} \le K(x, y) \le \frac{(1 - s) \Lambda}{|x - y|^{n + s p}} \quad \mbox{for a.a.~} x, y \in \R^n,
\end{equation}
for some~$\Lambda \ge 1$ and~$r_0 > 0$.

Let~$F: \R^n \times \R \to \R$ be a measurable function such that the composition~$F \circ v$ is measurable, for any given measurable function~$v: \R^n \to \R$.\footnote{As it is customary, one can take as~$F$ any Carath\'eodory function, i.e.~such that~$F(\cdot, v)$ is measurable for any~$v \in \R$ and~$F(x, \cdot)$ is continuous for a.a.~$x \in \R^n$. Indeed, when~$F$ is a Carath\'eodory function, then~$F \circ v$ is measurable every time~$v$ is. However, we adopted a slightly broader definition in order to take into account for instance the examples listed in~\eqref{Fexamples} and many more.} For any measurable set~$\Omega \subseteq \R^n$ and measurable function~$u: \R^n \to \R$, we consider the energy functional
\begin{equation} \label{Edef}
\E(u; \Omega) := \frac{1}{2p} \iint_{\C_\Omega} \left| u(x) - u(y) \right|^p K(x, y) \, dx dy + \int_{\Omega} F(x, u(x)) \, dx,
\end{equation}
where
\begin{equation} \label{COmega}
\C_\Omega := \R^{2 n} \setminus \left( \R^n \setminus \Omega \right)^2.
\end{equation}

Note that, if one takes as~$K$ the standard kernel
\begin{equation} \label{K0}
K_0(x, y) := \frac{1 - s}{|x - y|^{n + s p}},
\end{equation}
then~$\E$ is the energy~$\G_s$ considered in the introduction, up to a negligible factor. However, hypothesis~\eqref{Kell} allows for a much richer variety of kernels. Indeed, we can equivalently rewrite~\eqref{Kell} as
$$
K(x, y) = (1 - s) \frac{a(x, y)}{|x - y|^{n + s p}}, \quad \mbox{with } \, \frac{\chi_{B_{r_0}(x - y)}}{\Lambda} \le a(x, y) \le \Lambda.
$$
Then one can choose for instance
\begin{equation} \label{aexamples}
\begin{array}{rll}
& a(x, y) = a_0 \left( \dfrac{x - y}{|x - y|} \right), & \mbox{with } a_0: S^{n - 1} \to \left[ \dfrac{1}{\Lambda}, \Lambda \right],\\
& a(x, y) = a_0 \left( x - y \right), & \mbox{with } a_0: \R \to \left[ \dfrac{1}{\Lambda}, \Lambda \right],\\
\mbox{or} & a(x, y) = \chi_{D}(x - y), & \mbox{with } D \subset \R^n, \mbox{ such that } B_{r_0} \subseteq D \subseteq B_{r_1}, \rule{0pt}{16pt}
\end{array}
\end{equation}
with~$0 < r_0 \le r_1$, and even some more general non-translation-invariant kernels. The last possibility in~\eqref{aexamples}, in particular, yields a truncated kernel.

\medskip

We now introduce the concept of minimizers of~$\E$ that we shall work with. Before, we need a few definitions of the functional spaces involved.

We denote by~$L_s^{p - 1}(\R^n)$ the space composed by the functions~$u: \R^n \to \R$ that satisfy
$$
\int_{\R^n} \frac{|u(x)|^{p - 1}}{\left( 1 + |x| \right)^{n + s p}} \, dx < +\infty.
$$
An object that will play an important role in encoding the behavior of functions in~$L_s^{p - 1}(\R^n)$ at large scales is the so-called~\emph{Tail} defined by
\begin{equation} \label{Tailudef}
\Tail(u; x_0, R) := \left[ (1 - s) R^{s p} \int_{\R^n \setminus B_R(x_0)} \frac{|u(x)|^{p - 1}}{|x - x_0|^{n + s p}} \, dx \right]^{\frac{1}{p - 1}},
\end{equation}
for any fixed~$x_0 \in \R^n$ and~$R > 0$. The above scale-invariant quantity has been introduced in~\cite{DKP14,DKP16} and also considered in several other papers, such as~\cite{BP16,KMS15b}. Observe in particular that it is finite, provided~$u \in L^{p - 1}_s(\R^n)$.

Another functional space that we will need is a modified fractional Sobolev space. Given an open set~$\Omega \subseteq \R^n$, we denote as~$\W^{s, p}(\Omega)$ the space of measurable functions~$u: \R^n \to \R$ such that
$$
u|_\Omega \in L^p(\Omega) \quad \mbox{and} \quad (x, y) \longmapsto \frac{|u(x) - u(y)|^p}{|x - y|^{n + s p}} \in L^1(\C_\Omega),
$$
with~$\C_\Omega$ as in~\eqref{COmega}. In the quadratic case~$p = 2$, spaces of this kind have been for instance considered in~\cite{SV13,SV14}. Note that~$W^{s, p}(\R^n) \subseteq \W^{s, p}(\Omega) \subseteq W^{s, p}(\Omega)$.

We can now proceed to define the minimizers of the energy functional~$\E$ in~\eqref{Edef}.
\begin{definition} \label{mindef}
Let~$\Omega \subset \R^n$ be a bounded open set. A function~$u \in \W^{s, p}(\Omega)$ is said to be a~\emph{minimizer} of~$\E$ in~$\Omega$ if~$F(\cdot, u) \in L^1(\Omega)$ and
$$
\E(u; \Omega) \le \E(v; \Omega),
$$
for any measurable function~$v: \R^n \to \R$ such that~$v = u$ a.e.~in~$\R^n \setminus \Omega$.
\end{definition}

If~$u \in \W^{s, p}(\Omega)$ is such that~$F(\cdot, u) \in L^1(\Omega)$, then the energy~$\E(u; \Omega)$ is finite, thanks to~\eqref{Kell}. Therefore, Definition~\ref{mindef} is meaningful.

On top of this, notice that~$F(\cdot, u)$ is bounded, and thus integrable, whenever~$u$ is bounded and~$F$ is locally bounded in~$u$, uniformly with respect to~$x \in \Omega$. When~$n < s p$, this is true in particular for any~$u \in W^{s, p}(\Omega)$, thanks to the fractional Sobolev embedding (see e.g.~\cite{DPV12}). On the contrary, when~$n > sp$, we will often requre~$F$ to satisfy
\begin{equation} \label{Fbounds}
|F(x, u)| \le d_1 + d_2 |u|^q \quad \mbox{for a.a.~} x \in \Omega \mbox{ and any } u \in \R,
\end{equation}
with~$1 \le q < p^*_s$, where~$p^*_s$ is the fractional Sobolev exponent given by
\begin{equation} \label{p*sdef}
p^*_s := \frac{n p}{n - s p}.
\end{equation}
When~$n = s p$, we simply assume~$F$ to satisfy~\eqref{Fbounds} for some~$1 \le q < +\infty$, that is, we formally set~$p^*_s := +\infty$ in such case. Note that, under~\eqref{Fbounds}, we have~$F(\cdot, u) \in L^1(\Omega)$ for any~$u \in W^{s, p}(\Omega)$.

\medskip

In parallel to the energy~$\E$ considered in~\eqref{Edef} and its minimizers, we take into account weak solutions to equations driven by integral operators with kernel~$K$. For~$K$ satisfying~\eqref{Ksimm} and~\eqref{Kell}, we formally define
\begin{align*}
\L u(x) := & \, \PV \int_{\R^n} |u(x) - u(y)|^{p - 2} (u(x) - u(y)) K(x, y) \, dy \\
= & \, \lim_{\delta \rightarrow 0^+} \int_{\R^n \setminus B_\delta(x)} |u(x) - u(y)|^{p - 2} (u(x) - u(y)) K(x, y) \, dy.
\end{align*}
When~$K$ is of the form~\eqref{K0}, we recover the operator~$\L_{s, p}$ defined in the introduction and, in particular, the fractional Laplacian~$(-\Delta)^s$ if~$p = 2$.

Let~$f: \R^n \times \R \to \R$ be a measurable function such that the composition~$f(\cdot, v)$ is measurable whenever~$v: \R^n \to \R$ is measurable. We are interested in studying weak solutions of the equation
\begin{equation} \label{Lu=f}
- \L u = f(\cdot, u),
\end{equation}
in bounded domains of~$\R^n$. Notice that, when~$F$ is differentiable~$u$, this is the~Euler-Lagrange equation of~$\E$, with~$f = F_u$.

The precise notion of weak solution of~\eqref{Lu=f} that we take into account is as follows.

\begin{definition} \label{soldef}
Let~$\Omega \subset \R^n$ be a bounded open set. A function~$u \in \W^{s, p}(\Omega)$ is said to be a~\emph{weak solution} of~\eqref{Lu=f} in~$\Omega$ if
\begin{multline*}
- \frac{1}{2} \int_{\R^n} \int_{\R^n} |u(x) - u(y)|^{p - 2} \left( u(x) - u(y) \right) \left( \varphi(x) - \varphi(y) \right) K(x, y) \, dx dy \\
= \int_{\R^n} f(x, u(x)) \varphi(x) \, dx,
\end{multline*}
for any~$\varphi \in W^{s, p}(\R^n)$ with~$\supp(\varphi) \subset \subset \Omega$ and such that the right-hand side above is well-defined.
\end{definition}

We have been rather sloppy in our definition of weak solutions, with respect to the choice of test functions that make the right-hand side of the identity written above converge. A sufficient condition to have it well-defined for any~$\varphi \in W^{s, p}(\Omega)$ is that~$f(\cdot, u) \in L^{(p^*_s)'}(\Omega)$ when~$n > sp$, and simply~$f(\cdot, u) \in L^1(\Omega)$ when~$n < sp$, thanks to the fractional Sobolev embeddings. In the case~$n < s p$, in particular, this last requirement on~$f(\cdot, u)$ is fulfilled whenever~$f$ is locally bounded in~$u \in \R$, uniformly w.r.t.~$x \in \Omega$, by employing the Sobolev embedding once again. On the other hand, when~$n \ge s p$ we will almost always ask that
\begin{equation} \label{fbounds}
|f(x, u)| \le d_1 + d_2 |u|^{q - 1} \quad \mbox{for a.a.~} x \in \Omega \mbox{ and any } u \in \R,
\end{equation}
for some~$1 < q < p^*_s$, similarly to what we did in~\eqref{Fbounds} (again, with the understanding that~$p^*_s = +\infty$ if~$n = s p$). We remark that, with this choice,~$f(\cdot, u) \varphi \in L^1(\Omega)$ for any~$\varphi \in W^{s, p}(\Omega)$. We also notice that~$f(\cdot, u) \varphi \in L^1(\Omega)$ for all such~$\varphi$'s if~$f$ is locally bounded in~$u$, uniformly w.r.t.~$x$, and~$u$ is bounded, regardless to the values of~$n$,~$s$ and~$p$.

\bigskip

After all these necessary premises, we can now proceed to state our main contributions to the regularity theory for the minimizers of~$\E$ and the solutions of~\eqref{Lu=f}.

\medskip

First, we have the following result concerning the local boundedness of these critical points. Of course, we can restrict ourselves to take~$n \ge s p$, as otherwise the boundedness is warranted by the fractional Sobolev embedding.

\begin{theorem}[\bfseries Local boundedness] \label{boundmainthm}
Let~$n \in \N$,~$s \in (0, 1)$ and~$p > 1$ be such that~$n \ge s p$. Let~$\Omega$ be an open bounded subset of~$\R^n$. Suppose that the kernel~$K$ satisfies hypotheses~\eqref{Ksimm} and~\eqref{Kell}. Let~$u \in L^{p - 1}_s(\R^n) \cap \W^{s, p}(\Omega)$ be either
\begin{enumerate}[label=$(\alph*)$,leftmargin=*]
\item a minimizer of~$\E$ in~$\Omega$, with~$F$ satisfying~\eqref{Fbounds}, or
\item a weak solution of~\eqref{Lu=f} in~$\Omega$, with~$f$ satisfying~\eqref{fbounds}.
\end{enumerate}
Then,~$u \in L^\infty_\loc(\Omega)$. In particular, for any~$x_0 \in \Omega$ and~$0 < R < \dist(x_0, \partial \Omega) / 2$, it holds
$$
\| u \|_{L^\infty(B_R(x_0))} \le C,
$$
for some constant
$$
C = C \left( n, s, p, q, \Lambda, d_1, d_2, R, r_0, \| u \|_{L^p(B_{2 R}(x_0))}, \Tail(u; x_0, 2 R), \| u \|_{L^\lambda(\Omega)} \right),
$$
and~$\lambda > p$. When~$n > s p$, we can take~$\lambda = p^*_s$, while when~$n > p$, the constant~$C$ does not blow up as~$s \rightarrow 1^-$.
\end{theorem}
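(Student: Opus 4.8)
The plan is to run a De~Giorgi iteration on superlevel sets, exploiting the Caccioppoli inequality~\eqref{2scaccintro} that---as claimed in the introduction---holds for both minimizers of~$\E$ and weak solutions of~\eqref{Lu=f}. Fix $x_0\in\Omega$ and a radius $R$ as in the statement, and set $B=B_{2R}(x_0)$. I would work with the upper truncations $(u-k)_+$ and let $k\to+\infty$ along a sequence; the argument for $-u$ then gives the lower bound. The first step is to discard the nonlocal tail term: for $k$ larger than some fixed level, the quantity $\int_{\R^n\setminus B_r(x_0)}(u(x)-k)_+^{p-1}|x|^{-n-sp}\,dx$ is controlled by $\Tail(u;x_0,2R)$ plus a contribution from $\{u>k\}\cap B_{2R}$, so the third line of~\eqref{2scaccintro} is absorbed into the other two terms, at the cost of a constant depending on $R$, $r_0$, $\Tail(u;x_0,2R)$. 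What remains on the right-hand side is the usual two-term De~Giorgi bound, and the key point is that the left-hand side of~\eqref{2scaccintro} dominates $(1-s)[(u-k)_+]_{W^{s,p}(B_r(x_0))}^p$, which by the fractional Sobolev--Poincar\'e inequality controls a higher $L^{p^\sharp}$ norm of $(u-k)_+$ on $B_r$, where $p^\sharp=p^*_s$ if $n>sp$ and $p^\sharp$ is any fixed exponent larger than $p$ if $n=sp$.

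The second step is the iteration itself. I would take radii $R_j=R(1+2^{-j})$ decreasing to $R$, levels $k_j=M(2-2^{-j})$ increasing to $2M$ for a constant $M$ to be fixed, and set
\[
A_j:=\int_{B_{R_j}(x_0)}(u-k_j)_+^p\,dx,\qquad a_j:=|B_{R_j}(x_0)\cap\{u>k_j\}|.
\]
Using~\eqref{2scaccintro} on the annular pair $(R_{j+1},R_j)$, the fractional Sobolev inequality, and H\"older's inequality to trade the gain in integrability for a power of $a_j$, together with the elementary bound $a_j\le (k_{j+1}-k_j)^{-p}A_j\le C\,2^{jp}M^{-p}A_j$, one arrives at a recursive inequality of the form $A_{j+1}\le C^j\,M^{-\gamma}\,A_j^{1+\beta}$ for some $\beta,\gamma>0$ depending only on $n,s,p,q$. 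The growth exponent $q<p^*_s$ enters precisely here: the potential/right-hand side term $d^p\,a_j$ (respectively, the term coming from $f(\cdot,u)\varphi$ after testing) is superlinear-subcritical in $u$, which is what lets the $d_1,d_2|u|^q$ (resp.\ $d_1,d_2|u|^{q-1}$) growth be reabsorbed without destroying the fast-geometric structure; this is where the norm $\|u\|_{L^\lambda(\Omega)}$ with $\lambda=p^*_s$ (or $\lambda>p$ when $n=sp$) shows up as a fixed datum in the constant. By the standard fast-geometric-convergence lemma, if $A_0=\|(u-M)_+\|_{L^p(B_{2R}(x_0))}^p$ is small relative to a fixed power of $M$ and $C$, then $A_j\to0$, i.e.\ $u\le 2M$ a.e.\ in $B_R(x_0)$; choosing $M$ a large multiple of $\|u\|_{L^p(B_{2R}(x_0))}$ plus the tail- and growth-constants makes the smallness condition automatic.

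Two technical points need care. First, one must verify that~\eqref{2scaccintro} really is available for solutions in the sense of Definition~\ref{soldef} and not merely for minimizers; this is the place where the improved term~\eqref{addquantity} on the left is produced, via the algebraic inequality (in the spirit of~\cite{CCV11}) relating $|a-b|^{p-2}(a-b)\big((a-k)_+-(b-k)_+\big)$ to $|(a-k)_+-(b-k)_+|^p+(a-k)_-^{p-1}(b-k)_+ +\, (b-k)_-^{p-1}(a-k)_+$ after plugging the test function $\varphi=(u-k)_+\eta^p$ with a suitable cutoff $\eta$, and then splitting the double integral over $\C_\Omega$ into the $B_r\times B_r$, $B_r\times(\R^n\setminus B_r)$, and symmetric pieces. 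Second, when $n=sp$ the fractional Sobolev inequality gives an embedding into every $L^\lambda$ with $\lambda<\infty$ but not a single critical exponent, so one must fix once and for all some $\lambda>p$ (and use the corresponding constant) before starting the iteration, which is exactly why the statement phrases the dependence as "$\lambda>p$''. The main obstacle I anticipate is not any single inequality but the bookkeeping of the $(1-s)$ factors: one has to check that every constant produced---in the absorption of the tail, in the Sobolev--Poincar\'e step, and in the iteration lemma---either carries a compensating $(1-s)$ or is harmless as $s\to1^-$, so that the final $C$ does not blow up in that limit when $n>p$; for $n=sp$ one is forced to stay away from $s=1$ anyway, consistent with the hypothesis $n\ge sp$, so no uniformity is claimed there.
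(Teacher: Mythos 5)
Your proposal is correct and follows essentially the same route as the paper: a De~Giorgi iteration driven by a Caccioppoli inequality obtained from the test function $\varphi=(u-k)_+\eta^p$ (or energy comparison), absorption of the tail term for levels above a fixed $k_0$, a fractional Sobolev gain of integrability (using an auxiliary exponent $\sigma<s$ with $n>\sigma p$ when $n=sp$), and the fast-geometric-convergence lemma---which is exactly what the paper does in Propositions~\ref{minareDGprop}, \ref{solareDGprop} and~\ref{ulocboundprop}. One small observation: the paper explicitly remarks that the improved nonlocal term on the left-hand side of~\eqref{2scaccintro} is \emph{not} needed for local boundedness---the standard Caccioppoli inequality~\eqref{1scaccintro} (equivalently, only the first term on the left of~\eqref{DG+def}) suffices---so invoking~\eqref{2scaccintro} is harmless but superfluous at this stage; it becomes essential only for the H\"older and Harnack results.
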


The estimate of Theorem~\ref{boundmainthm} is given in terms of an implicit constant~$C$ that is meant to depend~\emph{at most} on the parameters listed. Indeed, in many specific cases (such as when~$d_2 = 0$ or~$1 \le q \le p$) the constant~$C$ may be chosen to depend on fewer quantities. We refer the reader to Theorems~\ref{minboundthm} and~\ref{solboundthm}---respectively for minimizers and solutions---for a more detailed account of the dependencies of~$C$.

We point out that, at least when~$n > p$, the estimate provided in Theorem~\ref{boundmainthm} is independent of~$s$, for~$s$ close to~$1$. This is not surprising at all, in view of the normalization of the kernel~$K$ by means of the factor~$(1 - s)$, implied by~\eqref{Kell}. Indeed, this is consistent with the fact that, for certain choices of kernels, the energy~$\E$ approaches, as~$s \rightarrow 1^-$, a local functional driven by a gradient-type Dirichlet term, such as~$\G_1$ in~\eqref{G1funcdef}. And similarly for the operator~$\L$.

Theorem~\ref{boundmainthm} has been obtained in~\cite{DKP16} for the case of solutions of an equation like~\eqref{Lu=f}. In~\cite{DKP16}, the result is stated assuming the right-hand side~$f$ to be zero, although the techniques displayed there should apply also to more general situations. This is true, since the boundedness of~$u$ may be recovered right from a standard Caccioppoli inequality of the form~\eqref{1scaccintro}, and does not require its improved variant~\eqref{2scaccintro}. We refer the interested reader to the proof of Proposition~\ref{ulocboundprop} for a verification of this fact.

Other results related to Theorem~\ref{boundmainthm} can be found for instance in~\cite{KMS15b,IMS16,LPPS15,BPV15}.

\medskip

Next is the main contribution of the present paper, ensuring the H\"older continuity of the minimizers of~$\E$ and of the solutions of~\eqref{Lu=f}. Again, we only deal with the case~$n \ge s p$. Moreover, by virtue of the boundedness result of Theorem~\ref{boundmainthm}, we can now simply assume the potential~$F$ and the right-hand side~$f$ to be locally bounded functions.

The statement of the H\"older continuity result is as follows.

\begin{theorem}[\bfseries H\"older continuity] \label{holmainthm}
Let~$n \in \N$,~$0 < s_0 \le s < 1$ and~$p > 1$ be such that~$n \ge s p$. Let~$\Omega$ be an open bounded subset of~$\R^n$. Suppose that the kernel~$K$ satisfies hypotheses~\eqref{Ksimm} and~\eqref{Kell}. Let~$u \in L^{p - 1}_s(\R^n) \cap \W^{s, p}(\Omega)$ be either
\begin{enumerate}[label=$(\alph*)$,leftmargin=*]
\item a minimizer of~$\E$ in~$\Omega$, with~$F$ locally bounded in~$u$, uniformly w.r.t.~$x \in \Omega$, or
\item a weak solution of~\eqref{Lu=f} in~$\Omega$, with~$f$ locally bounded in~$u$, uniformly w.r.t.~$x \in \Omega$.
\end{enumerate}
Then,~$u \in C^\alpha_\loc(\Omega)$, for some~$\alpha \in (0, 1)$. In particular, there exists a constant~$C \ge 1$ such that, for any~$x_0 \in \Omega$ and~$0 < R < \min \{ r_0, \dist(x_0, \partial \Omega) \} / 4$, it holds
$$
[u]_{C^\alpha(B_R(x_0))} \le \frac{C}{R^\alpha} \Big( \| u \|_{L^\infty(B_{2 R}(x_0))} + \Tail(u; x_0, 2 R) + \F \Big),
$$
where
\begin{equation} \label{Fdef}
\F := \begin{dcases}
R^s \| F(\cdot, u) \|_{L^\infty(B_{2 R}(x_0))}^{1 / p} \vphantom{\frac{0}{0}} & \quad \mbox{if } (a) \mbox{ is in force}, \\
R^{\frac{s p}{p - 1}} \| f(\cdot, u) \|_{L^\infty(B_{2 R}(x_0))}^{1 / (p - 1)} \vphantom{\frac{0}{0}} & \quad \mbox{if } (b) \mbox{ is in force}.
\end{dcases}
\end{equation}
The constants~$\alpha$ and~$C$ depend only on~$n$,~$s_0$,~$p$ and~$\Lambda$.
\end{theorem}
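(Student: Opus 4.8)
The plan is to reduce the statement entirely to the abstract theory of fractional De~Giorgi classes, which—per the Introduction—is the heart of the paper. The argument proceeds in three stages: (i) show that the minimizers and weak solutions under consideration belong to a suitable fractional De~Giorgi class, i.e.\ satisfy the improved Caccioppoli inequality~\eqref{2scaccintro}; (ii) invoke the (already established) interior Hölder estimate for elements of such classes; (iii) track the constants to obtain the stated dependence only on $n$, $s_0$, $p$, $\Lambda$, together with the right normalization of the $\F$-term. Since Theorem~\ref{boundmainthm} is at our disposal, we may freely assume $u \in L^\infty_\loc(\Omega)$, so that the bounded potential $F$ (resp.\ right-hand side $f$) may be replaced on any fixed ball by a bounded function without loss, and all the tail terms in~\eqref{2scaccintro} are finite thanks to $u \in L^{p-1}_s(\R^n)$.

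For step~(i), the key computation is the derivation of~\eqref{2scaccintro}. For minimizers, one compares $u$ with the competitor $v = u + \varepsilon\,(\text{something involving a truncation of } (u-k)_- \text{ cut off by a test function})$; more precisely, one uses $v = u - \eta^p (u-k)_-$ (or a variant thereof) with $\eta$ a cutoff supported in $B_R(x_0)$, equal to $1$ on $B_r(x_0)$, and exploits minimality together with the boundedness of $F$ to absorb the potential term into the $d^p|B_R\cap\{u<k\}|$ contribution. The algebraic heart is the pointwise inequality estimating $|v(x)-v(y)|^p - |u(x)-u(y)|^p$ from above in terms of the quantity that, after integration over $\C_{B_r}$ and splitting into the diagonal block $B_r\times B_r$ and the off-diagonal block $B_r\times(\R^n\setminus B_r)$, produces exactly the two left-hand-side terms of~\eqref{2scaccintro} and the three right-hand-side terms (the $(R-r)^{-p}$ term from the cutoff, the tail term from the off-diagonal interaction, the $d^p$ term from $F$). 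For weak solutions one argues analogously, testing the weak formulation with $\varphi = \eta^p (u-k)_-$; here the coercivity structure of the $p$-Laplacian-type bilinear form gives the left side and Young's inequality together with~\eqref{fbounds}/local boundedness of $f$ gives the right side. Both derivations must be carried out so that $H$ depends only on $n$, $p$, $\Lambda$ (not on $s$), and the $d$-term carries the correct power of $R$ matching~\eqref{Fdef}; this is where the $(1-s)$-normalization in~\eqref{Kell} does its work.

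For step~(ii), once $u$ is known to lie in the fractional De~Giorgi class with constants $(H,d)$ as above and with parameter $s \ge s_0$, we apply the master Hölder-continuity result for such classes (stated and proved elsewhere in the paper, whose exponent $\alpha$ and constant $C$ depend only on $n$, $s_0$, $p$, $\Lambda$). The only subtlety is the two-regime nature of the growth lemma flagged in the Introduction: for $s$ close to $1$ one uses the partial extension of the De~Giorgi isoperimetric inequality to $W^{s,p}$ (Proposition~\ref{sDGlemprop}), while for small $s$ one uses the extra term~\eqref{addquantity} on the left of~\eqref{2scaccintro}—bounded below by $\frac{1-s}{Cr^{n+sp}}|B_r\cap\{u\le 0\}||B_r\cap\{u\ge 1\}|$ when $k=1/2$—as the substitute; the uniform threshold $s \ge s_0$ ensures the constants do not degenerate in either regime. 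Scaling the estimate back to radius $R$ and collecting the $d$-contribution into $\F$ via~\eqref{Fdef} finishes the proof.

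The main obstacle is step~(i) in the case of \emph{minimizers with merely bounded, possibly discontinuous $F$}: unlike for weak solutions, there is no Euler--Lagrange equation to test, so the improved term~\eqref{addquantity} must be extracted purely from the minimality inequality via a careful choice of competitor and a sharp pointwise convexity-type estimate for $t\mapsto|t|^p$ that separates the "good" decreasing part of the interaction from the cross term $(u(y)-k)_+^{p-1}(u(x)-k)_-$. Getting this cross term to appear with the right sign and constant—rather than being swallowed by error terms—is the delicate point, and it is precisely the adaptation of the idea of~\cite{CCV11} to the stationary, $p$-growth, variational setting that the paper must supply.
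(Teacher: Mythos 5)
Your proposal is correct and follows essentially the same route as the paper: use local boundedness to reduce to bounded $F$ (resp.\ $f$), show minimizers and weak solutions lie in the fractional De~Giorgi classes via the improved Caccioppoli inequality (Propositions~\ref{minareDGprop} and~\ref{solareDGprop}), and then invoke the Hölder estimate for such classes (Theorem~\ref{DGholdthm}), whose growth lemma indeed splits into the two regimes you describe ($s \ge \bar s$ via Proposition~\ref{sDGlemprop}, $s < \bar s$ via the cross term). The only minor inaccuracy is cosmetic: for minimizers the paper tests with $v = u - \eta\,(u-k)_+$ (power $1$ on the cutoff) while the power $\eta^p$ is used only in the weak-solution test function, but since you wrote ``or a variant thereof'' this does not affect the substance of the argument.
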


Analogously to Theorem~\ref{boundmainthm}, the quantities determining the H\"older character of, say, a minimizer of~$\E$ stay bounded as~$s$ goes to~$1$. Again, this is consistent with the local scenario (formally represented by the choice~$s = 1$) where such results were proved in~\cite{GG82}.

In the case of a solution~$u$ of~\eqref{Lu=f}, estimates like the one established in Theorem~\ref{holmainthm} have been obtained in~\cite{DKP16}, for~$f = 0$, and in~\cite{IMS16}, for Dirichlet problems driven by the specific operator~$\L_{s, p}$ defined in the introduction (i.e.~the operator~$\L$ with kernel~$K$ given by~\eqref{K0}). When~$p = 2$, the literature is richer: similar results have been obtained in~\cite{BK05,Kas07a,Kas09,Kas11} and~\cite{Sil06,CS09,CS11}. We also mention~\cite{KMS15b}, where the authors show the continuity of~$u$ under very mild hypotheses on the right-hand side~$f$. Their potential theoretic approach should also yield an H\"older modulus of continuity for~$u$, once~$f$ is chosen sufficiently regular. 

As pointed out in the introduction, Theorems~\ref{boundmainthm} and~\ref{holmainthm} are, to the best of our knowledge, completely new for minimizers of~$\E$, even if~$p = 2$.

The verification of Theorem~\ref{holmainthm} is split between Section~\ref{minsec}---for minimizers---and Section~\ref{solsec}---for solutions. The~$C^\alpha$ estimate in the two different situations is respectively given by Theorem~\ref{minholdthm} in Section~\ref{minsec} and Theorem~\ref{solholdthm} in Section~\ref{solsec}. We remark that the statements of these two results partially differ from that of Theorem~\ref{holmainthm}, with respect to some limitations on the radius~$R$. As for Theorem~\ref{boundmainthm} and Theorems~\ref{minboundthm}-\ref{solboundthm}, the result stated right above can be easily recovered from those of Sections~\ref{minsec}-\ref{solsec} with the help of a straightforward covering argument.

A key ingredient of the proof of Theorem~\ref{holmainthm} is the improved Caccioppoli inequality~\eqref{2scaccintro}. With the aid of this estimate---that holds, in a sometimes slightly weaker form, for both solutions and minimizers---we are able to prove a growth lemma, the crucial step for the H\"older continuity. In particular, we use the bound for the second member on the left-hand side of~\eqref{2scaccintro} to replace the~De~Giorgi isoperimetric-type inequality~\eqref{isoplevset}, which may fail in the context of fractional Sobolev spaces.

Observe once again that the estimate provided in Theorem~\ref{holmainthm} is uniform in~$s$, at least when~$s$ is bounded away from~$0$. On the other hand, the information represented by the upper bound for the second term on the left-hand side of~\eqref{2scaccintro} tends to disappear as~$s$ approaches~$1$. As a result, one would naturally expect H\"older estimates which blow up as~$s \rightarrow 1^-$. To resolve this apparent inconsistency, in Proposition~\ref{sDGlemprop} we obtain an estimate in the spirit of~\eqref{isoplevset} for functions belonging to (large regions of) the fractional Sobolev space~$W^{s, p}$, with~$s$ sufficiently close to~$1$. The interpolation of this result with inequality~\eqref{2scaccintro} yields~$C^\alpha$ estimates uniform in~$s$.

\medskip

The last result that we present in this section is a Harnack-type inequality. Note that here we do not limit ourselves to~$n \ge s p$, as the result is now meaningful for the full range of parameters. On the other hand, in place of~\eqref{Kell}, we take into account the following slightly more restrictive hypothesis on~$K$:
\begin{equation} \label{Kell2}
\frac{1 - s}{\Lambda |x - y|^{n + s p}} \le K(x, y) \le \frac{(1 - s) \Lambda}{|x - y|^{n + s p}} \quad \mbox{for a.a.~} x, y \in \R^n,
\end{equation}
with~$\Lambda \ge 1$. Observe that~\eqref{Kell2} differs from~\eqref{Kell} in that the left-hand inequality---i.e.~the ellipticity assumption on~$K$---is now required to hold everywhere, instead of only in a neighborhood of the diagonal~$\{ x = y\}$. Hypothesis~\eqref{Kell2} formally corresponds to~\eqref{Kell} with~$r_0 = +\infty$.

The statement of the Harnack inequality may now follow.

\begin{theorem}[\bfseries Harnack inequality] \label{harmainthm}
Let~$n \in \N$,~$s \in (0, 1)$ and~$p > 1$. Let~$\Omega \subset \R^n$ be an open bounded subset of~$\R^n$. Suppose that~$K$ satisfies hypotheses~\eqref{Ksimm} and~\eqref{Kell2}. Let~$u$,~$F$ and~$f$ be as in Theorem~\ref{holmainthm}, and assume in addition that~$u \ge 0$ in~$\Omega$. Then, there exists a constant~$C \ge 1$ such that, for any~$x_0 \in \Omega$ and~$0 < R < \dist(x_0, \partial \Omega) / 2$, it holds
$$
\sup_{B_R(x_0)} u \le C \left( \inf_{B_R(x_0)} u + \Tail(u_-; x_0, R) + \F \right),
$$
with~$\F$ as in~\eqref{Fdef}. The constant~$C$ depends only on~$n$,~$s$,~$p$ and~$\Lambda$. When~$n \notin \{ 1, p \}$, the constant~$C$ does not blow up as~$s \rightarrow 1^-$.
\end{theorem}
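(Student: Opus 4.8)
The plan is to reduce the Harnack inequality to the two one-sided bounds that are already established as ingredients of the H\"older continuity theorem: a \emph{local boundedness estimate} (sup bound in terms of an averaged $L^p$ quantity plus a tail, coming from the standard Caccioppoli inequality \eqref{1scaccintro}) and a \emph{weak Harnack / infimum estimate} (an $L^\varepsilon$-to-inf bound for nonnegative functions, coming from the improved Caccioppoli inequality \eqref{2scaccintro} and the growth lemma). Concretely, I expect the sup estimate to take the shape
\begin{equation*}
\sup_{B_{R/2}(x_0)} u \le C \left( \left( \dashint_{B_R(x_0)} u^p \, dx \right)^{1/p} + \Tail(u_-; x_0, R) + \F \right),
\end{equation*}
valid because $u \ge 0$ in $\Omega$ forces the positive-truncation Caccioppoli inequality to lose its bad nonlocal tail term (only $u_-$, supported outside $B_R(x_0)$, survives there), and the infimum estimate to take the shape
\begin{equation*}
\left( \dashint_{B_R(x_0)} u^{\varepsilon} \, dx \right)^{1/\varepsilon} \le C \left( \inf_{B_{R/2}(x_0)} u + \Tail(u_-; x_0, R) + \F \right),
\end{equation*}
for some small universal $\varepsilon = \varepsilon(n, s, p, \Lambda) > 0$. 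Chaining the two, the Harnack inequality on $B_{R/2}(x_0)$ follows once one knows that the $L^p$ and $L^\varepsilon$ averages of $u$ over $B_R(x_0)$ are comparable; the standard covering/interpolation argument then upgrades $B_{R/2}$ to $B_R$.

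The bridge between the $L^p$ and $L^\varepsilon$ averages is the classical \emph{Bombieri--Giusti}-type argument (a log-estimate plus a crossover lemma for $\log u$), or, alternatively, a Moser-type iteration run directly on negative powers of $u + \F + \Tail$ using \eqref{1scaccintro} applied to $v := u + \delta$ for suitable $\delta$. I would proceed as follows: (i) normalize by setting $\tilde u := u + \Tail(u_-; x_0, R) + \F$, so that $\tilde u$ is a nonnegative subsolution-type object bounded below by a controlled constant and the nonlocal tail of $\tilde u_-$ over $B_R$ is absorbed; (ii) derive a logarithmic Caccioppoli estimate for $\log \tilde u$ from the equation/minimality, of the form $[\log \tilde u]_{W^{s,p}(B_{R/2})}^p \lesssim R^{n-sp}$, which yields (via fractional Poincar\'e, i.e.\ the embedding $W^{s,p} \hookrightarrow L^p$ used with a John--Nirenberg-type lemma adapted to the Gagliardo seminorm) that $\log \tilde u$ has bounded mean oscillation on $B_{R/2}(x_0)$; (iii) conclude by John--Nirenberg that $\dashint \tilde u^\varepsilon \cdot \dashint \tilde u^{-\varepsilon} \le C$ for small $\varepsilon$, which is exactly the missing comparison; (iv) feed this into the chain above. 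Steps (i)--(iv) are entirely parallel to the local De~Giorgi--Nash--Moser theory, except that every Sobolev/Poincar\'e inequality must be replaced by its fractional counterpart, and the non-locality must be handled by tail terms — precisely the reason \eqref{Kell2} (ellipticity on all of $\R^n \times \R^n$, i.e.\ $r_0 = +\infty$) is assumed rather than the weaker \eqref{Kell}: without it the lower bound on $K$ away from the diagonal is unavailable and the log-estimate fails to control oscillation at all scales.

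The main obstacle I anticipate is \textbf{step (ii)}, the fractional John--Nirenberg / log-to-BMO passage, together with making the whole chain \emph{uniform as $s \to 1^-$} when $n \notin \{1, p\}$. The log-estimate itself is routine, but extracting a genuine BMO bound from a bound on the Gagliardo seminorm of $\log \tilde u$ is more delicate than in the local case: one cannot simply invoke the classical Poincar\'e inequality, and the naive fractional Poincar\'e inequality degenerates with the wrong power of $(1-s)$ as $s \to 1^-$. I would handle the $s$-uniformity exactly as the paper handles it for H\"older continuity — by interpolating the growth-lemma information (which carries the content of the improved Caccioppoli term \eqref{2scaccintro} and is strong for $s$ away from $1$) with the isoperimetric-type estimate of Proposition~\ref{sDGlemprop} (which is available for $s$ close to $1$ and recovers the effect of the classical De~Giorgi inequality \eqref{isoplevset}). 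The excluded cases $n = 1$ and $n = p$ are precisely those in which the fractional Sobolev embedding degenerates ($n = sp$ is attainable in the limit, so $p^*_s \to \infty$), which is why uniformity cannot be claimed there; for fixed $s$, however, the argument goes through for all $n$, $p$. A secondary technical point is the careful tracking of the tail terms $\Tail(u_-; x_0, R)$ through the iteration, ensuring they never see $u$ inside $B_R(x_0)$ (only outside, where $u$ may be negative), which is what makes the final right-hand side depend on $u_-$ rather than $u$.
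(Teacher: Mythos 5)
Your outer decomposition—local boundedness plus a weak Harnack inequality, then chaining—is exactly the paper's. But the bridge you propose between the $L^p$ average (coming out of the sup estimate) and the $L^\varepsilon$ average (needed to feed the weak Harnack) is a genuinely different route from the paper's, and it is precisely the part you flag as problematic. The paper does \emph{not} use a log-Caccioppoli estimate, a BMO bound for $\log u$, or a fractional John--Nirenberg lemma. Instead it runs the De~Giorgi/DiBenedetto--Trudinger version of the crossover: starting from the local boundedness estimate (Proposition~\ref{ulocboundprop}) one has $\sup_{B_r(z)}u \lesssim (\dashint_{B_{2r}(z)}u^p)^{1/p} + \delta\,\Tail(u_+;z,r)+\cdots$; the Tail of $u_+$ is then traded for $\sup u + \Tail(u_-)+\cdots$ using formula~\eqref{Tail+control}, which is where the improved Caccioppoli inequality and hypothesis~\eqref{Kell2} are genuinely used; a weighted Young inequality on $(\dashint u^p)^{1/p}\le \delta_2^{p/(p-q)}\sup u + \delta_2^{-p/q}(\dashint u^q)^{1/q}$ then interpolates down to \emph{any} $q\in(0,p)$; and the resulting $\tfrac12\sup_{B_{2r}}u$ on the right is absorbed by the covering/iteration Lemma~\ref{induclem} (in the style of \cite[Lemma~7.1]{Giu03}). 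The weak Harnack $(\dashint_{B_R}u^q)^{1/q}\lesssim \inf_{B_R}u + \Tail(u_-)+\cdots$ for a specific small $q$ is Proposition~\ref{weakHarprop}, derived by iterating the growth lemma (Lemmas~\ref{2growthlem}--\ref{3growthlem}) through the Krylov--Safonov covering Lemma~\ref{KSlem}. By choosing the same $q$ in both halves, no comparison of $L^p$ and $L^\varepsilon$ averages is ever needed.

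This matters because the gap you correctly identify in step (ii)---extracting a BMO bound on $\log \tilde u$ from a Gagliardo-seminorm bound, and running a fractional John--Nirenberg argument uniformly as $s\to1^-$---is a real obstacle that the paper's route simply sidesteps. Your instinct that such a Moser-type path could be made to work (it is essentially the strategy of~\cite{DKP14a}) is not unreasonable, but as written it is incomplete: you would need to supply a fractional version of the log-estimate, justify that it yields BMO (not just control of the Gagliardo seminorm of the logarithm), prove a John--Nirenberg lemma adapted to that setting, and verify that every constant stays bounded as $s\to 1^-$. None of this is required by the paper's argument, which gets $s$-uniformity from exactly the same place it gets it for H\"older continuity: the growth lemma interpolates between the improved Caccioppoli term (good for small $s$) and the isoperimetric-type inequality of Proposition~\ref{sDGlemprop} (good for $s$ near $1$). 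Finally, your explanation of why~\eqref{Kell2} is needed is in the right spirit but locates it in the log-estimate; in the paper it enters only in proving~\eqref{Tail+control}, which controls $\Tail(u_+)$ by $\sup u + \Tail(u_-)$ and requires the ellipticity of $K$ to hold globally so that the second summand in~\eqref{DG-def} reaches all of $\R^n$.
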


Harnack inequalities for solutions to integral equations like~\eqref{Lu=f} have been obtained in~\cite{Kas11,DKP14}, both considering the homogeneous case (i.e.~with no right-hand side). For minimizers, this is the first available result in this direction.

When comparing Theorem~\ref{harmainthm} to the classical Harnack inequalities for second-order partial differential equations (see e.g.~\cite{M61,DT84}), we immediately notice the presence here of an additional Tail term. As noted in~\cite{Kas07b,Kas11}, this is the correct formulation of the Harnack inequality for nonlocal operators. Of course, when~$u$ is non-negative on the whole of~$\R^n$, one recovers the classical inequality.

Moreover, as the radius~$R$ of the ball~$B_R(x_0)$ over which the inequality is set can be chosen freely (as long as~$B_{2 R}(x_0)$ is contained in~$\Omega$), this is a global inequality. As a consequence, one can deduce from it a Liouville-type theorem for entire solutions of~$\L u = 0$ which are bounded from above or below.

We do not know whether or not the stronger assumption~\eqref{Kell2}, in place of~\eqref{Kell}, is necessary for the validity of Theorem~\ref{harmainthm}. The only place where we take full advantage of~\eqref{Kell2} is in Theorem~\ref{DGharthm}: it is used to deduce formula~\eqref{Tail+control}, which gives a bound for the Tail of the positive part of~$u$. We believe it to be an interesting problem to understand if a similar Harnack inequality might be obtained for more general kernels. To this aim, it would possibly be convenient to take into account a Tail term tailored on the kernel~$K$ under consideration, rather than the canonical choice~\eqref{Tailudef} given by~\eqref{K0}.

\medskip

We also stress that, coherently with the path traced in~\cite{GG82}, one could be interested in understanding if the minimizers and solutions under consideration belong to fractional~Sobolev spaces of higher integrability order. This has been proved in~\cite{KMS15a} for equations driven by nonlocal operators with linear growth and barely measurable kernels. Quite surprisingly and differently from the local scenario, the authors obtained there that the solutions embed in Sobolev spaces having also higher order of~(fractional) differentiability. As noted at the end of Section~1B in~\cite{KMS15a}, their techniques should extend without too many difficulties to solutions of nonlinear equations with more general growths and, possibly, to minimizers. This might be the object of future works.

\bigskip

As anticipated in the introductory section, we obtain Theorems~\ref{boundmainthm},~\ref{holmainthm} and~\ref{harmainthm} by showing that the minimizers of~$\E$ and the solutions of~\eqref{Lu=f} equally satisfy an improved Caccioppoli inequality of the form~\eqref{2scaccintro}. We consider the set of all functions fulfilling~\eqref{2scaccintro} and more general inequalities---which we call a~\emph{fractional~De~Giorgi class}---and prove that they are locally bounded, H\"{o}lder continuous and that, when non-negative, they satisfy a Harnack inequality.

In light of this, the present paper extends various results and techniques displayed in the classical references~\cite{DeG57,LU68,GG82,DT84,Giu03} to a nonlocal setting.

\bigskip

The remaining part of the paper is organized as follows.

First, in Section~\ref{notsec} we fix some terminology that is often adopted in the paper.

In the preparatory Section~\ref{prepsec} we include a collection of numerical and functional inequalities that will be largely used in the subsequent sections.

Section~\ref{sDGsec} is devoted to the proof of a~De~Giorgi isoperimetric-type inequality for the level sets of functions that belong to fractional Sobolev spaces with large differentiability order~$s$.

In Section~\ref{DGsec} we introduce fractional~De~Giorgi classes in the full generality needed for our applications. There, we also show that their elements are locally bounded, H\"older continuous functions that satisfy an Harnack-type theorem. These three facts are proved in Theorems~\ref{DGboundthm},~\ref{DGholdthm} and~\ref{DGharthm}.

The conclusive Sections~\ref{minsec} and~\ref{solsec} contain the proofs of Theorems~\ref{boundmainthm},~\ref{holmainthm},~\ref{harmainthm}: these results are restated as Theorems~\ref{minboundthm},~\ref{minholdthm},~\ref{minharthm} for minimizers, in Section~\ref{minsec}, and as Theorems~\ref{solboundthm},~\ref{solholdthm},~\ref{solharthm} for solutions, in Section~\ref{solsec}.

\section{Notation} \label{notsec}

In this brief section, we formally specify some of the notation that will be used more frequently in the remainder of the paper.

\smallskip

First of all, the dimension of the space in which we are set is always indicated by~$n$, which is normally meant to be any natural number.

As we did in the two previous sections, we denote by~$B_R(x_0)$ the open Euclidean ball of radius~$R > 0$, centered at~$x_0 \in \R^n$. That is,
$$
B_R(x_0) := \Big\{ x \in \R^n : |x - x_0| < R \Big\}.
$$
When~$x_0$ is the origin, we simply write~$B_R$ in place of~$B_R(0)$.

We use the symbol~$\chi_\Omega$ to indicate the characteristic function of a set~$\Omega \subseteq \R^n$, i.e.
$$
\chi_\Omega(x) := \begin{cases}
1 & \quad \mbox{if } x \in \Omega,\\
0 & \quad \mbox{if } x \in \R^n \setminus \Omega.
\end{cases}
$$

For any two given parameters~$s \in (0, 1)$,~$p > 1$ and any measurable set~$U \subseteq \R^n$, we have already introduced the fractional Sobolev space~$W^{s, p}(U)$ as the subset of~$L^p(U)$ made up by those functions that have finite Gagliardo seminorm~$[\, \cdot \,]_{W^{s, p}(U)}$, as given by~\eqref{GagspU}. For~$n > s p$, the important fractional Sobolev exponent~$p^*_s$ has been defined in~\eqref{p*sdef}. In section~\ref{mainsec}, we also considered the modified Sobolev space~$\W^{s, p}(U)$ and the weighted Lebesgue space~$L^{p - 1}_s(\R^n)$. For~$u \in L^{p - 1}_s(\R^n)$ and any~$x_0 \in \R^n$,~$R > 0$, we saw that the quantity~$\Tail(u; x_0, R)$, as in~\eqref{Tailudef}, is well-defined and finite. For some later purposes, it is convenient to introduce also the related non-scaling-invariant Tail term
\begin{equation} \label{nsiTailudef}
\begin{aligned}
\overline{\Tail}(u; x_0, R) := & \, \left[ (1 - s) \int_{\R^n \setminus B_R(x_0)} \frac{|u(x)|^{p - 1}}{|x - x_0|^{n + s p}} \, dx \right]^{\frac{1}{p - 1}} \\
= & \, R^{- \frac{s p}{p - 1}} \, \Tail(u; x_0, R).
\end{aligned}
\end{equation}

We adopt a short-hand notation for the level sets of functions. Given~$u: \R^n \to \R$ and~$k \in \R$, we denote the superlevel set of~$u$ of level~$k$ as
$$
\{ u > k \} := \Big\{ x \in \R^n : u(x) > k \Big\}.
$$
Similarly, we write~$\{ u < k \}$ for the sublevel set~$\{ x \in \R^n : u(x) < k \}$. The other notations~$\{ u = k \}$,~$\{ u \ge k \}$ and~$\{ u \le k \}$ all have analogous meanings.

As it is customary, the positive and negative parts of a function (or a real number)~$u$ are indicated by~$u_+$ and~$u_-$, respectively. This means that we have~$u_+ := \max \{ u, 0 \}$ and~$u_- := - \min \{ u, 0 \}$.

Of particular interest are also the lower truncation~$(u - k)_+$ and the upper truncation~$(u - k)_-$ of a function~$u$ at level~$k \in \R$. We often refer to their supports as
\begin{equation} \label{A+-def}
\begin{aligned}
A^+(k) := \supp((u - k)_+) = \{ u > k \}, \\
A^-(k) := \supp((u - k)_-) = \{ u < k \}.
\end{aligned}
\end{equation}
The intersections of these sets with the ball~$B_R(x_0)$ are denoted by~$A^+(k, x_0, R)$ and~$A^-(k, x_0, R)$, respectively. As before, we drop reference to~$x_0$ when it is the origin, and simply write~$A^+(k, R)$ and~$A^-(k, R)$.

In Sections~\ref{minsec} and~\ref{solsec}, we will frequently consider the measure element
\begin{equation} \label{dmudef}
d\mu = d\mu_K(x, y) := K(x, y) \, dx dy.
\end{equation}
This terminology is used for the sole purpose of abbreviating several integral formulas.

Finally, we remark that we use several letters (roman or greek characters, in upper or lower cases) to denote constants and parameters. Sometimes---as in Theorem~\ref{boundmainthm} or Proposition~\ref{minareDGprop}---we write the quantities on which some constant depends between round brackets, right after the symbol used for said constant. We always use the letter~$C$ to denote a general constant, greater or equal to~$1$. The value of~$C$ may change within the same statement, proof or even between different lines of the same formula. During proofs, we usually specify on which parameters a certain constant~$C$ depends as soon as it appears in a formula; eventual other occurrences of~$C$ in the same proof are supposed to depend on the same exact parameters, unless otherwise specified. When we need to be more precise on the value of some particular occurrence, we use subscripts, such as~$C_\star, C_\sharp, C_1, C_2$, etc.

\section{Some auxiliary results} \label{prepsec}

Here we present several ancillary lemmata that will be used in the remainder of the paper. For their technical nature and rather general applicability, we preferred to collect them in this separate section.

The first four results are standard numerical inequalities. Most of them are probably well-known to the reader or very easy to be obtained. For the sake of completeness, we include their proofs in full details.

\begin{lemma} \label{numestlem1}
Let~$p \ge 1$ and~$a, b \ge 0$. Then,
\begin{equation} \label{numest1}
(a + b)^p - a^p \ge \theta p a^{p - 1} b + (1 - \theta) b^p,
\end{equation}
for any~$\theta \in [0, 1]$.
\end{lemma}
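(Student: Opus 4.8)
The plan is to prove the inequality
\[
(a+b)^p - a^p \ge \theta p a^{p-1} b + (1-\theta) b^p, \qquad \theta \in [0,1],
\]
by first establishing the two extreme cases $\theta = 0$ and $\theta = 1$ separately, and then obtaining the general statement by convexity in $\theta$. Indeed, once the cases $\theta=0$ and $\theta=1$ are known, the right-hand side is an affine (hence convex) function of $\theta \in [0,1]$, so it is bounded above on $[0,1]$ by the larger of its two endpoint values; but both endpoint values are $\le (a+b)^p - a^p$, so we are done. Thus everything reduces to the two inequalities
\[
(a+b)^p - a^p \ge b^p \qquad\text{and}\qquad (a+b)^p - a^p \ge p\, a^{p-1} b.
\]

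First I would dispose of trivial situations: if $b = 0$ both inequalities are equalities (with the convention $0^0$ irrelevant since the left side is $0$); if $a = 0$, the first reads $b^p \ge b^p$ and the second reads $b^p \ge 0$, both clear. So assume $a, b > 0$. For the first inequality, I would divide through by $b^p$ and set $t := a/b > 0$, reducing it to $(1+t)^p - t^p \ge 1$; this follows since $g(t) := (1+t)^p - t^p$ satisfies $g(0) = 1$ and $g'(t) = p[(1+t)^{p-1} - t^{p-1}] \ge 0$ for $p \ge 1$ (as $r \mapsto r^{p-1}$ is nondecreasing on $[0,\infty)$), so $g(t) \ge g(0) = 1$. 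For the second inequality, I would again normalize by $b$: dividing by $a^{p-1} b$ (using $a > 0$) and setting $\tau := b/a > 0$, it becomes $\frac{(1+\tau)^p - 1}{\tau} \ge p$. This is just the statement that the slope of the secant of $\varphi(r) := r^p$ from $1$ to $1 + \tau$ is at least $\varphi'(1) = p$, which holds because $\varphi$ is convex on $[0,\infty)$ for $p \ge 1$; equivalently, by the mean value theorem $(1+\tau)^p - 1 = p\,\xi^{p-1}\tau$ for some $\xi \in (1, 1+\tau)$, and $\xi^{p-1} \ge 1$.

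Finally I would assemble the pieces: writing $R(\theta) := \theta p a^{p-1} b + (1-\theta) b^p = \theta\big(p a^{p-1} b - b^p\big) + b^p$, which is affine in $\theta$, we have for every $\theta \in [0,1]$ that $R(\theta) \le \max\{R(0), R(1)\} = \max\{b^p,\ p a^{p-1} b\} \le (a+b)^p - a^p$, by the two cases just proved. This closes the argument. I do not expect any genuine obstacle here; the only mild subtlety is making sure the reduction to the endpoints is stated cleanly (affine functions attain their max on an interval at an endpoint) and keeping track of the degenerate cases $a = 0$ or $b = 0$ so that the division steps are justified. An alternative, fully self-contained route that avoids even the convexity-in-$\theta$ remark would be to prove directly that $(a+b)^p - a^p - p a^{p-1} b \ge 0$ and $(a+b)^p - a^p - b^p \ge 0$ and then note $\theta X + (1-\theta) Y \le \max\{X,Y\}$ for $\theta\in[0,1]$ and nonnegative-difference bounds; I would likely present whichever is shorter in the write-up.
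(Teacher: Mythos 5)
Your proof is correct and takes essentially the same route as the paper: both reduce the statement to the two endpoint cases $\theta = 0$ and $\theta = 1$ (you justify this step more explicitly via affinity of the right-hand side in $\theta$, which the paper dismisses with ``of course''), and then verify each endpoint separately. The endpoint arguments differ only cosmetically: for $\theta=0$ the paper cites monotonicity of $\ell^p$-norms while you use a derivative computation, and for $\theta=1$ the paper writes $(a+b)^p - a^p = p\int_a^{a+b} t^{p-1}\,dt \ge p a^{p-1} b$ while you invoke the mean value theorem; these are interchangeable elementary facts.
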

\begin{proof}
Of course, it is enough to prove~\eqref{numest1} for~$\theta = 0, 1$. Indeed, the case~$\theta = 0$ plainly follows from the standard fact that the~$p$-norm~$|x|_p$ is monotone non-increasing in~$p \in [1, +\infty)$, for any fixed~$x \in \R^2$. On the other hand,
$$
(a + b)^p - a^p = p \int_a^{a + b} t^{p - 1} \, dt \ge p a^{p - 1} b,
$$
which is~\eqref{numest1} for~$\theta = 1$.
\end{proof}

We observe that, when~$p \ge 2$, a simpler and stronger inequality holds true. Essentially, in this case one can replace both coefficients~$\theta$ and~$1 - \theta$ on the right-hand side of~\eqref{numest1} with~$1$. However, for our applications the interpolation inequality of Lemma~\ref{numestlem1} will suffice.

Next are other three lemmata providing numerical estimates.

\begin{lemma} \label{numestlem2}
Let~$p \ge 1$,~$\mu \in [0, 1]$ and~$a, b \ge 0$. Then,
\begin{equation} \label{numest2}
|\mu a - b|^p - |a - b|^p \le p b^{p - 1} a.
\end{equation}
\end{lemma}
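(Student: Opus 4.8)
The plan is to exploit the convexity of the map $t \mapsto |t - b|^p$ on $[0, +\infty)$ --- which holds precisely because $p \ge 1$ --- and then reduce matters to an elementary one-variable estimate.

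First I would fix $a, b \ge 0$ and write $\mu a = (1 - \mu) \cdot 0 + \mu \cdot a$ as a convex combination of $0$ and $a$ with weights $1 - \mu$ and $\mu$. Setting $\phi(t) := |t - b|^p$ and using that $\phi$ is convex, this gives $\phi(\mu a) \le (1 - \mu) \phi(0) + \mu \phi(a)$, that is,
$$
|\mu a - b|^p - |a - b|^p \le (1 - \mu) \big( b^p - |a - b|^p \big).
$$
Since $\mu \in [0, 1]$, the right-hand side is at most $\max\{ b^p - |a - b|^p, 0 \}$: if $b^p - |a-b|^p \ge 0$ the factor $1 - \mu \le 1$ only decreases it, while if $b^p - |a-b|^p < 0$ the product is nonpositive. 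Hence it suffices to bound $b^p - |a-b|^p$ from above by $p\, b^{p-1} a$ whenever this quantity is positive.

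For this last step I would in fact prove the slightly sharper bound $b^p - |a - b|^p \le p\, b^{p-1} \min\{a, b\}$. Using $|a - b|^p \ge (b - a)_+^p$ together with the fundamental theorem of calculus,
$$
b^p - |a - b|^p \le b^p - (b - a)_+^p = p \int_{(b - a)_+}^{b} t^{p - 1} \, dt \le p\, b^{p - 1} \big( b - (b - a)_+ \big) = p\, b^{p - 1} \min\{a, b\},
$$
where the penultimate inequality uses that $t \mapsto t^{p-1}$ is nondecreasing on $[0, b]$, which is again where $p \ge 1$ enters. Combining the two displays yields $|\mu a - b|^p - |a - b|^p \le p\, b^{p - 1} \min\{a, b\} \le p\, b^{p-1} a$, which is \eqref{numest2}. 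There is no serious obstacle here; the only points deserving a line of care are the degenerate case $b = 0$ (where the right-hand side is $0$ and both sides are checked directly) and the validity of the convexity and monotonicity facts at the endpoint $p = 1$, both of which are immediate.
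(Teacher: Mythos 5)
Your proof is correct, and it takes a genuinely different route from the one in the paper. The paper argues by a direct case analysis on the ordering of $a$, $b$, $\mu a$: three subcases ($b \ge a$; $\mu a \le b < a$; $b < \mu a$), in each of which the absolute values are expanded explicitly and the fundamental theorem of calculus is applied to the difference of $p$-th powers. The middle case is the delicate one, requiring the bound $p(b - \mu a)^{p-1}(2b - (1+\mu)a) \le p b^{p-1} a$ (and strictly speaking a side remark when $2b - (1+\mu)a < 0$). You instead push the parameter $\mu$ through the convexity of $t \mapsto |t-b|^p$ to reduce the whole estimate to the $\mu$-free inequality $b^p - |a-b|^p \le p b^{p-1} a$, which then falls out of a single application of the fundamental theorem of calculus via $|a-b|^p \ge (b-a)_+^p$. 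This avoids the case split entirely, handles the sign issues uniformly through the $\max\{\cdot,0\}$ observation, and in fact yields the slightly sharper bound $p b^{p-1}\min\{a,b\}$. Both arguments are elementary and of comparable length; yours is a bit more conceptual, the paper's a bit more hands-on.
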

\begin{proof}
We consider separately the three possibilities~$b \ge a$,~$\mu a \le b < a$ and~$b < \mu a$. In the first case,
$$
|\mu a - b|^p - |a - b|^p = (b - \mu a)^p - (b - a)^p = p \int_{b - a}^{b - \mu a} t^{p - 1} \, dt \le p b^{p - 1} a.
$$
On the other hand, if~$\mu a \le b < a$, then
\begin{align*}
|\mu a - b|^p - |a - b|^p & = (b - \mu a)^p - (a - b)^p = p \int_{a - b}^{b - \mu a} t^{p - 1} \, dt \\
& \le p (b - \mu a)^{p - 1} (2 b - (1 + \mu) a) \le p b^{p - 1} a.
\end{align*}
Finally, when~$b < \mu a$ the thesis is trivially verified, as the left-hand side of~\eqref{numest2} is negative.
\end{proof}

\begin{lemma} \label{numestlem3}
Let~$p > 1$ and~$a \ge b \ge 0$. Then,
$$
a^p - b^p \le \varepsilon a^p + \left( \frac{p - 1}{\varepsilon} \right)^{p - 1} (a - b)^p,
$$
for any~$\varepsilon > 0$.
\end{lemma}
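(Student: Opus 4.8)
The statement to prove is: for $p > 1$ and $a \ge b \ge 0$,
$$
a^p - b^p \le \varepsilon a^p + \left( \frac{p - 1}{\varepsilon} \right)^{p - 1} (a - b)^p
$$
for any $\varepsilon > 0$. This is a Young-type inequality, and the natural plan is to reduce to a one-variable statement and then apply the standard Young inequality $XY \le \frac{X^{p'}}{p'} + \frac{Y^p}{p}$ with conjugate exponents $p' = p/(p-1)$ and $p$, or equivalently to prove it directly by a clean homogeneity-plus-calculus argument.

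First I would dispose of the trivial case $a = 0$ (hence $b = 0$), where both sides vanish. Assuming $a > 0$, the inequality is homogeneous of degree $p$ in $(a,b)$, so dividing through by $a^p$ it suffices to prove, for $t := b/a \in [0,1]$,
$$
1 - t^p \le \varepsilon + \left( \frac{p - 1}{\varepsilon} \right)^{p - 1} (1 - t)^p.
$$
Next I would use the elementary estimate $1 - t^p \le p(1-t)$ for $t \in [0,1]$ (which follows from $1 - t^p = p\int_t^1 \tau^{p-1}\,d\tau \le p(1-t)$, exactly the kind of computation already used in Lemma~\ref{numestlem1} and Lemma~\ref{numestlem2}). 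So it is enough to show
$$
p(1 - t) \le \varepsilon + \left( \frac{p - 1}{\varepsilon} \right)^{p - 1} (1 - t)^p,
$$
i.e., writing $\sigma := 1 - t \ge 0$, that $p\sigma \le \varepsilon + \left(\frac{p-1}{\varepsilon}\right)^{p-1}\sigma^p$ for all $\sigma \ge 0$. This now follows from Young's inequality: with $X = \varepsilon^{1/p'}/(p-1)^{1/p'}$ absorbed appropriately, one writes $p\sigma = \big(p'^{1/p'} \varepsilon^{1/p'}\big)\cdot\big(p^{1/p} (p-1)^{(p-1)/p}\varepsilon^{-(p-1)/p}\sigma\big)$ and applies $XY \le X^{p'}/p' + Y^p/p$; after simplification using $p'/(p-1) = p'/p'\cdot\ldots$ — more cleanly, one simply minimizes the right-hand side $g(\sigma) := \varepsilon + \left(\frac{p-1}{\varepsilon}\right)^{p-1}\sigma^p - p\sigma$ over $\sigma \ge 0$. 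Setting $g'(\sigma) = 0$ gives $\sigma_\ast = \varepsilon/(p-1)$, and a direct substitution shows $g(\sigma_\ast) = \varepsilon + \frac{\varepsilon}{p-1} - \frac{p\varepsilon}{p-1} = \varepsilon\big(1 + \tfrac{1}{p-1} - \tfrac{p}{p-1}\big) = 0$, so $g \ge 0$ everywhere. This closes the argument.

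I do not expect any serious obstacle here; the only mild care needed is choosing which reduction to run first so that the constant $\left(\frac{p-1}{\varepsilon}\right)^{p-1}$ comes out exactly, rather than with a worse numerical factor. The bound $1 - t^p \le p(1-t)$ is slightly lossy in general but is precisely tight enough (it is sharp at $t = 1$, which is where the constraint binds after optimizing in $\sigma$), so the final constant is not degraded. Alternatively, one can skip the intermediate step entirely and minimize $\varepsilon a^p + \left(\frac{p-1}{\varepsilon}\right)^{p-1}(a-b)^p - (a^p - b^p)$ directly in $b \in [0,a]$, but the variable $\sigma = a - b$ parametrization above keeps the calculus cleanest.
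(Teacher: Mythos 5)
Your proof is correct and follows essentially the same route as the paper's. Both arguments first reduce to the bound $a^p - b^p \le p\,a^{p-1}(a-b)$ (your normalized form $1 - t^p \le p(1-t)$) and then split the resulting product: the paper writes $p\,a^{p-1}(a-b) = p\,(\delta^{1/p}a)^{p-1}\big((a-b)\,\delta^{-(p-1)/p}\big)$, applies Young's inequality, and chooses $\delta = \varepsilon/(p-1)$, while you minimize $g(\sigma) = \varepsilon + \bigl(\tfrac{p-1}{\varepsilon}\bigr)^{p-1}\sigma^p - p\sigma$ directly and find $\sigma_* = \varepsilon/(p-1)$ — which is just an inline proof of the very same Young inequality, with your optimizer playing the role of the paper's $\delta$.
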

\begin{proof}
We compute
$$
a^p - b^p = \left( b + (a - b) \right)^p - b^p = p \int_0^{a- b} (b + t)^{p - 1} \, dt \le p a^{p - 1} (a - b).
$$
For a fixed~$\delta > 0$, we use Young's inequality to deduce
$$
a^p - b^p \le p \left( \delta^{\frac{1}{p}} a \right)^{p - 1} \left( \frac{a - b}{\delta^{\frac{p - 1}{p}}} \right) \le (p - 1) \delta a^p + \delta^{1 - p} (a - b)^p,
$$
and the conclusion follows by taking~$\delta = \varepsilon / (p - 1)$.
\end{proof}

\begin{lemma} \label{numestlem4}
Let~$p > 1$,~$a \in \R$ and~$b \ge 0$. Then,
\begin{equation} \label{numest4}
(a - b)_+^{p - 1} \ge \min \{ 1, 2^{2 - p} \} a_+^{p - 1} - b^{p - 1}.
\end{equation}
\end{lemma}
\begin{proof}
We consider separately the three possibilities~$a \ge b$,~$0 \le a < b$ and~$a < 0$. If~$a \ge b$, it easy to see that
$$
(a - b)^{p - 1} + b^{p - 1} \ge \min \{ 1, 2^{2 - p} \} a^{p - 1},
$$
which is~\eqref{numest4}. If~$0 \le a < b$, then
$$
(a - b)_+^{p - 1} - \min \{ 1, 2^{2 - p} \} a_+^{p - 1} = - \min \{ 1, 2^{2 - p} \} a^{p - 1} \ge - b^{p - 1},
$$
and~\eqref{numest4} follows as well. In the case~$a < 0$, inequality~\eqref{numest4} is also trivially true.
\end{proof}

The next six results contain well-know functional inequalities in fractional Sobolev spaces. Our estimates are usually minor modifications of those available in the literature. We write them here in order to keep better track of the values of the constants involved and to have them ready for applications in the subsequent sections.

First is a weighted estimate related to the embeddings of fractional Sobolev spaces as the differentiability order varies.

\begin{lemma} \label{sobinclem0}
Let~$n \in \N$,~$p \ge 1$,~$0 < \sigma \le s < 1$ and~$R > 0$. Then, for any~$u \in W^{s, p}(B_R)$, it holds
$$
[u]_{W^{\sigma, p}(B_R)}^p \le \delta^{(s - \sigma) p} [u]_{W^{s, p}(B_R)}^p + \frac{2^p |B_1|}{\sigma p} \, \chi_{(0, 2 R)}(\delta) \delta^{- \sigma p} \| u \|_{L^p(B_R)}^p,
$$
for any~$\delta > 0$.
\end{lemma}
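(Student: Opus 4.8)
The plan is to split the Gagliardo seminorm $[u]_{W^{\sigma,p}(B_R)}^p$ into a near-diagonal part, where $|x-y| < \delta$, and a far part, where $|x-y| \geq \delta$, and estimate each piece separately. For the near-diagonal part, the idea is that a lower differentiability order is easier to control by a higher one precisely at small scales: writing the integrand as $\frac{|u(x)-u(y)|^p}{|x-y|^{n+sp}} \cdot |x-y|^{(s-\sigma)p}$, on the region $\{|x-y|<\delta\}$ one has $|x-y|^{(s-\sigma)p} \le \delta^{(s-\sigma)p}$ (using $s\ge\sigma$), so
\[
\int_{B_R}\int_{B_R\cap\{|x-y|<\delta\}} \frac{|u(x)-u(y)|^p}{|x-y|^{n+\sigma p}}\,dx\,dy \le \delta^{(s-\sigma)p} [u]_{W^{s,p}(B_R)}^p.
\]

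For the far part, $\{|x-y|\ge\delta\}$, I would use the elementary bound $|u(x)-u(y)|^p \le 2^{p-1}(|u(x)|^p+|u(y)|^p)$ together with $|x-y|^{-(n+\sigma p)} \le \delta^{-(n+\sigma p)}$... actually more carefully, one keeps one power of $|x-y|$ to integrate: split off $|x-y|^{-(n+\sigma p)}$ and integrate the inner variable over the complement of a $\delta$-ball. Precisely, by symmetry,
\[
\int_{B_R}\int_{B_R\cap\{|x-y|\ge\delta\}} \frac{|u(x)-u(y)|^p}{|x-y|^{n+\sigma p}}\,dx\,dy \le 2^p \int_{B_R} |u(x)|^p \left(\int_{\{|z|\ge\delta\}} \frac{dz}{|z|^{n+\sigma p}}\right) dx.
\]
The inner integral is $|\partial B_1| \int_\delta^\infty \rho^{-1-\sigma p}\,d\rho = \frac{|\partial B_1|}{\sigma p}\,\delta^{-\sigma p} = \frac{n|B_1|}{\sigma p}\delta^{-\sigma p}$; this gives essentially the stated constant $\tfrac{2^p|B_1|}{\sigma p}$ up to tracking the precise numerical factor (the paper's constant has $|B_1|$ rather than $n|B_1|$, so the symmetry/combinatorial bookkeeping of the $2^{p-1}$ factor versus the two cross terms should be done with a bit of care, but it is routine). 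The characteristic function $\chi_{(0,2R)}(\delta)$ appears because when $\delta \ge 2R$ the region $\{|x-y|\ge\delta\}\cap (B_R\times B_R)$ is empty (the diameter of $B_R$ is $2R$), so the far part simply vanishes and no $L^p$ term is needed.

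None of the steps presents a genuine obstacle; this is a standard interpolation-type estimate. The only point requiring mild attention is getting the numerical constant to match exactly the one in the statement — in particular deciding how to distribute the factor $2^{p-1}(|u(x)|^p+|u(y)|^p)$ across the double integral and invoking symmetry so that the final coefficient reads $2^p|B_1|/(\sigma p)$ and not, say, $2^p n |B_1|/(\sigma p)$; I would handle this by being slightly wasteful in one direction (e.g.\ bounding $n|B_1|$ differently or absorbing constants) if an exact match is not immediate, but in fact a careful accounting should yield it on the nose. The assumption $s \ge \sigma$ is used only in the near-diagonal estimate to ensure $|x-y|^{(s-\sigma)p}$ is bounded by $\delta^{(s-\sigma)p}$ there; if $s=\sigma$ the first term is just $[u]_{W^{s,p}(B_R)}^p$ and the inequality is trivial.
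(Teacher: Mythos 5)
Your proposal is correct and follows essentially the same route as the paper's own proof: split the Gagliardo seminorm into the near-diagonal region $\{|x-y|<\delta\}$, handled by writing $|x-y|^{-(n+\sigma p)}=|x-y|^{-(n+sp)}\,|x-y|^{(s-\sigma)p}\le\delta^{(s-\sigma)p}|x-y|^{-(n+sp)}$ (this is where $\sigma\le s$ is used), and the far region $\{|x-y|\ge\delta\}$, handled via $|u(x)-u(y)|^p\le 2^{p-1}(|u(x)|^p+|u(y)|^p)$, symmetry in $(x,y)$, and $\int_{\R^n\setminus B_\delta}|z|^{-n-\sigma p}\,dz=\frac{|\partial B_1|}{\sigma p}\delta^{-\sigma p}$; and the cutoff $\chi_{(0,2R)}(\delta)$ records that the far region is empty once $\delta\ge 2R=\diam(B_R)$. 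Your hesitation about the numerical constant is warranted, but it does not point to a gap in your argument: since $|\partial B_1|=n|B_1|$, the far-part estimate actually yields $\frac{2^p\,n\,|B_1|}{\sigma p}\delta^{-\sigma p}\|u\|_{L^p(B_R)}^p$, and the paper's own (identical) computation produces the same thing, so the factor $n$ appears to have been dropped in the stated constant; this is of no consequence for the applications, which only use the qualitative form of the inequality.
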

\begin{proof}
The result is a weighted version of, say,~\cite[Proposition~2.1]{DPV12} and the proof follows the same lines of that presented there. However, we report it here for the reader's convenience.

First of all, it is enough to deal with~$R = 1$, as a scaling argument readily shows. Fix~$\delta > 0$. On the one hand, we have
\begin{align*}
& \int_{B_1} \left[ \int_{B_1 \cap B_\delta(x)} \frac{|u(x) - u(y)|^p}{|x - y|^{n + \sigma p}} \, dy \right] dx \\
& \hspace{40pt} = \frac{1}{\delta^{n + \sigma p}} \int_{B_1} \left[ \int_{B_1 \cap B_\delta(x)} |u(x) - u(y)|^p \left( \frac{\delta}{|x - y|} \right)^{n + \sigma p} \, dy \right] dx \\
& \hspace{40pt} \le \frac{1}{\delta^{n + \sigma p}} \int_{B_1} \left[ \int_{B_1 \cap B_\delta(x)} |u(x) - u(y)|^p \left( \frac{\delta}{|x - y|} \right)^{n + s p} \, dy \right] dx \\
& \hspace{40pt} \le \delta^{(s - \sigma) p} \int_{B_1} \int_{B_1} \frac{|u(x) - u(y)|^p}{|x - y|^{n + s p}} \, dx dy.
\end{align*}
Of course, if~$\delta \ge 2$, we have already proved the claim. On the other hand, by Jensen's inequality
\begin{align*}
\int_{B_1} \left[ \int_{B_1 \setminus B_\delta(x)} \frac{|u(x) - u(y)|^p}{|x - y|^{n + \sigma p}} \, dy \right] dx & \le 2^p \int_{B_1} |u(x)|^p \left[ \int_{\R^n \setminus B_\delta(x)} \frac{dy}{|x - y|^{n + \sigma p}} \right] dx \\
& = \frac{2^p |B_1|}{\sigma p} \, \delta^{- \sigma p} \int_{B_1} |u(x)|^p \, dx.
\end{align*}
These two formulas lead to the desired estimate.
\end{proof}

In the following result we deal with Sobolev spaces having different orders of integrability and differentiability. Unlike in Lemma~\ref{sobinclem0}, this estimate involves only the Gagliardo seminorms of these spaces, and no Lebesgue norms. Moreover, the inequality is stated for more general quantities than the Gagliardo seminorms, allowing for slightly more freedom in the choice of the domains of integration. This small tweak is of some importance for a future application in Lemma~\ref{growthlem}.

\begin{lemma} \label{sobinclem}
Let~$n \in \N$,~$1 \le q < p$ and~$0 < \sigma < s < 1$. Let~$\Omega' \subseteq \Omega \subset \R^n$ be two bounded measurable sets. Then, for any~$u \in W^{s, p}(\Omega)$, it holds
$$
\left[ \int_{\Omega} \int_{\Omega'} \frac{|u(x) - u(y)|^q}{|x - y|^{n + \sigma q}} \, dx dy \right]^{\frac{1}{q}} \le C_0 |\Omega'|^{\frac{p - q}{p q}} \diam(\Omega)^{s - \sigma} \left[ \int_{\Omega} \int_{\Omega'} \frac{|u(x) - u(y)|^p}{|x - y|^{n + s p}} \, dx dy \right]^{\frac{1}{p}},
$$
with
$$
C_0 := \left[ \frac{n (p - q)}{(s - \sigma) p q} |B_1| \right]^{\frac{p - q}{p q}}.
$$
\end{lemma}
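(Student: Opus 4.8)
The strategy is to interpolate between the $L^q$- and $L^p$-type double integrals by splitting off the singular kernel appropriately and applying Hölder's inequality in the variable pair $(x,y)$ on the product domain $\Omega \times \Omega'$. The starting observation is that the exponent $n + \sigma q$ on the kernel is strictly smaller than $n + sp$ times the ratio $q/p$, so there is room to absorb the gap into an integrable power of $|x-y|$. Concretely, I would write
$$
\frac{|u(x) - u(y)|^q}{|x - y|^{n + \sigma q}} = \frac{|u(x) - u(y)|^q}{|x - y|^{(n + s p) q / p}} \cdot \frac{1}{|x - y|^{n + \sigma q - (n + s p) q / p}},
$$
and check that the exponent on the second factor equals $n(1 - q/p) - (s - \sigma)q = n(p-q)/p - (s-\sigma)q$, which is $<n$ precisely because $s > \sigma$; hence $|x-y|^{-[n(p-q)/p - (s-\sigma)q]}$ is locally integrable.

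Next I would apply Hölder's inequality on $\Omega \times \Omega'$ with exponents $p/q$ and $p/(p-q)$: the first factor, raised to the power $p/q$, gives exactly the $W^{s,p}$-type double integral over $\Omega \times \Omega'$; the second factor, raised to $p/(p-q)$, gives $\iint_{\Omega \times \Omega'} |x-y|^{-[n - (s-\sigma)pq/(p-q)]}\,dx\,dy$ — here I need to double-check the bookkeeping so that the resulting exponent on $|x-y|$ is still $<n$, which again follows from $s>\sigma$. I would then bound the $y$-integral of this remaining kernel by extending it to a ball of radius $\diam(\Omega)$ centered at $x$, using
$$
\int_{B_{\diam(\Omega)}(x)} \frac{dy}{|x-y|^{n - \beta}} = \frac{n |B_1|}{\beta}\,\diam(\Omega)^{\beta}
$$
with $\beta$ the appropriate positive number, and then integrate the constant in $x$ over $\Omega'$ to pick up the factor $|\Omega'|$. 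Collecting the powers and raising everything to $1/q$ produces $|\Omega'|^{(p-q)/(pq)}$, $\diam(\Omega)^{s-\sigma}$, and the stated constant $C_0$.

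The only genuinely delicate point is the arithmetic of the exponents: one must verify at two places that a power of $|x-y|$ is locally integrable (equivalently, that a certain exponent stays strictly below $n$), and these facts both reduce to $\sigma < s$, which is in the hypotheses. The rest is a routine Hölder estimate plus the elementary radial integral above; there is no functional-analytic obstacle, since no embedding theorem is invoked — only the pointwise factorization of the kernel and Fubini. I would also note that extending the inner integral from $\Omega'$ (or $\Omega$) to the ball $B_{\diam(\Omega)}(x)$ is legitimate because, for $x \in \Omega'$ and $y \in \Omega'$, one has $|x-y| \le \diam(\Omega)$, so $\Omega' \subseteq B_{\diam(\Omega)}(x)$ and the integrand is nonnegative.
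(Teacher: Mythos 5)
Your plan is correct and coincides with the paper's proof: factor the kernel so that one piece carries $|x-y|^{-(n+sp)q/p}$, apply H\"older with exponents $p/q$ and $p/(p-q)$ on $\Omega\times\Omega'$, and bound the remaining kernel integral by the radial integral over a ball of radius $\diam(\Omega)$, picking up $|\Omega'|$ from the other variable. The exponent bookkeeping you flag indeed works out, and the constants match $C_0$.
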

\begin{proof}
By H\"older's inequality, we have
\begin{align*}
& \int_{\Omega} \int_{\Omega'} \frac{|u(x) - u(y)|^q}{|x - y|^{n + \sigma q}} \, dx dy \\
& \hspace{50pt} = \int_{\Omega} \int_{\Omega'} \frac{|u(x) - u(y)|^q}{|x - y|^{\frac{q}{p} (n + s p)}} \frac{1}{|x - y|^{\frac{p - q}{p} n - (s - \sigma) q}} \, dx dy \\
& \hspace{50pt} \le \left( \int_{\Omega} \int_{\Omega'} \frac{|u(x) - u(y)|^p}{|x - y|^{n + s p}} \, dx dy \right)^{\frac{q}{p}} \left( \int_{\Omega} \int_{\Omega'} \frac{dx dy}{|x - y|^{n - \frac{(s - \sigma) p q}{p - q}}} \right)^{\frac{p - q}{p}}.
\end{align*}
Then, letting~$d := \diam(\Omega)$ and changing variables appropriately, we compute
\begin{align*}
\int_{\Omega} \int_{\Omega'} \frac{dx dy}{|x - y|^{n - \frac{(s - \sigma) p q}{p - q}}} \le \int_{\Omega'} \left( \int_{B_d} \frac{dz}{|z|^{n - \frac{(s - \sigma) p q}{p - q}}} \right) dx = \frac{n (p - q)}{(s - \sigma) p q} |B_1| |\Omega'| d^{\frac{(s - \sigma) p q}{p - q}},
\end{align*}
and the thesis follows.
\end{proof}

Next is a fractional Poincar\'e inequality for functions having fat zero level sets. The corresponding Poincar\'e-Wirtinger-type inequality for functions with vanishing integral mean is due to~\cite{BBM02,P04}. Notice that the dependence of the constant on the parameter~$s$ is explicit, at least when~$s$ is far from~$0$.

\begin{lemma} \label{poinine}
Let~$n \in \N$,~$p \ge 1$,~$0 < s_0 \le s < 1$ and~$R > 0$. Let~$u \in W^{s, p}(B_R)$ be such that~$u = 0$ a.e.~on a set~$\Omega_0 \subseteq B_R$, with~$|\Omega_0| \ge \gamma |B_R|$, for some~$\gamma \in (0, 1]$. Then,
$$
\int_{B_R} |u(x)|^p \, dx \le C_1 (1 - s) R^{s p} \int_{B_R} \int_{B_R} \frac{|u(x) - u(y)|^p}{|x - y|^{n + s p}} \, dx dy,
$$
for some constant~$C_1 \ge 1$ depending only on~$n$,~$s_0$,~$p$ and~$\gamma$.
\end{lemma}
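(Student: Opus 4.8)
The plan is to argue by contradiction using a compactness argument of Lions--Aubin type, which is the standard way to get a Poincaré inequality without tracking constants by hand, while still recovering the explicit $(1-s)$ dependence via the known asymptotics of Gagliardo seminorms. First I would reduce to $R = 1$ by scaling: both sides scale homogeneously in $R$ (the left by $R^n$, the right by $R^{sp} \cdot R^{n - sp} = R^n$ after the change of variables in the double integral), so the constant $C_1$ is independent of $R$. So it suffices to prove
$$
\int_{B_1} |u(x)|^p \, dx \le C_1 (1-s) [u]_{W^{s,p}(B_1)}^p
$$
for all $u \in W^{s,p}(B_1)$ vanishing on a set $\Omega_0 \subseteq B_1$ with $|\Omega_0| \ge \gamma |B_1|$.

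The main step: suppose the inequality fails. Then for each $s \in [s_0, 1)$ — actually, since we want a single constant uniform in $s$, I would set up the contradiction allowing $s = s_j \to \bar s \in [s_0, 1]$ simultaneously with functions $u_j \in W^{s_j, p}(B_1)$, normalized so that $\|u_j\|_{L^p(B_1)} = 1$ while $(1 - s_j)[u_j]_{W^{s_j, p}(B_1)}^p \to 0$, and with $u_j = 0$ on sets $\Omega_j$ of measure $\ge \gamma |B_1|$. The quantity $(1-s_j)[u_j]_{W^{s_j,p}(B_1)}^p$ is, by the results of \cite{BBM01,BBM02} (and its monotonicity-type variants), a uniformly bounded control on the $W^{\sigma, p}(B_1)$ seminorm for any fixed $\sigma < s_0$: concretely, combining Lemma~\ref{sobinclem0} (with the roles $\sigma < s_0 \le s_j$) one gets $[u_j]_{W^{\sigma, p}(B_1)}^p \le C(\delta^{(s_j - \sigma)p}[u_j]_{W^{s_j,p}}^p + \delta^{-\sigma p}\|u_j\|_{L^p}^p)$, which stays bounded. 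Hence $\{u_j\}$ is bounded in $W^{\sigma, p}(B_1)$; by the compact embedding $W^{\sigma, p}(B_1) \hookrightarrow\hookrightarrow L^p(B_1)$ (Rellich--Kondrachov for fractional spaces, see \cite{DPV12}), a subsequence converges strongly in $L^p(B_1)$ and a.e. to some $u_\infty$ with $\|u_\infty\|_{L^p(B_1)} = 1$. Then I would show $u_\infty$ is constant. If $\bar s < 1$, lower semicontinuity of the Gagliardo seminorm under a.e. convergence (Fatou) gives $[u_\infty]_{W^{\bar s, p}(B_1)} \le \liminf [u_j]_{W^{\bar s, p}(B_1)}$; but $(1-s_j)[u_j]_{W^{s_j,p}}^p \to 0$ with $1 - s_j \to 1 - \bar s > 0$ forces $[u_j]_{W^{s_j,p}} \to 0$, and then (using $s_j \ge s_0$ and a Fatou argument at a fixed exponent $\sigma \le s_0$, or directly along the convergent $s_j$) one deduces $[u_\infty]_{W^{\sigma, p}(B_1)} = 0$, so $u_\infty$ is a.e.\ constant on the connected set $B_1$. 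If $\bar s = 1$, the Bourgain--Brezis--Mironescu theorem \cite{BBM01} gives $\liminf_j (1-s_j)[u_j]_{W^{s_j,p}(B_1)}^p \ge c(n,p) \|\nabla u_\infty\|_{L^p(B_1)}^p$ whenever $u_j \to u_\infty$ in $L^p$ (in the form of the lower-bound half of the $\Gamma$-convergence of these functionals), so the vanishing of the left side forces $\nabla u_\infty = 0$, i.e.\ $u_\infty$ constant again. Either way $u_\infty \equiv c$ with $|c|^p |B_1| = 1$, so $c \ne 0$. But $u_j \to u_\infty$ a.e.\ and each $u_j$ vanishes on $\Omega_j$ with $|\Omega_j| \ge \gamma|B_1|$; passing to a further subsequence along which $\chi_{\Omega_j} \rightharpoonup w$ weakly-$*$ in $L^\infty$ with $\int_{B_1} w \ge \gamma |B_1|$, and testing the identity $u_j \chi_{\Omega_j} = 0$ against a bounded test function (using strong $L^p$ convergence of $u_j$ times weak-$*$ convergence of $\chi_{\Omega_j}$), one gets $c \, w = 0$ a.e., hence $w \equiv 0$, contradicting $\int_{B_1} w \ge \gamma|B_1| > 0$. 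This contradiction proves the inequality with some $C_1 = C_1(n, s_0, p, \gamma) \ge 1$.

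The hard part will be making the two limiting regimes ($\bar s < 1$ versus $\bar s = 1$) uniform under a single contradiction argument, so that the resulting $C_1$ genuinely does not degenerate as $s \to 1^-$; this is exactly where the hypothesis $s \ge s_0 > 0$ and the $(1-s)$-normalization both enter, and where one must invoke the correct form of the BBM limit (the lower bound, which does not require strong convergence of the seminorms — only $L^p$ convergence of the functions — and hence is compatible with the compactness we have). An alternative, more self-contained route avoiding \cite{BBM01} at $\bar s = 1$ would be to prove the inequality only for $s$ bounded away from $1$ by the compactness argument above, and separately for $s$ close to $1$ by comparison with the classical ($s=1$) Poincaré inequality via Lemma~\ref{sobinclem0} and the elementary bound $[u]_{W^{s,p}(B_1)}^p \le C(n,p)(1-s)^{-1}\|\nabla u\|_{L^p(B_1)}^p$ for $u \in W^{1,p}$; but since the statement is phrased for general $u \in W^{s,p}(B_1)$, the compactness argument as above is the cleaner path.
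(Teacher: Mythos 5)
Your proposal takes a genuinely different route from the paper's. The paper's proof is very short: transfer the ball $B_1$ to the cube $Q_1$ by a bi-Lipschitz change of variables (tracking the effect on the $L^p$ norm, the Gagliardo seminorm, and the measure of the zero set), invoke Ponce's fractional Poincar\'e--Wirtinger inequality \cite[Corollary~2.1]{P04} as a black box (which already carries the explicit $(1-s)$ normalization and uniformity for $s \ge s_0$), and finally use that a zero set of measure $\ge \gamma$ controls the mean value. You instead re-derive essentially the entire content of that cited corollary from scratch via a compactness/contradiction argument, which is the standard way such inequalities are proved and is broadly sound; the paper simply delegates this step to the literature.

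There is, however, a genuine gap in the compactness step when the contradicting sequence has $s_j \to 1^-$. You claim that $(1-s_j)[u_j]_{W^{s_j,p}(B_1)}^p + \|u_j\|_{L^p(B_1)}^p$ bounded implies $\{u_j\}$ bounded in $W^{\sigma,p}(B_1)$ for some fixed $\sigma < s_0$ via Lemma~\ref{sobinclem0}; this is false uniformly in $s_j$. Lemma~\ref{sobinclem0} yields
$$
[u_j]_{W^{\sigma,p}(B_1)}^p \;\le\; \delta^{(s_j-\sigma)p}\,[u_j]_{W^{s_j,p}(B_1)}^p + C\,\delta^{-\sigma p}\,\|u_j\|_{L^p(B_1)}^p \;\lesssim\; \frac{\delta^{(s_j-\sigma)p}}{1-s_j} + \delta^{-\sigma p},
$$
and no choice of $\delta = \delta_j$ keeps both terms bounded as $s_j \to 1^-$ (the optimal $\delta$ gives a bound of order $(1-s_j)^{-\sigma/(s_j-\sigma)}$). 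The precompactness of $\{u_j\}$ in $L^p(B_1)$ in this regime is nevertheless true, but it is precisely the content of the Bourgain--Brezis--Mironescu/Ponce compactness theorem (\cite[Theorem~4]{BBM01}, \cite[Theorem~1.2]{P04}) --- which is \emph{not} a consequence of boundedness in a fixed fractional Sobolev space, and which the paper itself invokes directly in the proof of Proposition~\ref{sDGlemprop}. You should cite it as such rather than derive it from Lemma~\ref{sobinclem0}. A secondary, smaller imprecision: to conclude $u_\infty$ is constant when $\bar s < 1$, use Fatou with the $s_j$-varying kernel, namely
$$
[u_\infty]_{W^{\bar s, p}(B_1)}^p \;\le\; \liminf_{j\to\infty}\, [u_j]_{W^{s_j,p}(B_1)}^p \;=\;0,
$$
which follows since $(1-s_j)[u_j]_{W^{s_j,p}}^p \to 0$ and $1-s_j \to 1-\bar s > 0$; lower semicontinuity at a single fixed exponent $\sigma \le s_0$ does not directly give the vanishing because the $L^p$ contribution to $[u_j]_{W^{\sigma,p}}^p$ does not decay. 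Once these two points are patched by the appropriate references, the final step (letting $j\to\infty$ in $u_j\chi_{\Omega_j}=0$ using strong $L^p$ convergence of $u_j$ against a weak-$*$ limit of $\chi_{\Omega_j}$, and contradicting $\int w \ge \gamma|B_1| > 0$) is correct, and the argument closes.
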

\begin{proof}
First, we restrict ourselves to take~$R = 1$, as the general estimate follows then by scaling. Moreover, we may as well consider the unit cube~$Q_1 = (-1/2, 1/2)^n$ instead of the ball~$B_1$, by applying a suitable bi-Lipschitz diffeomorphism~$T: \overline{Q}_1 \to \overline{B}_1$. More precisely, if~$v := u \circ T$, then~$v \in W^{s, p}(Q_1)$, with
\begin{equation} \label{Cstar1}
\begin{aligned}
C_\star^{-1} \| u \|_{L^p(B_1)}^p & \le \| v \|_{L^p(Q_1)}^p \le C_\star \| u \|_{L^p(B_1)}^p, \\
C_\star^{-1} [ u ]_{W^{s, p}(B_1)}^p & \le [ v ]_{W^{s, p}(Q_1)}^p \le C_\star [ u ]_{W^{s, p}(B_1)}^p,
\end{aligned}
\end{equation}
and
\begin{equation} \label{Cstar2}
\left| \{ v = 0 \} \cap Q_1 \right| \ge \frac{\gamma}{C_\star} |Q_1|,
\end{equation}
for some dimensional constant~$C_\star \ge 1$.

Applying for instance~\cite[Corollary~2.1]{P04}, we know that there is a constant~$C_\sharp \ge 1$, depending only on~$n$,~$s_0$ and~$p$, such that
\begin{equation} \label{MSpoinine}
\| v - v_{Q_1} \|_{L^p(Q_1)}^p \le C_\sharp (1 - s) \, [v]_{W^{s, p}(Q_1)}^p,
\end{equation}
where
$$
v_{Q_1} := \dashint_{Q_1} v(x) \, dx.
$$
But, by~\eqref{Cstar2},
$$
\| v - v_{Q_1} \|_{L^p(Q_1)}^p \ge \left| \{ v = 0 \} \cap Q_1 \right| |v_Q|^p \ge \frac{\gamma}{C_\star} |v_{Q_1}|^p,
$$
and hence
$$
\| v \|_{L^p(Q_1)} \le \| v - v_{Q_1} \|_{L^p(Q_1)} + |v_{Q_1}| \le \left[ 1 + \left( \frac{C_\star}{\gamma} \right)^{1/p} \right] \| v - v_{Q_1} \|_{L^p(Q_1)}.
$$
This,~\eqref{MSpoinine} and~\eqref{Cstar1} yield the thesis.
\end{proof}

We stress that a Poincar\'e-type inequality of this kind can be obtained with a simpler and more direct computation in the spirit of formula~(4.2) in~\cite{M03}. However, this strategy does not seem to yield a constant with the needed dependence on~$s$.

We now have a couple of fractional Sobolev inequalities in balls. To deduce them, we report here below an analogous result by~\cite{BBM02,MS02}, set in the whole Euclidean space.

\begin{lemma}[{\cite[Theorem~1]{MS02}}] \label{sobinelem}
Let~$n \in \N$,~$p \ge 1$ and~$s \in (0, 1)$ be such that~$n > s p$. Then, for any~$W^{s, p}(\R^n)$, it holds
$$
\left( \int_{\R^n} |u(x)|^{p^*_s} \, dx \right)^{\frac{p}{p^*_s}} \le C_2 \frac{s (1 - s)}{(n - s p)^{p - 1}} \int_{\R^n} \int_{\R^n} \frac{|u(x) - u(y)|^p}{|x - y|^{n + s p}} \, dx dy,
$$
for some constant~$C_2 \ge 1$ depending only on~$n$ and~$p$.
\end{lemma}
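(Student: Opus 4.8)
The plan is to obtain the inequality in three steps: a pointwise bound for $u$ in terms of a fractional gradient and the Hardy--Littlewood maximal function, the deduction from it of a weak-type (Marcinkiewicz) version of the embedding, and the promotion of that weak estimate to the stated strong one by Maz'ya's truncation method. By a routine approximation it is enough to argue for $u$ bounded with compact support (or, equivalently, to run the whole scheme on the truncations of $u$ introduced below, which avoids assuming $u\in L^{p^*_s}$ a priori).

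First I would put $G(x):=\int_{\R^n}\frac{|u(x)-u(y)|^p}{|x-y|^{n+sp}}\,dy$, so that $\|G\|_{L^1(\R^n)}=[u]_{W^{s,p}(\R^n)}^p$, and use the telescoping identity
\[
u(x)=\dashint_{B_\rho(x)}u+\sum_{k\ge 0}\left(\dashint_{B_{2^{-k-1}\rho}(x)}u-\dashint_{B_{2^{-k}\rho}(x)}u\right),
\]
valid at Lebesgue points, together with the analogous expansion $\dashint_{B_\rho(x)}u=\sum_{k\ge 0}\bigl(\dashint_{B_{2^{k}\rho}(x)}u-\dashint_{B_{2^{k+1}\rho}(x)}u\bigr)$, which converges since $\dashint_{B_r(x)}u\to 0$ as $r\to+\infty$. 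Bounding each increment by a double average and using H\"older's inequality, one gets, for all $r>0$,
\[
\left|\dashint_{B_r(x)}u-\dashint_{B_{2r}(x)}u\right|\le C\dashint_{B_{2r}(x)}\dashint_{B_{2r}(x)}|u(y)-u(z)|\,dy\,dz\le C\,r^{s-\frac{n}{p}}\Bigl(\int_{B_{2r}(x)}G\Bigr)^{\frac{1}{p}},
\]
with $C=C(n,p)$. For the shrinking scales one then uses $\int_{B_{2r}(x)}G\le C\,r^n\,M(G)(x)$ ($M$ being the Hardy--Littlewood maximal operator), for the growing ones simply $\int_{B_{2r}(x)}G\le[u]_{W^{s,p}(\R^n)}^p$; summing the two geometric series $\sum_k 2^{-ks}$ and $\sum_k 2^{-k(\frac{n}{p}-s)}$ yields
\[
|u(x)|\le C\Bigl[\rho^{s}\,(M(G)(x))^{\frac{1}{p}}+\rho^{s-\frac{n}{p}}\,[u]_{W^{s,p}(\R^n)}\Bigr],\qquad\rho>0,
\]
and optimizing over $\rho$ gives $|u(x)|\le C\,[u]_{W^{s,p}(\R^n)}^{\,sp/n}\,(M(G)(x))^{1/p^*_s}$. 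Raising to the power $p^*_s$, invoking the weak-$(1,1)$ bound for $M$ and $\|G\|_{L^1}=[u]_{W^{s,p}}^p$, and using $p+\tfrac{sp}{n}p^*_s=p^*_s$, one reads off the Marcinkiewicz estimate $\lambda\,|\{|u|>\lambda\}|^{1/p^*_s}\le C\,[u]_{W^{s,p}(\R^n)}$ for every $\lambda>0$.

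To pass to the strong norm I would apply this to the dyadic truncations $u_j:=\min\{(|u|-2^{j})_+,\,2^{j}\}$, $j\in\Z$. As $\{|u|\ge 2^{j+1}\}\subseteq\{u_j=2^j\}$, the weak-type bound controls $2^{j}\,|\{|u|\ge 2^{j+1}\}|^{1/p^*_s}$ by $[u_j]_{W^{s,p}(\R^n)}$, so $\|u\|_{L^{p^*_s}(\R^n)}^{p^*_s}\le C\sum_j 2^{jp^*_s}|\{2^{j}\le|u|<2^{j+1}\}|\le C\sum_j[u_j]_{W^{s,p}}^{p^*_s}$. The point is the exact superadditivity $\sum_{j\in\Z}[u_j]_{W^{s,p}(\R^n)}^p\le[\,|u|\,]_{W^{s,p}(\R^n)}^p\le[u]_{W^{s,p}(\R^n)}^p$, which holds because $\min\{(t-2^j)_+,2^j\}$, as a function of $t\ge 0$, is monotone and satisfies $\sum_{j\in\Z}\min\{(t-2^j)_+,2^j\}=t$, so that $\sum_j|u_j(x)-u_j(y)|^p\le\bigl|\,|u(x)|-|u(y)|\,\bigr|^p\le|u(x)-u(y)|^p$ pointwise, and one then integrates against $|x-y|^{-n-sp}$; combining this with $\ell^p\hookrightarrow\ell^{p^*_s}$ gives $\sum_j[u_j]_{W^{s,p}}^{p^*_s}\le\bigl(\sum_j[u_j]_{W^{s,p}}^p\bigr)^{p^*_s/p}\le C\,[u]_{W^{s,p}(\R^n)}^{p^*_s}$, which is the claim up to the value of the constant.

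The main obstacle is precisely this last point: one must check that all the constants accumulated above --- from the two geometric series, the optimization in $\rho$, the weak-$(1,1)$ constant of $M$, and the truncation step --- combine into exactly $C_2\,\dfrac{s(1-s)}{(n-sp)^{p-1}}$ with $C_2=C_2(n,p)$. The factor $(n-sp)^{-(p-1)}$ reflects the degeneration of the embedding as $sp\uparrow n$ and comes from the growing-scales series $\sum_k 2^{-k(\frac{n}{p}-s)}\sim(n-sp)^{-1}$; the factors $s$ and $1-s$ --- which make the right-hand side vanish as $s\downarrow 0$ (consistently with $s\,[u]_{W^{s,p}}^p\to\|u\|_{L^p}^p$) and as $s\uparrow 1$ (consistently with $[u]_{W^{s,p}}^p\sim(1-s)^{-1}\|\nabla u\|_{L^p}^p$) --- are subtler and require the sharper bookkeeping carried out in \cite{MS02}. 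A conceptually transparent way to see the $s(1-s)$ behaviour is to first settle the case $p=1$ via the fractional coarea formula $[u]_{W^{s,1}(\R^n)}=2\int_0^\infty P_s(\{|u|>t\})\,dt$, the fractional isoperimetric inequality $|E|^{(n-s)/n}\le C_{\mathrm{iso}}(n,s)\,P_s(E)$ --- whose sharp constant, attained on balls, satisfies $C_{\mathrm{iso}}(n,s)\sim s(1-s)$ --- and Minkowski's integral inequality applied to the layer-cake formula for $\|u\|_{L^{n/(n-s)}(\R^n)}$, and then to bootstrap to general $p$ by running the $p=1$ inequality for $|u|^{\gamma}$ with $\gamma=\tfrac{p(n-s)}{n-sp}$; the latter step, however, requires estimating $[\,|u|^{\gamma}\,]_{W^{s,1}(\R^n)}$ in terms of $[u]_{W^{s,p}(\R^n)}\,\|u\|_{L^{p^*_s}(\R^n)}^{\gamma-1}$, an inequality that cannot be obtained by a global H\"older argument (the residual kernel $|x-y|^{-n}$ is not integrable) and must again be localized, thereby bringing back the truncation/maximal-function machinery.
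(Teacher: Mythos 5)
The paper does not prove this lemma: it is stated as a direct citation to \cite[Theorem~1]{MS02}, and its content is precisely the explicit form of the constant $C_2\,s(1-s)/(n-sp)^{p-1}$. Your Hedberg-type pointwise estimate combined with Maz'ya's truncation method is a valid and standard route to the \emph{qualitative} embedding $W^{s,p}(\R^n)\hookrightarrow L^{p^*_s}(\R^n)$, and the superadditivity identity $\sum_{j\in\Z}\min\{(t-2^j)_+,2^j\}=t$, combined with $\ell^p\hookrightarrow\ell^{p^*_s}$, is exactly the right mechanism to convert the weak-type bound into the strong one. That part of the argument is correct.

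The gap is the constant, and it is not a bookkeeping issue that sharper tracking can fix. The shrinking-scale geometric series $\sum_{k\ge0}2^{-ks}=(1-2^{-s})^{-1}\sim 1/s$ contributes a factor $A\sim 1/s$ in front of the $\rho^s(M(G))^{1/p}$ term, and the growing-scale series contributes $B\sim p/(n-sp)$ in front of $\rho^{s-n/p}[u]_{W^{s,p}(\R^n)}$; optimizing in $\rho$ yields $|u(x)|\lesssim A^{(n-sp)/n}B^{sp/n}[u]_{W^{s,p}(\R^n)}^{sp/n}(M(G)(x))^{1/p^*_s}$, so after the weak-type and truncation steps the final constant behaves like $A^{p(n-sp)/n}B^{sp^2/n}\sim s^{-p}$ as $s\to 0$, whereas the claimed constant vanishes like $s$. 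The discrepancy of $s^{p+1}$ is structural: $G(x)=\int_{\R^n}|u(x)-u(y)|^p|x-y|^{-n-sp}\,dy$ already diverges like $1/s$ through its far-field tail, so $M(G)$ cannot control $u$ uniformly down to $s=0$. Symmetrically, nothing in the scheme produces the factor $(1-s)$ required for the inequality to reduce to the classical Sobolev inequality as $s\to1^-$. These factors are not cosmetic here: the paper uses them, e.g.\ in the proofs of Corollaries~\ref{nullsobcor} and~\ref{sobinecor} (where the $s$ cancels against the $1/(sp)$ from the extension/boundary term) and, downstream, in the $s$-uniform bound of Proposition~\ref{ulocboundprop}. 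Your alternative route --- $p=1$ via coarea plus the fractional isoperimetric inequality with constant $\sim s(1-s)$, then bootstrap --- does identify the correct mechanism behind the sharp constant, but, as you yourself note, the step $[\,|u|^{\gamma}\,]_{W^{s,1}(\R^n)}\lesssim[u]_{W^{s,p}(\R^n)}\,\|u\|_{L^{p^*_s}(\R^n)}^{\gamma-1}$ is left unproved and cannot be obtained by a global H\"older argument. So the qualitative embedding is proved, but the lemma as stated --- with the specific constant, which is its entire point --- is not, and the maximal-function route as formulated cannot be sharpened to give it.
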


We recall that~$p^*_s$ denotes the fractional Sobolev exponent defined in~\eqref{p*sdef}.

As a first corollary of Lemmas~\ref{poinine} and~\ref{sobinelem}, we deduce the following homogeneous fractional Sobolev inequality in a ball.

\begin{corollary} \label{nullsobcor}
Let~$n \in \N$,~$p \ge 1$ and~$0 < s_0 \le s < 1$ be such that~$n > s p$. Let~$u \in W^{s, p}_0(B_R)$ and suppose that~$u = 0$ on a set~$\Omega_0 \subseteq B_R$ with~$|\Omega_0| \ge \gamma |B_R|$, for some~$\gamma \in (0, 1]$. Then,
$$
\left( \int_{B_R} |u(x)|^{p^*_s} \, dx \right)^{\frac{p}{p^*_s}} \le C_3 \frac{1 - s}{(n - s p)^{p - 1}} \int_{B_R} \int_{B_R} \frac{|u(x) - u(y)|^p}{|x - y|^{n + s p}} \, dx dy,
$$
for some constant~$C_3 \ge 1$ depending only on~$n$,~$s_0$,~$p$ and~$\gamma$.
\end{corollary}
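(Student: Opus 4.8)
The goal is to bootstrap from the whole-space Sobolev inequality of Lemma~\ref{sobinelem} to one localized on the ball $B_R$, using that $u$ vanishes on a fat set to absorb the lower-order term. Since all the inequalities involved are scale-invariant in the right way, I would first reduce to the case $R = 1$ by a routine rescaling, and then work on $B_1$.

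\textbf{Strategy.} The function $u$ lies in $W^{s,p}_0(B_1)$, so its trivial extension by zero---call it $\widetilde u$---belongs to $W^{s,p}(\R^n)$. The plan is to apply Lemma~\ref{sobinelem} to $\widetilde u$:
$$
\left( \int_{\R^n} |\widetilde u|^{p^*_s} \right)^{\frac{p}{p^*_s}} \le C_2 \frac{s(1-s)}{(n - sp)^{p-1}} \iint_{\R^n \times \R^n} \frac{|\widetilde u(x) - \widetilde u(y)|^p}{|x - y|^{n + sp}} \, dx \, dy.
$$
The left-hand side is exactly $\| u \|_{L^{p^*_s}(B_1)}^p$ since $\widetilde u$ is supported in $B_1$. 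The right-hand side must now be split: the contribution from $(x,y) \in B_1 \times B_1$ is the seminorm $[u]_{W^{s,p}(B_1)}^p$ we want, while the crossed contribution, where one variable is inside $B_1$ and the other is outside, needs to be controlled. On that crossed region $\widetilde u$ vanishes at one of the two points, so the integrand is $|u(x)|^p / |x - y|^{n+sp}$ with $x \in B_1$, $y \notin B_1$; integrating in $y$ over $\R^n \setminus B_1$ gives a factor comparable to $\operatorname{dist}(x, \partial B_1)^{-sp}$, which is generally \emph{not} integrable against $|u(x)|^p$ near $\partial B_1$. So a naive split fails, and this is where vanishing on a fat set must be used together with a Poincar\'e inequality.

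\textbf{Key steps.} The fix is to not split directly but to first replace $\| u \|_{L^{p^*_s}(B_1)}$ by an estimate that already has the seminorm on $B_1$ built in, via Lemma~\ref{poinine}. Concretely: apply the whole-space inequality to $\widetilde u$ to get $\| u \|_{L^{p^*_s}(B_1)}^p$ bounded by $C$ times $[\widetilde u]_{W^{s,p}(\R^n)}^p$; then observe that because $\widetilde u = 0$ outside $B_1$ and $u = 0$ on $\Omega_0 \subseteq B_1$ with $|\Omega_0| \ge \gamma|B_1|$, we may compare $[\widetilde u]_{W^{s,p}(\R^n)}^p$ with $[u]_{W^{s,p}(B_1)}^p$ plus a term controlled by $\| u \|_{L^p(B_1)}^p$, and the latter is in turn absorbed by Lemma~\ref{poinine} into $(1-s)[u]_{W^{s,p}(B_1)}^p$. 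To estimate the crossed term $\iint_{B_1 \times (\R^n \setminus B_1)} |u(x)|^p |x-y|^{-n-sp}\,dx\,dy$ without a divergent weight, I would enlarge the outer integration: since $u$ is supported in $B_1$ and $\Omega_0$ occupies a fixed fraction, one can fold the crossed term back into a double integral over $B_1 \times B_1$ against $\Omega_0$, using that for $y \in \Omega_0$ one has $u(y) = 0$, so $|u(x)|^p = |u(x) - u(y)|^p$, and $|x-y|$ is bounded above by $\operatorname{diam} B_1 = 2$; this yields
$$
\int_{B_1} |u(x)|^p \, dx \;=\; \frac{1}{|\Omega_0|} \int_{\Omega_0} \int_{B_1} |u(x) - u(y)|^p \, dx \, dy \;\le\; \frac{2^{n+sp}}{\gamma |B_1|} \iint_{B_1 \times B_1} \frac{|u(x)-u(y)|^p}{|x-y|^{n+sp}} \, dx \, dy,
$$
which is a clean Poincar\'e-type bound (indeed this reproves Lemma~\ref{poinine} in a weaker, $s$-nonexplicit form, so one should instead just cite Lemma~\ref{poinine} to keep the constant's dependence on $s$ correct, and only use the folding trick to dispatch the crossed tail term). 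Finally I would assemble: $\| u \|_{L^{p^*_s}(B_1)}^p \le C_2 \frac{s(1-s)}{(n-sp)^{p-1}}\big( [u]_{W^{s,p}(B_1)}^p + (\text{crossed term})\big)$, bound the crossed term by $C(n,\gamma) \| u \|_{L^p(B_1)}^p$, apply Lemma~\ref{poinine} to get $\| u \|_{L^p(B_1)}^p \le C_1 (1-s) [u]_{W^{s,p}(B_1)}^p$, use $(1-s) \le 1$ and $s \le 1$ to simplify, undo the scaling, and collect all constants into $C_3 = C_3(n, s_0, p, \gamma)$. The dependence on $s_0$ enters only through $C_1$ from Lemma~\ref{poinine}.

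\textbf{Main obstacle.} The one genuinely delicate point is the crossed-tail term: the weight $\operatorname{dist}(x, \partial B_1)^{-sp}$ is not integrable against an arbitrary $L^p(B_1)$ function, so one cannot simply discard the exterior interaction. The resolution above---reinterpreting the exterior integral via the fat zero set $\Omega_0$ and the bounded diameter of $B_1$, so that the singular weight is replaced by the harmless upper bound $|x-y|^{-n-sp} \le \operatorname{const}$ times what appears in the double integral over $B_1^2$---is what makes the argument go through, and it is essential that $u$ (not merely $\widetilde u$) vanishes on a set of positive proportion inside $B_1$. Everything else is bookkeeping of constants and a scaling reduction.
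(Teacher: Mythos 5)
Your plan—extend to $\R^n$, apply Lemma~\ref{sobinelem}, and absorb the lower-order term via Lemma~\ref{poinine} using the fat zero set—is the right architecture, and it matches the paper's. But the choice of extension is where your argument breaks, and you have correctly identified the problem without actually solving it. The crossed contribution to $[\widetilde u]_{W^{s,p}(\R^n)}^p$ coming from the zero extension is
\[
2\int_{B_1}|u(x)|^p\left(\int_{\R^n\setminus B_1}\frac{dy}{|x-y|^{n+sp}}\right)dx,
\]
and the inner integral is comparable to $\dist(x,\partial B_1)^{-sp}$; this is a \emph{weighted} $L^p$ quantity that is not bounded by $\|u\|_{L^p(B_1)}^p$ for a general $u\in W^{s,p}(B_1)$ (nor even in $W^{s,p}_0$ in the closure-of-$C_c^\infty$ sense when $sp\le1$). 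Your ``folding trick'' — writing $|u(x)|^p=|u(x)-u(z)|^p$ for $z\in\Omega_0$ and averaging — only produces the Poincar\'e inequality $\|u\|_{L^p(B_1)}^p\le C\,[u]_{W^{s,p}(B_1)}^p$. It never addresses the singular weight in the crossed term, so the claim that it ``dispatches the crossed tail term'' is simply unjustified: you end up needing a Hardy-type inequality, which fails in the regime $sp\le1$ and, where it holds, does not come from averaging over $\Omega_0$. Note also that this corollary is applied later (in the proof of Lemma~\ref{growthlem}) to truncations $(u-k)_-$ restricted to a ball, which do \emph{not} vanish near the boundary, so a proof cannot rely on any controlled boundary decay.

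The paper sidesteps the issue entirely by \emph{not} extending by zero. Instead it invokes the Whitney-type extension operator of~\cite[Theorem~5.4]{DPV12} and the quantitative inequality~\eqref{extine},
\[
[\tilde u]_{W^{s,p}(\R^n)}^p\le C\left([u]_{W^{s,p}(B_1)}^p+\frac{\|u\|_{L^p(B_1)}^p}{s(1-s)}\right),
\]
which holds because the Whitney extension reflects and smooths across $\partial B_1$ rather than jumping. Combined with $\|u\|_{L^{p^*_s}(B_1)}\le\|\tilde u\|_{L^{p^*_s}(\R^n)}$, Lemma~\ref{sobinelem} applied to $\tilde u$, and then Lemma~\ref{poinine} to absorb the $L^p$ term, the constant has exactly the claimed dependence. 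If you wish to salvage your route, you should replace the zero extension by this bounded extension; everything else in your write-up then goes through.
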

\begin{proof}
Of course, we can reduce to the case~$R = 1$, as the inequality is scaling invariant. In view of~\cite[Theorem~5.4]{DPV12}, we know that there exists a function~$\tilde{u} \in W^{s, p}(\R^n)$ such that~$\tilde{u}|_{B_1} = u$ and~$\| \tilde{u} \|_{W^{s, p}(\R^n)} \le C' \| u \|_{W^{s, p}(B_1)}$, for some constant~$C' \ge 1$. A careful inspection of the several estimates leading to the proof of this result shows that, actually, one has
\begin{equation} \label{extine}
[ \tilde{u} ]_{W^{s, p}(\R^n)}^p \le C \left( [u]_{W^{s, p}(B_1)}^p + \frac{\| u \|_{L^p(B_1)}^p}{s(1 - s)} \right),
\end{equation}
with~$C \ge 1$ depending only on~$n$ and~$p$. The conclusion of the corollary now follows by applying this with Lemmas~\ref{poinine} and~\ref{sobinelem}.
\end{proof}

When the zero level set of a function does not occupy a region as large as a fraction of the ball, but, instead, the function is supported well inside of that ball, we can still take advantage of Lemma~\ref{sobinelem} to get the following estimate.

\begin{corollary} \label{sobinecor}
Let~$n \in \N$,~$p \ge 1$ and~$s \in (0, 1)$ be such that~$n > s p$. Let~$u \in W^{s, p}_0(B_R)$ be such that~$\supp(u) \subseteq B_r$, with~$0 < r < R$. Then,
\begin{multline*}
\left( \int_{B_R} |u(x)|^{p^*_s} \, dx \right)^{\frac{p}{p^*_s}} \\
\le C_4 \frac{1 - s}{(n - s p)^{p - 1}} \left[ \int_{B_R} \int_{B_R} \frac{|u(x) - u(y)|^p}{|x - y|^{n + s p}} \, dx dy + \frac{1}{(R - r)^{s p}} \int_{B_r} |u(x)|^p dx \right],
\end{multline*}
for some constant~$C_4 \ge 1$ depending only on~$n$ and~$p$.
\end{corollary}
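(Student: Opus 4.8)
The plan is to deduce the estimate from the global fractional Sobolev inequality of Lemma~\ref{sobinelem}, applied to the zero extension of~$u$. First I would use scaling to reduce to the case~$R = 1$: both sides of the claimed inequality are homogeneous of degree~$n - sp$ under the rescaling~$u \mapsto u(\cdot / R)$ (together with~$r \mapsto r/R$, so that~$R - r$ becomes~$R(1 - r/R)$), and hence it suffices to treat unit balls.

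Set~$\tilde u := u$ on~$B_1$ and~$\tilde u := 0$ on~$\R^n \setminus B_1$. Since~$\supp(u) \subseteq B_r$ with~$r < 1$, I expect~$\tilde u \in W^{s,p}(\R^n)$, and the key point is to control its Gagliardo seminorm over all of~$\R^n$. To this end I would split~$\R^{2n}$ into the four blocks determined by~$B_1$ and its complement. The block~$B_1 \times B_1$ contributes exactly~$[u]_{W^{s,p}(B_1)}^p$; the block~$(\R^n \setminus B_1)^2$ contributes nothing, as~$\tilde u$ vanishes on both factors; and the two mixed blocks, equal by symmetry, contribute
$$
2 \iint_{B_1 \times (\R^n \setminus B_1)} \frac{|u(x)|^p}{|x - y|^{n + s p}} \, dx \, dy = 2 \int_{B_r} |u(x)|^p \left( \int_{\R^n \setminus B_1} \frac{dy}{|x - y|^{n + s p}} \right) dx,
$$
where I used that~$\tilde u = 0$ outside~$B_1$ and that~$u$ is supported in~$B_r$. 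For~$x \in B_r$ one has~$\{ |y| > 1 \} \subseteq \{ |x - y| > 1 - r \}$, so the inner integral is at most~$\int_{\R^n \setminus B_{1 - r}} |z|^{-n - s p} \, dz = \frac{n |B_1|}{s p} (1 - r)^{- s p}$. This both shows~$\tilde u \in W^{s,p}(\R^n)$ and bounds~$[\tilde u]_{W^{s,p}(\R^n)}^p$ by~$[u]_{W^{s,p}(B_1)}^p + \frac{2 n |B_1|}{s p} (1 - r)^{-s p} \| u \|_{L^p(B_r)}^p$.

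I would then invoke Lemma~\ref{sobinelem} for~$\tilde u$ (legitimate since~$n > sp$), noting that~$\int_{\R^n} |\tilde u|^{p^*_s} = \int_{B_1} |u|^{p^*_s}$, and substitute the seminorm bound just obtained. It only remains to tidy up the constants: in the term coming from~$[u]_{W^{s,p}(B_1)}^p$ I would absorb the extra factor~$s \le 1$, while in the term coming from~$\| u \|_{L^p(B_r)}^p$ the factor~$s$ in the constant of Lemma~\ref{sobinelem} cancels against the~$1/(sp)$ of the tail bound, leaving precisely~$\frac{1 - s}{(n - sp)^{p - 1}}$ times a constant depending only on~$n$ and~$p$. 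Taking~$C_4 := C_2 \max\{1, 2 n |B_1| / p\}$ then yields the claim.

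The argument is essentially routine; the one point deserving care is the bookkeeping of the~$s$-dependence, so that the final constant is genuinely~$s$-free (one must use the~$s(1-s)$ normalization of Lemma~\ref{sobinelem} in exactly the right way against the~$1/(sp)$ produced by the tail integral). I would also remark that the hypothesis~$u \in W^{s,p}_0(B_R)$ is stronger than needed—$u \in W^{s,p}(B_R)$ with~$\supp(u) \subseteq B_r$ suffices—and that the strict inequality~$r < R$ is essential, since it is exactly what makes~$\int_{\R^n \setminus B_1} |x - y|^{-n - sp} \, dy$ finite for~$x \in B_r$.
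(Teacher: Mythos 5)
Your argument is correct and coincides with the paper's own proof: both apply Lemma~\ref{sobinelem} to the zero extension of~$u$, split the Gagliardo seminorm over~$\R^n$ into the~$B_R\times B_R$ block plus a mixed tail term, and bound the tail by~$\frac{n|B_1|}{sp}(R-r)^{-sp}\|u\|_{L^p(B_r)}^p$ using~$\supp(u)\subseteq B_r$, with the factor~$s$ in the constant of Lemma~\ref{sobinelem} absorbing the~$1/(sp)$ from the tail. The only cosmetic difference is that you reduce to~$R=1$ by scaling first, whereas the paper works directly at general~$R$; your observation that~$u\in W^{s,p}(B_R)$ with~$\supp(u)\subseteq B_r$ already suffices is also correct.
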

\begin{proof}
First, we apply Lemma~\ref{sobinelem} to obtain
\begin{align*}
\left( \int_{B_R} |u(x)|^{p^*_s} \, dx \right)^{\frac{p}{p^*_s}} & = \left( \int_{\R^n} |u(x)|^{p^*_s} \, dx \right)^{\frac{p}{p^*_s}}\\
& \le C_2 \frac{s(1 - s)}{(n - s p)^{p - 1}} \int_{\R^n} \int_{\R^n} \frac{|u(x) - u(y)|^p}{|x - y|^{n + s p}} \, dx dy.
\end{align*}
Now, we essentially use~\cite[Lemma~5.1]{DPV12} to deduce a bound for the right-hand side of the inequality above in terms of quantities integrated over the ball~$B_R$ alone. In fact, we redo the computation in order to keep track of the constants involved. By using that~$\supp(u) \subseteq B_r$ and changing variables appropriately, we compute
\begin{align*}
& \int_{\R^n} \int_{\R^n} \frac{|u(x) - u(y)|^p}{|x - y|^{n + s p}} \, dx dy \\
& \hspace{40pt} = \int_{B_R} \int_{B_R} \frac{|u(x) - u(y)|^p}{|x - y|^{n + s p}} \, dx dy + 2 \int_{B_r} |u(x)|^p \left[ \int_{\R^n \setminus B_R} \frac{dy}{|x - y|^{n + s p}} \right] dx \\
& \hspace{40pt} \le \int_{B_R} \int_{B_R} \frac{|u(x) - u(y)|^p}{|x - y|^{n + s p}} \, dx dy + \frac{2 n |B_1|}{s p} \frac{1}{(R - r)^{s p}} \int_{B_r} |u(x)|^p dx,
\end{align*}
from which the thesis follows.
\end{proof}

We conclude the section with an iteration lemma similar to e.g.~\cite[Lemma~1.1]{GG82}. For the reader's convenience, we provide its simple proof in full details.

\begin{lemma} \label{induclem}
Let~$0 < r < R$ and~$\Phi: [r, R] \to [0, +\infty)$ be a bounded function. Suppose that, for any~$r \le \rho < \tau \le R$, it holds
\begin{equation} \label{frho<ftau}
\Phi(\rho) \le \gamma \Phi(\tau) + A + \frac{B}{(\tau - \rho)^\alpha} + \frac{D}{(\tau - \rho)^\beta},
\end{equation}
for some constants~$A, B, D, \alpha, \beta > 0$ and~$\gamma \in (0, 1)$. Then, for any~$0 < r < R$,
\begin{equation} \label{induclemine}
\Phi(r) \le C \left[ A + \frac{B}{(R - r)^\alpha} + \frac{D}{(R - r)^\beta} \right],
\end{equation}
with~$C \ge 1$ depending only on~$\alpha$,~$\beta$ and~$\gamma$.
\end{lemma}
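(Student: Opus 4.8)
The plan is to run the standard Giaquinta--Giusti iteration argument: assume we have the recursive inequality \eqref{frho<ftau} on every pair of radii, and bootstrap it to eliminate the term $\gamma\Phi(\tau)$ by summing a geometric series along a cleverly chosen sequence of radii. Concretely, fix $r<R$ and pick $\tau_0:=r$ and $\tau_{i+1}:=\tau_i+(1-\lambda)\lambda^i(R-r)$ for a parameter $\lambda\in(0,1)$ to be chosen, so that $\tau_i\uparrow R$ and $\tau_{i+1}-\tau_i=(1-\lambda)\lambda^i(R-r)$. Applying \eqref{frho<ftau} with $\rho=\tau_i$, $\tau=\tau_{i+1}$ gives
\begin{equation*}
\Phi(\tau_i)\le\gamma\,\Phi(\tau_{i+1})+A+\frac{B}{(1-\lambda)^\alpha\lambda^{i\alpha}(R-r)^\alpha}+\frac{D}{(1-\lambda)^\beta\lambda^{i\beta}(R-r)^\beta}.
\end{equation*}

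Next I would iterate this $N$ times starting from $i=0$, obtaining
\begin{equation*}
\Phi(r)=\Phi(\tau_0)\le\gamma^{N}\Phi(\tau_N)+\sum_{i=0}^{N-1}\gamma^i\left[A+\frac{B}{(1-\lambda)^\alpha(R-r)^\alpha}\,\lambda^{-i\alpha}+\frac{D}{(1-\lambda)^\beta(R-r)^\beta}\,\lambda^{-i\beta}\right].
\end{equation*}
The crucial choice is to take $\lambda$ close enough to $1$ that $\gamma\lambda^{-\alpha}<1$ and $\gamma\lambda^{-\beta}<1$ simultaneously; this is possible precisely because $\gamma<1$, so one may fix any $\lambda\in\bigl(\max\{\gamma^{1/\alpha},\gamma^{1/\beta}\},1\bigr)$. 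With such a $\lambda$ (depending only on $\alpha,\beta,\gamma$), the two series $\sum_i(\gamma\lambda^{-\alpha})^i$ and $\sum_i(\gamma\lambda^{-\beta})^i$ converge, and $\sum_i\gamma^i$ converges as well. Since $\Phi$ is bounded, $\gamma^N\Phi(\tau_N)\to0$ as $N\to\infty$; letting $N\to\infty$ yields
\begin{equation*}
\Phi(r)\le\frac{1}{1-\gamma}\,A+\frac{1}{1-\gamma\lambda^{-\alpha}}\cdot\frac{B}{(1-\lambda)^\alpha(R-r)^\alpha}+\frac{1}{1-\gamma\lambda^{-\beta}}\cdot\frac{D}{(1-\lambda)^\beta(R-r)^\beta},
\end{equation*}
which is \eqref{induclemine} with $C:=\max\bigl\{(1-\gamma)^{-1},\,(1-\gamma\lambda^{-\alpha})^{-1}(1-\lambda)^{-\alpha},\,(1-\gamma\lambda^{-\beta})^{-1}(1-\lambda)^{-\beta}\bigr\}$, a constant depending only on $\alpha$, $\beta$, $\gamma$.

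I do not expect any serious obstacle here; the only genuine point requiring care is the joint choice of $\lambda$, which is why the hypothesis $\gamma<1$ is essential (it guarantees $\max\{\gamma^{1/\alpha},\gamma^{1/\beta}\}<1$, leaving room below $1$). One must also make sure the radii $\tau_i$ stay in $[r,R]$ and converge to $R$ — which they do, since $\sum_{i\ge0}(1-\lambda)\lambda^i=1$ — and that $\Phi$ being bounded is used legitimately to kill the $\gamma^N\Phi(\tau_N)$ tail. A harmless alternative, avoiding the passage to the limit, is to stop the iteration at a finite $N$ and simply absorb $\gamma^N\Phi(\tau_N)\le\gamma^N\sup\Phi$ into the estimate, but the infinite iteration gives the cleanest constant. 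Everything else is a routine summation of geometric series.
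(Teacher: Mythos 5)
Your proof is correct and follows essentially the same route as the paper: both iterate inequality \eqref{frho<ftau} along a geometric sequence of radii $\tau_i \uparrow R$ with increments $(1-\lambda)\lambda^i(R-r)$, choose the ratio $\lambda$ (the paper's $\theta$) close enough to $1$ so that $\gamma\lambda^{-\alpha}<1$ and $\gamma\lambda^{-\beta}<1$, sum the resulting geometric series, and use the boundedness of $\Phi$ to discard the $\gamma^N\Phi(\tau_N)$ term in the limit. The only difference is cosmetic: the paper records the partial sums via an explicit induction hypothesis before passing to the limit, whereas you sum directly.
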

\begin{proof}
Take~$\theta \in (0, 1)$ to be later specified and consider the sequence~$\{ \rho_i \}$ of positive numbers inductively defined by
$$
\begin{cases}
\rho_i - \rho_{i - 1} = (1 - \theta) \theta^{i - 1} (R - r) & \quad \mbox{for } i \in \N \\
\rho_0 = r. &
\end{cases}
$$
This sequence is obviously increasing and~$\rho_i \rightarrow R$ as~$i \rightarrow +\infty$. We claim that
\begin{equation} \label{induclemclaim}
\begin{aligned}
\Phi(r) & \le \gamma^k \Phi(\rho_k) + A \frac{1 - \gamma^k}{1 - \gamma} + \frac{B}{(1 - \theta)^\alpha (R - r)^{\alpha}} \frac{1 - \left( \frac{\gamma}{\theta^\alpha} \right)^k}{1 - \frac{\gamma}{\theta^\alpha}} \\
& \quad + \frac{D}{(1 - \theta)^\beta (R - r)^\beta} \frac{1 - \left( \frac{\gamma}{\theta^\beta} \right)^k}{1 - \frac{\gamma}{\theta^\beta}},
\end{aligned}
\end{equation}
for any~$k \in \N \cup \{ 0 \}$.

We prove~\eqref{induclemclaim} by induction. Clearly, such inequality holds true for~$k = 0$. Thus, we take~$j \in \N$ and assume that~\eqref{induclem} is valid for~$k = j - 1$. With the aid of~\eqref{frho<ftau} we compute
\begin{align*}
\Phi(r) & \le \gamma^{j - 1} \left[ \gamma \Phi(\rho_j) + A + \frac{B}{(\rho_j - \rho_{j - 1})^\alpha} + \frac{D}{(\rho_j - \rho_{j - 1})^\beta} \right] \\
&\quad + A \frac{1 - \gamma^{j - 1}}{1 - \gamma} + \frac{B}{(1 - \theta)^\alpha (R - r)^{\alpha}} \frac{1 - \left( \frac{\gamma}{\theta^\alpha} \right)^{j - 1}}{1 - \frac{\gamma}{\theta^\alpha}} + \frac{D}{(1 - \theta)^\beta (R - r)^\beta} \frac{1 - \left( \frac{\gamma}{\theta^\beta} \right)^{j - 1}}{1 - \frac{\gamma}{\theta^\beta}} \\
& = \gamma^j \Phi(\rho_j) + A \left[ \frac{1 - \gamma^{j - 1}}{1 - \gamma} + \gamma^{j - 1} \right] + \frac{B}{(1 - \theta)^\alpha (R - r)^{\alpha}} \left[ \frac{1 - \left( \frac{\gamma}{\theta^\alpha} \right)^{j - 1}}{1 - \frac{\gamma}{\theta^\alpha}} + \left( \frac{\gamma}{\theta^\alpha} \right)^{j - 1} \right] \\
&  \quad + \frac{D}{(1 - \theta)^\beta (R - r)^{\beta}} \left[ \frac{1 - \left( \frac{\gamma}{\theta^\beta} \right)^{j - 1}}{1 - \frac{\gamma}{\theta^\beta}} + \left( \frac{\gamma}{\theta^\beta} \right)^{j - 1} \right] \\
& = \gamma^j \Phi(\rho_j) + A \frac{1 - \gamma^j}{1 - \gamma} + \frac{B}{(1 - \theta)^\alpha (R - r)^{\alpha}} \frac{1 - \left( \frac{\gamma}{\theta^\alpha} \right)^j}{1 - \frac{\gamma}{\theta^\alpha}} \\
& \quad + \frac{D}{(1 - \theta)^\beta (R - r)^{\beta}} \frac{1 - \left( \frac{\gamma}{\theta^\beta} \right)^j}{1 - \frac{\gamma}{\theta^\beta}}, 
\end{align*}
which is precisely~\eqref{induclemclaim} with~$k = j$. We can therefore conclude that~\eqref{induclemclaim} holds for any~$k \ge 0$. Taking the limit as~$k \rightarrow +\infty$ in~\eqref{induclemclaim}, we are finally led to~\eqref{induclemine}, provided we choose~$\theta$ in such a way that~$\gamma \theta^{-\alpha}$ and~$\gamma \theta^{-\beta}$ are both strictly smaller than~$1$.
\end{proof}

\section{A fractional~De~Giorgi isoperimetric-type inequality} \label{sDGsec}

In this section, we establish an isoperimetric-type inequality for the level sets of functions belonging to~$W^{s, p}$, when~$s$ is close to~$1$. This estimate will turn out to be crucial in the next section, where we use it to obtain~$C^\alpha$ estimates uniform in the parameter~$s$, as~$s \rightarrow 1^-$.

The statement of this result is as follows.

\begin{proposition} \label{sDGlemprop}
Let~$n \ge 2$ be an integer and~$p > 1$. Fix~$M > 0$ and~$\gamma_1, \gamma_2 \in (0, 1)$. Then, there exist two constants~$\bar{s} \in (0, 1)$ and~$C > 0$ such that, given any~$s \in [\bar{s}, 1)$ and~$R > 0$, it holds
\begin{equation} \label{sDGine}
\begin{aligned}
& (k - h) \Big[ |B_R \cap \{ u \le h \}| | B_R \cap \{ u \ge k \} | \Big]^{\frac{n - 1}{n}} \\
& \hspace{60pt} \le C R^{n - 2 + s} (1 - s)^{1 / p} [u]_{W^{s, p}(B_R)} |B_R \cap \{ h < u < k \}|^{\frac{p - 1}{p}},
\end{aligned}
\end{equation}
for any two real numbers~$h < k$ and any~$u \in W^{s, p}(B_R)$ satisfying
\begin{gather*}
\| u \|_{L^p(B_R)}^p + (1 - s) R^{s p} [u]_{W^{s, p}(B_R)}^p \le M R^{n} (k - h)^p, \\
|B_R \cap \{ u \le h \}| \ge \gamma_1 |B_R| \quad \mbox{and} \quad |B_R \cap \{ u \ge k \}| \ge \gamma_2 |B_R|.
\end{gather*}
The constant~$C$ depends only on~$n$ and~$p$, while~$\bar{s}$ may also depend on~$M$,~$\gamma_1$ and~$\gamma_2$.
\end{proposition}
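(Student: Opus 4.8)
The goal is a fractional analogue of De~Giorgi's isoperimetric inequality~\eqref{isoplevset}, valid for $s$ close to $1$ and with constants uniform as $s\to1^-$. The natural route is to reduce to the classical inequality~\eqref{isoplevset} for $W^{1,p}$ functions via a mollification/approximation argument, carefully tracking the $(1-s)$ and $R$ dependencies. Since~\eqref{sDGine} is scale invariant (both sides scale like $R^{n-1}$ after the natural rescaling of $u$), I would first reduce to $R=1$. Then I would consider the standard mollification $u_\varepsilon:=u*\rho_\varepsilon$ on a slightly smaller ball, say $B_{1/2}$, apply the classical De~Giorgi isoperimetric inequality~\eqref{isoplevset} (or rather its scaled/rescaled version, e.g.~Lemma~\ref{DGisolem} referenced in the introduction) to $u_\varepsilon$ with the levels $h,k$, and finally let $\varepsilon\to0$, using that $u_\varepsilon\to u$ in $L^p(B_{1/2})$ and a.e.~to pass to the limit in the level-set measures. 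The crux is controlling $\|\nabla u_\varepsilon\|_{L^p(B_{1/2})}$ in terms of the fractional seminorm of $u$.

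\textbf{The key estimate.} The heart of the matter is the bound
$$
\|\nabla u_\varepsilon\|_{L^p(B_{1/2})}^p \le C\,\varepsilon^{-(1-s)p}\Big([u]_{W^{s,p}(B_1)}^p + \|u\|_{L^p(B_1)}^p\Big),
$$
which follows from writing $\nabla u_\varepsilon(x) = \int (\nabla\rho_\varepsilon)(x-y)(u(y)-u(x))\,dy$, using $\int\nabla\rho_\varepsilon = 0$, estimating $|\nabla\rho_\varepsilon(z)|\le C\varepsilon^{-n-1}\chi_{B_\varepsilon}(z)$, and applying H\"older in the $y$-variable against the measure $|x-y|^{-n-sp}\,dy$ restricted to $B_\varepsilon(x)$ — this produces exactly a factor $\varepsilon^{(n+sp)/p - n - 1} \cdot \varepsilon^{n(p-1)/p} = \varepsilon^{s-1}$ per derivative, hence $\varepsilon^{-(1-s)p}$ after taking the $p$-th power. (The $L^p$ term appears only when $B_\varepsilon(x)$ pokes outside $B_1$, contributing a harmless lower-order piece; for $x\in B_{1/2}$ and $\varepsilon<1/2$ it is in fact absent, so the cleaner bound $\|\nabla u_\varepsilon\|_{L^p(B_{1/2})}^p\le C\varepsilon^{-(1-s)p}[u]_{W^{s,p}(B_1)}^p$ holds.) Plugging this into~\eqref{isoplevset} applied to $u_\varepsilon$ on $B_{1/2}$ gives, for the levels $h<k$,
$$
(k-h)\big[|B_{1/2}\cap\{u_\varepsilon\le h\}|\,|B_{1/2}\cap\{u_\varepsilon\ge k\}|\big]^{\frac{n-1}{n}} \le C\,\varepsilon^{-(1-s)}\,(1-s)^{1/p}[u]_{W^{s,p}(B_1)}\,|B_{1/2}\cap\{h<u_\varepsilon<k\}|^{\frac{p-1}{p}},
$$
after inserting an artificial factor $(1-s)^{1/p}$ balanced by $(1-s)^{-1/p}$ absorbed into $C$ — but this is premature; the $(1-s)^{1/p}$ should genuinely come out, see below.

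\textbf{Choosing $\varepsilon$ and handling the level sets; the main obstacle.} The delicate point is twofold. First, the balls $B_{1/2}$ versus $B_1$ mismatch: the hypotheses give $|B_1\cap\{u\le h\}|\ge\gamma_1|B_1|$ and $|B_1\cap\{u\ge k\}|\ge\gamma_2|B_1|$, but I need the corresponding measures inside $B_{1/2}$ (for $u$, then for $u_\varepsilon$) to be bounded below by a positive constant. This is \emph{not} automatic — the mass could concentrate in the annulus $B_1\setminus B_{1/2}$. The standard fix is to use a Poincar\'e-type argument (Lemma~\ref{poinine}) together with the energy bound $\|u\|_{L^p(B_1)}^p+(1-s)[u]_{W^{s,p}(B_1)}^p\le M(k-h)^p$ to show that, after possibly translating by a constant in $[h,k]$, the oscillation of $u$ is controlled by $(k-h)$ times something, forcing a definite fraction of each level set to survive in any fixed sub-ball — or, more robustly, one proves~\eqref{sDGine} first with $B_R$ replaced by $B_{R/2}$ on the left and $B_R$ on the right, then removes the discrepancy by a covering/iteration argument as the paper does elsewhere. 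Second, and this is where $\bar s$ and the $M,\gamma_1,\gamma_2$ dependence enter: I must choose $\varepsilon=\varepsilon(s)$ so that $\varepsilon^{-(1-s)}$ stays bounded, i.e.~$\varepsilon^{1-s}\ge c>0$; taking $\varepsilon$ a fixed small constant (depending on $\gamma_1,\gamma_2,M$, chosen so the level sets of $u_\varepsilon$ still have large measure — this uses $\|u_\varepsilon-u\|_{L^p}\le C\varepsilon^s[u]_{W^{s,p}}\le C\varepsilon^s M^{1/p}(k-h)$ and Chebyshev to control $|\{|u_\varepsilon-u|>\tau(k-h)\}|$) gives $\varepsilon^{-(1-s)}\le\varepsilon^{-(1-\bar s)}$, and then demanding $\varepsilon^{-(1-\bar s)}\le 2$, say, pins down how close $\bar s$ must be to $1$ in terms of $\varepsilon$, hence in terms of $M,\gamma_1,\gamma_2$. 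The genuine $(1-s)^{1/p}$ factor on the right-hand side of~\eqref{sDGine} survives because~\eqref{isoplevset} contributes $\|\nabla u_\varepsilon\|_{L^p}$ linearly and we only used $\|\nabla u_\varepsilon\|_{L^p}\le C\varepsilon^{s-1}[u]_{W^{s,p}}$ — to get the $(1-s)^{1/p}$ we instead interpolate: bound $\|\nabla u_\varepsilon\|_{L^p}$ by $(C\varepsilon^{s-1})\cdot\big((1-s)[u]_{W^{s,p}}^p\big)^{1/p}(1-s)^{-1/p}\cdot(1-s)^{1/p}$ is circular, so more honestly one keeps the $(1-s)^{1/p}$ explicit from the start by writing the mollifier estimate as $\|\nabla u_\varepsilon\|_{L^p}^p\le C\varepsilon^{-(1-s)p}(1-s)^{-1}\cdot(1-s)[u]_{W^{s,p}}^p$ and noting $(1-s)^{-1}\le(1-\bar s)^{-1}$ is bounded — thus the clean statement~\eqref{sDGine} with factor $(1-s)^{1/p}$ follows with $C$ absorbing $(1-\bar s)^{-1/p}$, except that $C$ must depend only on $n,p$, which forces us \emph{not} to absorb $(1-\bar s)^{-1}$ into $C$ and instead to genuinely produce $(1-s)^{1/p}$. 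I expect this bookkeeping — simultaneously getting $C=C(n,p)$ only, the explicit $(1-s)^{1/p}$, and $\bar s=\bar s(n,p,M,\gamma_1,\gamma_2)$ — to be the main technical obstacle, resolved by being careful to apply~\eqref{isoplevset} to $(1-s)^{1/p}$-normalized quantities and to choose $\varepsilon$ as a \emph{fixed} constant (not $\to1$ with $s$) so that $\varepsilon^{-(1-s)}\le\varepsilon^{-(1-\bar s)}=O(1)$.
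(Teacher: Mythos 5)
Your proposal takes a genuinely different route from the paper: you propose a \emph{direct} argument — mollify $u$, apply the classical De~Giorgi inequality to $u_\varepsilon$, and pass to the limit — whereas the paper argues by \emph{compactness and contradiction}: assume the inequality fails along a sequence $s_j\to1^-$, normalize, use the Bourgain--Brezis--Mironescu/Ponce compactness theorem to extract a strong $L^p$-limit $v_\infty\in W^{1,p}(B_1)$, and apply the sharp BBM asymptotic
$$
\liminf_{j\to\infty}(1-s_j)^{1/p}[v_j]_{W^{s_j,p}(B_1)}\ \ge\ D_*\|\nabla v_\infty\|_{L^p(B_1)}
$$
together with the classical De~Giorgi inequality for $v_\infty$ to reach a contradiction. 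The compactness route has the disadvantage of producing no explicit $\bar s$, but it gives the $(1-s)^{1/p}$ factor for free, because that factor is exactly the BBM normalization.

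This last point is where your proposal has a genuine gap, which you yourself flag but do not close. Your key estimate is
$$
\|\nabla u_\varepsilon\|_{L^p(B_{1/2})}^p\ \le\ C\,\varepsilon^{(s-1)p}\,[u]_{W^{s,p}(B_1)}^p,
$$
and this bound carries \emph{no} $(1-s)$ factor. For a fixed mollification scale $\varepsilon$, the right-hand side blows up like $(1-s)^{-1}$ as $s\to1^-$ when $u\in W^{1,p}$, while the left-hand side stays bounded; the estimate is therefore lossy by exactly a factor $(1-s)$, and no choice of $\varepsilon=\varepsilon(s)$ repairs this (one would need $\varepsilon^{(1-s)p}\gtrsim(1-s)^{-1}$, forcing $\varepsilon\to\infty$). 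Plugging your bound into Lemma~\ref{DGisolem} therefore yields the inequality \emph{without} the $(1-s)^{1/p}$ factor, i.e.\ a strictly weaker statement than~\eqref{sDGine}. Your attempted bookkeeping — writing $[u]_{W^{s,p}}=(1-s)^{-1/p}\cdot(1-s)^{1/p}[u]_{W^{s,p}}$ and absorbing $(1-\bar s)^{-1/p}$ into $C$ — fails precisely because it makes $C$ depend on $\bar s$, hence on $M,\gamma_1,\gamma_2$, which the proposition forbids. To ``genuinely produce $(1-s)^{1/p}$'' from a gradient bound on a regularization of $u$ is essentially to prove the sharp BBM lower bound, i.e.\ to redo what the compactness step accomplishes; a bare mollification estimate cannot deliver it. Separately, the mismatch $B_{1/2}$ versus $B_1$ (which you also flag) would require a real fix — the level-set mass hypotheses are given on $B_1$, not $B_{1/2}$, and your suggested remedies (Poincar\'e plus energy bound, or iteration) are left as sketches. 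The paper sidesteps both obstructions entirely: the compactness argument works on the full ball $B_1$, and the level-set measures pass to the limit directly using a.e.\ convergence and the fact that a.e.\ level set of the $W^{1,p}$ limit is Lebesgue-null.
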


Inequality~\eqref{sDGine} gives a bound from below for the measures of intermediate level sets of functions in fractional Sobolev spaces of differentiability order~$s$ close to~$1$. In particular, it provides a partial fractional counterpart to the classical result by~E.~De~Giorgi that states that functions in~$W^{1, p}$ cannot have jump discontinuities.

The constant~$\bar{s}$, which unfortunately is not explicitly determined, acts as a threshold, separating~\emph{well-behaved} from~\emph{ill-behaved} functions in the scale~$W^{s, p}$, as~$s \in (0, 1)$. As noted in the introduction, characteristic functions may belong to~$W^{s, p}$, if~$s p < 1$. This suggests that~$\bar{s} \ge 1 / p$.

Since~$\bar{s}$ may not depend solely on~$n$ and~$p$, we do not get a clear, global separation between~\emph{good} and~\emph{bad} Sobolev spaces. Instead, we identify a~\emph{transversal} region of, say,~$\cup_{s \in (0, 1)} W^{s, p}(B_1)$, composed by functions that satisfy~\eqref{sDGine} for various parameters~$M$,~$\gamma_1$ and~$\gamma_2$. It would be interesting to understand whether or not~$\bar{s}$ could be chosen independently from~$M$,~$\gamma_1$ or~$\gamma_2$. See the brief discussion following the proof of Lemma~\ref{growthlem} in Section~\ref{DGsec} for some comments on the implications of such possible uniformity on the regularity of functions in fractional~De~Giorgi classes.

\smallskip

We now focus on the proof of Proposition~\ref{sDGlemprop}. Our aim is to deduce it from the already mentioned isoperimetric-type inequality obtained by~E.~De~Giorgi in~\cite{DeG57}. We recall here below this classical result and provide a short proof of it. Our argument follows the strategy outlined in~\cite{Giu03}, which is essentially based on the Poincar\'e-Sobolev inequality.

\begin{lemma} \label{DGisolem}
Let~$n \ge 2$ be an integer and~$p > 1$. Then, for any two real numbers~$\ell < m$ and any~$u \in W^{1, p}(B_1)$, it holds
$$
\Big[ | B_1 \cap \{ u \le \ell \} | | B_1 \cap \{ u \ge m \} | \Big]^{\frac{n - 1}{n}} \le \frac{C_\bullet}{m - \ell} \, \| \nabla u \|_{L^p(B_1)} | B_1 \cap \{ \ell < u < m \} |^{\frac{p - 1}{p}},
$$
for some constant~$C_\bullet \ge 1$ depending only on~$n$ and~$p$.
\end{lemma}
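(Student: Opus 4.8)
The goal is the classical De Giorgi isoperimetric-type inequality, and the natural route—following the approach in \cite{Giu03}—is to reduce it to the Poincar\'e--Sobolev inequality applied to a suitable truncation of $u$. Let me write $A^- := B_1 \cap \{u \le \ell\}$, $A := B_1 \cap \{\ell < u < m\}$, and $A^+ := B_1 \cap \{u \ge m\}$. The key device is to consider the truncated function
\[
w := \min\{\max\{u, \ell\}, m\} - \ell,
\]
which satisfies $w = 0$ on $A^-$, $w = m - \ell$ on $A^+$, and $0 \le w \le m - \ell$ everywhere on $B_1$, with $\nabla w = \nabla u \, \chi_A$ almost everywhere. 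This $w$ belongs to $W^{1,p}(B_1)$, so the standard Sobolev inequality $\|w - w_{B_1}\|_{L^{p^*}(B_1)} \le C \|\nabla w\|_{L^p(B_1)}$ (with $p^* = np/(n-p)$ when $p < n$; for $p \ge n$ one uses an appropriate embedding, or just reduces to a subcritical exponent—one only needs \emph{some} exponent strictly bigger than $1$, which is why the statement is clean in all dimensions) is available.

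\medskip

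\textbf{Key steps.} First I would bound the left-hand side from below by estimating $|A^-|\,|A^+|$ in terms of how far $w$ deviates from its mean. On $A^-$ one has $w - w_{B_1} = -w_{B_1}$, and on $A^+$ one has $w - w_{B_1} = (m-\ell) - w_{B_1}$; since these two values differ by $m - \ell$, at least one of them has absolute value $\ge (m-\ell)/2$ in magnitude on the respective set—more precisely, $|w_{B_1}| + |(m-\ell)-w_{B_1}| \ge m - \ell$, so
\[
(m - \ell)\bigl(|A^-| + |A^+|\bigr)^{?} \lesssim \int_{A^- \cup A^+} |w - w_{B_1}|\,dx
\]
needs to be handled with a touch more care: the cleanest form is to note that for the harmonic-mean-type combination, $(m-\ell)\,|A^-|\,|A^+| \le |A^+|\int_{A^-}|w - w_{B_1}| + |A^-|\int_{A^+}|w - w_{B_1}|$, and then feed each integral into Hölder and Sobolev. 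Second, apply Hölder on $B_1$ with exponents $p^*$ and its conjugate to get $\int_{A^\pm}|w - w_{B_1}| \le \|w - w_{B_1}\|_{L^{p^*}(B_1)}\,|A^\pm|^{1 - 1/p^*}$. Third, invoke Sobolev to replace $\|w - w_{B_1}\|_{L^{p^*}(B_1)}$ by $C\|\nabla w\|_{L^p(B_1)} = C\|\nabla u\|_{L^p(A)} = C\|\nabla u\|_{L^p(B_1)}$ (extending the integrand by $0$ outside $A$). Fourth, bound the remaining $|A|^{p-1 \text{ over } p}$ factor: this is where one must be slightly clever, because after the above one obtains a bound involving $|A^\pm|^{1-1/p^*}$ rather than $|A|^{(p-1)/p}$. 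The standard trick is to iterate: apply the inequality to \emph{all} pairs of intermediate levels $\ell = t_0 < t_1 < \dots < t_N = m$ (a dyadic-type subdivision), sum a telescoping estimate, and let $N \to \infty$, converting the $|A^\pm|$-exponent into the $|A|$-exponent. Equivalently—and more in line with \cite[Lemma~7.4]{Giu03}—one differentiates the measure function $\mu(t) := |B_1 \cap \{u < t\}|$ and integrates the coarea-type estimate back up. Collecting constants (all depending only on $n$ and $p$ through the Sobolev constant and the combinatorial factors) yields $C_\bullet$.

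\medskip

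\textbf{Main obstacle.} The genuinely delicate point is the fourth step: the raw one-shot argument produces the "wrong" exponent on $|A|$, and one must run the layer-cake/iteration argument to upgrade $|A^\pm|^{(p^*-1)/p^*}$ into $|A|^{(p-1)/p}$ while keeping the exponents on $|A^-|$ and $|A^+|$ at exactly $\frac{n-1}{n}$ as in the statement. This is exactly the subtlety flagged in the footnote to \eqref{isoplevset} in the introduction—different sources state this inequality with slightly different exponents precisely because this bookkeeping can be done in several inequivalent ways. I would handle it by choosing the intermediate slicing levels so that the measures of the successive "shells" are comparable (a standard greedy/dyadic selection), which makes the telescoping sum converge geometrically and produces the claimed exponents cleanly. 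Everything else—the truncation, the mean-value splitting, Hölder, Sobolev—is routine.
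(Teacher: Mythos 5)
You correctly choose the truncation $w$, but your proof has a genuine gap in step four, and the iteration/layer-cake argument you appeal to is not actually needed—nor is it carried out. The paper's argument is a clean one-shot: instead of the mean-zero Sobolev inequality at exponent $p$ (which produces the unhelpful exponent $1 - 1/p^*$ on $|A^\pm|$), one applies the Poincar\'e--Sobolev inequality adapted to functions vanishing on a set of positive measure, at the exponent $n/(n-1)$. Concretely, since $w = 0$ on $A^- := B_1 \cap \{u \le \ell\}$ and $w \in W^{1,1}(B_1)$, one has
\[
\| w \|_{L^{\frac{n}{n-1}}(B_1)} \le \frac{C}{|A^-|^{\frac{n-1}{n}}}\, \| \nabla w \|_{L^1(B_1)},
\]
with $C = C(n, p)$. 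The left-hand side is bounded below by restricting the integral to $A^+ := B_1 \cap \{u \ge m\}$, where $w \equiv m - \ell$, giving $(m - \ell)\, |A^+|^{\frac{n-1}{n}}$. Both factors $|A^-|^{\frac{n-1}{n}}$ and $|A^+|^{\frac{n-1}{n}}$ thus appear immediately, with the correct exponent, and no telescoping over intermediate levels is needed.

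The second ingredient you misplace is where H\"older enters. Since $\nabla w = \nabla u \, \chi_{A}$ with $A := B_1 \cap \{\ell < u < m\}$, one Hölders the $L^1$ norm of $\nabla w$ up to $L^p$:
\[
\| \nabla w \|_{L^1(B_1)} = \int_{A} |\nabla u| \le \| \nabla u \|_{L^p(B_1)}\, |A|^{\frac{p-1}{p}}.
\]
This is precisely how $|A|^{(p-1)/p}$ arises—from H\"older applied to the gradient, not from any exponent gymnastics on the level sets. Your route, by going to $L^{p^*}$ and then absorbing $|A^\pm|$ factors via H\"older on the function (not its gradient), produces the exponent $1 - 1/p^*$ on the wrong objects and you are then forced to invoke an iteration you never verify will produce exactly $\frac{n-1}{n}$ and $\frac{p-1}{p}$. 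Since the statement holds without any such scheme, and you flag this as the "genuinely delicate point," I would count the proposal as incomplete: the mechanism you identify as the main obstacle is an artifact of choosing the wrong Sobolev inequality, and the fix—replace $L^p \to L^{p^*}$ by the vanishing-set $L^1 \to L^{n/(n-1)}$ Poincar\'e--Sobolev estimate—removes it entirely.
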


\begin{proof}
Clearly, we can suppose that both sets~$B_1 \cap \{ u \le \ell \}$ and~$B_1 \cap \{ u \ge m \}$ have positive measure, otherwise there is nothing to prove. Define
$$
w(x) := \begin{cases}
m - \ell & \quad \mbox{if } u(x) \ge m, \\
u(x) - \ell & \quad \mbox{if } \ell < u(x) < m, \\
0 & \quad \mbox{if } u(x) \le \ell.
\end{cases}
$$
By applying Poincar\'e-Sobolev inequality (see e.g.~\cite[Theorem~3.16]{Giu03}) to this function, we get
\begin{align*}
(m - \ell) |B_1 \cap \{ u \ge m \}|^{\frac{n - 1}{n}} & = \left( \int_{B_1 \cap \{ u \ge m \}} w(x)^{\frac{n}{n - 1}} \, dx \right)^{\frac{n - 1}{n}} \le \| w \|_{L^{\frac{n}{n - 1}}(B_1)} \\
& \le \frac{C_\bullet}{|B_1 \cap \{ u \le \ell \}|^{\frac{n - 1}{n}}} \, \| \nabla w \|_{L^1(B_1)},
\end{align*}
for some~$C_\bullet \ge 1$ depending only on~$n$ and~$p$. Using then H\"older's inequality,
$$
\| \nabla w \|_{L^1(B_1)} = \int_{B_1 \cap \{ \ell < u < m \}} | \nabla u(x) | \, dx \le \| \nabla u \|_{L^p(B_1)} |B_1 \cap \{ \ell < u < m \}|^{\frac{p - 1}{p}}.
$$
The combination of these two estimates leads to the thesis.
\end{proof}

With this result at hand, we can deduce Proposition~\ref{sDGlemprop} via a contradiction argument. Here follow the details.

\begin{proof}[Proof of Proposition~\ref{sDGlemprop}]
First of all, we suppose that~$R = 1$, as the general case can then be obtained by scaling.

We claim that~\eqref{sDGine} holds true with
$$
C := \frac{4 C_\bullet}{D_*},
$$
where~$C_\bullet$ is as in Lemma~\ref{DGisolem} and~$D_*$ is defined by
\begin{equation} \label{Dstardef}
D_* := \left[ \frac{1}{p} \int_{S^{n - 1}} |e_1 \cdot \sigma|^p \, d\Haus^{n - 1}(\sigma) \right]^{\frac{1}{p}}.
\end{equation}
To prove this fact, we argue by contradiction. Observe that we can limit ourselves to deal with the case of~$h = 0$ and~$k > 0$, since the inequality is invariant under translations in the dependent variable~$u$. Therefore, we suppose that there exist three sequences~$\{ s_j \}_{j \in \N} \subset (0, 1)$,~$\{ k_j \}_{j \in \N} \subset (0, +\infty)$ and~$\{ u_j \}_{j \in \N} \subset L^p(B_1)$, such that
\begin{gather*}
\nonumber \lim_{j \rightarrow +\infty} s_j = 1, \\
u_j \in W^{s_j, p}(B_1), \, \, \mbox{with} \, \, \| u_j \|_{L^p(B_1)}^p + (1 - s_j) [u_j]_{W^{s_j, p}(B_1)}^p \le M k_j^p, \\
| B_1 \cap \{ u_j \le 0 \} | \ge \gamma_1 |B_1|, \, \, | B_1 \cap \{ u_j \ge k_j \} | \ge \gamma_2 |B_1|,
\end{gather*}
and
\begin{align*}
& k_j \Big[ | B_1 \cap \{ u_j \le 0 \} | | B_1 \cap \{ u_j \ge k_j \} | \Big]^{\frac{n - 1}{n}} \\
& \hspace{60pt} > 4 C_\bullet \, \frac{(1 - s_j)^{1 / p} [u_j]_{W^{s_j, p}(B_1)}}{D_*} \, | B_1 \cap \{ 0 < u_j < k_j \} |^{\frac{p - 1}{p}},
\end{align*}
for any~$j \in \N$.

We now normalize the~$u_j$'s over the sequence~$\{ k_j \}$, i.e.~we define~$v_j := u_j / k_j$. Observe that~$v_j \in W^{s_j, p}(B_1)$ satisfies
\begin{gather}
\label{vjleM} \| v_j \|_{L^p(B_1)}^p + (1 - s_j) [v_j]_{W^{s_j, p}(B_1)}^p \le M, \\
\label{vjlev} | B_1 \cap \{ v_j \le 0 \} | \ge \gamma_1 |B_1|, \, \, | B_1 \cap \{ v_j \ge 1 \} | \ge \gamma_2 |B_1|,
\end{gather}
and
\begin{equation} \label{sDGineM2}
\begin{aligned}
& \Big[ | B_1 \cap \{ v_j \le 0 \} | | B_1 \cap \{ v_j \ge 1 \} | \Big]^{\frac{n - 1}{n}} \\
& \hspace{60pt} > 4 C_\bullet \, \frac{(1 - s_j)^{1 / p} [v_j]_{W^{s_j, p}(B_1)}}{D_*} \, | B_1 \cap \{ 0 < v_j < 1 \} |^{\frac{p - 1}{p}},
\end{aligned}
\end{equation}
for any~$j \in \N$.

Thanks to~\eqref{vjleM}, we may apply~\cite[Corollary~7]{BBM01} or~\cite[Theorem~1.2]{P04} and deduce that, up to subsequences,~$v_j$ converges in~$L^p(B_1)$ to some function~$v_\infty \in W^{1, p}(B_1)$, as~$j \rightarrow +\infty$. Up to extracting a further subsequence, we may suppose that~$v_j \rightarrow v_\infty$ a.e.~in~$B_1$ and that~$(1 - s_j)^{1/p} [v_j]_{W^{s_j, p}(B_1)}$ has a limit as~$j \rightarrow +\infty$, which, by~\cite[Theorem~1.2]{P04}, necessarily satisfies
\begin{equation} \label{nablalesemi}
\lim_{j \rightarrow +\infty} (1 - s_j)^{1 / p} [v_j]_{W^{s_j, p}(B_1)} \ge D_* \| \nabla v_\infty \|_{L^p(B_1)},
\end{equation}
with~$D_*$ as in~\eqref{Dstardef}.

Since~$v_\infty \in W^{1, p}(B_1)$, we know that~$|B_1 \cap \{ v_\infty = t \}| = 0$, for a.a.~$t \in \R$. To see this, one could apply e.g.~\cite[Theorem~1.1]{MSZ03} and conclude that almost all level sets of~(a specific representative of)~$v_\infty$ are countable~$(n - 1)$-rectifiable sets, and thus have zero Lebesgue measure. Choose now~$\varepsilon \in (0, 1/4]$ in a way that
\begin{equation} \label{uinftyzerolev}
|B_1 \cap \{ v_\infty = \varepsilon \}| = |B_1 \cap \{ v_\infty = 1 - \varepsilon \}| = 0.
\end{equation}
It is not hard to see that
\begin{equation} \label{limlevsets}
\begin{aligned}
& \lim_{j \rightarrow +\infty} | B_1 \cap \{ v_j < \varepsilon \} | = |B_1 \cap \{ v_\infty < \varepsilon \}|,\\
& \lim_{j \rightarrow +\infty} | B_1 \cap \{ v_j > 1 - \varepsilon \} | = |B_1 \cap \{ v_\infty > 1 - \varepsilon \}|,\\
& \lim_{j \rightarrow +\infty} | B_1 \cap \{ \varepsilon < v_j < 1 - \varepsilon \} | = |B_1 \cap \{ \varepsilon < v_\infty < 1 - \varepsilon \}|.
\end{aligned}
\end{equation}
We prove for instance the validity of the first limit in~\eqref{limlevsets}. Notice that
\begin{equation} \label{chiujtouinfty}
\lim_{j \rightarrow +\infty} \chi_{\{ v_j < \varepsilon \}} = \chi_{\{ v_\infty < \varepsilon \}} \quad \mbox{a.e.~in } B_1.
\end{equation}
Indeed, for a.a.~$x \in \{ v_\infty < \varepsilon \}$, we have that~$v_j(x) < \varepsilon$ for all but a finite number of~$j$'s, since~$v_j \rightarrow v_\infty$ a.e.~in~$B_1$, as~$j \rightarrow +\infty$. Therefore,
$$
\lim_{j \rightarrow +\infty} \chi_{\{ v_j < \varepsilon \}}(x) = 1 = \chi_{\{ v_\infty < \varepsilon \}}(x) \quad \mbox{for a.a.~} x \mbox{ in } B_1 \cap \{ v_\infty < \varepsilon \}.
$$
Similarly,
$$
\lim_{j \rightarrow +\infty} \chi_{\{ v_j < \varepsilon \}}(x) = 0 = \chi_{\{ v_\infty < \varepsilon \}}(x) \quad \mbox{for a.a.~} x \mbox{ in } B_1 \cap \{ v_\infty > \varepsilon \},
$$
and~\eqref{chiujtouinfty} follows, thanks to~\eqref{uinftyzerolev}. By~\eqref{chiujtouinfty}, we may apply Lebesgue's dominated convergence theorem to obtain that
$$
\lim_{j \rightarrow +\infty} \int_{B_1} \chi_{\{ v_j < \varepsilon \}}(x) \, dx = \int_{B_1} \chi_{\{ v_\infty < \varepsilon \}}(x) \, dx.
$$
This gives the first formula in~\eqref{limlevsets}. Analogously, one gets the other two. In particular, we deduce from~\eqref{vjlev} and the first two limits in~\eqref{limlevsets} that
\begin{equation} \label{uinftylev}
|B_1 \cap \{ v_\infty < \varepsilon \}| \ge \gamma_1 |B_1| \quad \mbox{and} \quad |B_1 \cap \{ v_\infty > 1 - \varepsilon \}| \ge \gamma_2 |B_1|.
\end{equation}

In view of~\eqref{nablalesemi} and~\eqref{limlevsets}, by letting~$j \rightarrow +\infty$ in~\eqref{sDGineM2} we immediately see that
\begin{align*}
& \Big[ | B_1 \cap \{ v_\infty < \varepsilon \} | | B_1 \cap \{ v_\infty > 1 - \varepsilon \} | \Big]^{\frac{n - 1}{n}} \\
& \hspace{60pt} \ge 4 C_\bullet \| \nabla v_\infty \|_{L^p(B_1)} | B_1 \cap \{ \varepsilon < v_\infty < 1 - \varepsilon \} |^{\frac{p - 1}{p}}.
\end{align*}
By comparing this with the inequality of Lemma~\ref{DGisolem} and taking advantage of the fact that, by definition of~$\varepsilon$, it holds~$2 (1 - 2 \varepsilon ) \ge 1$, we finally obtain that
\begin{align*}
& \Big[ | B_1 \cap \{ v_\infty < \varepsilon \} | | B_1 \cap \{ v_\infty > 1 - \varepsilon \} | \Big]^{\frac{n - 1}{n}} \\
& \hspace{50pt} \ge 4 (1 - 2 \varepsilon) \Big[ | B_1 \cap \{ v_\infty \le \varepsilon \} | | B_1 \cap \{ v_\infty \ge 1 - \varepsilon \} | \Big]^{\frac{n - 1}{n}} \\
& \hspace{50pt} \ge 2 \Big[ | B_1 \cap \{ v_\infty < \varepsilon \} | | B_1 \cap \{ v_\infty > 1 - \varepsilon \} | \Big]^{\frac{n - 1}{n}}.
\end{align*}
This is clearly a contradiction, since both sides are positive, by~\eqref{uinftylev}. We therefore conclude that~\eqref{sDGine} holds true and the proposition is proved.
\end{proof}

\section{Fractional De~Giorgi classes} \label{DGsec}

In this section we introduce the notion of fractional De~Giorgi classes that we take into consideration and prove that their elements are bounded, H\"older continuous functions. On top of this, we show that where they are non-negative, they also satisfy a nonlocal version of the Harnack inequality.

\subsection{Definition and first properties}

In this first subsection, we give our definition of fractional~De~Giorgi classes and point out some elementary features of them. These classes are composed by functions that satisfy an improved Caccioppoli-type inequality, such as formula~\eqref{2scaccintro} in the introduction. In fact, we consider here a broader family of inequalities, that depend on a number of parameters.

\smallskip

Let~$n \in \N$,~$s \in (0, 1)$ and~$p > 1$. Let~$\Omega$ be an open subset of~$\R^n$.

Also fix~$d \ge 0$,~$H \ge 1$,~$k_0 \in \R$,~$\varepsilon \in (0, sp / n]$,~$\lambda \ge 0$ and~$R_0 \in (0, +\infty]$.

Let~$u \in L_s^{p - 1}(\R^n) \cap W^{s, p}(\Omega)$ be a given function. We say that~$u$ belongs to the~\emph{fractional~De~Giorgi class}~$\DG_+^{s, p}(\Omega; d, H, k_0, \varepsilon, \lambda, R_0)$ if and only if it holds
\begin{equation} \label{DG+def}
\begin{aligned}
& [(u - k)_+]_{W^{s, p}(B_{r}(x_0))}^p + \int_{B_{r}(x_0)} (u(x) - k)_+ \left[ \int_{B_{2 R_0}(x)} \frac{ (u(y) - k)_-^{p - 1}}{|x - y|^{n + s p}} \, dy \right] dx \\
& \hspace{3pt} \le \frac{H}{1 - s} \Bigg[ \left( R^\lambda d^p + \frac{|k|^p}{R^{n \varepsilon}} \right) |A^+(k, x_0, R)|^{1 - \frac{sp}{n} + \varepsilon} + \frac{R^{(1 - s) p}}{(R - r)^p} \| (u - k)_+ \|_{L^p(B_R(x_0))}^p \\
& \hspace{3pt} \quad + \frac{R^{n + s p}}{(R - r)^{n + s p}} \| (u - k)_+ \|_{L^1(B_R(x_0))} \overline{\Tail}((u - k)_+; x_0, r)^{p - 1} \Bigg],
\end{aligned}
\end{equation}
for any point~$x_0 \in \Omega$, radii~$0 < r < R < \min \{ R_0, \dist \left( x_0, \partial \Omega \right) \}$ and~$k \ge k_0$. Recall that the set~$A^+(k, x_0, R)$ has been defined right below~\eqref{A+-def}.

Analogously,~$u \in \DG_-^{s, p}(\Omega; d, H, k_0, \varepsilon, \lambda, R_0)$ if and only if
\begin{equation} \label{DG-def}
\begin{aligned}
& [(u - k)_-]_{W^{s, p}(B_{r}(x_0))}^p + \int_{B_{r}(x_0)} (u(x) - k)_- \left[ \int_{B_{2 R_0}(x)} \frac{ (u(y) - k)_+^{p - 1}}{|x - y|^{n + s p}} \, dy \right] dx \\
& \hspace{3pt} \le \frac{H}{1 - s} \Bigg[ \left( R^\lambda d^p + \frac{|k|^p}{R^{n \varepsilon}} \right) |A^-(k, x_0, R)|^{1 - \frac{sp}{n} + \varepsilon} + \frac{R^{(1 - s) p}}{(R - r)^p} \| (u - k)_- \|_{L^p(B_R(x_0))}^p \\
& \hspace{3pt} \quad + \frac{R^{n + s p}}{(R - r)^{n + s p}} \| (u - k)_- \|_{L^1(B_R(x_0))} \overline{\Tail}((u - k)_-; x_0, r)^{p - 1} \Bigg],
\end{aligned}
\end{equation}
for any~$x_0 \in \Omega$,~$0 < r < R < \min \{ R_0, \dist \left( x_0, \partial \Omega \right) \}$ and~$k \le - k_0$.

Finally, we set
$$
\DG^{s, p}(\Omega; d, H, k_0, \varepsilon, \lambda, R_0) := \DG_+^{s, p}(\Omega; d, H, k_0, \varepsilon, \lambda, R_0) \cap \DG_-^{s, p}(\Omega; d, H, k_0, \varepsilon, \lambda, R_0).
$$

With a slight abuse of notation, we denote by~$\DG_+^{s, p}(\Omega; d, H, -\infty, \varepsilon, \lambda, R_0)$ the class of functions that satisfy~\eqref{DG+def} for any~$k \in \R$, and similarly for the spaces~$\DG_-^{s, p}$ and~$\DG^{s, p}$.

\smallskip

Notice that, with the above definitions, we have
$$
u \in \DG_+^{s, p}(\Omega; d, H, k_0, \varepsilon, \lambda, R_0) \quad \mbox{iff} \quad - u \in \DG_-^{s, p}(\Omega; d, H, k_0, \varepsilon, \lambda, R_0),
$$
Furthermore, it is not hard to see that the following scaling properties hold true:
\begin{align*}
u \in \DG_+^{s, p}(\Omega; d, H, k_0, \varepsilon, \lambda, R_0) \quad & \mbox{iff} \quad u_{z, \rho} \in \DG_+^{s, p} \left( \rho \, \Omega + z; \rho^{- \frac{\lambda + n \varepsilon}{p}} d, H, k_0, \varepsilon, \lambda, \rho R_0 \right),\\
u \in \DG_+^{s, p}(\Omega; d, H, k_0, \varepsilon, \lambda, R_0) \quad & \mbox{iff} \quad u^{(\mu)} \in \DG_+^{s, p}(\Omega; \mu d, H, \mu k_0, \varepsilon, \lambda, R_0),
\end{align*}
where, for any~$z \in \R^n$,~$\rho, \mu > 0$, we set
$$
u_{z, \rho}(x) := u \left(\frac{x - z}{\rho}\right), \quad u^{(\mu)}(x) := \mu u(x),
$$
and, as customary, we wrote~$\rho \, \Omega + z = \{ \rho x + z : x \in \Omega \}$. Analogous statements clearly hold for the spaces~$\DG_-^{s, p}$ and~$\DG^{s, p}$.

\smallskip

We now proceed to inspect the regularity properties of the elements of the just defined classes.

\subsection{Local boundedness}

We prove that the elements of fractional~De~Giorgi classes are locally bounded functions. Observe that here we only consider choices of parameters~$n$,~$s$,~$p$ that satisfy the condition~$n \ge sp$. Indeed, this is not at all a strong limitation, as when~$n < s p$ the boundedness and the H\"older continuity of the functions in~$\DG^{s, p}$---and, more generally, in~$W^{s, p}$---is warranted by the fractional Morrey embedding (see e.g.~\cite{DPV12}).

\smallskip

We begin with the following proposition, that establishes interior upper bounds for~$u \in \DG^{s, p}_+$.

\begin{proposition} \label{ulocboundprop}
Let~$u \in \DG_+^{s, p}(\Omega; d, H, k_0, \varepsilon, \lambda, R_0)$, with~$n \ge s p$,~$k_0 \ge 0$ and~$0 < \varepsilon_0 \le \varepsilon \le s p / n$. Then, there exist~$C \ge 1$ and~$\theta \in (0, \varepsilon_0 / 2]$, such that, for any~$x_0 \in \Omega$ and~$0 < R < \min \{ \dist \left( x_0, \partial \Omega \right), R_0 \} / 2$, it holds
\begin{equation} \label{ulocbound}
\begin{aligned}
\sup_{B_R(x_0)} \, (u - k_0)_+ & \le C \, \frac{\delta^{- \frac{p - 1}{(\varepsilon - \theta) p}}}{(n - s p + n \theta)^{\frac{p - 1}{(\varepsilon - \theta) p}}} \left( \dashint_{B_{2 R}(x_0)} (u(x) - k_0)_+^p \, dx \right)^{\frac{1}{p}} \\
& \quad + \delta \Tail((u - k_0)_+; x_0, R) + \delta^{\frac{p - 1}{p}} \left( R^{\frac{\lambda + n \varepsilon}{p}} d + k_0 \right),
\end{aligned}
\end{equation}
for any~$\delta \in (0, 1]$. The constant~$\theta$ depends on~$n$,~$p$ and~$\varepsilon_0$, while~$C$ also on~$H$. When~$n > s p$, we can even take~$\theta = 0$.
\end{proposition}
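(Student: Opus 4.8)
The plan is to run a De Giorgi iteration on a decreasing sequence of levels and radii, using the defining inequality \eqref{DG+def} together with the fractional Sobolev inequality to produce a superlinear recursion for the quantities $\dashint_{B_{\rho_j}} (u-k_j)_+^p$, which then forces these quantities to vanish as $j\to\infty$. Concretely, I would fix $x_0\in\Omega$, set $R\in(0,\min\{\dist(x_0,\partial\Omega),R_0\}/2)$, and for $j\ge 0$ define radii $\rho_j := R(1+2^{-j})$ (so $\rho_j\downarrow R$, $\rho_0=2R$), balls $B_j := B_{\rho_j}(x_0)$, and levels $k_j := k_0 + M(1-2^{-j})$ for a level increment $M>0$ to be chosen at the end (this $M$ will be the right-hand side of \eqref{ulocbound}). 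Write $w_j := (u-k_j)_+$, $A_j := A^+(k_j,x_0,\rho_j)$, and $Y_j := \dashint_{B_j} w_j^p\,dx$, the normalized energy to be iterated.

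First I would bound $Y_{j+1}$ from above. On $B_{j+1}$ one has $w_{j+1}\le w_j$ and, since on $A_{j+1}$ one has $w_j \ge k_{j+1}-k_j = M 2^{-(j+1)}$, Chebyshev gives $|A_{j+1}\cap B_{j+1}| \le (M2^{-(j+1)})^{-p}\int_{B_{j+1}} w_j^p$. Next, apply the fractional Sobolev inequality (Corollary \ref{sobinecor} or, after cutting off, Corollary \ref{nullsobcor}, using that $w_{j+1}$ vanishes on the complement of $A_{j+1}$ which occupies a controlled portion of $B_{j+1}$ for large $j$) to the truncated function $w_{j+1}$ localized between $B_{j+1}$ and a slightly larger ball, to obtain
\[
\left(\dashint_{B_{j+1}} w_{j+1}^{p}\right)^{\!1}
\le C\,|A_{j+1}|^{\frac{sp}{n}-\varepsilon+\theta}\left[\frac{1-s}{\rho_{j+1}^{\,sp-n\theta}}\,[w_{j+1}]_{W^{s-?,p}(B_{j+1})}^{p}+\cdots\right],
\]
where I would actually interpolate via Hölder to gain the extra small exponent $\theta$ (this is where $\theta\in(0,\varepsilon_0/2]$, depending on $n,p,\varepsilon_0$, enters, and where $n>sp$ allows $\theta=0$ because then the plain embedding already has room). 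Then control the Gagliardo seminorm of $w_{j+1}$ on $B_{j+1}$ by the right-hand side of \eqref{DG+def} applied at $x_0$ with $r=\rho_{j+1}$, $R=\rho_j$, $k=k_{j+1}$: this produces the three terms — the $\bigl(R^\lambda d^p + |k_{j+1}|^p/R^{n\varepsilon}\bigr)|A_{j+1}|^{1-sp/n+\varepsilon}$ term, the $(R-r)^{-p}$-weighted $L^p$ term which here gives a factor $2^{p(j+1)}$ (since $\rho_j-\rho_{j+1}\sim R 2^{-j}$), and the Tail term $\overline{\Tail}(w_{j+1};x_0,\rho_{j+1})^{p-1}$, the last of which is bounded by $\overline{\Tail}((u-k_0)_+;x_0,R)^{p-1}$ plus lower-order contributions since $w_{j+1}\le (u-k_0)_+$. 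Combining, and absorbing the constants $R^\lambda$, $R^{n\varepsilon}$, $(1-s)$ appropriately (note the $\frac{H}{1-s}$ in \eqref{DG+def} cancels the $(1-s)$ from the Sobolev inequality — this is the cancellation responsible for $s$-uniformity), I arrive at a recursion of the shape $Y_{j+1} \le C\, b^{j}\, \bigl(Y_j^{1+\beta} M^{-\gamma} + (\text{Tail and }d,k_0\text{ terms})\bigr)$ with $b>1$, $\beta>0$, $\gamma>0$ all depending only on $n,p,\varepsilon_0$ (and $C$ on $H$ as well).

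Having the recursion, I invoke the standard fast-geometric-convergence lemma (De Giorgi's iteration lemma: if $Y_{j+1}\le C b^j Y_j^{1+\beta}$ and $Y_0 \le C^{-1/\beta} b^{-1/\beta^2}$, then $Y_j\to 0$). To satisfy the smallness of $Y_0 = \dashint_{B_{2R}}(u-k_0)_+^p$ relative to the inhomogeneous terms, I split $M = M_1 + M_2$ where $M_1$ absorbs the Tail term and the term $R^{(\lambda+n\varepsilon)/p}d + k_0$, each scaled by the free parameter $\delta\in(0,1]$ precisely as displayed in \eqref{ulocbound}, and $M_2$ is chosen as a large multiple of $\bigl(\dashint_{B_{2R}}(u-k_0)_+^p\bigr)^{1/p}$ times the indicated power of $\delta^{-(p-1)/((\varepsilon-\theta)p)}$ and of $(n-sp+n\theta)^{-(p-1)/((\varepsilon-\theta)p)}$ — the exponents here are exactly those dictated by solving $Y_0 \lesssim M^{\gamma}$ for $M$ with $\gamma = (\varepsilon-\theta)p/(p-1)$ coming from the interplay of the $|A|^{1-sp/n+\varepsilon}$ factor with the $\beta$ in the recursion, and the $(n-sp+n\theta)$ factor tracking the blow-up constant in the fractional Sobolev embedding near $sp=n$. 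With this choice $Y_j\to 0$, hence $(u-k_\infty)_+ = (u-(k_0+M))_+ = 0$ a.e.\ on $B_R(x_0)$, i.e.\ $u\le k_0 + M$ on $B_R(x_0)$, which is \eqref{ulocbound}.

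The main obstacle, I expect, is the bookkeeping of the three competing terms on the right of \eqref{DG+def} through the iteration while keeping the final constant dependence exactly as claimed — in particular isolating the factor $(n-sp+n\theta)^{-(p-1)/((\varepsilon-\theta)p)}$, which requires tracking how the Sobolev constant degenerates as $sp\uparrow n$ (via $C_2/(n-sp)^{p-1}$ in Lemma \ref{sobinelem}) and how the artificial small exponent $\theta$ both tames the level-set measure factor and, when $n>sp$, becomes unnecessary. A secondary delicate point is the initial step where one must ensure the superlevel set $A_j\cap B_j$ is small enough (a fraction of $|B_j|$) to legitimately apply the Poincaré-type input in Corollary \ref{nullsobcor}; this is automatic once $j$ is large by Chebyshev, but requires either starting the iteration at a sufficiently late index or, more cleanly, using Corollary \ref{sobinecor} (support strictly inside) at every step with a cutoff, which avoids any positivity-fraction hypothesis at the cost of the harmless extra term $\frac{1}{(R-r)^{sp}}\int w^p$ that merges with the already-present $(R-r)^{-p}$ term.
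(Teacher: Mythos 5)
Your proposal follows essentially the same route as the paper's proof: the same choice of shrinking radii $\rho_j=(1+2^{-j})R$ and increasing levels $k_j=k_0+M(1-2^{-j})$, the same use of Corollary~\ref{sobinecor} (avoiding the density hypothesis of Corollary~\ref{nullsobcor}, exactly as you observe), the same cancellation of $1-s$ between the Sobolev constant and the $H/(1-s)$ prefactor in~\eqref{DG+def}, and the same splitting $M=M_1+M_2$ with the $\delta$-weighted Tail/$d$/$k_0$ terms. The only cosmetic difference is that the paper realizes your ``extra small exponent $\theta$'' by lowering the differentiability parameter to an explicit $\sigma<s$ (with $n>\sigma p$) before invoking the Sobolev embedding, plus Lemma~\ref{sobinclem0} when $n=sp$, rather than by a separate H\"older interpolation step, but this is the same mechanism.
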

\begin{proof}
Suppose without loss of generality that~$x_0 = 0$.

Let~$R \le \rho < \tau \le 2 R$ and consider a cut-off function~$\eta \in C^\infty_0(\R^n)$ such that~$0 \le \eta \le 1$ in~$\R^n$,~$\supp(\eta) \subseteq B_{(\tau + 3 \rho) / 4}$,~$\eta = 1$ in~$B_\rho$ and~$|\nabla \eta| \le 8 / (\tau - \rho)$ in~$\R^n$. Fix~$k \ge k_0 \ge 0$ and set~$w_k := (u - k)_+$,~$v := \eta w_k$. Notice that~$\supp(v) \subseteq B_{(\tau + 3 \rho) / 4}$. Let~$\sigma \in [s - n \varepsilon_0 / (2 p), s]$ be given by
$$
\sigma := \begin{cases}
s & \quad \mbox{if } n > s p, \\
\max \left\{ 2 s - 1, s - \frac{n \varepsilon_0}{2 p}  \right\} & \quad \mbox{if } n = s p.
\end{cases}
$$
Observe that, with this choice,~$n > \sigma p$. Also,~$1 - \sigma \le 2 (1 - s)$. By H\"older's inequality and Corollary~\ref{sobinecor}, we have
\begin{equation} \label{locboundtech1}
\begin{aligned}
\| w_k \|_{L^p(B_\rho)}^p & \le |A^+(k, \rho)|^{\frac{\sigma p}{n}} \| v \|_{L^{p^*_\sigma}(B_{(\tau + \rho) / 2})}^p \\
& \le \frac{C (1 - \sigma)}{(n - \sigma p)^{p - 1}} |A^+(k, \rho)|^{\frac{\sigma p}{n}} \left[ [ v ]_{W^{\sigma, p}(B_{(\tau + \rho) / 2})}^p + \frac{\| v \|_{L^p(B_{(\tau + \rho) / 2})}^p}{(r - \rho)^{\sigma p}} \right] \\
& \le \frac{C (1 - s)}{(n - \sigma p)^{p - 1}} \frac{|A^+(k, \rho)|^{\frac{\sigma p}{n}}}{(\tau - \rho)^{\sigma p}} \left[ (\tau - \rho)^{s p} [ v ]_{W^{s, p}(B_{(\tau + \rho) / 2})}^p + \| w_k \|_{L^p(B_\tau)}^p \right],
\end{aligned}
\end{equation}
with~$C \ge 1$ depending only on~$n$,~$p$ and~$\varepsilon_0$. Notice that, when~$n = s p$ we also took advantage of Lemma~\ref{sobinclem0} (with~$\delta = \tau - \rho$), to deduce the last inequality. Using then Young's inequality and the definition of~$\eta$, we compute
\begin{align*}
[ v ]_{W^{s, p}(B_{(\tau + \rho) / 2})}^p & \le C \left[ [w_k]_{W^{s, p}(B_{(\tau + \rho) / 2})}^p + \int_{B_\tau} w_k(x)^p \left[ \int_{B_\tau} \frac{|\eta(x) - \eta(y)|^p}{|x - y|^{n + s p}} \, dy \right] dx \right] \\
& \le C \left[ [w_k]_{W^{s, p}(B_{(\tau + \rho) / 2})}^p + \frac{1}{(\tau - \rho)^p} \int_{B_\tau} w_k(x)^p \left[ \int_{B_\tau} \frac{dy}{|x - y|^{n - p + s p}} \right] dx \right] \\
& \le C \left[ [w_k]_{W^{s, p}(B_{(\tau + \rho) / 2})}^p + \frac{1}{1 - s} \frac{\tau^{(1 - s) p}}{(\tau - \rho)^p} \| w_k \|_{L^p(B_\tau)}^p \right].
\end{align*}
By combining this with~\eqref{locboundtech1} and~\eqref{DG+def}, we are led to the estimate
\begin{equation} \label{locboundtech2}
\begin{aligned}
\| w_k \|_{L^p(B_\rho)}^p & \le \frac{C}{(n - \sigma p)^{p - 1}}  |A^+(k, \rho)|^{\frac{\sigma p}{n}} \Bigg[ \left( \tau^\lambda d^p + \frac{k^p}{\tau^{n \varepsilon}} \right) \tau^{(s - \sigma) p} |A^+(k, \tau)|^{1 - \frac{sp}{n} + \varepsilon} \\
& \quad + \frac{\tau^{(1 - \sigma) p}}{(\tau - \rho)^p} \| w_k \|_{L^p(B_\tau)}^p + \frac{\tau^{n + s p}}{(\tau - \rho)^{n + \sigma p}} \| w_k \|_{L^1(B_\tau)} \overline{\Tail}(w_k; R)^{p - 1} \Bigg],
\end{aligned}
\end{equation}
where~$C$ now depends on~$H$ too.

Fix~$0 < h < k$. For~$x \in A^+(k) \subseteq A^+(h)$, we have
$$
w_h(x) = u(x) - h \ge k - h,
$$
and
$$
w_h(x) = u(x) - h \ge u(x) - k = w_k(x).
$$
Accordingly, given any~$r > 0$,
\begin{align*}
\| w_h \|_{L^p(B_r)}^p & \ge \int_{A^+(k, r)} w_h(x)^p \, dx \ge (k - h)^p |A^+(k, r)|,\\
\| w_h \|_{L^p(B_r)}^p & \ge \int_{A^+(k, r)} w_k(x)^p \, dx = \| w_k \|_{L^p(B_r)}^p,\\
\| w_h \|_{L^p(B_r)}^p & \ge (k - h)^{p-1} \int_{A^+(k, r)} w_k(x) \, dx = (k - h)^{p-1} \| w_k \|_{L^1(B_r)}.
\end{align*}
That is,
$$
|A^+(k, r)| \le \frac{\| w_h \|_{L^p(B_r)}^p}{(k - h)^p}, \, \, \| w_k \|_{L^p(B_r)}^p \le \| w_h \|_{L^p(B_r)}^p \, \mbox{ and } \,
\| w_k \|_{L^1(B_r)} \le \frac{\| w_h \|_{L^p(B_r)}^p}{(k - h)^{p - 1}}.
$$
With the aid of these estimates, inequality~\eqref{locboundtech2} yields
\begin{align*}
\| w_k \|_{L^p(B_\rho)}^p & \le \frac{C}{(n - \sigma p)^{p - 1}} \Bigg[
\frac{\tau^{\lambda + n \varepsilon} d^p + k^p}{(k - h)^p} \left( \frac{|A^+(k, \tau)|}{|B_\tau|} \right)^{\varepsilon - \frac{(s - \sigma)p}{n}} \\
& \quad + \left( \frac{|A^+(k, \tau)|}{|B_\tau|} \right)^{\frac{\sigma p}{n}} \left( \frac{\tau^p}{(\tau - \rho)^p} + \frac{\tau^{n + (s + \sigma) p} \, \overline{\Tail}(w_k; R)^{p - 1}}{(\tau - \rho)^{n + \sigma p} (k - h)^{p - 1}} \right) \Bigg] \| w_h \|_{L^p(B_\tau)}^p \\
& \le \frac{C}{(n - \sigma p)^{p - 1}} \frac{\tau^{- n \varepsilon_\sigma}}{(k - h)^{\varepsilon_\sigma p}} \Bigg[
\frac{\tau^{\lambda + n \varepsilon} d^p + k^p}{(k - h)^p} + \frac{\tau^{p}}{(\tau - \rho)^p} \\
& \quad + \frac{\tau^{n + (s + \sigma) p} \, \overline{\Tail}(w_k; R)^{p - 1}}{(\tau - \rho)^{n + \sigma p} (k - h)^{p - 1}} \Bigg] \| w_h \|_{L^p(B_\tau)}^{(1 + \varepsilon_\sigma) p},
\end{align*}
with~$\varepsilon_\sigma := \varepsilon - (s - \sigma) p / n \le \sigma p / n$.
Setting
$$
\varphi(\ell, \sigma) := \| w_\ell \|_{L^p(B_\sigma)}^p, \quad \mbox{for any } \ell, \, \sigma > 0,
$$
we get
\begin{equation} \label{locboundtech5}
\begin{aligned}
\varphi(k, \rho) & \le \frac{C}{(n - \sigma p)^{p - 1}} \frac{\tau^{- n \varepsilon_\sigma}}{(k - h)^{\varepsilon_\sigma p}} \left[
\frac{\tau^{\lambda + n \varepsilon} d^p + k^p}{(k - h)^p} + \frac{\tau^{p}}{(\tau - \rho)^p} \right. \\
& \quad \left. + \frac{\tau^{n + (s + \sigma) p} \, \overline{\Tail}(w_k; R)^{p - 1}}{(\tau - \rho)^{n + \sigma p} (k - h)^{p - 1}} \right] \varphi(h, \tau)^{1 + \varepsilon_\sigma}.
\end{aligned}
\end{equation}

Consider now the two sequences of positive numbers~$\{ k_i \}$ and~$\{ \rho_i \}$, defined by
$$
k_i := k_0 + M (1 - 2^{-i}) \quad \mbox{and} \quad \rho_i := (1 + 2^{-i}) R,
$$
for~$i \in \N \cup \{ 0 \}$ and for some~$M > 0$ to be determined. Note that~$\{ k_i \}$ is increasing, while~$\{ \rho_i \}$ is decreasing. Also set~$\varphi_i := \varphi(k_i, \rho_i)$. Recalling definitions~\eqref{Tailudef} and~\eqref{nsiTailudef},
\begin{align*}
\overline{\Tail}(w_{k_{i + 1}}; R)^{p - 1} & = (1 - s) \int_{A(k_{i + 1}) \setminus B_R} \frac{(u(x) - k_{i + 1})^{p - 1}}{|x|^{n + s p}} \, dx \\
& \le (1 - s) \int_{A(k_0) \setminus B_R} \frac{(u(x) - k_0)^{p - 1}}{|x|^{n + s p}} \, dx = R^{- s p} \Tail(w_{k_0}; R)^{p - 1}.
\end{align*}
By this and~\eqref{locboundtech5}, we compute
\begin{align*}
\varphi_{i + 1} & \le \frac{C}{(n - \sigma p)^{p - 1}} \frac{\rho_i^{- n \varepsilon_\sigma}}{(k_{i + 1} - k_i)^{\varepsilon_\sigma p}} \left[ \frac{\rho_i^{\lambda + n \varepsilon} d^p + k_{i + 1}^p}{(k_{i + 1} - k_i)^p} + \frac{\rho_i^{p}}{(\rho_i - \rho_{i + 1})^p} \right. \\
& \quad \left. + \frac{\rho_i^{n + (s + \sigma) p} \, \overline{\Tail}(w_{k_{i + 1}}; R)^{p - 1}}{(\rho_i - \rho_{i + 1})^{n + \sigma p} (k_{i + 1} - k_i)^{p - 1}} \right] \varphi_i^{1 + \varepsilon_\sigma} \\
& \le \frac{C}{(n - \sigma p)^{p - 1}} \frac{2^{(n + 3 p) i}}{R^{n \varepsilon_\sigma} M^{\varepsilon_\sigma p}} \left[ \frac{ R^{\lambda + n \varepsilon} d^p + k_0^p + M^p }{M^p} + 1 + \frac{\Tail(w_{k_0}; R)^{p - 1}}{M^{p - 1}} \right] \varphi_i^{1 + \varepsilon_\sigma} \\
& \le \frac{C \delta^{- 1 + p}}{(n - \sigma p)^{p - 1}} \frac{2^{(n + 3 p) i}}{R^{n \varepsilon_\sigma} M^{\varepsilon_\sigma p}} \, \varphi_i^{1 + \varepsilon_\sigma},
\end{align*}
if we choose
$$
M \ge M_1 := \delta \Tail(w_{k_0}; R) + \delta^{\frac{p - 1}{p}} \left( R^{\frac{\lambda + n \varepsilon}{p}} d + k_0 \right),
$$
for any fixed~$\delta \in (0, 1]$. By applying for instance~\cite[Lemma~7.1]{Giu03}, we infer that~$\varphi_i \rightarrow 0$ as~$i \rightarrow +\infty$, and hence~$u \le k_0 + M$ in~$B_R$, provided it holds
$$
\varphi_0 \le C \delta^{\frac{p - 1}{\varepsilon_\sigma}} (n - \sigma p)^{\frac{p - 1}{\varepsilon_\sigma}} R^{n} M^p,
$$
that is
$$
M \ge M_2 := \frac{C \delta^{- \frac{p - 1}{\varepsilon_\sigma p}}}{(n - \sigma p)^{\frac{p - 1}{\varepsilon_\sigma p}}} \left( \frac{1}{R^n} \int_{B_{2 R}} w_{k_0}(x)^p \, dx \right)^{\frac{1}{p}}.
$$
Estimate~\eqref{ulocbound} then follows by taking e.g.~$M := M_1 + M_2$.
\end{proof}

By applying Proposition~\ref{ulocboundprop} to both~$u$ and~$-u$, we get the desired two-sided boundedness result.

\begin{theorem}[\bfseries Local boundedness of fractional De~Giorgi functions] \label{DGboundthm}
\textcolor{white}{}\\
Let~$u \in \DG^{s, p}(\Omega; d, H, k_0, \varepsilon, \lambda, R_0)$, with~$n \ge s p$,~$k_0 \ge 0$ and~$0 < \varepsilon_0 \le \varepsilon \le s p / n$. Then,~$u \in L^\infty_\loc(\Omega)$. In particular, there exists a constant~$C \ge 1$, such that, for any~$x_0 \in \Omega$ and~$0 < R < \min \{ \dist(x_0, \partial \Omega), R_0 \} / 2$,
$$
\| u \|_{L^\infty(B_R(x_0))} \le C R^{-\frac{n}{p}} \| u \|_{L^p(B_{2 R}(x_0))} + \Tail(u; x_0, R) + d R^{\frac{\lambda + n \varepsilon}{p}} + 2 k_0,
$$
The constant~$C$ depends on~$n$,~$s$,~$p$,~$\varepsilon_0$ and~$H$. When~$n > p$, it does not blow up as~$s \rightarrow 1^-$.
\end{theorem}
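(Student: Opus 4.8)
The plan is to deduce Theorem~\ref{DGboundthm} directly from Proposition~\ref{ulocboundprop}, applied to both~$u$ and~$-u$. Recall that, by definition, $u \in \DG^{s, p}(\Omega; d, H, k_0, \varepsilon, \lambda, R_0)$ means that $u$ lies in both $\DG_+^{s, p}$ and $\DG_-^{s, p}$ with these parameters; since $u \in \DG_-^{s, p}(\Omega; d, H, k_0, \varepsilon, \lambda, R_0)$ if and only if $-u \in \DG_+^{s, p}(\Omega; d, H, k_0, \varepsilon, \lambda, R_0)$, Proposition~\ref{ulocboundprop} is available for $u$ and for $-u$ alike. The only choice to make is to take $\delta = 1$ in that proposition, so that all the $\delta$-weighted terms take their simplest form.

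First I would apply Proposition~\ref{ulocboundprop} to $u$ with $\delta = 1$, which bounds $\sup_{B_R(x_0)}(u - k_0)_+$. Since $k_0 \ge 0$, one has the pointwise inequality $(u - k_0)_+ \le u_+ \le |u|$, whence $\dashint_{B_{2 R}(x_0)} (u(x) - k_0)_+^p \, dx \le (2^n |B_1|)^{-1} R^{-n} \| u \|_{L^p(B_{2 R}(x_0))}^p$ and, by the obvious monotonicity of the Tail in its first (nonnegative) argument, $\Tail((u - k_0)_+; x_0, R) \le \Tail(u; x_0, R)$. Absorbing constants and using $\sup_{B_R(x_0)} u \le k_0 + \sup_{B_R(x_0)}(u - k_0)_+$, together with the term $\delta^{(p - 1)/p}(R^{(\lambda + n \varepsilon)/p} d + k_0)$ appearing in Proposition~\ref{ulocboundprop}, this yields
\begin{equation*}
\sup_{B_R(x_0)} u \le C R^{- \frac{n}{p}} \| u \|_{L^p(B_{2 R}(x_0))} + \Tail(u; x_0, R) + d R^{\frac{\lambda + n \varepsilon}{p}} + 2 k_0 .
\end{equation*}
Running the same argument for $-u$, and observing that $(-u - k_0)_+ = (u + k_0)_-$ with $(u + k_0)_- \le u_- \le |u|$, gives the matching lower bound $\inf_{B_R(x_0)} u \ge - \big( C R^{-n/p} \| u \|_{L^p(B_{2 R}(x_0))} + \Tail(u; x_0, R) + d R^{(\lambda + n \varepsilon)/p} + 2 k_0 \big)$. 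Taking the larger of the two bounds produces the asserted $L^\infty$ estimate, valid on every admissible ball, hence $u \in L^\infty_\loc(\Omega)$.

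It remains only to account for the constants. With $\delta = 1$, the constant of Proposition~\ref{ulocboundprop} is $C_0 \, (n - s p + n \theta)^{- (p - 1) / ((\varepsilon - \theta) p)}$, where $C_0$ depends on $n$, $p$, $H$ and $\theta \in (0, \varepsilon_0 / 2]$ depends on $n$, $p$, $\varepsilon_0$; since $\varepsilon \ge \varepsilon_0$ and $\theta \le \varepsilon_0/2$, the exponent is finite, so $C$ depends only on $n$, $s$, $p$, $\varepsilon_0$, $H$. For the non-blow-up as $s \rightarrow 1^-$ when $n > p$: in that regime $s p < p < n$, so in fact $n > s p$ and Proposition~\ref{ulocboundprop} allows $\theta = 0$; then $C = C_0 \, (n - s p)^{-(p - 1)/(\varepsilon p)}$, which stays bounded because $n - s p \rightarrow n - p > 0$, while the remaining terms depend on $s$ only through explicit, stable exponents. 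The substantive work of the theorem is entirely contained in the De~Giorgi iteration of Proposition~\ref{ulocboundprop}; here there is no real analytic obstacle, and the only point requiring a little care is the constant bookkeeping together with the elementary observation that $n > p$ forces $n > s p$, which is what makes the estimate uniform as $s \rightarrow 1^-$.
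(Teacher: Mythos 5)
Your proposal is correct and follows exactly the route the paper takes: the paper's proof of Theorem~\ref{DGboundthm} consists of a single sentence, stating that the result follows by applying Proposition~\ref{ulocboundprop} to both $u$ and $-u$, and your write-up simply fills in the elementary bookkeeping that the paper leaves implicit. In particular, your choice $\delta = 1$, the pointwise comparisons $(u - k_0)_+ \le |u|$ and $(u+k_0)_- \le u_- \le |u|$ together with the monotonicity of the Tail, and the observation that $n > p$ forces $n > sp$ (allowing $\theta = 0$ and hence a constant that stays bounded as $s \to 1^-$) are all precisely the steps the author intends the reader to supply.
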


We remark that up to now we have not fully exploited inequalities~\eqref{DG+def}-\eqref{DG-def}, that define the fractional~De~Giorgi classes. Indeed, to obtain the previous results we only took advantage of the bounds that those inequalities provide for the Gagliardo seminorm of the truncations~$(u - k)_\pm$, i.e.~the first term appearing on the left-hand sides of~\eqref{DG+def} and~\eqref{DG-def}.

In the next subsection, on the contrary, we will make full use of such defining inequalities.

\subsection{H\"older continuity}

We focus here on establishing H\"older continuity estimates for functions belonging to fractional~De~Giorgi classes. As before, we might well restrict ourselves to the case~$n \ge s p$. However, since we will need some of the results obtained here in the following subsection, we do not make such assumption.

\smallskip

The fundamental step in recovering the H\"older regularity is made in the following~\emph{growth lemma}.

\begin{lemma} \label{growthlem}
Let~$u \in \DG_-^{s, p}(B_{4 R}; d, H, -1, \varepsilon, \lambda, R_0)$, for some values~$R > 0$,~$R_0 \ge 4 R$ and~$0 < \varepsilon_0 \le \varepsilon \le s p / n$. Suppose that
\begin{equation} \label{uge0}
u \ge 0 \quad \mbox{in } B_{4 R},
\end{equation}
and
\begin{equation} \label{1dens}
|B_{2 R} \cap \{ u \ge 1 \}| \ge \gamma |B_{2 R}|,
\end{equation}
for some~$\gamma \in (0, 1)$. There exist a small constant~$\delta \in (0, 1/8]$, such that, if
\begin{equation} \label{udecay}
R^{\frac{\lambda + n \varepsilon}{p}} d + \Tail(u_-; 4 R) \le \delta,
\end{equation}
then,
\begin{equation} \label{ugedelta}
u \ge \delta \quad \mbox{in } B_R.
\end{equation}
The constant~$\delta$ depends only on~$n$,~$p$,~$\varepsilon_0$,~$H$,~$\gamma$ when~$n \ge 2$, and also on~$s$, when~$n = 1$.
\end{lemma}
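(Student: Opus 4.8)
The plan is to follow De~Giorgi's two-step scheme for the growth lemma, adapted to the nonlocal framework. After translating so that the ball is centered at the origin, the first step is a \emph{critical density} estimate: I would show that there is $\eta \in (0, 1)$, depending only on $n$, $p$, $\varepsilon_0$ and $H$, such that if $\left| B_{2R} \cap \{ u < 2 \delta \} \right| \le \eta \, |B_{2R}|$ and~\eqref{udecay} holds, then $u \ge \delta$ in $B_R$. This follows from a De~Giorgi iteration carried out exactly as in the proof of Proposition~\ref{ulocboundprop}, but on the lower truncations: one sets $k_j := \delta(1 + 2^{-j})$, $\rho_j := R(1 + 2^{-j})$, $w_j := (u - k_j)_-$, and combines the bound for the Gagliardo seminorm $[w_j]_{W^{s, p}}^p$ furnished by the left-hand side of~\eqref{DG-def} with the fractional Sobolev inequality of Corollary~\ref{sobinecor} (and the exponent reduction of Lemma~\ref{sobinclem0} in the borderline case $n = sp$; when $n < sp$ the statement is even simpler, since $W^{s,p}$ embeds into a H\"older space). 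H\"older's inequality converts the measure $|B_{\rho_j} \cap \{ u < k_j \}|$ of the bad set into a superlinear gain, so that~\cite[Lemma~7.1]{Giu03} makes the recursion $\varphi_{j + 1} \le C b^j \varphi_j^{1 + \beta}$ collapse once $\varphi_0 = \| w_0 \|_{L^p(B_{2R})}^p$ is small. Hypothesis~\eqref{udecay} is exactly what keeps the forcing terms of~\eqref{DG-def} --- the contribution $R^{\lambda + n \varepsilon} d^p$, the term $k_j^p R^{- n \varepsilon} |A^-|^{1 - sp/n + \varepsilon}$, and the Tail $\overline{\Tail}(w_j; \cdot)^{p - 1}$ --- under control, all of them being $O(\delta^{p - 1})$ on the range of levels $k_j \in (\delta, 2\delta]$ after rescaling by $R$.

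The second step verifies that, for $\delta$ small, this density hypothesis is implied by~\eqref{uge0}, \eqref{1dens} and~\eqref{udecay}. Here I would exploit the \emph{second} term on the left-hand side of~\eqref{DG-def}, the one playing the role of the missing De~Giorgi isoperimetric inequality. Fix the dyadic levels $2^{-j}$, $j = 0, \dots, J$ with $2^{-J} \simeq \delta$, and for each $j$ apply~\eqref{DG-def} with inner radius $3R$ and outer radius $\tfrac{7}{2} R$ (admissible since $R_0 \ge 4R$) at the intermediate level $k = \tfrac{3}{4} 2^{-j}$. On the one hand, because $\{ u \ge 1 \}$ fills at least a fraction $\gamma$ of $B_{2R}$ by~\eqref{1dens} and $B_{2 R_0}(x) \supseteq B_{2R}$ for $x \in B_{3R}$, the nonlocal term is bounded below by $c \, \gamma \, 2^{-j} R^{-sp} \, |B_{3R} \cap \{ u \le 2^{-j - 1} \}|$. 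On the other hand, using $u \ge 0$ in $B_{4R}$ together with~\eqref{udecay} to estimate the three pieces of the right-hand side of~\eqref{DG-def}, one bounds it by $C \, 2^{-jp} R^{n - sp}$. Comparing the two inequalities gives
\[
\left| B_{3R} \cap \{ u \le 2^{-j - 1} \} \right| \le \frac{C H}{(1 - s) \gamma} \, 2^{-j(p - 1)} R^n ,
\]
and, since $p > 1$, the choice $j = J$ yields $|B_{3R} \cap \{ u < 2\delta \}| \le \frac{C' H \delta^{p - 1}}{(1 - s) \gamma} |B_{3R}|$. For $n = 1$ this already concludes the argument: for fixed $s$ the prefactor is a constant, so choosing $\delta$ small enough to undercut $\eta$ produces~\eqref{ugedelta}, with $\delta$ then also depending on $s$.

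For $n \ge 2$ the factor $(1 - s)^{-1}$ has to be handled in an $s$-uniform way, and I would split on a threshold $\bar s = \bar s(n, p, H, \gamma) \in (0, 1)$ provided by Proposition~\ref{sDGlemprop}. When $s < \bar s$, the estimate above applies with $(1 - s)^{-1} \le (1 - \bar s)^{-1}$, a constant depending only on the admissible parameters. When $s \ge \bar s$, I would instead run the classical De~Giorgi measure-shrinking on $\min\{ u, 1 \} = 1 - (u - 1)_-$, whose Gagliardo seminorm over the relevant ball is controlled by~\eqref{DG-def} with $k = 1$ (together with~\eqref{uge0} and~\eqref{udecay}), so that the hypotheses of Proposition~\ref{sDGlemprop} hold with $M = M(n, p, H)$ and $\gamma_1, \gamma_2$ fixed in terms of $\gamma$; then, since the strips $\{ 2^{-j - 1} < u < 2^{-j} \}$ are pairwise disjoint, at least one among the first $\simeq \log_2(1/\delta)$ of them has measure $\le |B_{2R}| / \log_2(1/\delta)$, and inequality~\eqref{sDGine} applied at that level forces $|B_{2R} \cap \{ u < 2\delta \}|$ to be small, uniformly in $s$. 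In either regime one then feeds the resulting density bound into the first step, with $\delta$ now independent of $s$.

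The hardest point will be precisely this last dichotomy: neither isoperimetric substitute is available on the whole range $s \in (0, 1)$ --- the second term of~\eqref{DG-def} degenerates as $s \to 1^-$, being weighted by $1 - s$, whereas Proposition~\ref{sDGlemprop} holds only for $s$ near $1$ and $n \ge 2$ --- and the two must be patched so that the final $\delta$ carries no $s$-dependence. Most of the remaining effort is bookkeeping in the second step: the two-sided estimates of the pieces of~\eqref{DG-def}, absorbing the Tail contributions through~\eqref{udecay}, and checking that every constant produced depends only on the quantities listed in the statement. Everything else is the by-now standard De~Giorgi iteration machinery, for which Lemma~\ref{induclem}, Lemma~\ref{poinine}, Lemma~\ref{sobinclem0}, Corollary~\ref{sobinecor} and~\cite[Lemma~7.1]{Giu03} are the tools I would use.
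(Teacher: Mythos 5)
Your overall architecture matches the paper's proof exactly: a critical-density step proved by a De~Giorgi iteration on the lower truncations, followed by a verification of that density hypothesis via a dichotomy on $s$, using the nonlocal term in~\eqref{DG-def} when $s$ is small and Proposition~\ref{sDGlemprop} when $s$ is close to~$1$. The first step is fine (the paper reaches the Poincar\'e--Sobolev via Corollary~\ref{nullsobcor} on the fat zero set rather than Corollary~\ref{sobinecor} with cut-offs, a cosmetic variant), and your treatment of the $s<\bar s$ branch is a slight repackaging of the paper's single-level estimate at $\ell=4\delta$, giving the same $\delta^{p-1}$.

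There is, however, a genuine gap in the $s\ge\bar s$ branch. You propose to apply Proposition~\ref{sDGlemprop} to the fixed function $v=\min\{u,1\}$ at the dyadic levels $h=2^{-j-1}$, $k=2^{-j}$, claiming the normalization hypothesis holds with $M=M(n,p,H)$. But that hypothesis reads
\[
\| v \|_{L^p(B_{2R})}^p + (1-s)(2R)^{sp}[v]_{W^{s,p}(B_{2R})}^p \le M (2R)^n (k-h)^p,
\]
and by~\eqref{1dens} the left side is at least $\gamma |B_{2R}|$ (since $v\equiv 1$ on a $\gamma$-fraction of $B_{2R}$), whereas the right side shrinks like $M(2R)^n 2^{-jp}$. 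One is therefore forced to take $M\gtrsim \gamma\, 2^{jp}$, and since the pigeonhole only guarantees some $j_0\lesssim\log(1/\delta)$, the constant $M$ — and hence $\bar s$, which depends on $M$ — would depend on $\delta$. This destroys the dichotomy: as $\delta\to 0$, $\bar s\to 1$, and the $s<\bar s$ branch's prefactor $(1-\bar s)^{-1}$ blows up. The paper avoids this by applying Proposition~\ref{sDGlemprop} to the \emph{level-dependent} truncation $w_i:=(u-k_{i-1})_-$, $k_i=2^{-i}$, at levels $h=2^{-i}$, $k=3\cdot 2^{-i-1}$: since $0\le w_i\le 2^{-i+1}$ and~\eqref{2summbound} gives $(1-s)[w_i]_{W^{s,p}(B_2)}^p\le C_1\, 2^{(-i+1)p}$, both sides of the normalization scale like $2^{-ip}$ and $M$ can be taken uniform in $i$ (namely $M=8^p(|B_1|+C_1)$). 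Your measure-shrinking argument goes through unchanged once you replace $\min\{u,1\}$ by these rescaled truncations.
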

\begin{proof}
First of all, by scaling, we can restrict ourselves to take~$R = 1$. Let~$\delta \in (0, 1/64]$ and~$\tau \in (0, 2^{- n - 1}]$ to be specified later. Let then~$\delta \le h < k \le 2 \delta$ and~$1 \le \rho < r \le 2$.

We initially suppose that
\begin{equation} \label{deltadens}
|B_2 \cap \{ u < 2 \delta \}| \le \tau |B_2|.
\end{equation}
Under this additional assumption (and if~$\tau$ is sufficiently small), we prove that~\eqref{ugedelta} holds true. Then, at a second stage, we will show that~\eqref{deltadens} is in fact a consequence of the hypotheses made in the statement of the lemma, provided~$\delta$ is chosen small enough.

By~\eqref{deltadens} and the upper bound on~$\tau$, we have
\begin{equation} \label{nullishalf}
\begin{aligned}
| B_\rho \cap \{ (u - k)_- = 0 \}| & = |B_\rho \setminus \{ u < k\}| \ge |B_\rho| - |B_2 \cap \{ u < 2 \delta \}| \\
& \ge \left[ 1 - \tau \left( \frac{2}{\rho} \right)^n \right] |B_\rho| \ge (1 - 2^n \tau) |B_\rho| \\
& \ge \frac{1}{2} \, |B_\rho|.
\end{aligned}
\end{equation}
Let~$\sigma \in (0, s)$ be defined by
$$
\sigma := \max \left\{ 2 s - 1, s - \frac{n \varepsilon_0}{2 p} \right\},
$$
and notice that~$n \ge 1 > \sigma$. Also,~$1 - \sigma \le 2 (1 - s)$ and~$C^{-1} (1 - s) \le s - \sigma \le 1 - s$, for some~$C \ge 1$ depending on~$n$,~$p$ and~$\varepsilon_0$. By using~\eqref{nullishalf}, Corollary~\ref{nullsobcor} and Lemma~\ref{sobinclem}, we find that
\begin{align*}
(k - h) |A^-(h, \rho)|^{\frac{n - \sigma}{n}} & \le \left[ \int_{A^-(h, \rho)} (k - u(x))^{\frac{n}{n - \sigma}} \, dx \right]^{\frac{n - \sigma}{n}} \le \left[ \int_{B_\rho} (u(x) - k)_-^{1^*_\sigma} \, dx \right]^{\frac{1}{1^*_\sigma}} \\
& \le C (1 - \sigma) \int_{B_\rho} \int_{B_\rho} \frac{|(u(x) - k)_- - (u(y) - k)_-|}{|x - y|^{n + \sigma}} \, dx dy \\
& \le C (1 - s) \int_{A^-(k, \rho)} \int_{B_\rho} \frac{|(u(x) - k)_- - (u(y) - k)_-|}{|x - y|^{n + \sigma}} \, dx dy \\
& \le C (1 - s)^{\frac{1}{p}} |A^-(k, \rho)|^{\frac{p - 1}{p}} [(u - k)_-]_{W^{s, p}(B_\rho)}.
\end{align*}
By raising the above inequality to the~$p$ power and applying~\eqref{DG-def}, we then get
\begin{equation} \label{imppostech1}
\begin{aligned}
(k - h)^p |A^-(h, \rho)|^{\frac{n - \sigma}{n} p} & \le C |A^-(k, \rho)|^{p - 1} \Bigg[ (d^p + k^p) |A^-(k, r)|^{1 - \frac{sp}{n} + \varepsilon} \\
& \quad + \frac{\| (u - k)_- \|_{L^p(B_r)}^p}{(r - \rho)^p} \\
& \quad + \frac{\| (u - k)_- \|_{L^1(B_r))} \overline{\Tail}((u - k)_-; \rho)^{p - 1}}{(r - \rho)^{n + s p}} \Bigg],
\end{aligned}
\end{equation}
where~$C$ may now depend on~$H$ too. On the one hand, thanks to~\eqref{uge0},
\begin{equation} \label{Lqcontrol}
\| (u - k)_- \|_{L^q(B_r)}^q = \int_{A^-(k, r)} \left( k - u(x) \right)^q \, dx \le |A^-(k, r)| k^q,
\end{equation}
for any~$q \ge 1$. On the other hand, using~\eqref{udecay}, once again~\eqref{uge0} and that~$k \ge \delta$, we compute
\begin{align*}
\overline{\Tail}((u - k)_-; \rho)^{p - 1} & = (1 - s) \int_{\R^n \setminus B_\rho} \frac{(k - u(x))_+^{p - 1}}{|x|^{n + s p}} \, dx \\
& \le C \left[ k^{p - 1} \int_{\R^n \setminus B_\rho} \frac{dx}{|x|^{n + s p}} + (1 - s) \int_{\R^n \setminus B_4} \frac{u_-(x)^{p - 1}}{|x|^{n + s p}} \, dx \right]
\\
& = C \left[ \frac{\rho^{- s p}}{n \varepsilon_0} \, k^{p - 1} + 4^{- s p} \Tail(u_-; 4)^{p - 1} \right] \le C \left[ k^{p - 1} + \delta^{p - 1} \right] \\\
& \le C k^{p - 1}.
\end{align*}
By exploiting the last two estimates in~\eqref{imppostech1}, together with the fact that, by assumption~\eqref{udecay},~$d \le \delta \le k$, we easily conclude that
$$
(k - h)^p |A^-(h, \rho)|^{\frac{n - \sigma}{n} p} \le C (r - \rho)^{- n - s p} k^p |A^-(k, r)|^{p - \frac{s p}{n} + \varepsilon},
$$
which, thanks to the definition of~$\sigma$, in turn implies that
\begin{equation} \label{imppostech2}
(k - h)^{\frac{n}{n - \sigma}} |A^-(h, \rho)| \le C (r - \rho)^{- \frac{n (n + p)}{(n - 1) p}} k^{\frac{n}{n - \sigma}} |A^-(k, r)|^{1 + \frac{\varepsilon_0}{2 p}},
\end{equation}
at least when~$n \ge 2$.

Consider the sequences~$\{ r_i \}$ and~$\{ k_i \}$ defined by~$r_i := 1 + 2^{-i}$ and~$k_i := (1 + 2^{-i}) \delta$. Also set~$\phi_i := |A^-(k_i, r_i)| / |B_{r_i}|$. By applying~\eqref{imppostech2} with~$h = k_i$,~$k = k_{i - 1}$,~$\rho = r_i$ and~$r = r_{i - 1}$, we obtain
$$
\phi_i \le C 2^{\frac{n (n + 2 p)}{(n - 1) p} i} \phi_{i - 1}^{1 + \frac{\varepsilon_0}{2 p}}.
$$
Note that, by~\eqref{deltadens}, we know that
$$
\phi_0 = \frac{|A^-(2 \delta, 2)|}{|B_2|} \le \tau.
$$
Therefore, we may apply e.g.~\cite[Lemma~7.1]{Giu03} to deduce that~\eqref{ugedelta} holds true, at least if~$\tau$ is chosen sufficiently small, in dependence of~$n$,~$p$,~$\varepsilon_0$ and~$H$ only.

Note that, when~$n = 1$, one can deduce the same fact as above. But in this case~$\tau$ would depend on~$s$ too.

In order to conclude the proof, we now only need to show that the additional assumption~\eqref{deltadens} can be deduced from the hypotheses of lemma, provided~$\delta$ is small enough. To do so, we argue by contradiction and suppose that
\begin{equation} \label{deltadenscontra}
|B_2 \cap \{ u < 2 \delta \}| \ge \tau |B_2|,
\end{equation}
with~$\tau$ fixed as before.

We employ once again inequality~\eqref{DG-def}. Notice that, up to here, we only took advantage of the estimate for the first term on the left-hand side of~\eqref{DG-def}. Now we use it to obtain a bound for the second summand too. By arguing as before, we deduce from~\eqref{DG-def}---applied with~$r = 2$ and~$R = 3$---that, for any~$\ell \in [\delta, 1]$,
\begin{equation} \label{2summbound}
(1 - s) \left[ [(u - \ell)_-]_{W^{s, p}(B_2)}^p + \int_{B_2} \int_{B_2} \frac{(u(x) - \ell)_+^{p - 1} (u(y) - \ell)_-}{|x - y|^{n + s p}} \, dx dy \right] \le C_1 \ell^p,
\end{equation}
with~$C_1 \ge 1$ only depending on~$n$,~$p$,~$\varepsilon_0$ and~$H$.

We start by addressing the case~$n \ge 2$. Let~$\bar{s} \in (0, 1)$ be the parameter found in Proposition~\ref{sDGlemprop}, in correspondence to the choices~$M = 8^p \left( |B_1| + C_1 \right)$,~$\gamma_1 = \tau$ and~$\gamma_2 = \gamma$. Observe that~$\bar{s}$ depends only on~$n$,~$p$,~$\varepsilon_0$,~$H$ and~$\gamma$.

Suppose that~$s \in [\bar{s}, 1)$. Let~$m \ge 5$ be the unique integer for which
\begin{equation} \label{mdeltadef}
2^{- m - 1} \le \delta < 2^{-m}.
\end{equation}
Consider the decreasing sequence~$k_i := 2^{-i}$, for~$i \in \{ 0, \ldots, m \}$. Notice that~$k_i \in (2 \delta, 1]$ for any~$i \in \{ 1, \ldots, m - 1 \}$. Moreover, by~\eqref{1dens},~\eqref{Lqcontrol},~\eqref{deltadenscontra} and~\eqref{2summbound} it is easy to see that
\begin{align*}
|B_2 \cap \{ (u - k_{i - 1})_- \le 2^{-i} \}| & = |B_2 \cap \{ u \ge k_i \}| \ge |B_2 \cap \{ u \ge 1 \}| \ge \gamma |B_2|, \\
|B_2 \cap \{ (u - k_{i - 1})_- \ge 3 \cdot 2^{- i - 1} \}| & = |B_2 \cap \{ u \le k_{i + 1} \}| \ge |B_2 \cap \{ u < 2 \delta \}| \ge \tau |B_2|,
\end{align*}
and
\begin{equation} \label{Gagu-k}
\begin{aligned}
& \| (u - k_{i - 1})_- \|_{L^p(B_2)}^p + (1 - s) [(u - k_{i - 1})_-]_{W^{s, p}(B_2)}^p \\
& \hspace{50pt} \le \left( |A^-(k_{i - 1}, 2)| + C_1 \right) k_{i - 1}^p \le 4^p \left( |B_1| + C_1 \right) 2^n (k_i - k_{i + 1})^p,
\end{aligned}
\end{equation}
for any~$i \in \{ 1, \ldots, m - 2 \}$. Consequently, we can apply Proposition~\ref{sDGlemprop} to the function~$(u - k_{i - 1})_-$, with~$h = k_{i - 1} - k_i = 2^{- i}$ and~$k = k_{i - 1} - k_{i + 1} = 3 \cdot 2^{- i - 1}$. We easily get
\begin{align*}
& | B_2 \cap \{ u \le k_{i + 1} \} |^{\frac{n - 1}{n}} \le C \, 2^i (1 - s)^{1 / p} [(u - k_{i - 1})_-]_{W^{s, p}(B_2)} |B_2 \cap \{ k_{i + 1} < u < k_i \}|^{\frac{p - 1}{p}},
\end{align*}
for some~$C \ge 1$ depending only on~$n$,~$p$ and~$\gamma$. By means of~\eqref{Gagu-k}, we can control the Gagliardo seminorm of~$(u - k_{i - 1})_-$ and deduce that, for any~$i \in \{ 1, \ldots, m - 2 \}$,
$$
| B_2 \cap \{ u < 2 \delta \} |^{\frac{(n - 1) p}{n (p - 1)}} \le | B_2 \cap \{ u \le k_{i + 1} \} |^{\frac{(n - 1) p}{n (p - 1)}} \le C |B_2 \cap \{ k_{i + 1} < u < k_i \}|,
$$
where~$C$ may now depend on~$\varepsilon_0$ and~$H$ too. By adding up the above inequality as~$i$ ranges between~$1$ and~$m - 2$, we find
$$
(m - 2) | B_2 \cap \{ u < 2 \delta \} |^{\frac{(n - 1) p}{n (p - 1)}} \le C \sum_{i = 1}^{m - 2} |B_2 \cap \{ k_{i + 1} < u < k_i \}| \le C,
$$
which in turn yields that
$$
| B_2 \cap \{ u < 2 \delta \} | \le C \, m^{- \frac{n (p - 1)}{(n - 1) p}} \le C |\log \delta|^{- \frac{n (p - 1)}{(n - 1) p}},
$$
thanks to~\eqref{mdeltadef}. But this is in contradiction with~\eqref{deltadenscontra}, if~$\delta$ is sufficiently small.

On the other hand, when~$s \in (0, \bar{s})$, we simply estimate
\begin{align*}
& (1 - s) \int_{B_2} \int_{B_2} \frac{(u(x) - 4 \delta)_+^{p - 1} (u(y) - 4 \delta)_-}{|x - y|^{n + s p}} \, dx dy \\
& \hspace{50pt} \ge \frac{1 - \bar{s}}{4^{n + p}} \int_{B_2 \cap \{ u \ge 1 \}} ( u(x) - 4 \delta)^{p - 1} \, dx \int_{B_2 \cap \{ u < 2 \delta \}} (4 \delta - u(y)) \, dy \\
& \hspace{50pt} \ge \frac{1 - \bar{s}}{4^{n + p}} \frac{2 \delta}{2^{p - 1}} |B_2 \cap \{ u \ge 1 \}| |B_2 \cap \{ u < 2 \delta \}| \\
&  \hspace{50pt} \ge \frac{\delta}{C} \, |B_2 \cap \{ u < 2 \delta \}|,
\end{align*}
where we used~\eqref{uge0},~\eqref{1dens}, that~$\delta \le 1/8$ and the fact that~$|x - y|^{n + s p} \le 4^{n + p}$, for any~$x, y \in B_2$. By comparing this inequality with~\eqref{2summbound}---used here with~$\ell = 4 \delta$---, we readily get
$$
|B_2 \cap \{ u < 2 \delta \}| \le C \delta^{p - 1},
$$
which, again, contradicts~\eqref{deltadenscontra}, provided~$\delta$ is chosen small enough.

The case~$n = 1$ can be treated exactly in the same way as we just did, for~$n \ge 2$ and~$s \in (0, \bar{s})$. Of course, this time~$\delta$ may not be independent of~$s$. The proof is therefore complete.
\end{proof}

We remark that the proof just displayed makes complete use of inequality~\eqref{DG-def} only when~$s$ is smaller than the parameter~$\bar{s}$ found in Proposition~\ref{sDGlemprop} (and when~$n \ge 2$). Indeed, when~$s \ge \bar{s}$, we only needed estimate~\eqref{DG-def} to control the first summand on its left-hand side. If~$\bar{s}$ could be chosen to depend only on~$n$ and~$p$ in Proposition~\ref{sDGlemprop}, then when~$s \ge \bar{s}$ one would be able to prove Lemma~\ref{growthlem}---and thus, H\"older continuity, as we shall see momentarily---for a larger class of functions than~$\DG^{s, p}$. Namely, one could drop the second term on the left-hand side of inequalities~\eqref{DG+def}-\eqref{DG-def} and hence prove regularity for all functions that satisfy a more standard Caccioppoli-type inequality such as~\eqref{1scaccintro}.

Also notice that Proposition~\ref{sDGlemprop} has been used for the sole purpose of having~$\delta$ independent of~$s$. This mainly implies that our~$C^\alpha$ estimates will be independent of~$s$ as well, for~$s$ far from~$0$. On the contrary, if one is not interested in obtaining uniform estimates, then the proof of Lemma~\ref{growthlem} simplifies, as the same argument that we adopted for~$s \le \bar{s}$ can be reproduced in the case of a general~$s \in (0, 1)$.

\smallskip

Thanks to Lemma~\ref{growthlem}, we are now in position to prove the H\"older regularity of functions in fractional~De~Giorgi classes.

\begin{theorem}[\bfseries H\"older continuity of fractional De~Giorgi functions] \label{DGholdthm}
\textcolor{white}{}\\
Let~$u \in \DG^{s, p}(\Omega; d, H, -\infty, \varepsilon, \lambda, R_0)$, with~$0 < \varepsilon_0 \le \varepsilon \le s p / n$. Then~$u \in C^\alpha_\loc(\Omega)$, for some~$\alpha \in (0, 1)$. Moreover, given any~$x_0 \in \Omega$ and~$0 < R < \min \{ \dist \left( x_0, \partial \Omega \right), R_0 \} / 8$, it holds
$$
[ u ]_{C^\alpha(B_R(x_0))} \le \frac{C}{R^\alpha} \Big( \| u \|_{L^\infty(B_{4 R}(x_0))} + \Tail(u; x_0, 4 R) + R^{\frac{\lambda + n \varepsilon}{p}} d \Big),
$$
for some~$C \ge 1$. The constants~$\alpha$ and~$C$ depend only on~$n$,~$p$,~$\varepsilon_0$,~$H$ when~$n \ge 2$, and also on~$s$ when~$n = 1$.
\end{theorem}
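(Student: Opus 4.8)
The plan is to prove H\"older continuity by the classical De~Giorgi scheme --- a dichotomy, the growth Lemma~\ref{growthlem}, and an iteration yielding geometric decay of oscillations --- the genuinely nonlocal difficulty being the control of the Tail terms across dyadic scales. Since $\DG^{s,p}(\Omega;d,H,-\infty,\varepsilon,\lambda,R_0)$ is contained in every such class with a nonnegative $k_0$, Theorem~\ref{DGboundthm} gives $u\in L^\infty_{\loc}(\Omega)$ when $n\ge sp$; when $n<sp$ the fractional Morrey embedding of $W^{s,p}$ yields the assertion outright, so I assume $n\ge sp$ and take $x_0=0$. Writing
\[
\mathcal A:=\|u\|_{L^\infty(B_{4R})}+\Tail(u;0,4R)+R^{\frac{\lambda+n\varepsilon}{p}}d<+\infty ,
\]
and using the scaling relations of Section~\ref{DGsec} (dilating $x\mapsto x/R$ and multiplying $u\mapsto u/\mathcal A$), it is enough to establish the oscillation-decay bound
\[
\osc_{B_\rho(y)}u\le C\,(\rho/R)^{\alpha}\,\mathcal A,\qquad y\in B_R,\ 0<\rho\le R,
\]
for some $\alpha\in(0,1)$ and $C\ge1$ with the stated dependences; the $C^{\alpha}$-seminorm bound on $B_R$ then follows from the usual characterisation of H\"older spaces through decay of oscillations.

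\emph{Iteration.} Fix $y\in B_R$, abbreviate $B_\rho=B_\rho(y)$, and set $\rho_j:=4^{-j}\rho_*$ with $\rho_*$ a fixed fraction of $R$, small enough that the balls used below lie well inside $\Omega$ and have radius below $R_0$; let $M_j:=\sup_{B_{\rho_j}}u$, $m_j:=\inf_{B_{\rho_j}}u$. I would construct a nonincreasing sequence $(\omega_j)_{j\ge0}$ with $\omega_0\le2\mathcal A$, $M_j-m_j\le\omega_j$ and $\omega_j\le C\mathcal A(\rho_j/\rho_*)^{\alpha}$. The inductive step uses the dichotomy: at scale $\rho_j$ at least one of
\[
\bigl|B_{\rho_j/2}\cap\{u\ge\tfrac{M_j+m_j}{2}\}\bigr|\ge\tfrac12|B_{\rho_j/2}|,\qquad
\bigl|B_{\rho_j/2}\cap\{u\le\tfrac{M_j+m_j}{2}\}\bigr|\ge\tfrac12|B_{\rho_j/2}|
\]
holds. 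In the first case one applies Lemma~\ref{growthlem} on $B_{4\rho_{j+1}}=B_{\rho_j}$ to $v:=\frac{2}{M_j-m_j}(u-m_j)$ --- which is $\ge0$ in $B_{\rho_j}$ and satisfies $\{v\ge1\}\cap B_{\rho_j/2}=\{u\ge\frac{M_j+m_j}{2}\}\cap B_{\rho_j/2}$, hence the density hypothesis with $\gamma=\tfrac12$ --- and in the second case to the analogous normalisation of $M_j-u$ (using $u\in\DG_+^{s,p}\Leftrightarrow-u\in\DG_-^{s,p}$). If the smallness condition~\eqref{udecay} for $v$ is met, Lemma~\ref{growthlem} yields $v\ge\delta$ in $B_{\rho_{j+1}}$, hence $M_{j+1}-m_{j+1}\le(1-\tfrac\delta2)(M_j-m_j)$, with $\delta$ the constant of the lemma; if~\eqref{udecay} fails, then $M_j-m_j$ is itself bounded by $\delta^{-1}$ times the corresponding Tail- and $d$-contributions at scale $\rho_j$. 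Accordingly I would set $\omega_{j+1}:=\max\{(1-\tfrac\delta2)\omega_j,\ \Theta_j\}$, where $\Theta_j$ is (a multiple of) those two contributions, each decaying at least like $(\rho_j/\rho_*)^{n\varepsilon_0/p}$, and verify $M_j-m_j\le\omega_j$ by induction. With $\alpha:=-\log_4(1-\delta/2)$ and $\alpha$ taken small enough --- so that $(1-\tfrac\delta2)=4^{-\alpha}\ge4^{-n\varepsilon_0/p}$ --- one obtains $\omega_j\le C\mathcal A(\rho_j/\rho_*)^{\alpha}$; interpolating between consecutive radii completes the reduction above.

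\emph{Main obstacle.} The crux is the verification of~\eqref{udecay} for $v$, that is, control of $\Tail(v_-;0,\rho_j)$. Rescaling $u-m_j$ by $2/(M_j-m_j)$ multiplies it by the large factor $\sim\omega_j^{-1}$, so its negative part does not shrink by itself: one has to split $\R^n\setminus B_{\rho_j}$ into the dyadic annuli $B_{\rho_i}\setminus B_{\rho_{i+1}}$ ($i<j$), the annulus $B_{4R}\setminus B_{\rho_*}$, and $\R^n\setminus B_{4R}$, bound the relevant truncation on each of these by the already-obtained oscillation $\omega_i$, by $\mathcal A$, and via $\Tail(u;0,4R)$ respectively, and sum the resulting series --- exploiting $(1-s)(1-4^{-sp})/(sp)\le\log 4$ to keep the annular weights bounded uniformly in $s$, and, when $s$ is small, the extra second term on the left-hand side of the defining fractional De~Giorgi inequality, which compensates the absence of the isoperimetric-type inequality~\eqref{isoplevset} exactly as in the proof of Lemma~\ref{growthlem}. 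It is this summation that forces the particular form of $\Theta_j$ and hence fixes the admissible range of $\alpha$. Finally, the independence of $\alpha$ and $C$ from $s$ as $s\to1^-$ for $n\ge2$ is inherited from that of $\delta$ in Lemma~\ref{growthlem}, which in turn relies on Proposition~\ref{sDGlemprop}; for $n=1$ no analogous surrogate for~\eqref{isoplevset} is available and the constants are allowed to depend on $s$.
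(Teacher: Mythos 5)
Your proposal follows essentially the same route as the paper's proof: a dyadic De~Giorgi oscillation-decay iteration in which the normalized function is fed to Lemma~\ref{growthlem} after the $\gamma=1/2$ dichotomy, and the Tail of the rescaled negative part is controlled by decomposing $\R^n\setminus B_{\rho_j}$ into dyadic annuli and summing the already-established oscillation bounds, with the uniformity in $s$ (for $n\ge 2$) inherited from $\delta$ via Proposition~\ref{sDGlemprop}. The only bookkeeping difference is that you allow hypothesis~\eqref{udecay} to fail and absorb the alternative into $\Theta_j$, whereas the paper inflates the starting oscillation $L$ by the factor $2\cdot 4^{n\varepsilon_0 j_0/(2p)}$ and tunes $\alpha$ via~\eqref{alphadef}--\eqref{alphacond} so that~\eqref{udecay} is always satisfied along the iteration; your claimed decay rate $(\rho_j/\rho_*)^{n\varepsilon_0/p}$ for the Tail part of $\Theta_j$ is slightly too optimistic (it only decays like $4^{-\alpha j}$), though this is harmless since $4^{-\alpha j}$ is exactly the target rate.
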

\begin{proof}
Assume without loss of generality that~$x_0 = 0$. Let~$\delta \in (0, 1/8]$ be the constant found in Lemma~\ref{growthlem}---with~$\gamma = 1/2$ and~$4^p H$ instead of~$H$. Take
\begin{equation} \label{alphadef}
0 < \alpha \le \min \left\{ \frac{n \varepsilon_0}{2 p} , \log_4 \left( \frac{2}{2 - \delta} \right) \right\},
\end{equation}
in such a way that
\begin{equation} \label{alphacond}
\int_4^{+\infty} \frac{(\rho^\alpha - 1)^{p - 1}}{\rho^{1 + n \varepsilon_0}} \, d\rho \le \frac{\varepsilon_0 \delta}{8^{p + 1} p \max \{ 1, |B_1| \}}.
\end{equation}
Observe that, by Lebesgue's dominated convergence theorem, the integral appearing in~\eqref{alphacond} can be made as small as desired, by taking~$\alpha$ sufficiently small. Set
\begin{equation} \label{j0def}
j_0 := \left\lceil \frac{2}{n \varepsilon_0} \log_4 \left( \frac{8^{p + 1} p \max \{ 1, |B_1| \}}{\varepsilon_0 \delta} \right) \right\rceil.
\end{equation}

We claim that there exist a non-decreasing sequence~$\{ m_i \}$ and a non-increasing sequence~$\{ M_i \}$ of real numbers, such that, for any~$i \in \N \cup \{ 0 \}$,
\begin{equation} \label{miMidef}
\begin{aligned}
m_i \le u \le M_i \mbox{ in } B_{4^{1 - i} R} \quad \mbox{and} \quad M_i - m_i = 4^{- \alpha i} L,
\end{aligned}
\end{equation}
with
\begin{equation} \label{Ldef}
L := 2 \cdot 4^{\frac{n \varepsilon_0}{2 p} j_0} \| u \|_{L^\infty(B_{4 R})} + \Tail(u; 4 R) + R^{\frac{\lambda + n \varepsilon}{p}} d.
\end{equation}

We proceed by induction on the index~$i$. Set~$m_i := - 4^{- \alpha i} L / 2$ and~$M_i := 4^{- \alpha i} L / 2$, for any~$i = 0, \ldots, j_0$. Then,~\eqref{miMidef} holds for these~$i$'s, thanks to~\eqref{alphadef} and~\eqref{Ldef}. Now we fix an integer~$j \ge j_0$ and suppose that the sequences~$\{ m_i \}$ and~$\{ M_i \}$ have been constructed up to~$i = j$. Claim~\eqref{miMidef} will be proved once we construct~$m_{j + 1}$ and~$M_{j + 1}$ appropriately.

Consider the function
\begin{equation} \label{vdef}
v := \frac{2 \cdot 4^{\alpha j}}{L} \left( u - \frac{M_j + m_j}{2} \right).
\end{equation}
By~\eqref{miMidef} and the monotonicity of~$\{ m_j \}$,~$\{ M_j \}$, we have that
\begin{equation} \label{meanjest}
\left| M_j + m_j \right| \le \left( 1 - 4^{- \alpha j} \right) L.
\end{equation}
Then, it is not hard to see that
\begin{equation} \label{vDG}
v \in \DG^{s, p} \left( B_{8 R}; \frac{2 \cdot 4^{\alpha j}}{L} \, d + R^{- \frac{n \varepsilon + \lambda}{p}} \left( 4^{\alpha j} - 1 \right), 2^p H, -\infty, \varepsilon, \lambda, R_0 \right),
\end{equation}
and
\begin{equation} \label{|v|le1}
|v| \le 1 \quad \mbox{in } B_{4^{1 - j} R}.
\end{equation}

Take now~$x \in B_{4 R} \setminus B_{4^{1 - j} R}$ and let~$\ell \in \{ 0, \ldots, j - 1 \}$ be the unique integer for which~$x \in B_{4^{1 - \ell} R} \setminus B_{4^{- \ell} R}$. By~\eqref{miMidef} and the monotonicity of~$\{ m_i \}$ we have
\begin{align*}
v(x) & \le \frac{2 \cdot 4^{\alpha j}}{L} \left[ M_\ell - m_\ell + m_\ell - \frac{M_j + m_j}{2} \right] \le \frac{2 \cdot 4^{\alpha j}}{L} \left[ M_\ell - m_\ell + m_j - \frac{M_j + m_j}{2} \right] \\
& = \frac{2 \cdot 4^{\alpha j}}{L} \left[ M_\ell - m_\ell - \frac{M_j - m_j}{2} \right] = 2 \cdot 4^{\alpha (j - \ell)} - 1 \\
& \le 2 \left( 4^j \frac{|x|}{R} \right)^\alpha - 1.
\end{align*}
Similarly, one checks that~$v(x) \ge - 2 \left( 4^j |x| / R \right)^\alpha + 1$, and hence
\begin{equation} \label{1pmvinsideest}
(1 \pm v(x))_-^{p - 1} \le 2^{p - 1} \left[ \left( 4^j \frac{|x|}{R} \right)^{\alpha} - 1 \right]^{p - 1} \quad \mbox{for a.a.~} x \in B_{4 R} \setminus B_{4^{1 - j} R}.
\end{equation}
On the other hand, using~\eqref{meanjest} we easily get that
\begin{equation} \label{1pmvoutsideest}
(1 \pm v)_-^{p - 1} \le 2^{p - 1} \left[ \left( \frac{2 \cdot 4^{\alpha j}}{L} \right)^{p - 1} |u|^{p - 1} + 4^{\alpha (p - 1) j} \right] \quad \mbox{a.e.~in } \R^n \setminus B_{4 R}.
\end{equation}
With the help of~\eqref{1pmvinsideest},~\eqref{1pmvoutsideest} and changing variables appropriately, we compute
\begin{align*}
& \Tail((1 \pm v)_-; 4^{1 - j} R)^{p - 1} \\
& \hspace{10pt} \le 4^{- j s p + 2 p} R^{s p} \left[ \rule{0pt}{26pt} \int_{\R^n \setminus B_{4^{1 - j} R}} \frac{\left[ \left( 4^j \frac{|x|}{R} \right)^\alpha - 1 \right]^{p - 1}}{|x|^{n + s p}} \, dx \right. \\
& \hspace{10pt} \quad + \left. (1 - s) \left( \frac{4^{\alpha j}}{L} \right)^{p - 1} \int_{\R^n \setminus B_{4 R}} \frac{|u(x)|^{p - 1}}{|x|^{n + s p}} \, dx + 4^{\alpha (p - 1) j} \int_{\R^n \setminus B_{4 R}} \frac{dx}{|x|^{n + s p}} \rule{0pt}{28pt} \right] \\
& \hspace{10pt} \le \frac{8^p p \max \{ 1, |B_1| \}}{\varepsilon_0} \left[ \int_{4}^{+ \infty} \frac{\left( \rho^\alpha - 1 \right)^{p - 1}}{\rho^{1 + n \varepsilon_0}} \, d\rho + 4^{\left( \alpha p - n \varepsilon_0 \right) j} \left( \frac{\Tail(u; 4 R)^{p - 1}}{L^{p - 1}} + 1 \right) \right].
\end{align*}
Recalling~\eqref{alphadef},~\eqref{alphacond},~\eqref{j0def} and~\eqref{Ldef}, we are led to conclude that
\begin{equation} \label{Tailcontrol}
\Tail((1 \pm v)_-; 4^{1 - j} R) \le \frac{\delta}{2}.
\end{equation}

Now, we have that either
\begin{equation} \label{holderdico}
\left| B_{4^{1 - j} R / 2} \cap \{ v \ge 0 \} \right| \ge \frac{1}{2} \left| B_{4^{1 - j} R / 2} \right| \mbox{ or } \left| B_{4^{1 - j} R / 2} \cap \{ v \ge 0 \} \right| < \frac{1}{2} \left| B_{4^{1 - j} R / 2} \right|.
\end{equation}
In the first of the two situations described by~\eqref{holderdico} we set~$w := 1 + v$, while in the second~$w := 1 - v$. In any case, we obtain
$$
\left| B_{4^{1 - j} R / 2} \cap \{ w \ge 1 \} \right| \ge \frac{1}{2} \left| B_{4^{1 - j} R / 2} \right|.
$$
Furthermore,
\begin{align*}
w & \in \DG^{s, p} \left( B_{4^{1 - j} R}; \frac{2 \cdot 4^{\alpha j}}{L} \, d + R^{- \frac{n \varepsilon + \lambda}{p}} 4^{\alpha j}, 4^p H, -\infty, \varepsilon, \lambda, R_0 \right), \\
w & \ge 0 \mbox{ in } B_{4^{1 - j} R},
\end{align*}
and
$$
\left( 4^{- j} R \right)^{\frac{\lambda + n \varepsilon}{p}} \left[ \frac{2 \cdot 4^{\alpha j}}{L} \, d + R^{- \frac{n \varepsilon + \lambda}{p}} 4^{\alpha j} \right] + \Tail(w_-; 4^{1 - j} R) \le \delta,
$$
thanks to~\eqref{vDG},~\eqref{|v|le1},~\eqref{Tailcontrol},~\eqref{alphadef} and~\eqref{j0def}. Therefore, we are in position to apply Lemma~\ref{growthlem} to~$w$. We deduce that
$$
w \ge \delta \quad \mbox{in } B_{4^{- j} R}.
$$
Assume for instance that the first alternative in~\eqref{holderdico} is satisfied. By taking advantage of the above estimate,~\eqref{vdef} and~\eqref{miMidef},
\begin{align*}
u(x) & = \frac{M_j + m_j}{2} + \frac{L}{2 \cdot 4^{\alpha j}} \, v(x) = \frac{M_j + m_j}{2} + \frac{L}{2 \cdot 4^{\alpha j}} \left( w(x) - 1 \right) \\
& \ge M_j - \frac{M_j - m_j}{2} - \frac{L}{2 \cdot 4^{\alpha j}} (1 - \delta) \\
& \ge M_j - \frac{L}{4^{\alpha j}} \frac{2 - \delta}{2},
\end{align*}
for any~$x \in B_{4^{- j} R}$. In view of~\eqref{alphadef}, we finally conclude that
$$
M_j - 4^{ - (j + 1) \alpha} L \le u \le M_j \quad \mbox{in } B_{4^{- j} R},
$$
that is,~\eqref{miMidef} is true for~$i = j + 1$, with~$M_{j + 1} := M_j$ and~$m_{j + 1} := M_{j + 1} - 4^{- (j + 1) \alpha} L$. Of course, if instead the second alternative in~\eqref{holderdico} is valid, an analogous argument leads to the same conclusion, with~$m_{j + 1} := m_j$ and~$M_{j + 1} := m_{j + 1} + 4^{-(j + 1) \alpha} L$.

Claim~\eqref{miMidef} holds therefore for any~$i \in \N \cup \{ 0 \}$, and the H\"older continuity of~$u$ follows in a standard way.
\end{proof}

\subsection{Harnack inequality} \label{harsubsec}

The conclusive part of this section is devoted to establishing a Harnack-type inequality for functions in fractional De~Giorgi classes.

For simplicity of exposition, we restrict ourselves to assume~$n \ge 2$ throughout the whole subsection. In this way, the constants involved in the various propositions are independent of~$s$, if~$s$ is bounded away from~$0$ (at least if~$p \ne n$). When~$n = 1$, all the arguments displayed are still valid, but several estimates may not be uniform in~$s$.

\smallskip

As a first step towards the aforementioned goal, we have the following result, that slightly improves Lemma~\ref{growthlem}.

\begin{lemma} \label{2growthlem}
Let~$t > 0$ and~$u \in \DG_-^{s, p}(B_{4 R}(x_0); d, H, -t, \varepsilon, \lambda, R_0)$, for some~$x_0 \in \R^n$ and~$R > 0$, with~$R_0 \ge 4 R$ and~$0 < \varepsilon_0 \le \varepsilon \le s p / n$. Suppose that
$$
u \ge 0 \quad \mbox{in } B_{4 R}(x_0),
$$
and
$$
|B_R(x_0) \cap \{ u \ge t \}| \ge \gamma |B_R|,
$$
for some~$\gamma \in (0, 1)$. There exists a constant~$\delta > 0$, depending only on~$n$,~$p$,~$\varepsilon_0$,~$H$ and~$\gamma$, such that, if
$$
R^{\frac{\lambda + n \varepsilon}{p}} d + \Tail(u_-; x_0, 4 R) \le \delta t,
$$
then,
$$
u \ge \delta t \quad \mbox{in } B_R(x_0).
$$
\end{lemma}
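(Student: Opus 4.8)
The plan is to reduce the statement to Lemma~\ref{growthlem} by a rescaling argument, exploiting the translation and dilation invariances of the fractional~De~Giorgi classes recorded just after the definition of $\DG^{s,p}$.

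First I would assume, without loss of generality, that $x_0 = 0$: by the translation invariance of $\DG_-^{s,p}$ (the case $\rho = 1$ of the scaling property for $u_{z,\rho}$, which leaves $d$ and $R_0$ unchanged), the hypotheses of the lemma and its conclusion are unaffected if $u$ is replaced by $u(\,\cdot - x_0)$. Next I would normalize the height of $u$ by setting $v := u/t$. By the dilation scaling $u \mapsto u^{(\mu)} = \mu u$ with $\mu = 1/t$—which turns $\DG_-^{s,p}(B_{4R}; d, H, -t, \varepsilon, \lambda, R_0)$ into $\DG_-^{s,p}(B_{4R}; d/t, H, -1, \varepsilon, \lambda, R_0)$—we get that $v \in \DG_-^{s,p}(B_{4R}; d/t, H, -1, \varepsilon, \lambda, R_0)$, which is precisely the class to which Lemma~\ref{growthlem} applies. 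Clearly $v \ge 0$ in $B_{4R}$, and since
$$
| B_{2 R} \cap \{ v \ge 1 \} | \ge | B_R \cap \{ v \ge 1 \} | = | B_R \cap \{ u \ge t \} | \ge \gamma | B_R | = 2^{-n} \gamma \, | B_{2 R} |,
$$
the density hypothesis~\eqref{1dens} of Lemma~\ref{growthlem} holds for $v$ with $\gamma$ replaced by $2^{-n}\gamma$.

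Then I would apply Lemma~\ref{growthlem} to $v$, with $R_0 \ge 4R$, the same $\varepsilon_0 \le \varepsilon \le sp/n$, and density parameter $2^{-n}\gamma$: it provides a constant $\delta \in (0, 1/8]$, depending only on $n$, $p$, $\varepsilon_0$, $H$ and $\gamma$, such that $v \ge \delta$ in $B_R$ as soon as $R^{(\lambda + n\varepsilon)/p}(d/t) + \Tail(v_-; 4R) \le \delta$. Since the Tail functional is positively $1$-homogeneous in its first argument and $(u/t)_- = u_-/t$, one has $\Tail(v_-; 4R) = t^{-1}\Tail(u_-; 4R)$, so this smallness condition reads exactly $R^{(\lambda + n\varepsilon)/p} d + \Tail(u_-; 4R) \le \delta t$, i.e.\ the hypothesis of the lemma. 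Undoing the normalization, $v \ge \delta$ in $B_R$ becomes $u \ge \delta t$ in $B_R$, which is the claim. There is essentially no obstacle here; the only steps requiring a little care are checking how the dilation $u \mapsto u/t$ acts on each of the three terms of the defining inequality~\eqref{DG-def} and on the Tail term (all covered by the homogeneities just used), and the harmless passage from the measure bound on $B_R$ in the present statement to the one on $B_{2R}$ demanded in Lemma~\ref{growthlem}.
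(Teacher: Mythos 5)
Your proposal is correct and follows essentially the same route as the paper: reduce to Lemma~\ref{growthlem} by passing to the normalization $v = u/t$ via the dilation scaling of $\DG_-^{s,p}$, and pass from the density on $B_R$ to one on $B_{2R}$ by replacing $\gamma$ with $2^{-n}\gamma$. The paper merely treats the case $t=1$ first and then rescales, whereas you rescale first, but the substance is identical.
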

\begin{proof}
Of course, we can assume~$x_0$ to be the origin. We begin by addressing the case of~$t = 1$. Set
$$
\tilde{\gamma} := 2^{- n} \gamma \in (0, 1),
$$
and let~$\delta$ be the constant found in Lemma~\ref{growthlem}, corresponding to the above defined~$\tilde{\gamma}$. It holds
$$
|B_{2 R} \cap \{ u \ge 1 \}| \ge |B_R \cap \{ u \ge 1 \}| \ge \gamma |B_R| = \tilde{\gamma} |B_{2 R}|.
$$
Hence, we are in position to apply Lemma~\ref{growthlem} and deduce that
$$
u \ge \delta \quad \mbox{in } B_R.
$$
The lemma is therefore proved, for~$t = 1$.

The general case of~$t > 0$ can be then easily deduced. Indeed, let~$v := t^{-1} u$. The function~$v$ belongs to~$\DG_-^{s, p}(B_{4 R}; d/t, H, -1, \varepsilon, R_0)$ and fulfills the hypotheses of the lemma with~$t = 1$ and~$d/t$ in place of~$d$. Thus, from the preceding argument we deduce that~$u = t v \ge t \delta$, and the proof is complete.
\end{proof}

Next, we use Lemma~\ref{2growthlem} to prove a~\emph{weak Harnack inequality}, which, together with Proposition~\ref{ulocboundprop}, will lead to the proper Harnack inequality.

In order to do this, we first recall a classical covering lemma of Krylov and Safonov~\cite{KS80}. We present it here in a version with balls in place of cubes, due to~\cite{KS01}.

\begin{lemma}[{\cite[Lemma~7.2]{KS01}}] \label{KSlem}
Let~$\gamma \in (0, 1)$,~$R > 0$ and~$E \subseteq B_R$ be a measurable set. Define
$$
E_\gamma := \bigcup \Big\{ B_R \cap B_{3 r}(x_0) : x_0 \in B_R, \, r > 0 \mbox{ and } |B_{3 r}(x_0) \cap E| \ge \gamma |B_r| \Big\}.
$$
Then, either~$E_\gamma = B_R$ or
$$
|E_\gamma| \ge \frac{1}{2^n \gamma} |E|.
$$
\end{lemma}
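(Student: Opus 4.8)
The plan is to prove the quantitative alternative in the sharp form: assuming $E_\gamma \ne B_R$, I will show $|E| \le 2^n\gamma|E_\gamma|$. The two ingredients are the Lebesgue differentiation theorem and a Vitali-type covering argument, and the whole point is to attach to almost every point of $E$ a ``critical scale'' at which the density of $E$ drops to $\gamma$.

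First I would fix a point $\bar x \in B_R\setminus E_\gamma$ --- this is the only use of the hypothesis $E_\gamma\ne B_R$. Since $E$ is measurable, for a.e.\ $x\in E$ one has $|B_\rho(x)\cap E|/|B_\rho|\to1$ as $\rho\to0^+$, hence $|B_{3\rho}(x)\cap E|\ge|B_\rho(x)\cap E|\ge\gamma|B_\rho|$ for all small $\rho$. On the other hand, if $\rho>|x-\bar x|/3$ then $\bar x\in B_{3\rho}(x)$, so $|B_{3\rho}(x)\cap E|\ge\gamma|B_\rho|$ is impossible: it would force $\bar x\in B_R\cap B_{3\rho}(x)\subseteq E_\gamma$. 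Thus the critical radius $\rho_x:=\sup\{\rho>0:|B_{3\rho}(x)\cap E|\ge\gamma|B_\rho|\}$ satisfies $0<\rho_x\le|x-\bar x|/3<\tfrac{2}{3}R$ for a.e.\ $x\in E$ --- in particular these radii are bounded uniformly in $\gamma$, which is crucial below --- and, since $\rho\mapsto|B_{3\rho}(x)\cap E|$ is continuous, one gets $|B_{3\rho_x}(x)\cap E|\ge\gamma|B_{\rho_x}|$ while $|B_{3\rho}(x)\cap E|<\gamma|B_\rho|$ for every $\rho>\rho_x$.

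Next I would apply the Vitali covering lemma to the family $\{B_{\rho_x}(x)\}$ indexed over the density points of $E$: having uniformly bounded radii, it contains a countable disjoint subfamily $\{B_{\rho_i}(x_i)\}_i$ whose fixed concentric dilates cover $E$ up to a null set. The bound on $|E|$ from above uses the property $|B_{3\rho}(x_i)\cap E|<\gamma|B_\rho|$ for $\rho>\rho_i$ on these dilates, giving $|E|\le C(n)\,\gamma\sum_i|B_{\rho_i}|$. The bound from below is geometric: since $x_i\in B_R$ and $\rho_i<\tfrac{2}{3}R$, a ball of radius $\rho_i/2$ fits inside $B_{\rho_i}(x_i)\cap B_R$ (slide it towards the centre of $B_R$), so $|B_{\rho_i}(x_i)\cap B_R|\ge2^{-n}|B_{\rho_i}|$; and because the sets $B_{\rho_i}(x_i)\cap B_R\subseteq B_{3\rho_i}(x_i)\cap B_R\subseteq E_\gamma$ are pairwise disjoint, $\sum_i|B_{\rho_i}|\le2^n|E_\gamma|$. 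Combining the two bounds yields $|E|\le C(n)\,\gamma\,|E_\gamma|$.

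The routine parts are the Lebesgue differentiation step, the continuity of $\rho\mapsto|B_{3\rho}(x)\cap E|$, and the elementary inequality $|B_\rho(x)\cap B_R|\ge2^{-n}|B_\rho|$. The delicate point --- and the main obstacle to hitting the sharp constant --- is the bookkeeping in the covering step: the plain $5r$-Vitali selection is simple but only produces a constant of the shape $(10/3)^n$, and obtaining exactly $2^n$ needs the more careful selection scheme of~\cite{KS01} (running the stopping-time argument on dyadic cubes, where $2^n$ is the volume ratio of a cube to its parent, and then transferring to balls). I would reproduce that scheme if the sharp constant is required, and otherwise be content with $|E|\le C(n)\gamma|E_\gamma|$ from the argument above.
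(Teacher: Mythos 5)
The paper does not give its own proof of this lemma; it is cited verbatim from~\cite[Lemma~7.2]{KS01}. So there is no in-paper argument to compare against, and your proposal has to be assessed on its own.

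Your scheme is sound: attaching to a.e.\ Lebesgue density point~$x \in E$ the stopping radius~$\rho_x := \sup\{\rho>0 : |B_{3\rho}(x)\cap E|\ge\gamma|B_\rho|\}$, showing~$0<\rho_x\le|x-\bar x|/3<\frac{2}{3}R$ via a fixed point~$\bar x\in B_R\setminus E_\gamma$, and invoking the~$5r$-Vitali lemma. The routine verifications you flag all hold: continuity of~$\rho\mapsto|B_{3\rho}(x)\cap E|$ gives~$|B_{3\rho_x}(x)\cap E|\ge\gamma|B_{\rho_x}|$ at the critical scale; disjointness of the selected balls inside~$B_R$ together with the elementary inclusion~$|B_\rho(x)\cap B_R|\ge 2^{-n}|B_\rho|$ (valid for~$x\in B_R$,~$\rho<R$, by sliding a half-radius ball towards the origin) gives~$\sum_i|B_{\rho_i}|\le 2^n|E_\gamma|$; and applying the stopping property at scale~$\rho=5\rho_i/3>\rho_i$ on each dilate gives~$|E\cap B_{5\rho_i}(x_i)|<\gamma(5/3)^n|B_{\rho_i}|$, so~$|E|\le\gamma(5/3)^n\sum_i|B_{\rho_i}|$.

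The one genuine gap is exactly the one you name: these two bounds combine to~$|E|\le(10/3)^n\gamma\,|E_\gamma|$, not~$|E|\le 2^n\gamma\,|E_\gamma|$. The extra~$(5/3)^n$ is unavoidable with the~$5r$-Vitali selection, and to hit~$2^n$ one has to run the stopping time on a dyadic mesh, where~$2^n$ is the volume ratio of a cube to its parent, and then transfer to balls --- precisely what~\cite{KS01} do. Your proposal therefore proves a strictly weaker statement than the one asserted. That said, the only place the paper uses this lemma is the proof of Lemma~\ref{3growthlem}, where~$2^n$ enters only through the normalization~$\gamma_1:=2^{-n}\gamma$; replacing that by~$\gamma_1:=(3/10)^n\gamma$ (and~$\gamma_2:=3^{-n}\gamma_1$ accordingly) makes the~$(10/3)^n$-version fully adequate, and no downstream constant in the paper depends on the sharpness. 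So the gap is real relative to the lemma as stated, but harmless for the application, and you have diagnosed both correctly.
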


With the aid of Lemma~\ref{KSlem} we can now prove the following result.

\begin{lemma} \label{3growthlem}
Let~$\gamma \in (0, 1)$,~$t > 0$ and~$k \in \N$. Let~$u \in \DG_-^{s, p}(B_{16 R}; d, H, -t, \varepsilon, \lambda, R_0)$, with~$R > 0$,~$R_0 \ge 16 R$ and~$0 < \varepsilon_0 \le \varepsilon \le s p / n$. Suppose that
\begin{equation} \label{uge0B16}
u \ge 0 \quad \mbox{in } B_{16 R},
\end{equation}
and
\begin{equation} \label{supergegammak}
|B_R \cap \{ u \ge t \}| \ge \gamma^k |B_R|.
\end{equation}
There exists a constant~$\delta \in (0, 1/8]$, depending only on~$n$,~$p$,~$\varepsilon_0$,~$H$ and~$\gamma$, such that, if
\begin{equation} \label{Tailledeltak}
R^{\frac{\lambda + n \varepsilon}{p}} d + \Tail(u_-; 16 R) \le \delta^k t,
\end{equation}
then
$$
u \ge \delta^k t \quad \mbox{in } B_{R}.
$$
\end{lemma}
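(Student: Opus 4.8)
The plan is to argue by induction on $k$, fixing $\delta\in(0,1/8]$ once and for all as a suitable minimum of the constants furnished by Lemma~\ref{2growthlem} (applied with a few fixed parameters depending only on $n$, $p$, $\gamma$) divided by a harmless $\Tail$-comparison factor. For $k=1$ the statement is essentially Lemma~\ref{2growthlem} itself: restricting inequality~\eqref{DG-def} from $B_{16R}$ to $B_{4R}$ only weakens the admissible range of centres and radii, the sign condition $u\ge0$ on $B_{16R}$ forces $u_-\equiv0$ there and hence $\Tail(u_-;4R)\le\Tail(u_-;16R)$ (so the global smallness hypothesis transfers), and one takes $\delta\le\delta_1$, where $\delta_1=\delta_1(n,p,\varepsilon_0,H,\gamma)$ is the constant of Lemma~\ref{2growthlem} with parameter $\gamma$.

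For the inductive step, assume the statement for $k-1$ and suppose $|B_R\cap\{u\ge t\}|\ge\gamma^k|B_R|$. Set $E:=B_R\cap\{u\ge t\}$ and apply the Krylov--Safonov covering Lemma~\ref{KSlem} with parameter $\bar\gamma:=\gamma/2^n$. I would first dispose of the balls $B_{3r}(y)$ ($y\in B_R$, $|B_{3r}(y)\cap E|\ge\bar\gamma|B_r|$) entering $E_{\bar\gamma}$ that have $r>R/3$: for any such ball, $|B_R\cap\{u\ge t\}|\ge|B_{3r}(y)\cap E|\ge\bar\gamma|B_r|>(\gamma/6^n)|B_R|$, so Lemma~\ref{2growthlem} applied directly on $B_{4R}$ with parameter $\gamma/6^n$ (smallness transferred as above) already gives $u\ge\delta^\sharp t\ge\delta^k t$ in $B_R$, with $\delta^\sharp=\delta^\sharp(n,p,\varepsilon_0,H,\gamma)$ and $\delta\le\delta^\sharp$, and we are done. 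Otherwise every ball of the covering has $r\le R/3$, whence $B_{12r}(y)\subseteq B_{5R}\subseteq B_{16R}$, $R_0\ge16R\ge12r$, $3r\le R$, and $|B_{3r}(y)\cap\{u\ge t\}|\ge(\gamma/6^n)|B_{3r}|$; since moreover $(3r)^{(\lambda+n\varepsilon)/p}d\le R^{(\lambda+n\varepsilon)/p}d$ and $\Tail(u_-;y,12r)\le C_{n,p}\,\Tail(u_-;16R)$ (again using $u_-\equiv0$ on $B_{16R}$ together with $|x-y|\ge\tfrac{15}{16}|x|$ for $x\notin B_{16R}$, $y\in B_R$), the smallness hypothesis of Lemma~\ref{2growthlem} on each $B_{3r}(y)$ follows from ours provided $\delta\le\delta^\sharp/C_{n,p}$, so $u\ge\delta^\sharp t$ on every such $B_{3r}(y)$ and therefore $E_{\bar\gamma}\subseteq B_R\cap\{u\ge\delta^\sharp t\}$. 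The dichotomy of Lemma~\ref{KSlem} then gives either $E_{\bar\gamma}=B_R$, whence $u\ge\delta^\sharp t\ge\delta^k t$ in $B_R$, or $|E_{\bar\gamma}|\ge(2^n\bar\gamma)^{-1}|E|=\gamma^{-1}|E|\ge\gamma^{k-1}|B_R|$, whence $|B_R\cap\{u\ge\delta^\sharp t\}|\ge\gamma^{k-1}|B_R|$. In this last case I apply the inductive hypothesis to $u$ with $\delta^\sharp t$ in place of $t$ — legitimate because $\DG_-^{s,p}(B_{16R};d,H,-t,\dots)\subseteq\DG_-^{s,p}(B_{16R};d,H,-\delta^\sharp t,\dots)$ since $\delta^\sharp t\le t$, and the needed smallness $R^{(\lambda+n\varepsilon)/p}d+\Tail(u_-;16R)\le\delta^{k-1}\delta^\sharp t$ follows from $\le\delta^k t$ because $\delta\le\delta^\sharp$ — obtaining $u\ge\delta^{k-1}\delta^\sharp t\ge\delta^k t$ in $B_R$. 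Taking $\delta:=\min\{1/8,\delta_1,\delta^\sharp/C_{n,p}\}$ closes the induction.

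The main obstacle is the scale bookkeeping in the Krylov--Safonov step: one has to split off the covering balls that, after the fourfold dilation needed to invoke the growth Lemma~\ref{2growthlem}, would fail to fit inside the fixed ball $B_{16R}$. These turn out to be exactly the balls forcing the superlevel set $\{u\ge t\}$ to already occupy a definite fraction of $B_R$, so the growth lemma applies on $B_{4R}$ directly and the iteration terminates there; while for the remaining (small) balls one must check that a single global smallness hypothesis on $B_{16R}$ simultaneously controls the rescaled $d$-terms and the nonlocal $\Tail$-terms of all the local applications of Lemma~\ref{2growthlem}. This is precisely why the lemma is stated on $B_{16R}$ rather than on $B_{4R}$.
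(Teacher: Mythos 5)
Your argument is correct and follows essentially the same route as the paper: a Krylov--Safonov covering step feeding the two-scale growth Lemma~\ref{2growthlem}, with the iteration repackaged as an induction on $k$ (replacing $t$ by $\delta^\sharp t$ and $k$ by $k-1$) rather than the paper's internal iteration over the nested superlevel sets $A^i := B_R \cap \{u \ge \delta^i t\}$. Two small places where you are actually more careful than the paper: you justify explicitly why the covering balls with $r > R/3$ cause no trouble (the paper simply asserts this is WLOG), and you correctly insert a dimensional comparison constant $C_{n,p}$ in passing from $\Tail(u_-; y, 12r)$ to $\Tail(u_-; 16R)$ when the centre $y \ne 0$, whereas the paper writes an equality that only holds verbatim for $y = 0$ (harmless, since $\delta$ can absorb the constant, but worth the care).
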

\begin{proof}
Set~$\gamma_1 := 2^{- n} \gamma$ and~$\gamma_2 := 3^{-n} \gamma_1 = 6^{-n} \gamma$. Let~$\delta \in (0, 1/8]$ be the constant found in Lemma~\ref{2growthlem}, in correspondence to~$\gamma_2$. For any~$i \in \N \cap \{ 0 \}$, write
$$
A^i := B_R \cap \left\{ u \ge \delta^i t \right\}.
$$
Clearly,
\begin{equation} \label{Aincl}
A^{i - 1} \subseteq A^i \quad \mbox{for any } i \in \N,
\end{equation}
as~$\delta \le 1$.

Notice that, in order to prove the lemma, it suffices to show that
\begin{equation} \label{Ak-1ge}
|A^{k - 1}| \ge \gamma_2 |B_R|,
\end{equation}
since then an application of Lemma~\ref{2growthlem} would yield the thesis.

Let~$i \in \{ 1, \ldots, k - 1 \}$ be fixed and suppose that, in the notation of Lemma~\ref{KSlem}, it holds~$B_R \cap B_{3 r}(x_0) \subseteq (A^{i - 1})_{\gamma_1}$, for some~$x_0 \in B_R$ and~$r > 0$. This implies that
$$
|B_{3 r}(x_0) \cap \{ u \ge \delta^{i - 1} t \}| \ge |B_{3 r}(x_0) \cap A^{i - 1}| \ge \gamma_1 |B_r| = \gamma_2 |B_{3 r}|.
$$
Moreover, since we may suppose without loss of generality that~$r \le R/3$, we have
\begin{gather*}
u \ge 0 \quad \mbox{in } B_{12 r}(x_0), \\
(3 r)^{\frac{\lambda + n \varepsilon}{p}} d \le R^{\frac{\lambda + n \varepsilon}{p}} d \le \delta^k t \le \frac{\delta^i t}{2}
\end{gather*}
and
$$
\Tail(u_-; x_0, 12 r) = \left( \frac{12 r}{16 R} \right)^{\frac{s p}{p - 1}} \Tail(u_-; 16 R) \le \Tail(u_-; 16 R) \le \delta^k t \le \frac{\delta^i t}{2},
$$
thanks to~\eqref{uge0B16},~\eqref{Tailledeltak} and the fact that~$\delta \le 1/2$. Accordingly, an application of Lemma~\ref{2growthlem} gives that
$$
u \ge \delta^i t \quad \mbox{in } B_{3 r}(x_0).
$$
We have therefore proved that
\begin{equation} \label{AgammainA}
(A^{i - 1})_{\gamma_1} \subseteq A^i \quad \mbox{for any } i \in \{ 1, \ldots, k - 1 \}.
\end{equation}

We now claim that either
\begin{equation} \label{claimA}
\begin{aligned}
& \mbox{there exists }  i \in \{ 1, \ldots, k - 1 \} \mbox{ such that } A^i = B_R, \\
& \mbox{or } |A^i| \ge \frac{1}{\gamma} |A^{i - 1}| \mbox{ for any } i \in \{ 1, \ldots, k - 1 \}.
\end{aligned}
\end{equation}
Indeed, suppose that~$\gamma |A^i| < |A^{i - 1}|$ for some index~$i \in \{ 1, \ldots, k - 1 \}$. By~\eqref{AgammainA}, we deduce that~$(2^n \gamma_1) |(A^{i - 1})_{\gamma_1}| = \gamma |(A^{i - 1})_{\gamma_1}| < |A^{i - 1}|$. But then Lemma~\ref{KSlem} yields that~$(A^{i - 1})_{\gamma_1} = B_R$ and thus~$A^i = B_R$, using once again~\eqref{AgammainA}. Consequently, claim~\eqref{claimA} holds true.

We now show that~\eqref{claimA} implies~\eqref{Ak-1ge}. As noted before, this will conclude the proof. Indeed, if~$A^i = B_R$ for some~$i \in \{ 1, \ldots, k - 1 \}$, then~$A^{k - 1} = B_R$, thanks to~\eqref{Aincl}, and~\eqref{Ak-1ge} follows trivially. On the other hand, if the other option in~\eqref{claimA} is verified, then
$$
|A^{k - 1}| \ge \frac{1}{\gamma} |A^{k - 2}| \ge \frac{1}{\gamma^2} |A^{k - 3}| \ge \ldots \ge \frac{1}{\gamma^{k - 1}} |A^0| \ge \gamma |B_R|,
$$
where the last inequality is true in view of~\eqref{supergegammak}. Hence, we have verified the validity of the bound~\eqref{Ak-1ge} also in this case, since~$\gamma = 6^n \gamma_2 \ge \gamma_2$. Thence, the proof is complete.
\end{proof}

Starting from this result, the derivation of the weak Harnack inequality is rather straightforward.

\begin{proposition} \label{weakHarprop}
Let~$u \in \DG_-^{s, p}(B_{16 R}; d, H, - \infty, \varepsilon, \lambda, R_0)$, with~$R > 0$,~$R_0 \ge 16 R$ and~$0 < \varepsilon_0 \le \varepsilon \le s p / n$, and suppose that~\eqref{uge0B16} holds true. Then, there exist a small~$q \in (0, 1)$ and a large~$C \ge 1$, both depending only on~$n$,~$p$,~$\varepsilon_0$ and~$H$, such that
\begin{equation} \label{weakHarine}
\left( \dashint_{B_R} u(x)^q \, dx \right)^{\frac{1}{q}} \le C \left( \inf_{B_R} u + \Tail(u_-; R) + R^{\frac{\lambda + n \varepsilon}{p}} d \right).
\end{equation}
\end{proposition}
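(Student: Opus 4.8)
The plan is to obtain \eqref{weakHarine} from the iterated growth estimate of Lemma~\ref{3growthlem} via the standard De~Giorgi-type argument that converts a decay estimate for the distribution function of $u$ into an $L^q$ bound for a suitably small exponent $q$. First I would normalize $R = 1$, using the scaling invariance of the class $\DG_-^{s,p}$ recorded in Section~\ref{DGsec} (applied with $\rho = 1/R$): this replaces $d$ by $R^{(\lambda+n\varepsilon)/p} d$ and $\Tail(u_-; R)$ by $\Tail(u_-; 1)$ — exactly the combination on the right-hand side of \eqref{weakHarine} — while $\big(\dashint_{B_R} u^q\big)^{1/q}$ becomes $\big(\dashint_{B_1} u^q\big)^{1/q}$.

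Next I fix once and for all some $\gamma \in (0,1)$ and let $\delta \in (0, 1/8]$ be the constant produced by Lemma~\ref{3growthlem} for that $\gamma$; both depend only on $n$, $p$, $\varepsilon_0$ and $H$. I then set
$$
T := \inf_{B_1} u + 16^{p/(p-1)} \, \Tail(u_-; 1) + d .
$$
Since $u \ge 0$ in $B_{16}$, the negative part $u_-$ vanishes there, so $\Tail(u_-; 16) = 16^{sp/(p-1)} \Tail(u_-; 1) \le 16^{p/(p-1)} \Tail(u_-; 1)$, hence $d + \Tail(u_-; 16) \le T$. If $T = 0$ then $d = 0$, $u_- = 0$ a.e.~outside $B_1$, and $\inf_{B_1} u = 0$; were $|B_1 \cap \{u \ge t\}| > 0$ for some $t > 0$, I could pick $k$ with $\gamma^k |B_1| \le |B_1 \cap \{u \ge t\}|$ and apply Lemma~\ref{3growthlem} (its smallness hypothesis reading $0 \le \delta^k t$) to get $\inf_{B_1} u \ge \delta^k t > 0$, a contradiction; so $u = 0$ a.e.~in $B_1$ and \eqref{weakHarine} is trivial. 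Thus I may assume $T > 0$.

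The heart of the argument is the claim that for every $k \in \N$
$$
\big| B_1 \cap \{ u \ge 2 \delta^{-k} T \} \big| \le \gamma^k \, |B_1| .
$$
Indeed, put $t_k := 2 \delta^{-k} T$, so $\delta^k t_k = 2T$; then $\delta^k t_k \ge d + \Tail(u_-; 16)$ and $\delta^k t_k = 2T > \inf_{B_1} u$ (as $T > 0$). If the claim failed for some $k$, then $|B_1 \cap \{u \ge t_k\}| > \gamma^k |B_1|$ and Lemma~\ref{3growthlem} — applicable because $u$ satisfies \eqref{DG-def} for every $k \in \R$, in particular for $k \le -t_k$ — would give $u \ge \delta^k t_k = 2T$ in $B_1$, hence $\inf_{B_1} u \ge 2T$, contradicting the previous line. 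With the claim in hand I would use the layer-cake identity $\int_{B_1} u^q = q \int_0^\infty t^{q-1} \, |B_1 \cap \{u \ge t\}| \, dt$, bounding the distribution function by $|B_1|$ on $(0, 2\delta^{-1}T]$ and by $\gamma^k |B_1|$ on each shell $[2\delta^{-k}T, 2\delta^{-(k+1)}T]$, $k \ge 1$, to arrive at
$$
\int_{B_1} u^q \le |B_1| \, (2T)^q \, \delta^{-q} \Big( 1 + \sum_{k \ge 1} (\gamma \, \delta^{-q})^k \Big) .
$$
Since $\gamma$ and $\delta$ are already fixed, I then choose $q := \tfrac12 \min\{1, \ln(1/\gamma)/\ln(1/\delta)\} \in (0,1)$, so that $\delta^{-q} \le \gamma^{-1/2}$ and $\gamma \delta^{-q} \le \gamma^{1/2} < 1$; the geometric series sums to a constant depending only on $n$, $p$, $\varepsilon_0$, $H$. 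Dividing by $|B_1|$, taking $q$-th roots, and using $T \le 16^{p/(p-1)} \big( \inf_{B_1} u + \Tail(u_-; 1) + d \big)$ yields \eqref{weakHarine} for $R = 1$; undoing the scaling finishes the proof.

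I expect the only genuinely delicate point to be the bookkeeping that forces the threshold $t_k$ to be proportional to the \emph{full} quantity $T$ — incorporating the Tail and the $d$-term, not merely $\inf_{B_1} u$: this is dictated by the smallness hypothesis $R^{(\lambda+n\varepsilon)/p} d + \Tail(u_-; 16R) \le \delta^k t$ of Lemma~\ref{3growthlem} and is precisely what produces the corresponding terms on the right-hand side of \eqref{weakHarine}. The apparent circularity in the dependence of $\delta$ on $\gamma$ is harmless, since $\gamma$ (hence $\delta$) is frozen before the exponent $q$ is selected.
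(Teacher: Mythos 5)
Your proof is correct and follows essentially the same strategy as the paper's: both convert Lemma~\ref{3growthlem} into a decay estimate for the distribution function of~$u$ — the paper phrases it as the continuous power-law bound~$|A^+(t,R)| \le (L/\delta t)^a\,|B_R|$ for all~$t$, you phrase it as the equivalent dyadic statement~$|B_1 \cap \{u \ge 2\delta^{-k}T\}| \le \gamma^k|B_1|$ — and then integrate via Cavalieri's principle with a sufficiently small~$q$. The normalization to~$R=1$ and the explicit treatment of the~$T=0$ case are harmless reorganizations.
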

\begin{proof}
Of course, we can assume that~$u$ does not vanish identically on~$B_R$, otherwise~\eqref{weakHarine} is obviously true. Let~$\delta \in (0, 1/8]$ be the constant given by Lemma~\ref{3growthlem}, for~$\gamma = 1/2$. Set
$$
a := \frac{\log \gamma}{\log \delta} = \frac{1}{\log_{\frac{1}{2}} \delta} \in (0, 1).
$$

We claim that
\begin{equation} \label{wHmainclaim}
\inf_{B_R} u + \Tail(u_-; 16 R) + R^{\frac{\lambda + n \varepsilon}{p}} d \ge \delta \left( \frac{|A^+(t, R)|}{|B_R|} \right)^{ \frac{1}{a} } t,
\end{equation}
for any~$t \ge 0$. Notice that, if~$u$ is bounded from above in~$B_R$, then, by~\eqref{uge0B16}, inequality~\eqref{wHmainclaim} holds trivially for any~$t \ge \sup_{B_R} u$. Thus, it suffices to verify~\eqref{wHmainclaim} for any~$t \in [0, u^*)$, where~$u^* \in (0, +\infty]$ denotes the supremum of~$u$ in~$B_R$.

Given~$t \in [0, u^*)$, let~$k = k(t)$ be the smallest integer for which
\begin{equation} \label{wHlevest}
|A^+(t, R)| \ge 2^{-k} |B_R|,
\end{equation}
i.e.,~$k$ is the only integer for which
$$
\log_{\frac{1}{2}} \frac{|A^+(t, R)|}{|B_R|} \le k < 1 + \log_{\frac{1}{2}} \frac{|A^+(t, R)|}{|B_R|}.
$$
Observe that, with this choice, it holds
\begin{equation} \label{wHdeltakbound}
\delta^k \ge \delta \left( \frac{|A^+(t, R)|}{|B_R|} \right)^{\frac{1}{a}}.
\end{equation}
Furthermore,
\begin{equation} \label{wHclaim}
\inf_{B_R} u + \Tail(u_-; 16 R) + R^{\frac{\lambda + n \varepsilon}{p}} d \ge \delta^k t.
\end{equation}
Indeed, if~$\Tail(u_-; 16 R) + R^{\frac{\lambda + n \varepsilon}{p}} d \ge \delta^k t$, then~\eqref{wHclaim} is true, thanks to hypothesis~\eqref{uge0B16}. On the other hand, if~$\Tail(u_-; 16 R) + R^{\frac{\lambda + n \varepsilon}{p}} d < \delta^k t$, then this and~\eqref{wHlevest} enable us to apply Lemma~\ref{3growthlem} and deduce that
$$
u \ge \delta^k t \quad \mbox{in } B_{R}.
$$
Again,~\eqref{wHclaim} follows. Putting together~\eqref{wHclaim} and~\eqref{wHdeltakbound}, we see that~\eqref{wHmainclaim} is valid for any~$t \ge 0$.

Write now
$$
L := \inf_{B_R} u + \Tail(u_-; 16 R) + R^{\frac{\lambda + n \varepsilon}{p}} d,
$$
and note that~\eqref{wHmainclaim} is equivalent to
$$
\frac{|A^+(t, R)|}{|B_R|} \le \left( \frac{L}{\delta t} \right)^a.
$$
Using this inequality and Cavalieri's principle, for any~$q > 0$ we compute
$$
\dashint_{B_R} u(x)^q \, dx = q \int_0^{+\infty} \frac{|A^+(t, R)|}{|B_R|} \, t^{q - 1} \, dt \le q \left[ \int_0^L t^{q - 1} \, dt + \left( \frac{L}{\delta} \right)^a \int_{L}^{+\infty}  t^{q - 1 - a} \, dt \right].
$$
Choosing~$q = a / 2$, we then get
$$
\dashint_{B_R} u(x)^q \, dx \le (1 + \delta^{- a}) L^q,
$$
that is
\begin{align*}
\left( \dashint_{B_R} u(x)^q \, dx \right)^{\frac{1}{q}} & \le (1 + \delta^{- a})^{\frac{2}{a}} \left( \inf_{B_R} u + \Tail(u_-; 16 R) + R^{\frac{\lambda + n \varepsilon}{p}} d \right) \\
& \le (1 + \delta^{- a})^{\frac{2}{a}} \left( \inf_{B_R} u + 16^{\frac{p}{p - 1}} \Tail(u_-; R) + R^{\frac{\lambda + n \varepsilon}{p}} d \right).
\end{align*}
This yields~\eqref{weakHarine}.
\end{proof}

With this, we are now in position to prove the Harnack inequality for fractional De~Giorgi classes with~$r_0 = +\infty$.

\begin{theorem}[\bfseries Harnack inequality for fractional De~Giorgi functions] \label{DGharthm}
\textcolor{white}{}\\
Let~$u \in \DG^{s, p}(\Omega; d, H, -\infty, \varepsilon, \lambda, +\infty)$, with~$0 < \varepsilon_0 \le \varepsilon \le s p / n$ and~$0 \le \lambda \le \lambda_0$. Suppose that~$u \ge 0$ in~$\Omega$. Then, for any~$x_0 \in \Omega$ and~$0 < R < \dist (x_0, \partial \Omega) / 2$, it holds
\begin{equation} \label{Harine}
\sup_{B_R(x_0)} u \le C \left( \inf_{B_R(x_0)} u + \Tail(u_-; x_0, R) + R^{\frac{\lambda + n \varepsilon}{p}} d \right),
\end{equation}
for some~$C \ge 1$ depending on~$n$,~$s$,~$p$,~$\varepsilon_0$,~$\lambda_0$ and~$H$. When~$n \ne p$, the constant~$C$ does not blow up as~$s \rightarrow 1^-$.
\end{theorem}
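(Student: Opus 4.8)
The plan is to combine the weak Harnack inequality from Proposition~\ref{weakHarprop} (lower bound) with the local boundedness estimate from Proposition~\ref{ulocboundprop} (upper bound), glued together by a covering/iteration argument of De~Giorgi--Nash--Moser type. Write $L := \inf_{B_R(x_0)} u + \Tail(u_-; x_0, R) + R^{(\lambda + n\varepsilon)/p} d$ for brevity. Since $u \in \DG^{s,p}(\Omega; d, H, -\infty, \varepsilon, \lambda, +\infty)$, it satisfies both the $\DG_+$ and $\DG_-$ defining inequalities for \emph{all} $k \in \R$, in particular with $k_0 = 0$; moreover $R_0 = +\infty$ means there is no upper restriction on the radii beyond staying inside $\Omega$. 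The strategy is standard: Proposition~\ref{weakHarprop} gives $\big(\dashint_{B_{16^{-1}\rho}} u^q\big)^{1/q} \le C L$ on suitable subballs, and Proposition~\ref{ulocboundprop} (applied with $k_0 = 0$, so $(u - k_0)_+ = u$ since $u \ge 0$) gives, for any $\delta' \in (0,1]$,
$$
\sup_{B_\rho} u \le C \frac{(\delta')^{-(p-1)/((\varepsilon - \theta)p)}}{(n - sp + n\theta)^{(p-1)/((\varepsilon-\theta)p)}} \Big(\dashint_{B_{2\rho}} u^p\Big)^{1/p} + \delta' \Tail(u; x_0, \rho) + (\delta')^{(p-1)/p}\big(\rho^{(\lambda+n\varepsilon)/p} d\big),
$$
with the understanding that $\theta = 0$ when $n > sp$. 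The obstruction is that the upper bound involves an $L^p$ average of $u$, not the $L^q$ average with the small $q \in (0,1)$ coming from Proposition~\ref{weakHarprop}.

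The key step to bridge the gap between $L^q$ and $L^p$ (indeed $L^\infty$) norms is the standard interpolation trick of Di~Giorgi. For $0 < \rho < \tau \le 2R$, apply the boundedness estimate on $B_\rho$ with radii scaled appropriately to get $\sup_{B_\rho} u \le \tfrac12 \sup_{B_\tau} u + C(\tau - \rho)^{-\beta}\big(\dashint_{B_{2R}} u^q\big)^{1/q} + (\text{tail and } d\text{ terms})$; the point is to first bound $\big(\dashint_{B_{2\rho}} u^p\big)^{1/p} \le \big(\sup_{B_{2\rho}} u\big)^{(p-q)/p}\big(\dashint_{B_{2\rho}} u^q\big)^{q/p}$ and then use Young's inequality to absorb the $\sup$ factor, choosing the free parameter $\delta'$ (equivalently the Young weight) small. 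This produces an inequality of the form $\Phi(\rho) \le \gamma \Phi(\tau) + A + B(\tau-\rho)^{-\alpha}$ with $\gamma = 1/2$, to which the iteration Lemma~\ref{induclem} applies, yielding $\sup_{B_R} u \le C\big[\big(\dashint_{B_{2R}} u^q\big)^{1/q} + \Tail(u;x_0,2R) + R^{(\lambda+n\varepsilon)/p} d\big]$. One still needs to control $\Tail(u; x_0, 2R)$ (the tail of $u$, not of $u_-$) by $L$; this is where hypothesis~\eqref{Kell2} with $r_0 = +\infty$ enters — since the ellipticity of $K$ holds everywhere, one derives a bound of the form $\Tail(u; x_0, R) \le C\big(\inf_{B_R} u + \Tail(u_-; x_0, R)\big)$, i.e. formula~\eqref{Tail+control} alluded to in the introduction, using that the positive part of the tail of a nonnegative supersolution-type function is controlled by its infimum plus a lower-order contribution; combining this with the weak Harnack bound finishes the estimate.

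The scheme then runs as follows. First I would fix $x_0 \in \Omega$ and $0 < R < \dist(x_0,\partial\Omega)/2$ and reduce (by scaling and translation, using the $\DG$ scaling properties recorded after the definition) to a normalized situation. Second, invoke Proposition~\ref{weakHarprop} on $B_{16\rho}(x_0) \subseteq \Omega$ for a suitable $\rho \sim R$ to get the $L^q$-to-$\inf$ estimate with the small exponent $q$; here I must check $R_0 = +\infty \ge 16\rho$, which is automatic. Third, run the $\sup$-to-$L^q$ iteration described above via Proposition~\ref{ulocboundprop} and Lemma~\ref{induclem}, taking care that the constants in Proposition~\ref{ulocboundprop} do not blow up as $s \to 1^-$ when $n > p$ (this is exactly the assertion of Theorem~\ref{DGboundthm}, and it is why the factor $(n - sp + n\theta)^{-(p-1)/((\varepsilon-\theta)p)}$ stays bounded: for $n > p$ one has $n > sp$ for all $s$, so $\theta = 0$ and $n - sp \ge n - p > 0$ uniformly). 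Fourth, establish the tail bound~\eqref{Tail+control} for $\Tail(u; x_0, R)$ using~\eqref{Kell2}, and assemble everything into~\eqref{Harine}. I expect the main obstacle to be the tail-control step: producing a bound on the \emph{full} tail $\Tail(u; x_0, R)$ (rather than $\Tail(u_-; x_0, R)$) in terms of $\inf_{B_R} u$, which genuinely requires the stronger everywhere-ellipticity hypothesis~\eqref{Kell2} and a careful argument testing the weak Caccioppoli/supersolution inequality against a cutoff that sees the far-field behaviour — in the $\DG$ formulation this should come out of the $\DG_-$ inequality applied with a well-chosen negative level $k$ together with the already-established local lower bound. Tracking the $s \to 1^-$ dependence of constants through all four steps — especially ruling out blow-up when $n \notin \{1, p\}$ — is the bookkeeping-heavy part but follows the uniform-in-$s$ normalizations built into Proposition~\ref{sDGlemprop}, Lemma~\ref{growthlem}, and the Poincaré/Sobolev lemmata of Section~\ref{prepsec}.
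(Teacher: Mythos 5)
Your overall scheme is the paper's: prove a tail estimate for $\Tail(u_+)$, feed the local boundedness estimate of Proposition~\ref{ulocboundprop} and the weak Harnack inequality of Proposition~\ref{weakHarprop} into a De~Giorgi-type interpolation between $L^\infty$ and $L^q$, and close by the iteration lemma. The Young-inequality absorption step you describe (splitting $\big(\dashint u^p\big)^{1/p} \le (\sup u)^{(p-q)/p}\big(\dashint u^q\big)^{q/p}$ and choosing the weights small to get $\sup_{B_\rho} u \le \tfrac12 \sup_{B_\tau} u + \ldots$) is verbatim what the paper does.

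The one substantive misstep is the form you guess for the tail estimate~\eqref{Tail+control}. You write it as $\Tail(u_+; x_0, R) \le C\bigl(\inf_{B_R} u + \Tail(u_-; x_0, R) + \dots\bigr)$, i.e.\ in terms of the \emph{infimum}. That version is not what falls out of the $\DG_-$ inequality, and indeed cannot be obtained directly without already knowing the Harnack inequality (which would be circular). The bound the paper proves — and the one the absorption scheme actually needs — is in terms of the \emph{supremum}:
$$
\Tail(u_+; z, r) \le C \Bigl( \sup_{B_r(z)} u + \Tail(u_-; z, r) + r^{\frac{\lambda + n\varepsilon}{p}} d \Bigr),
$$
derived by testing~\eqref{DG-def} at the level $k = 2M$ with $M := \max\{ \sup_{B_r(z)} u,\, r^{(\lambda + n\varepsilon)/p} d\}$: then $(u - 2M)_- \ge M$ on all of $B_r(z)$, and the second (nonlocal interaction) term on the left of~\eqref{DG-def} dominates $M \cdot r^{n - sp}(1-s)^{-1}\Tail(u_+;z,r)^{p-1}$ while the right side is controlled by $M^p$ and $M \Tail(u_-)^{p-1}$. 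If instead you try the level $k = 2\inf_{B_r} u$, the truncation $(u-k)_-$ may vanish on most of $B_r$ and the same lower bound does not hold. So the $\sup$ form is structural, not cosmetic; it is precisely this $\sup$ that gets swallowed by the $\delta_1, \delta_2$ absorption. Only after the weak Harnack inequality is combined at the end does $\inf$ appear.

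Two smaller points you should also address to make the plan complete: Proposition~\ref{ulocboundprop} is stated only for $n \ge sp$, but the theorem is claimed for the full range $p > 1$, so for $n < sp$ you need a separate argument (the paper reproves the analogue of~\eqref{Hineulocbound-1} by the fractional Morrey embedding, tracking the constant via the sharp Poincar\'e inequality); and the paper first establishes~\eqref{Harine} under the stronger restriction $R < \dist(x_0, \partial\Omega)/64$ to leave room for the nested balls, then removes it by a chain-of-balls argument, which you should include to match the stated radius constraint $R < \dist(x_0,\partial\Omega)/2$.
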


\begin{proof}
We start supposing that~$x_0$ and~$R$ are such that
\begin{equation} \label{/64}
0 < R < \frac{\dist (x_0, \partial \Omega)}{64}. 
\end{equation}
At the end of the proof we will in fact show that this assumption is unnecessary.

We now prove that~\eqref{Harine} holds true under~\eqref{/64}. Up to a translation, we may assume~$x_0$ to be the origin. As a preliminary observation, we claim that, for any~$z \in B_R$ and~$0 < r \le 2 R$,
\begin{equation} \label{Tail+control}
\Tail(u_+; z, r) \le C \left( \sup_{B_r(z)} u + \Tail(u_-; z, r) + r^{\frac{\lambda + n \varepsilon}{p}} d \right),
\end{equation}
for some constant~$C \ge 1$ depending only on~$n$,~$p$,~$\varepsilon_0$ and~$H$. To check the validity of~\eqref{Tail+control}, we apply~\eqref{DG-def} with~$k := 2 M$ and
$$
M := \max \left\{ \sup_{B_r(z)} u, r^{\frac{\lambda + n \varepsilon}{p}} d \right\}.
$$
Focusing on just the second term on the left-hand side of such inequality, we get
\begin{equation} \label{Tail+tech1}
\begin{aligned}
& \int_{B_{\frac{r}{2}}(z)} (u(x) - 2 M)_- \left[ \int_{\R^n} \frac{(u(y) - 2 M)_+^{p - 1}}{|x - y|^{n + s p}} \, dy \right] dx \\
& \hspace{20pt} \le \frac{C}{1 - s} \bigg[ \left( r^\lambda d^p + \frac{M^p}{r^{n \varepsilon}} \right) |A^-(2 M, z, r)|^{1 - \frac{s p}{n} + \varepsilon} + r^{- s p} \| (u - 2 M)_- \|_{L^p(B_{r}(z))}^p \\
& \hspace{20pt} \quad + \| (u - 2 M)_- \|_{L^1(B_r(z))} \overline{\Tail} ((u - 2 M)_-; z, r/2)^{p - 1} \bigg]
\end{aligned}
\end{equation}
with~$C \ge 1$ depending on~$n$,~$p$ and~$H$.

We begin by dealing with the left-hand side. Observe that~$|x - y| \le 2 |y - z|$, for any~$x \in B_r(z)$ and~$y \in \R^n \setminus B_r(z)$. Moreover, by Lemma~\ref{numestlem4}, we know that
$$
(u(y) - 2 M)_+^{p - 1} \ge \min \{ 1, 2^{2 - p} \} u_+(y)^{p - 1} - 2^{p - 1} M^{p - 1}.
$$
Using these two facts and that~$u \le M$ on~$B_r(z)$, we compute
\begin{align*}
& \int_{B_{\frac{r}{2}}(z)} (u(x) - 2 M)_- \left[ \int_{\R^n} \frac{(u(y) - 2 M)_+^{p - 1}}{|x - y|^{n + s p}} \, dy \right] dx \\
& \hspace{30pt} \ge 2^{- n - s p} M \int_{B_{\frac{r}{2}}(z)} \left[ \int_{\R^n \setminus B_r(z)} \frac{\min \{ 1, 2^{2 - p} \} u_+(y)^{p - 1} - 2^{p - 1} M^{p - 1}}{|y - z|^{n + s p}} \, dy \right] dx \\
& \hspace{30pt} \ge \frac{M r^{n - sp}}{C (1 - s)} \Tail(u_+; z, r)^{p - 1} - C r^{n - s p} M^p,
\end{align*}
where~$C$ now depends on~$\varepsilon_0$ too.

On the other hand, by taking advantage once again of Lemma~\ref{numestlem4} and of the fact that~$u \ge 0$ on~$B_r(z)$, it is not hard to check that the right-hand side of~\eqref{Tail+tech1} can be bounded by
\begin{align*}
\frac{C r^{n - s p}}{1 - s} \Big( M^p + M \Tail(u_-; z, r)^{p - 1} \Big).
\end{align*}
By comparing these last two estimates with~\eqref{Tail+tech1}, we are easily led to~\eqref{Tail+control}.

We now proceed to prove the actual theorem. We consider separately the two cases~$n \ge s p$ and~$n < sp$.

If~$n \ge s p$, we take advantage of the boundedness given by Proposition~\ref{ulocboundprop}. We notice that estimate~\eqref{ulocbound} implies in this setting that, for any~$\delta_1 \in (0, 1]$,
\begin{equation} \label{Hineulocbound-1}
\sup_{B_r(z)} u \le C \, \delta_1^{- \frac{p - 1}{\beta p}} \left( \dashint_{B_{2 r}(z)} u(x)^p \, dx \right)^{\frac{1}{p}} + \delta_1 \Tail(u_+; z, r) + \delta_1^{\frac{p - 1}{p}} r^{\frac{\lambda + n \varepsilon}{p}} d,
\end{equation}
where~$C \ge 1$ and~$\beta \ge \varepsilon_0 / 2$ depend on~$n$,~$s$,~$p$,~$\varepsilon_0$ and~$H$. Both constants do not blow up as~$s \rightarrow 1^-$, when~$n > p$. By~\eqref{Tail+control}, this becomes
$$
\sup_{B_r(z)} u \le C \left[ \delta_1^{ - \frac{p - 1}{\beta p}} \left( \dashint_{B_{2 r}(z)} u(x)^p \, dx \right)^{\frac{1}{p}} + \delta_1^{\frac{p - 1}{p}} \left( \sup_{B_r(z)} u + \Tail(u_-; z, r) + r^{\frac{\lambda + n \varepsilon}{p}} d \right) \right].
$$
Fix now any~$q \in (0, p)$. Using the weighted Young's inequality, we have
\begin{align*}
\left( \dashint_{B_{2 r}(z)} u(x)^p \, dx \right)^{\frac{1}{p}} & \le \left( \sup_{B_{2 r}(z)} u \right)^{\frac{p - q}{p}} \left( \dashint_{B_{2 r}(z)} u(x)^q \, dx \right)^{\frac{1}{p}} \\
& \le \delta_2^{\frac{p}{p - q}} \sup_{B_{2 r}(z)} u + \delta_2^{- \frac{p}{q}} \left( \dashint_{B_{2 r}(z)} u(x)^q \, dx \right)^{\frac{1}{q}},
\end{align*}
for any~$\delta_2 > 0$. By taking~$\delta_1$,~$\delta_2$ sufficiently small, we obtain
\begin{equation} \label{Hinelocbound2}
\sup_{B_r(z)} u \le \frac{1}{2} \sup_{B_{2 r}(z)} u + C \left[ \left( \dashint_{B_{2 r}(z)} u(x)^q \, dx \right)^{\frac{1}{q}} + \Tail(u_-; z, r) + r^{\frac{\lambda + n \varepsilon}{p}} d \right].
\end{equation}

On the other hand, when~$n < s p$, we already know from the fractional Morrey embedding that~$u$ is bounded. In particular, a careful analysis of the proofs of~\cite[Theorem~8.2]{DPV12} and~\cite[Lemma~2.2]{Giu03} reveals, together with the sharp Poincar\'e-Wirtinger-type estimate of~\cite{BBM02,P04} and the extension inequality~\eqref{extine}, that
$$
\sup_{B_r(z)} u^p \le \frac{C}{\left( 2^{\frac{s p - n}{p}} - 1 \right)^p} \left[ (1 - s) \delta_3^{s p - n} r^{s p - n} [u_+]_{W^{s, p}(B_r(z))}^p + \delta_3^{- n} r^{- n} \| u_+ \|_{L^p(B_r(z))}^p \right],
$$
for any~$\delta_3 \in [0, 1]$ and for some constant~$C \ge 1$ depending only on~$n$ and~$p$. To get this estimate, one could prove it first in the case~$r = 1$ and then scale it. The arbitrary parameter~$\delta_3$ is essentially the constant~$R_0$ appearing in~\cite[formula~(8.9)]{DPV12}, while the denominator of the fraction in front of the square brackets comes from the proof of~\cite[Lemma~2.2]{Giu03}. By using~\eqref{DG+def}---with~$k = 0$,~$x_0 = z$ and~$R = 2 r$---to estimate the Gagliardo seminorm of~$u_+$, we are led to
$$
\sup_{B_r(z)} u^p \le C \left[ \delta_3^{- n} \dashint_{B_{2 r}(z)} u(x)^p \, dx + \delta_3^{s p - n} \Bigg( \Tail(u_+; z, r)^p + r^{\lambda + n \varepsilon} d^p \Bigg) \right],
$$
where~$C$ now depends on~$n$,~$s$,~$p$ and~$H$, but does not blow as~$s \rightarrow 1^-$ if~$p > n$ is fixed. By arguing as before, in the case~$n \ge s p$, and starting from this last inequality instead of~\eqref{Hineulocbound-1}, we deduce~\eqref{Hinelocbound2} once again.

We plan to take advantage of~\cite[Lemma~7.1]{Giu03}---or Lemma~\ref{induclem} here---to reabsorb the term~$(1 / 2)\sup_{B_{2 r}(z)} u$ on the left-hand side of~\eqref{Hinelocbound2}. To do it, we first need to perform an easy covering argument. Let~$R\le \rho < \tau \le 2 R$ be fixed. Note that
$$
B_\rho = \bigcup_{z \in B_{2 \rho - \tau}} B_{\tau - \rho}(z) \quad \mbox{and} \quad B_{2 (\tau - \rho)}(z) \subset B_\tau \mbox{ for any } z \in B_{2 \rho - \tau}.
$$
Therefore, by using~\eqref{Hinelocbound2} with~$r = \tau - \rho$, we get
\begin{align*}
\sup_{B_\rho} u & = \sup_{z \in B_{2 \rho - \tau}} \sup_{B_{\tau - \rho}(z)} u \\
& \le \frac{1}{2} \sup_{B_\tau} u + C \sup_{z \in B_{2 \rho - \tau}} \left[ \frac{\| u \|_{L^q(B_{2 (\tau - \rho)}(z))}}{(\tau - \rho)^{\frac{n}{q}}} + \Tail(u_-; z, \tau - \rho) + (\tau - \rho)^{\frac{\lambda + n \varepsilon}{p}} d \right] \\
& \le \frac{1}{2} \sup_{B_\tau} u + C \left[ \frac{\| u \|_{L^q(B_{2 R})}}{(\tau - \rho)^{\frac{n}{q}}} + \Tail(u_-; R) + R^{\frac{\lambda + n \varepsilon}{p}} d \right],
\end{align*}
where we also used that~$u \ge 0$ in~$B_R$. Now we are in position to apply~\cite[Lemma~7.1]{Giu03} and finally deduce that
$$
\sup_{B_R} u \le C \left[ \left( \dashint_{B_{2 R}} u(x)^q \, dx \right)^{\frac{1}{q}} + \Tail(u_-; R) + R^{\frac{\lambda + n \varepsilon}{p}} d \right].
$$
By choosing~$q$ as in Proposition~\ref{weakHarprop} and combining the above estimate with~\eqref{weakHarine}, we conclude that~\eqref{Harine} follows, at least when~\eqref{/64} is in force.

To finish the proof, we only need to show that the additional assumption~\eqref{/64} is not necessary, and that in fact~\eqref{Harine} is true for any~$x_0 \in \Omega$ and~$0 < R < \dist(x_0, \partial \Omega) / 2$. Let~$x_0$ and~$R$ be as such, and take any~$z \in B_R(x_0)$. Obviously,~$B_R(z) \subset B_{2 R}(x_0) \subset \Omega$. Moreover, as~\eqref{Harine} holds under~\eqref{/64} and~$u \ge 0$ in~$B_{2 R}(x_0)$, we know that
\begin{align*}
\sup_{B_{R / 64}(z)} u & \le C \left( \inf_{B_{R / 64}(z)} u + \Tail \left( u_-; z, \frac{R}{64} \right) + \left( \frac{R}{64} \right)^{\frac{\lambda + n \varepsilon}{p}} d \right) \\
& \le C \left( \inf_{B_{R / 64}(z)} u + \Tail \left( u_-; x_0, R \right) + R^{\frac{\lambda + n \varepsilon}{p}} d \right).
\end{align*}
In particular, this implies that, for any~$x', y' \in B_R(x_0)$ such that~$|x' - y'| < R / 32$, we have
\begin{equation} \label{Harx'y'}
u(x') \le C_\star \left( u(y') + \Tail \left( u_-; x_0, R \right) + R^{\frac{\lambda + n \varepsilon}{p}} d \right),
\end{equation}
with~$C_\star$ depending only on~$n$,~$s$,~$p$,~$\varepsilon_0$,~$\lambda_0$ and~$H$. Clearly, we can suppose that~$C_\star \ge 2$.

Let~$x, y \in B_R(x_0)$ be any two points. We now show that~\eqref{Harx'y'} is also true with~$x$,~$y$ in place of~$x'$,~$y'$, up to raising~$C_\star$ to a universal power. Of course, we can suppose that~$|x - y| \ge R / 32$, otherwise~\eqref{Harx'y'} applies to them directly. Denoting by~$\overrightarrow{xy}$ the oriented segment with end points~$x$ and~$y$, we consider~$N$ consecutive points~$\{ x_i \}_{i = 0}^N \subset \overrightarrow{xy}$ such that~$x_0 = x$,~$x_N = y$ and~$R / 64 \le |x_i - x_{i - 1}| < R / 32$ for any~$i \in \{ 1, \ldots, N \}$. It is immediate to check that~$N \le 128$. By applying~\eqref{Harx'y'} with~$x' = x_{i - 1}$ and~$y' = x_i$, we get
$$
u(x_{i - 1}) \le C_\star \left( u(x_i) + \Tail \left( u_-; x_0, R \right) + R^{\frac{\lambda + n \varepsilon}{p}} d \right).
$$
Iterating such estimate over~$i \in \{ 1, \ldots, N \}$, we easily find that
\begin{align*}
u(x) & \le C_\star^N u(y) + \left( \Tail \left( u_-; x_0, R \right) + R^{\frac{\lambda + n \varepsilon}{p}} d \right) \sum_{i = 1}^N C_\star^i \\
& \le C_\star^{129} \left( u(y) + \Tail \left( u_-; x_0, R \right) + R^{\frac{\lambda + n \varepsilon}{p}} d \right).
\end{align*}
This leads to~\eqref{Harine}, in its full generality.
\end{proof}

\section{Applications to minimizers} \label{minsec}

In this section, we show that the minimizers of the energy~$\E$ introduced in~\eqref{Edef} are locally bounded and H\"older continuous functions that satisfy a nonlocal Harnack inequality wherever non-negative. That is, we prove Theorems~\ref{boundmainthm},~\ref{holmainthm} and~\ref{harmainthm} for minimizers. Those three results are rephrased---and sometimes more precisely stated---as follows.

\begin{theorem}[\bfseries Local boundedness of minimizers] \label{minboundthm}
\textcolor{white}{}\\
Let~$n \in \N$,~$s \in (0, 1)$ and~$p > 1$ be such that~$n \ge s p$. Let~$\Omega$ be an open bounded subset of~$\R^n$. Assume that~$K$ and~$F$ respectively satisfy hypotheses~\eqref{Ksimm},~\eqref{Kell} and~\eqref{Fbounds}. If~$u$ is a minimizer of~$\E$ in~$\Omega$, then~$u \in L^\infty_\loc(\Omega)$. In particular, there exist four constants~$C \ge 1$,~$R_0 \in (0, \min \{ 1, r_0 / 2 \}]$,~$\varepsilon \in (0, sp / n]$ and~$\kappa \in \{0, 1\}$ such that, given any~$x_0 \in \Omega$ and~$0 < R \le \min \{ R_0, \dist \left( x_0, \partial \Omega \right) \} / 4$, it holds\footnote{Here and in Proposition~\ref{minareDGprop} we continue to understand~$p^*_s = +\infty$, when~$n = s p$. The same convention will still be valid in Section~\ref{solsec}.}
$$
\| u \|_{L^\infty(B_R(x_0))} \le C R^{- \frac{n}{p}} \| u \|_{L^p(B_{2 R}(x_0))} + \Tail(u; x_0, R) + d_1^{\frac{1}{p}} R^{\frac{n \varepsilon}{p}} + 2 \kappa.
$$
Moreover, the constants can be chosen as follows:
\begin{enumerate}[label={$\bullet$},leftmargin=*]
\item if~$d_2 = 0$, then
$$
C = C(n, s, p, \Lambda), \, \, R_0 = \frac{r_0}{2}, \, \, \varepsilon = \frac{s p}{n} \, \mbox{ and } \, \kappa = 0;
$$
\item if~$d_2 > 0$ and~$1 \le q \le p$, then
$$
C = C(n, s, p, \Lambda, d_2), \, \, R_0 = \min \left\{ 1, \frac{r_0}{2} \right\}, \, \, \varepsilon = \frac{s p}{n} \, \mbox{ and } \, \kappa = 1;
$$
\item if~$d_2 > 0$ and~$p < q < p^*_s$, then
\begin{align*}
& C = C(n, s, p, q, \Lambda, d_2), \, \, R_0 = R_0(n, s, p, q, \Lambda, d_2, r_0, \| u \|_{L^{p^*_\sigma}(\Omega)}), \\
& \varepsilon = 1 - \frac{q}{p^*_\sigma} \, \mbox{ and  } \, \kappa = 0, \, \mbox{ for some } \, \sigma = \sigma(n, s, p, q) \in (0, s).
\end{align*}
\end{enumerate}
When~$n > s p$ we can even take~$\sigma = s$, while when~$n > p$ both constants~$C$ and~$R_0$ do not blow up as~$s \rightarrow 1^-$.
\end{theorem}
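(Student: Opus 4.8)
The plan is to deduce the statement from the corresponding result for fractional De~Giorgi classes, Theorem~\ref{DGboundthm}, by showing that every minimizer $u$ of $\E$ in $\Omega$ belongs to a class $\DG^{s,p}(\Omega;d,H,k_0,\varepsilon,\lambda,R_0)$ with parameters dictated by the three cases. As noted after Theorem~\ref{boundmainthm}, for local boundedness it suffices to control the Gagliardo seminorm of the truncations $(u-k)_\pm$, so one does not need the improved second term in \eqref{DG+def}--\eqref{DG-def}: a standard Caccioppoli estimate is enough, and a simple test function replaces the one from \cite{CCV11}. Once $u\in\DG^{s,p}$ with the right parameters, Theorem~\ref{DGboundthm}---applied to $u$ and to $-u$---yields exactly the displayed bound, with $\lambda=0$, $d\sim d_1^{1/p}$ and $k_0=\kappa$ in every case; the uniformity as $s\to1^-$ when $n>p$ is inherited from the same theorem, and a trivial rescaling of the radius accounts for the $\tfrac14$ in the admissible range of $R$.

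The first and main step is a Caccioppoli inequality for minimizers. Fix $x_0\in\Omega$, radii $0<r<R<\min\{R_0,\dist(x_0,\partial\Omega)\}$, a level $k$ (with $k\ge k_0$ in the cases with $k_0>0$), and $\eta\in C^\infty_0(B_R(x_0))$ with $\eta\equiv1$ on $B_r(x_0)$, $0\le\eta\le1$ and $|\nabla\eta|\le C/(R-r)$. Testing the minimality of $u$ against $v:=u-\eta^p(u-k)_+$, which agrees with $u$ off $B_R(x_0)$ and off $A^+(k,x_0,R)$ and satisfies $0\le v\le u$ on $A^+(k)$, one obtains
\[
\frac{1}{2p}\iint_{\C_\Omega}\!\bigl(|u(x)-u(y)|^p-|v(x)-v(y)|^p\bigr)K(x,y)\,dx\,dy \;\le\; \int_\Omega\bigl(F(x,v)-F(x,u)\bigr)\,dx .
\]
The left-hand side is bounded below by means of the numerical inequalities of Lemmas~\ref{numestlem1}--\ref{numestlem4} and the ellipticity~\eqref{Kell}; since the lower bound in~\eqref{Kell} is only available for $|x-y|<r_0$, this is what forces $R_0\le r_0/2$, and, after splitting $\C_\Omega$ into the near-diagonal part, the part where $\eta$ varies, and the far field, and keeping the $(1-s)$-weights sharp, one arrives at $(1-s)[(u-k)_+]_{W^{s,p}(B_r(x_0))}^p$ on the left and, on the right, the $L^p$ and Tail summands of~\eqref{DG+def} with $C=C(n,s,p,\Lambda)$, together with the potential contribution. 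The latter is controlled by~\eqref{Fbounds} and $0\le v\le u$ on $A^+(k)$: there $|F(x,v)-F(x,u)|\le2\bigl(d_1+d_2\max\{|u|,|v|\}^q\bigr)\le C\bigl(d_1+d_2|u|^q\bigr)$, so it amounts to $C\int_{A^+(k,x_0,R)}(d_1+d_2|u|^q)\,dx$.

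It then remains to recast $\int_{A^+(k,x_0,R)}(d_1+d_2|u|^q)\,dx$ in the form $(R^\lambda d^p+|k|^pR^{-n\varepsilon})|A^+(k,x_0,R)|^{1-sp/n+\varepsilon}$ of~\eqref{DG+def}, and this is where the three cases separate. Choosing $R_0$ small enough that $|A^+(k,x_0,R)|\le|B_{R_0}|$ is bounded and using $|A^+|\le cR^n$, the term $d_1|A^+(k,x_0,R)|$ is absorbed with $\varepsilon=sp/n$, $\lambda=0$, $k_0=0$, constants $C(n,s,p,\Lambda)$ and $R_0=r_0/2$; this settles $d_2=0$. If $d_2>0$ and $1\le q\le p$, restricting to $k\ge1$ gives $u\ge1$ on $A^+(k)$, whence $|u|^q\le|u|^p\le C\bigl((u-k)_+^p+k^p\bigr)$: the $(u-k)_+^p$ part folds into the $L^p$ summand (using $R\le R_0\le1$ to absorb powers of $R$, and the freedom in $H$ to absorb $d_2$), the $k^p$ part produces the $|k|^pR^{-n\varepsilon}$ term with $\varepsilon=sp/n$, so $C=C(n,s,p,\Lambda,d_2)$, $R_0=\min\{1,r_0/2\}$ and $\kappa=k_0=1$. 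Finally, if $d_2>0$ and $p<q<p^*_s$, one fixes $\sigma=\sigma(n,s,p,q)\in(0,s)$ with $q<p^*_\sigma$ (taking $\sigma=s$ when $n>sp$) and combines a fractional Sobolev inequality at order $\sigma$---Corollary~\ref{nullsobcor}, with Lemma~\ref{sobinclem0} to return from order $\sigma$ to order $s$---with H\"older's inequality in the form $\int_{A^+}|u|^q\le\|u\|_{L^{p^*_\sigma}(\Omega)}^q|A^+|^{1-q/p^*_\sigma}$; after reabsorbing a small multiple of the seminorm and rearranging, this produces the required form with $\varepsilon=1-q/p^*_\sigma\in(0,sp/n]$ and $\kappa=0$, with $C=C(n,s,p,q,\Lambda,d_2)$, while the smallness needed for the reabsorption and for starting the De~Giorgi iteration of Proposition~\ref{ulocboundprop} forces $R_0$ to depend in addition on $\|u\|_{L^{p^*_\sigma}(\Omega)}$. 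Running the same argument on $-u$, one gets $u\in\DG^{s,p}(\Omega;d,H,k_0,\varepsilon,\lambda,R_0)$ with the parameters just listed, and Theorem~\ref{DGboundthm} concludes.

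I expect the superlinear regime $p<q<p^*_s$ to be the main obstacle: the naive bound $\int_{A^+}|u|^q\le\|u\|_{L^\infty}^{q-p}\int_{A^+}|u|^p$ is circular, so one is forced to spend a little Sobolev differentiability and choose $\sigma\in(0,s)$ so that simultaneously $q<p^*_\sigma$, the resulting power of $|A^+(k,x_0,R)|$ is compatible with $\varepsilon\le sp/n$, and the recursion behind the iteration of Proposition~\ref{ulocboundprop} stays super-linear; tracking this interplay is exactly what makes $R_0$---but not the local constant $C$---depend on the global norm $\|u\|_{L^{p^*_\sigma}(\Omega)}$. A secondary technical nuisance is the lower estimate for the nonlocal energy difference, where one must exploit the near-diagonal ellipticity of $K$ (hence $R_0\le r_0/2$) and assemble all cut-off and far-field contributions into a Tail term carrying the sharp $(1-s)$-normalisation, so that no constant degenerates as $s\to1^-$ when $n>p$.
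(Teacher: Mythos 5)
Your overall strategy---derive a Caccioppoli inequality by testing the minimality against a cut-off truncation, conclude membership in a fractional De Giorgi class, and then invoke Theorem~\ref{DGboundthm}---is precisely the route the paper takes (Proposition~\ref{minareDGprop} combined with Theorem~\ref{DGboundthm}), and your handling of the regimes $d_2=0$ and $1\le q\le p$ agrees in substance with the paper's, with only minor stylistic differences: the paper tests with $v=u-\eta w_+$ rather than $u-\eta^p(u-k)_+$, and in the Sobolev step uses Corollary~\ref{sobinecor} (designed for functions supported strictly inside the ball) rather than Corollary~\ref{nullsobcor} (which requires a fat zero set that $\eta w_+$ need not have).

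However, your superlinear case $p<q<p^*_s$ has a genuine gap. The H\"older estimate you invoke, $\int_{A^+}|u|^q\le\|u\|^q_{L^{p^*_\sigma}(\Omega)}|A^+|^{1-q/p^*_\sigma}$, produces the exponent $\varepsilon_\sigma:=1-q/p^*_\sigma$ on $|A^+(k,x_0,R)|$, while the class inequality~\eqref{DG+def} requires the strictly larger exponent $1-\tfrac{sp}{n}+\varepsilon_\sigma$ (equal only when $n=sp$). Since $|A^+(k,x_0,R)|$ can be arbitrarily small, $|A^+|^{\varepsilon_\sigma}$ is not dominated by any fixed multiple of $|A^+|^{1-sp/n+\varepsilon_\sigma}$, and the weaker exponent is in fact insufficient to close the iteration in Proposition~\ref{ulocboundprop} for $q$ close to $p^*_s$. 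The device the paper uses in estimate~\eqref{minarekest} is sharper: after the decomposition $|u|^q+|v|^q\le 3^q\bigl((1-\eta)^q|u|^q+(\eta w_+)^q+k^q\bigr)$, it treats the $k^q$-piece by writing $k^q=k^p\cdot k^{q-p}$ and bounding $k^{q-p}\le\bigl(\|u\|^{p^*_\sigma}_{L^{p^*_\sigma}(\Omega)}/|A^+(k,R)|\bigr)^{(q-p)/p^*_\sigma}$ via the Chebyshev inequality $k^{p^*_\sigma}|A^+(k,R)|\le\|u\|^{p^*_\sigma}_{L^{p^*_\sigma}(\Omega)}$; the resulting \emph{negative} power of $|A^+(k,R)|$ is exactly what lifts the exponent from $\varepsilon_\sigma$ up to $1-sp/n+\varepsilon_\sigma$, and is what lets the global norm be shifted into $R_0$ rather than into $C$. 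The term $(\eta w_+)^q$ is then handled with Corollary~\ref{sobinecor}, and $(1-\eta)^q|u|^q$ is reabsorbed through the hole-filling Lemma~\ref{induclem}. Without this $k^q=k^p k^{q-p}$ splitting the claimed De Giorgi class inclusion---and with it a constant $C$ independent of $\|u\|_{L^{p^*_\sigma}(\Omega)}$---does not follow from your argument as written.
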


\begin{theorem}[\bfseries H\"older continuity of minimizers] \label{minholdthm}
\textcolor{white}{ }\\
Let~$n \in \N$,~$0 < s_0 \le s < 1$ and~$p > 1$ be such that~$n \ge s p$. Let~$\Omega$ be an open bounded subset of~$\R^n$. Assume that~$K$ satisfies hypotheses~\eqref{Ksimm},~\eqref{Kell} and that~$F$ is locally bounded in~$u \in \R$, uniformly w.r.t~$x \in \Omega$. If~$u$ is a minimizer of~$\E$ in~$\Omega$, then,~$u \in C^\alpha_\loc(\Omega)$ for some~$\alpha \in (0, 1)$. In particular, there exists a constant~$C \ge 1$ such that, given any~$x_0 \in \Omega$ and~$0 < R \le \min \{ r_0 / 2, \dist \left(x_0, \partial \Omega \right) \} / 16$, it holds
$$
[ u ]_{C^\alpha(B_R(x_0))} \le \frac{C}{R^\alpha} \left( \| u \|_{L^\infty(B_{4 R}(x_0))} + \Tail(u; x_0, 4 R) + R^s \| F(\cdot, u) \|_{L^\infty(B_{8 R}(x_0))}^{1 / p} \right).
$$
The constants~$\alpha$ and~$C$ depend only on~$n$,~$s_0$,~$p$ and~$\Lambda$.
\end{theorem}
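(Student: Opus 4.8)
The plan is to deduce Theorem~\ref{minholdthm} from the abstract H\"older estimate for fractional De~Giorgi classes, Theorem~\ref{DGholdthm}, in three stages. \emph{Stage one: reduction to a bounded minimizer.} If local boundedness of $u$ is not granted, it follows from the local boundedness result (Theorem~\ref{minboundthm}, itself based on Proposition~\ref{ulocboundprop}), whose proof uses only a standard Caccioppoli-type inequality and, restricted to a subdomain, is insensitive to whether $F$ merely obeys a local bound. In any case, fix a subdomain $\Omega' \Subset \Omega$; then $M := \| u \|_{L^\infty(\Omega')} < +\infty$ and, since $F$ is locally bounded in $u$ uniformly in $x$, also $d_0 := \| F(\cdot, u) \|_{L^\infty(\Omega')} < +\infty$.

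\emph{Stage two: membership in a fractional De~Giorgi class.} The heart of the matter is to show that $u \in \DG^{s,p}(\Omega'; d_0^{1/p}, H, -\infty, sp/n, 0, R_0)$ with $R_0 \sim r_0$ and $H = H(n,p,\Lambda)$ independent of $s$ (this is the content of Proposition~\ref{minareDGprop}). Following~\cite{CCV11}, one proves the improved fractional Caccioppoli inequality~\eqref{2scaccintro} and its upper-truncation counterpart directly from minimality: fixing $x_0 \in \Omega'$, radii $0 < r < R < \min\{R_0, \dist(x_0, \partial\Omega')\}$, a level $k \in \R$, and a cut-off $\eta \in C^\infty_0(B_{(r+R)/2}(x_0))$ with $\eta \equiv 1$ on $B_r(x_0)$ and $|\nabla\eta| \le C/(R-r)$, one tests $\E(u;\Omega) \le \E(v;\Omega)$ against the admissible competitor $v := u + \eta^p (u-k)_-$, which equals $u$ outside the small ball $B_R(x_0) \Subset \Omega$ and pushes $u$ up toward $k$ on $\{u<k\}$. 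The potential part is controlled by $\int_\Omega [F(\cdot,u)-F(\cdot,v)]\,dx \le 2 d_0 \, |A^-(k,x_0,R)|$, as $u\neq v$ only on $A^-(k,x_0,R)$, where $u$ and $v$ stay bounded. For the nonlocal part one uses the two-sided bound~\eqref{Kell} to replace $K$ by the model kernel up to the factor $\Lambda$, splits the integral over $B_R(x_0)^2$ and over $B_R(x_0)\times(\R^n\setminus B_R(x_0))$ (counted twice by symmetry), and applies the elementary inequalities of Lemmas~\ref{numestlem1}--\ref{numestlem4} to $|u(x)-u(y)|^p-|v(x)-v(y)|^p$; this produces on the left-hand side both the Gagliardo seminorm $[(u-k)_-]_{W^{s,p}(B_r(x_0))}^p$ and the extra cross term $\iint_{B_r(x_0)^2}(u(y)-k)_+^{p-1}(u(x)-k)_-|x-y|^{-n-sp}\,dx\,dy$, while the $B_R$-versus-complement interaction supplies the Tail contribution and the cut-off errors the $(R-r)^{-p}$ and $(R-r)^{-n-sp}$ factors. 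A final application of the iteration Lemma~\ref{induclem} reabsorbs the terms with unfavorable powers of $R-r$ and yields~\eqref{2scaccintro} with $\lambda = 0$ and $\varepsilon = sp/n$. The specular choice $v := u - \eta^p(u-k)_+$ gives the analogue for $(u-k)_+$; levels $|k|>M$ are either trivial ($(u-k)_\mp$ vanishes identically, so both sides are zero) or reduce to a constant shift of $u$, for which the $|k|^p/R^{n\varepsilon}$ term on the right-hand side of~\eqref{DG-def} makes the inequality automatic.

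\emph{Stage three: conclusion.} Since $\varepsilon = sp/n \ge s_0 p/n =: \varepsilon_0 > 0$, Theorem~\ref{DGholdthm} applies on $\Omega'$ and gives $u\in C^\alpha_{\loc}(\Omega')$ with $\alpha = \alpha(n,s_0,p,\Lambda)\in(0,1)$ and, for $x_0\in\Omega'$ and small $R$,
$$
[u]_{C^\alpha(B_R(x_0))} \le \frac{C}{R^\alpha}\Big(\| u\|_{L^\infty(B_{4R}(x_0))} + \Tail(u; x_0, 4R) + R^{(\lambda+n\varepsilon)/p}\,d_0^{1/p}\Big);
$$
with $\lambda = 0$, $\varepsilon = sp/n$ one has $R^{(\lambda+n\varepsilon)/p}d_0^{1/p} = R^s\|F(\cdot,u)\|_{L^\infty(B_{8R}(x_0))}^{1/p}$ (after enlarging the ball in the definition of $d_0$), exactly the stated term. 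Letting $\Omega'$ exhaust $\Omega$ yields $u\in C^\alpha_{\loc}(\Omega)$, and the precise estimate in the statement, with its $\min\{r_0/2,\dist(x_0,\partial\Omega)\}/16$ restriction on $R$, follows by a routine covering argument. The principal obstacle is the computation in stage two: arranging the competitor and the use of~\eqref{Kell} together with the pointwise numerical inequalities so that the extra cross term---the substitute for the De~Giorgi isoperimetric inequality~\eqref{isoplevset}, unavailable in $W^{s,p}$ for small $s$---survives on the left-hand side with the correct sign, while keeping the dependence on $R-r$ in a form to which Lemma~\ref{induclem} applies.
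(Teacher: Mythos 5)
Your proposal is correct and follows essentially the same route as the paper: use local boundedness to reduce to the case of a bounded potential ($d_1 = \|F(\cdot,u)\|_{L^\infty}$, $d_2 = 0$), establish membership in a fractional De~Giorgi class via the improved Caccioppoli inequality of Proposition~\ref{minareDGprop}, and then invoke the abstract H\"older estimate of Theorem~\ref{DGholdthm}. The details you sketch for the Caccioppoli step (competitor $v = u \pm \eta^p (u-k)_\mp$, use of~\eqref{Kell} and the numerical lemmata, absorption of the $(R-r)$ powers via Lemma~\ref{induclem}) match the paper's Proposition~\ref{minareDGprop} up to inessential choices of cut-off normalization.
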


\begin{theorem}[\bfseries Harnack inequality for minimizers] \label{minharthm}
\textcolor{white}{ }\\
Let~$n \in \N$,~$s \in (0, 1)$ and~$p > 1$. Let~$\Omega$ be an open bounded subset of~$\R^n$. Assume that~$K$ satisfies hypotheses~\eqref{Ksimm},~\eqref{Kell2} and that~$F$ is locally bounded in~$u \in \R$, uniformly w.r.t~$x \in \Omega$. Let~$u$ be a minimizer of~$\E$ in~$\Omega$ such that~$u \ge 0$ in~$\Omega$. Then, for any~$x_0 \in \Omega$ and~$0 < R < \dist(x_0, \partial \Omega) / 2$, it holds
$$
\sup_{B_R(x_0)} u \le C \left( \inf_{B_R(x_0)} u + \Tail(u_-; x_0, R) + R^s \| F(\cdot, u) \|_{L^\infty(B_{2 R}(x_0))}^{1 / p} \right),
$$
for some~$C \ge 1$ depending only on~$n$,~$s$,~$p$ and~$\Lambda$. When~$n \notin \{1, p \}$, the constant~$C$ does not blow up as~$s \rightarrow 1^-$.
\end{theorem}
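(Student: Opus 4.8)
The plan is to obtain Theorem~\ref{minharthm} as a corollary of the Harnack inequality for fractional De~Giorgi classes, Theorem~\ref{DGharthm}. The real content is therefore to show that a minimizer of~$\E$, restricted to a compactly contained subdomain, belongs to a class $\DG^{s,p}(\,\cdot\,;d,H,-\infty,\varepsilon,\lambda,+\infty)$ with suitable parameters --- this is the statement of the auxiliary Proposition~\ref{minareDGprop}, whose proof is the improved Caccioppoli inequality of type~\eqref{DG-def}--\eqref{DG+def} for minimizers. So the first step is to recall, by Theorem~\ref{minboundthm} (local boundedness of minimizers), that $u\in L^\infty_\loc(\Omega)$, whence $F(\cdot,u)\in L^\infty_\loc(\Omega)$; from then on one fixes a subdomain $\Omega'\Subset\Omega$ and works with $d\simeq\|F(\cdot,u)\|_{L^\infty(\Omega')}^{1/p}$.

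The heart of the argument is the improved Caccioppoli inequality. I would fix $x_0\in\Omega'$, radii $0<r<R<\dist(x_0,\partial\Omega')$, a level $k\in\R$, and a cutoff $\eta\in C^\infty_0(B_{(R+r)/2}(x_0))$ with $0\le\eta\le1$, $\eta\equiv1$ on $B_r(x_0)$, $|\nabla\eta|\lesssim(R-r)^{-1}$. Writing $w:=(u-k)_-$, test the minimality $\E(u;\Omega)\le\E(v;\Omega)$ against the competitor $v:=u+\eta^pw$, which equals $u$ outside $A^-(k)\cap\supp\eta$ and, on $\{\eta=1\}$, coincides with $\max\{u,k\}$. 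Since $v=u$ off $A^-(k,x_0,R)$ and $F(\cdot,u)$ is bounded there, the potential contribution $\int_\Omega(F(x,v)-F(x,u))\,dx$ is controlled by $2\|F(\cdot,u)\|_{L^\infty}\,|A^-(k,x_0,R)|$; this yields precisely the $R^\lambda d^p\,|A^-(k,x_0,R)|^{1-sp/n+\varepsilon}$-term of~\eqref{DG-def} with $\lambda=0$ and $\varepsilon=sp/n$ (so that the exponent equals~$1$), while the $|k|^p/R^{n\varepsilon}$-term is not needed and only makes the defining inequality easier to fulfil. The delicate side is the Dirichlet energy difference $\tfrac{1}{2p}\iint_{\C_\Omega}\big(|u(x)-u(y)|^p-|v(x)-v(y)|^p\big)K(x,y)\,dx\,dy$: splitting $\C_\Omega$ into $B_R(x_0)^2$ and its complement, on the mixed region $\{u(x)<k\le u(y)\}$ one has $|u(x)-u(y)|=(u(y)-k)_+ +(u(x)-k)_-$ while $|v(x)-v(y)|\le(u(y)-k)_+$ up to cutoff errors, so Lemma~\ref{numestlem1} (with $\theta\in(0,1)$) produces exactly the positive cross term $\iint_{B_r(x_0)^2}\tfrac{(u(y)-k)_+^{p-1}(u(x)-k)_-}{|x-y|^{n+sp}}\,dx\,dy$ together with a piece of $[w]^p_{W^{s,p}(B_r(x_0))}$; on the regions where $0<\eta<1$, $v$ is a convex combination of $u$ and~$k$ and the increments are handled through Lemma~\ref{numestlem2} in the form $|\mu a-b|^p-|a-b|^p\le pb^{p-1}a$; the complementary region produces the $\|(u-k)_-\|_{L^1(B_R(x_0))}\,\overline{\Tail}$-term after estimating $\int_{\R^n\setminus B_r(x_0)}$ against $\int_{\R^n\setminus B_R(x_0)}$. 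Keeping track of all cutoff error terms, collecting the contributions and absorbing them via the iteration Lemma~\ref{induclem} to turn a double-radius estimate into one with the $(R-r)^{-p}$ and $(R-r)^{-n-sp}$ denominators of~\eqref{DG-def} is the step I expect to be the main obstacle. Here the stronger ellipticity~\eqref{Kell2} --- i.e.\ \eqref{Kell} with $r_0=+\infty$ --- is what allows $\int_{B_{2R_0}(x)}$ to be replaced by $\int_{\R^n}$ in the second term on the left of~\eqref{DG-def}, giving $R_0=+\infty$; and since $F(\cdot,u)$ is bounded on $\Omega'$ the inequality holds for every $k\in\R$, i.e.\ $k_0=-\infty$. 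In conclusion $u\in\DG^{s,p}\big(\Omega';C\|F(\cdot,u)\|_{L^\infty(\Omega')}^{1/p},H,-\infty,sp/n,0,+\infty\big)$ with $H$ depending only on $n$, $p$, $\Lambda$ (and likewise with $\pm u$ interchanged).

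Finally, I would apply Theorem~\ref{DGharthm} with $\varepsilon_0=sp/n$ and $\lambda_0=0$: for $u\ge0$ this gives, at any $x_0$ and $0<R<\dist(x_0,\partial\Omega)/2$,
$$\sup_{B_R(x_0)}u\le C\Big(\inf_{B_R(x_0)}u+\Tail(u_-;x_0,R)+R^{(\lambda+n\varepsilon)/p}\,d\Big),$$
and since $(\lambda+n\varepsilon)/p=(0+sp)/p=s$ and $d\simeq\|F(\cdot,u)\|_{L^\infty}^{1/p}$, the last summand is exactly the quantity~$\F$ of~\eqref{Fdef} in case~$(a)$. The constant $C$ inherited from Theorem~\ref{DGharthm} depends on $n,s,p,\varepsilon_0,\lambda_0,H$, hence only on $n,s,p,\Lambda$, and it does not blow up as $s\to1^-$ when $n\notin\{1,p\}$, exactly as asserted there. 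A routine covering and chaining argument --- of the same type carried out at the end of the proof of Theorem~\ref{DGharthm} to remove the restriction on the radius --- localizes the norm of $F(\cdot,u)$ to $B_{2R}(x_0)$ and reconciles the various ball sizes, yielding the stated estimate.
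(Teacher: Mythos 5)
Your proposal is correct and reproduces the paper's argument: reduce to bounded $F$ via Theorem~\ref{minboundthm}, show that $u$ belongs to $\DG^{s,p}(\,\cdot\,; d, H, -\infty, sp/n, 0, +\infty)$ with $d\simeq\|F(\cdot,u)\|_{L^\infty}^{1/p}$ --- this is Proposition~\ref{minareDGprop} (and its $\DG_-$ counterpart) in the $d_2=0$ case, where hypothesis~\eqref{Kell2}, i.e.\ $r_0=+\infty$, upgrades $R_0=r_0/2$ to $R_0=+\infty$ as Theorem~\ref{DGharthm} requires --- and then invoke Theorem~\ref{DGharthm} together with the covering/chaining step from its proof. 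The only cosmetic deviation is the cut-off power: in Proposition~\ref{minareDGprop} the paper compares against $v=u-\eta w_+$ (first power of~$\eta$), reserving the $\eta^p$ you use for the test functions of Proposition~\ref{solareDGprop} in the equation case; either choice yields the same improved Caccioppoli inequality.
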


Theorems~\ref{minboundthm}-\ref{minharthm} will be proved by showing that the minimizers of~$\E$ belong to a fractional~De~Giorgi class, so that Theorems~\ref{DGboundthm},\ref{DGholdthm} and~\ref{DGharthm} may be applied to them.

Before proceeding to the proof of this inclusion, we define the notions of sub- and superminimizers as one-sided generalizations of the minimizers introduced in Definition~\ref{mindef}.

\begin{definition}
Let~$\Omega \subset \R^n$ be a bounded measurable set. A function~$u \in \W^{s, p}(\Omega)$ is said to be a~\emph{subminimizer} (\emph{superminimizer}) of~$\E$ in~$\Omega$ if
\begin{equation} \label{minine}
\E(u; \Omega) \le \E(u + \varphi; \Omega),
\end{equation}
for any non-positive (non-negative) measurable function~$\varphi: \R^n \to \R$ supported inside~$\Omega$.
\end{definition}

Recalling Definition~\ref{mindef}, minimizers are, in particular, sub- and superminimizers. Conversely, it is not hard to see that if~$u$ is at the same time a sub- and a superminimizer, then it is a minimizer as well.

Even if not immediately obvious, the minimality property introduced above behaves well with respect to set inclusion. That is, if~$\Omega' \subset \Omega \subset \R^n$ and~$u$ is a sub- or superminimizer in~$\Omega$, then~$u$ is also a sub- or superminimizer in~$\Omega'$ (see e.g.~\cite[Remark~1.2]{CV15} for a more detailed explanation in the case of minimizers).

Furthermore, it is easy to check that~$u$ is a subminimizer (superminimizer) for~$\E$ if and only if~$-u$ is a superminimizer (subminimizer) for the energy having the same interaction term and potential determined by~$\widetilde{F}(x, u) = F(x, -u)$. Noticing that~$\widetilde{F}$ satisfies the same growth assumption~\eqref{Fbounds} of~$F$, this allows us to focus on subminimizers only.

\smallskip

We can now state the following result, where we prove that the minimizers of~$\E$ belong to a suitable fractional De Giorgi class.

Unless otherwise specified, in what follows we assume~$n \in \N$,~$s \in (0, 1)$,~$p > 1$ to be given parameters, and that~$K$ and~$F$ satisfy~\eqref{Ksimm},~\eqref{Kell} and~\eqref{Fbounds}, respectively.

\begin{proposition} \label{minareDGprop}
Let~$u$ be a subminimizer of~$\E$ in a bounded open set~$\Omega \subset \R^n$. Then, there exist four constants~$R_0 \in (0, r_0 / 2]$,~$k_0 \in [-\infty, 1]$,~$H \ge 1$ and~$\varepsilon \in (0, sp / n]$, such that
\begin{equation} \label{minareDGine}
\begin{aligned}
& [(u - k)_+]_{W^{s, p}(B_r(x_0))}^p + \int_{B_r(x_0)} (u(x) - k)_+ \left[ \int_{B_{2 R_0}(x)} \frac{(u(y) - k)_-^{p - 1}}{|x - y|^{n + s p}} \, dy \right] dx \\
& \hspace{15pt} \le \frac{H}{1 - s} \Bigg[ \left( d_1 + \frac{|k|^p}{R^{n \varepsilon}} \right) |A^+(k, x_0, R)|^{1 - \frac{s p}{n} + \varepsilon} + \frac{R^{(1 - s) p}}{(R - r)^p} \| (u - k)_+ \|_{L^p(B_R(x_0))}^p \\
& \hspace{15pt} \quad + \frac{R^{n + s p}}{(R - r)^{n + s p}} \| (u - k)_+ \|_{L^1(B_R(x_0))} \overline{\Tail}((u - k)_+; x_0, r)^{p - 1} \Bigg],
\end{aligned}
\end{equation}
for any~$x_0 \in \Omega$,~$0 < r < R \le \min \{ R_0, \dist (x_0, \partial \Omega) \}$ and~$k \ge k_0$. Consequently,~$u$ belongs to the following fractional De~Giorgi classes:
\begin{enumerate}[label={$\bullet$},leftmargin=*]
\item if~$d_2 = 0$, then
$$
u \in \DG^{s, p}_+ \left( \Omega; d_1^{\frac{1}{p}}, H, -\infty, \frac{ s p }{n}, 0, \frac{r_0}{2} \right),
$$
with~$H = H(n, p, \Lambda)$;
\item if~$d_2 > 0$ and~$1 \le q \le p$, then
$$
u \in \DG^{s, p}_+ \left( \Omega; d_1^{\frac{1}{p}}, H, 1, \frac{ s p }{n}, 0, \min \left\{ 1, \frac{r_0}{2} \right\} \right),
$$
with~$H = H(n, p, \Lambda, d_2)$;
\item if~$d_2 > 0$,~$n \ge s p$ and~$p < q < p^*_s$, then
$$
u \in \DG^{s, p}_+ \left( \Omega; d_1^{\frac{1}{p}}, H, 0, 1 - \frac{q}{p^*_\sigma}, 0, R_0 \right),
$$
with~$H = H(n, p, q, \Lambda, d_2)$,~$R_0 = R_0(n, s, p, q, \Lambda, d_2, r_0, \| u \|_{L^{p^*_\sigma}(\Omega)})$ and for some constant~$\sigma = \sigma(n, s, p, q) \in (0, s)$. When~$n > sp$ we can even take~$\sigma = s$, while when~$n > p$ the constant~$R_0$ does not blow up as~$s \rightarrow 1^-$.
\end{enumerate}
An analogous statement holds for superminimizers and the classes~$\DG_-^{s, p}$.
\end{proposition}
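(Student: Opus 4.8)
The plan is to establish inequality~\eqref{minareDGine} by a direct energy‑comparison argument, and then to read off membership in the three classes by matching parameters against the defining inequality~\eqref{DG+def}. As noted right before the statement, it suffices to treat subminimizers (the case of superminimizers follows upon replacing $u$ by $-u$ and $F$ by $\widetilde F(x,\cdot):=F(x,-\cdot)$, which satisfies the same bound~\eqref{Fbounds}). Fix $x_0\in\Omega$, radii $0<r<R\le\min\{R_0,\dist(x_0,\partial\Omega)\}$, with $R_0\le r_0/2$ still to be chosen, and a level $k\ge k_0$. Pick a cutoff $\eta\in C^\infty_0(B_{(R+r)/2}(x_0))$ with $0\le\eta\le1$, $\eta\equiv1$ on $B_r(x_0)$, $|\nabla\eta|\le C/(R-r)$, and set $\varphi:=-\eta^p(u-k)_+$ and $v:=u+\varphi$. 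Then $\varphi\le0$ is supported in $B_R(x_0)\subset\subset\Omega$, one checks $v\in\W^{s,p}(\Omega)$ and $v=u$ a.e.\ outside $\Omega$, so the subminimality of $u$ yields $\E(u;\Omega)\le\E(v;\Omega)$, i.e.
\[
\frac1{2p}\iint_{\C_\Omega}\Big(|u(x)-u(y)|^p-|v(x)-v(y)|^p\Big)K(x,y)\,dx\,dy\le\int_{A^+(k,x_0,R)}\big(F(x,v(x))-F(x,u(x))\big)\,dx,
\]
where we used that $v=u$ outside $A^+(k,x_0,R)=B_R(x_0)\cap\{u>k\}$ and, there, $v(x)\in[k,u(x)]$. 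Note also that $v(y)=u(y)$ whenever $u(y)\le k$, irrespective of $\eta(y)$.

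The core of the proof is the lower bound for the left‑hand side. Since $v-u$ is supported in $B_{(R+r)/2}(x_0)$, only pairs with $x$ (or, by symmetry, $y$) in that ball contribute, and I would split them into three regimes. (i) $x\in B_r(x_0)$ with $u(x)>k$ and $u(y)<k$ with $|x-y|<2R_0$ (so $\eta(x)=1$, $v(x)=k$, $v(y)=u(y)$, and the lower bound in~\eqref{Kell} applies since $B_{2R_0}\subseteq B_{r_0}$): here $|u(x)-u(y)|^p-|v(x)-v(y)|^p=\big((u(x)-k)_++(u(y)-k)_-\big)^p-(u(y)-k)_-^p$, and Lemma~\ref{numestlem1}, applied with $a=(u(y)-k)_-$, $b=(u(x)-k)_+$ and a suitably chosen $\theta\in(0,1)$, produces \emph{both} a multiple of $(u(x)-k)_+^p$ — which, together with the analysis of pairs lying in $\{u>k\}^2$, reconstructs the Gagliardo seminorm $[(u-k)_+]^p_{W^{s,p}(B_r(x_0))}$ — \emph{and} a multiple of the good cross term $(u(y)-k)_-^{p-1}(u(x)-k)_+$; integrating against $K$ (lower bound) gives exactly the second summand on the left of~\eqref{minareDGine}, after removing the redundant level‑set restrictions. (ii) $x\in B_{(R+r)/2}(x_0)$, $y\in B_R(x_0)$ with $\eta$ not identically $1$ on the pair: using $|\eta(x)-\eta(y)|\le C|x-y|/(R-r)$, Lemma~\ref{numestlem2}, the upper bound in~\eqref{Kell} and $\int_{B_R}|x-y|^{-(n-(1-s)p)}\,dy\le CR^{(1-s)p}/((1-s)p)$, this regime costs at most a multiple of $\frac1{1-s}\frac{R^{(1-s)p}}{(R-r)^p}\|(u-k)_+\|^p_{L^p(B_R(x_0))}$. (iii) $x\in B_{(R+r)/2}(x_0)$, $y\in\R^n\setminus B_R(x_0)$: then $v(y)=u(y)$, $|x-y|\ge\tfrac14R^{-1}(R-r)|y-x_0|$, and Lemma~\ref{numestlem2} again bounds the integrand, delivering the Tail contribution $\frac{R^{n+sp}}{(R-r)^{n+sp}}\|(u-k)_+\|_{L^1(B_R(x_0))}\,\overline{\Tail}((u-k)_+;x_0,r)^{p-1}$. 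Collecting (i)–(iii), absorbing via Young's inequality the part of (ii) comparable to the Gagliardo seminorm, and using the ellipticity of $K$ once more, the left‑hand side dominates that of~\eqref{minareDGine}.

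For the right‑hand side one estimates $\int_{A^+(k,x_0,R)}\big(|F(x,v(x))|+|F(x,u(x))|\big)\,dx$ through~\eqref{Fbounds} and $|v(x)|\le\max\{|k|,|u(x)|\}$. If $d_2=0$ this is $2d_1|A^+(k,x_0,R)|$; if $d_2>0$ and $1\le q\le p$ one writes $|u|\le(u-k)_++|k|$ on $A^+(k,x_0,R)$ and uses H\"older's and Young's inequalities to bound $\int_{A^+(k,x_0,R)}|u|^q$ by a multiple of $\|(u-k)_+\|^p_{L^p(B_R(x_0))}+(1+|k|^p)|A^+(k,x_0,R)|$; in both cases, since $R\le R_0\le1$ makes powers $R^{sp-n\varepsilon}$ harmless, the outcome is at most $\frac{H}{1-s}\big(d_1+|k|^p/R^{n\varepsilon}\big)|A^+(k,x_0,R)|$ with $\varepsilon=sp/n$, after enlarging $H$ and, when $q\le p$, taking $k_0=1$ so that $k^q\le k^p$. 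When $p<q<p^*_s$ one first fixes $\sigma\in(0,s)$ — with $\sigma=s$ allowed if $n>sp$ — close enough to $s$ that $\varepsilon:=1-q/p^*_\sigma\in(0,sp/n]$, and controls $\int_{A^+(k,x_0,R)}|u|^q$ by H\"older and the fractional Sobolev embedding on $B_{2R}(x_0)$ in terms of $\|u\|_{L^{p^*_\sigma}(\Omega)}$ and $|A^+(k,x_0,R)|^{1-q/p^*_\sigma}=|A^+(k,x_0,R)|^{\varepsilon}$; choosing $R_0$ small, depending on $\|u\|_{L^{p^*_\sigma}(\Omega)}$, $d_2$, $r_0$ and the structural constants, makes the corresponding factor absorbable into $\frac{H}{1-s}|k|^p R^{-n\varepsilon}|A^+(k,x_0,R)|^{1-sp/n+\varepsilon}$, which with $k_0=0$ gives the last class. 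Comparing the resulting inequality with~\eqref{DG+def} (here $d=d_1^{1/p}$, $\lambda=0$, and $u\in L^{p-1}_s(\R^n)$ because $u\in\W^{s,p}(\Omega)$) yields the three stated De~Giorgi classes; the statement for superminimizers and $\DG^{s,p}_-$ follows from the reduction above, and the uniformity as $s\to1^-$ (for $n>p$ in the last case) is transparent since every $(1-s)$ weight travels along with the $(1-s)$ normalization built into~\eqref{Kell}.

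The step I expect to be the main obstacle is regime (i) of the left‑hand side analysis: the CCV11‑type bookkeeping that simultaneously extracts the Gagliardo seminorm of $(u-k)_+$ and the genuinely new cross term $(u(y)-k)_-^{p-1}(u(x)-k)_+$ from the single algebraic inequality of Lemma~\ref{numestlem1}, while arranging that the leftover interactions (same superlevel set, cutoff transition, exterior contributions controlled by Lemma~\ref{numestlem2}) do not spoil the sign of the main terms. The case $p<q<p^*_s$ is a secondary technical point, as it forces the a priori use of the Sobolev embedding and hence the $R_0$‑smallness depending on $\|u\|_{L^{p^*_\sigma}(\Omega)}$.
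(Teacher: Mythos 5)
Your overall strategy lines up with the paper's: compare $u$ against $u-\eta w_+$ (the paper uses $\eta$ rather than $\eta^p$ here, reserving $\eta^p$ for the weak-formulation case in Proposition~\ref{solareDGprop}, but that is immaterial), expand the $p$th powers, extract simultaneously $[(u-k)_+]^p_{W^{s,p}}$ and the cross term from Lemma~\ref{numestlem1} with $\theta=1/2$, and feed the potential term to the Sobolev embedding via Corollary~\ref{sobinecor} for $p<q<p^*_s$. That part of your sketch, including the role of $R_0\le r_0/2$ in allowing the lower ellipticity bound on $B_{2R_0}(x)$, is correct.

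The genuine gap is in your treatment of your regime (ii), and it is not a minor bookkeeping point. When both $x$ and $y$ lie in the annulus where $\eta$ transitions (say both in $B_{(R+r)/2}\setminus B_r$, both with $u>k$), the quantity $|u(x)-u(y)|^p-|v(x)-v(y)|^p$ can be bounded from below only up to a term of the form $-C(1-\eta(x))^p|w_+(x)-w_+(y)|^p$ (this is the paper's estimate~\eqref{minareDGprop2}). After integrating against $K$, this produces on the right-hand side a genuine annular Gagliardo contribution
$$
(1-s)\iint_{B_\tau^2\setminus B_\rho^2}\frac{|w_+(x)-w_+(y)|^p}{|x-y|^{n+sp}}\,dx\,dy,
$$
which is \emph{not} controlled by $\frac{R^{(1-s)p}}{(R-r)^p}\|w_+\|_{L^p(B_R)}^p$ and cannot be absorbed into the left-hand side seminorm (which lives on the smaller ball $B_\rho$) by Young's inequality. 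This is precisely the object of Widman's hole-filling: one sets $\Phi(\rho):=[w_+]^p_{W^{s,p}(B_\rho)}+(\text{cross term})+\Psi(\rho)$, recognizes the annular contribution as $C(\Phi(\tau)-\Phi(\rho))$, adds $C\Phi(\rho)$ to both sides to get $\Phi(\rho)\le\gamma\Phi(\tau)+(\text{good RHS})$ with $\gamma\in(0,1)$, and then invokes Lemma~\ref{induclem} to iterate. Without this mechanism, the energy comparison does not close, so your sentence ``absorbing via Young's inequality the part of (ii) comparable to the Gagliardo seminorm'' does not hold, and the conclusion of the argument is unjustified. The same iteration is also needed in the case $p<q<p^*_s$, where the Sobolev step injects $\tfrac12[w_+]^p_{W^{s,p}(B_\tau)}$ and a tail $\|u\|^q_{L^q(B_\tau\setminus B_\rho)}$ onto the right side, both of which are again handled through the hole-filling iteration and not merely through smallness of $R_0$.

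You also flag regime (i) as the expected main obstacle; in fact that step is the clean one (Lemma~\ref{numestlem1} with $\theta=1/2$ does exactly what you want), and the nontrivial structural ingredient you should worry about is the iteration just described.
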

\begin{proof}
Without loss of generality, we suppose~$x_0 = 0$.

Let~$r \le \rho < \tau \le R$ and consider a cut-off function~$\eta \in C^\infty_0(\R^n)$ satisfying~$0 \le \eta \le 1$ in~$\R^n$,~$\supp (\eta) = B_{(\tau + \rho) / 2}$,~$\eta = 1$ in~$B_\rho$ and~$|\nabla \eta| \le 4 / (\tau - \rho)$ in~$\R^n$. For any fixed~$k \ge 0$, we write~$w_\pm := (u - k)_\pm$ and choose~$v := u - \eta w_+$ as a test function in~\eqref{minine}. Notice that~$\supp(\eta w_+) = A^+(k, (\tau + \rho) / 2)$. Hence, and recalling~\eqref{Fbounds} as well as notation~\eqref{dmudef}, we get
\begin{align*}
0 & \le \frac{1}{2 p} \iint_{\C_{B_\tau}} \Big[ |v(x) - v(y)|^p - |u(x) - u(y)|^p \Big] d\mu \\
& \quad + \int_{A^+ \left( k, \frac{\tau + \rho}{2} \right)} \Big[ 2 d_1 + d_2 \left( |u(x)|^q + |v(x)|^q \right) \Big] \, dx.
\end{align*}
Recalling~\eqref{Ksimm} and~\eqref{COmega}, this is equivalent to
\begin{equation} \label{minareDG1}
\begin{aligned}
0 & \le \int_{B_\tau} \int_{B_\tau} \Big[ |v(x) - v(y)|^p - |u(x) - u(y)|^p \Big] \, d\mu \\
& \quad + 2 \int_{B_\tau} \int_{\R^n \setminus B_\tau} \Big[ |v(x) - v(y)|^p - |u(x) - u(y)|^p \Big] \, d\mu \\
& \quad + 2 p \int_{A^+ \left( k, \frac{\tau + \rho}{2} \right)} \Big[ 2 d_1 + d_2 \left( |u(x)|^q + |v(x)|^q \right) \Big] \, dx.
\end{aligned}
\end{equation}

We begin by dealing with the two terms involving double integrals. The following facts hold true:
\begin{align}
\label{minareDGprop-1}
& \mbox{if either } x \notin A^+(k) \mbox{ or } y \notin A^+(k), \mbox{ then } |v(x) - v(y)|^p \le |u(x) - u(y)|^p,\\
\label{minareDGprop1}
& \mbox{if } x, y \in A^+(k, \rho), \mbox{ then } |v(x) - v(y)|^p - |u(x) - u(y)|^p = - |w_+(x) - w_+(y)|^p,\\
\label{minareDGprop3}
\begin{split}
& \mbox{if } x \in A^+(k, \rho) \mbox{ and } y \notin A^+(k), \mbox{ then} \\
& \hspace{10pt} |v(x) - v(y)|^p - |u(x) - u(y)|^p \le - \frac{1}{2} \, |w_+(x) - w_+(y)|^p - \frac{p}{2} \, w_-(y)^{p - 1} w_+(x),
\end{split}\\
\label{minareDGprop2}
\begin{split}
& \mbox{if } x, y \in A^+(k), \mbox{ then} \\
& \hspace{10pt} |v(x) - v(y)|^p \le C \left[ (1 - \eta(x))^p |w_+(x) - w_+(y)|^p + w_+(y)^p \frac{|x - y|^p}{(\tau - \rho)^p} \right],
\end{split}
\end{align}
for some constant~$C \ge 1$ depending only on~$p$.

Indeed,~\eqref{minareDGprop-1} and~\eqref{minareDGprop1} are immediate consequences of the definition of~$v$. On the other hand, for general~$x, y \in A^+(k)$ we may compute
\begin{align*}
|v(x) - v(y)|^p & = |(1 - \eta(x)) w_+(x) - (1 - \eta(y)) w_+(y)|^p \\
& \le 2^{p - 1} \Big[ (1 - \eta(x))^p |w_+(x) - w_+(y)|^p + w_+(y)^p |\eta(x) - \eta(y)|^p \Big],
\end{align*}
and~\eqref{minareDGprop2} follows in view of the properties of~$\eta$. Finally, to get~\eqref{minareDGprop3} we observe that for~$x \in A^+(k, \tau)$ and~$y \notin A^+(k)$,
\begin{align*}
|v(x) - v(y)|^p - |u(x) - u(y)|^p = \left( (1 - \eta(x)) w_+(x) + w_-(y) \right)^p - \left( w_+(x) + w_-(y) \right)^p,
\end{align*}
and then apply Lemma~\ref{numestlem1} (with~$\theta = 1/2$).

Thanks to properties~\eqref{minareDGprop-1}-\eqref{minareDGprop2}, we are in position to estimate the first double integral in~\eqref{minareDG1}. By~\eqref{minareDGprop-1},~\eqref{minareDGprop1},~\eqref{minareDGprop3}, the symmetry assumption~\eqref{Ksimm} on~$K$ and the properties of~$w_\pm$, we get
\begin{align*}
& \int_{B_\rho} \int_{B_\rho} \Big[ |v(x) - v(y)|^p - |u(x) - u(y)|^p \Big] \, d\mu \\
& \hspace{50pt} \le - \frac{1}{2} \int_{B_\rho} \int_{B_\rho} |w_+(x) - w_+(y)|^p \, d\mu - \frac{p}{2} \int_{B_\rho} \int_{B_\rho} w_-(y)^{p - 1} w_+(x) \, d\mu.
\end{align*}
On the other hand, after applying~\eqref{minareDGprop-1},~\eqref{minareDGprop3},~\eqref{minareDGprop2} and~\eqref{Kell}, an immediate computation reveals that
\begin{align*}
& \iint_{B_\tau^2 \setminus B_\rho^2} \Big[ |v(x) - v(y)|^p - |u(x) - u(y)|^p \Big] \, d\mu \\
& \hspace{40pt} \le C \left[ \iint_{B_\tau^2 \setminus B_\rho^2} |w_+(x) - w_+(y)|^p \, d\mu + \frac{R^{(1 - s) p}}{(\tau - \rho)^p} \int_{B_R} w_+(x)^p \, dx \right] \\
& \hspace{40pt} \quad - \frac{p}{2} \int_{B_\rho} w_+(x) \left( \int_{B_\tau \setminus B_\rho} w_-(y)^{p - 1} K(x, y) \, dy \right) dx,
\end{align*}
with~$C$ now depending also on~$n$ and~$\Lambda$. The combination of these last two estimates and another use of~\eqref{Kell} yield
\begin{equation} \label{minareDG2}
\begin{aligned}
&  \int_{B_\tau} \int_{B_\tau} \Big[ |v(x) - v(y)|^p - |u(x) - u(y)|^p \Big] \, d\mu \\
& \hspace{20pt} \le - \frac{1 - s}{C} \left[ [w_+]_{W^{s, p}(B_\rho)}^p  + \int_{B_\rho} w_+(x) \left( \int_{B_\tau} \frac{w_-(y)^{p - 1}}{|x - y|^{n + s p}} \, dy \right) dx \right] \\
& \hspace{20pt} \quad + C \left[ (1 - s) \iint_{B_\tau^2 \setminus B_\rho^2} \frac{|w_+(x) - w_+(y)|^p}{|x - y|^{n + s p}} \, dx dy + \frac{R^{(1 - s) p}}{(\tau - \rho)^p} \| w_+ \|_{L^p(B_R)}^p \right].
\end{aligned}
\end{equation}

Now we deal with the second double integral in~\eqref{minareDG1}. We claim that
\begin{align}
\label{minareDGprop0}
& \mbox{if } x, y \notin A^+ \left( k, \frac{\tau + \rho}{2} \right), \mbox{ then } |v(x) - v(y)|^p = |u(x) - u(y)|^p,\\
\label{minareDGprop4}
\begin{split}
& \mbox{if } x \in A^+(k) \mbox{ and } y \in A^+(k) \setminus B_\tau, \mbox{ then} \\
& \hspace{10pt} |v(x) - v(y)|^p - |u(x) - u(y)|^p \le p w_+(y)^{p - 1} w_+(x).
\end{split}
\end{align}
Since~\eqref{minareDGprop0} is an obvious consequence of the fact that~$\supp(\eta w_+) = A(k, (\tau + \rho) / 2)$, we can concentrate on~\eqref{minareDGprop4}. Letting~$x$ and~$y$ as prescribed, we have
\begin{align*}
|v(x) - v(y)|^p - |u(x) - u(y)|^p = |(1 - \eta(x)) w_+(x) - w_+(y)|^p - |w_+(x) - w_+(y)|^p.
\end{align*}
The inequality stated in~\eqref{minareDGprop4} can then be obtained by employing Lemma~\ref{numestlem2}.

In view of~\eqref{Kell},~\eqref{minareDGprop0},~\eqref{minareDGprop4}, again~\eqref{minareDGprop-1},~\eqref{minareDGprop3} and the fact that
$$
|x - y| \ge |y| - |x| \ge \left( 1 - \frac{\tau + \rho}{2 \tau} \right) |y| = \frac{\tau - \rho}{2 \tau} |y| \ge \frac{\tau - \rho}{2 R} |y|,
$$
for any~$x \in B_{(\tau + \rho) / 2}$ and~$y \notin B_\tau$, it is not hard to see that
%
\begin{align*}
& \int_{B_\tau} \int_{\R^n \setminus B_\tau} \Big[ |v(x) - v(y)|^p - |u(x) - u(y)|^p \Big] \, d\mu \\
& \hspace{50pt} \le C \frac{R^{n + s p}}{(\tau - \rho)^{n + s p}} \| w_+ \|_{L^1(B_R)} \overline{\Tail}(w_+; r)^{p - 1} \\
& \hspace{50pt} \quad - \frac{1 - s}{C} \int_{B_\rho} w_+(x) \left( \int_{B_{r_0}(x) \setminus B_\tau} \frac{w_-(y)^{p - 1}}{|x - y|^{n + s p}} \, dy \right) dx.
\end{align*}
By putting together this last inequality with~\eqref{minareDG1} and~\eqref{minareDG2}, we obtain
\begin{equation} \label{minareDG3}
\begin{aligned}
& [w_+]_{W^{s, p}(B_\rho)}^p + \int_{B_\rho} w_+(x) \left( \int_{B_{r_0}(x)} \frac{w_-(y)^{p - 1}}{|x - y|^{n + s p}} \, dy \right) dx \\
& \hspace{15pt} \le C \iint_{B_\tau^2 \setminus B_\rho^2} \frac{|w_+(x) - w_+(y)|^p}{|x - y|^{n + s p}} \, dx dy \\
& \hspace{15pt} \quad + \frac{C}{1 - s} \Bigg[ \frac{R^{n + s p}}{(\tau - \rho)^{n + s p}} \| w_+ \|_{L^1(B_R)} \overline{\Tail}(w_+; r)^{p - 1} + \frac{R^{(1 - s) p}}{(\tau - \rho)^p} \| w_+ \|_{L^p(B_R)}^p \\
& \hspace{15pt} \quad + d_1 |A^+(k, R)| + d_2 \int_{A^+(k, \tau)} \left( |u(x)|^q + |v(x)|^q \right) \, dx \Bigg],
\end{aligned}
\end{equation}
for some~$C \ge 1$ depending only on~$n$,~$p$ and~$\Lambda$.

We set
\begin{equation} \label{Phidef}
\Phi(t) := [w_+]_{W^{s, p}(B_t)}^p + \int_{B_t} w_+(x) \left( \int_{B_{r_0}(x)} \frac{w_-(y)^{p - 1}}{|x - y|^{n + s p}} \, dy \right) dx + \Psi(t),
\end{equation}
with
$$
\Psi(t) := \begin{dcases}
0 & \mbox{if } d_2 = 0, \mbox{ or } d_2 > 0, \, n \ge sp \mbox{ and } 1 \le q \le p,\\
\frac{d_2}{1 - s} \| u \|_{L^q(A^+(k, t))}^q & \mbox{if } d_2 > 0, \, n \ge s p \mbox{ and } p < q < p^*_s,
\end{dcases}
$$
for any~$0 < t \le R$, and we claim that
\begin{equation} \label{minareDG4}
\begin{aligned}
\Phi(\rho) & \le C_\flat \left[ \Phi(\tau) - \Phi(\rho) \right] + \frac{C_\flat}{1 - s} \Bigg[ \frac{R^{n + s p}}{(\tau - \rho)^{n + s p}} \| w_+ \|_{L^1(B_R)} \overline{\Tail}(w_+; r)^{p - 1} \\
& \quad + (1 + d_2) \frac{R^{(1 - s) p}}{(\tau - \rho)^p} \| w_+ \|_{L^p(B_R)}^p + (d_1 + d_2 k^p R^{- n \varepsilon}) |A^+(k, R)|^{1 - \frac{s p}{n} + \varepsilon} \Bigg],
\end{aligned}
\end{equation}
for some~$C_\flat \ge 1$ possibly depending on~$n$,~$p$,~$q$,~$\Lambda$, and~$\varepsilon \in (0, sp / n]$.

Note that, in the case of~$d_2 = 0$, claim~\eqref{minareDG4} follows immediately from~\eqref{minareDG3}, with~$\varepsilon = s p / n$. To check the validity of~\eqref{minareDG4} when~$d_2 > 0$, we consider separately the two possibilities~$1 \le q \le p$ and~$p < q < p^*_s$ (with~$n \ge sp$ in the latter case).

Suppose that~$1 \le q \le p$. Assuming~$k \ge 1$, for~$x \in A^+(k)$ we have
$$
u(x) > k \ge 1 \quad \mbox{and} \quad v(x) = (1 - \eta(x)) w_+(x) + k \ge k \ge 1.
$$
Therefore,
\begin{align*}
|u(x)|^q + |v(x)|^q & \le |u(x)|^p + |v(x)|^p = |w_+(x) + k|^p + |(1 - \eta(x)) w_+(x) + k|^p \\
& \le 2^p \left( w_+(x)^p + k^p \right).
\end{align*}
By taking~$R \le 1$, it follows that
\begin{align*}
\int_{A^+(k, \tau)} \left( |u(x)|^q + |v(x)|^q \right) \, dx & \le 2^p \int_{A^+(k, \tau)} \left( w_+(x)^p + k^p \right) dx \\
& \le 2^p \left[ \frac{R^{(1 - s) p}}{(\tau - \rho)^p} \| w_+ \|_{L^p(B_R)}^p + k^p R^{- s p} |A^+(k, R)| \right].
\end{align*}
Using this with~\eqref{minareDG3}, we easily deduce~\eqref{minareDG4}, again with~$\varepsilon = sp / n$.

We now deal with the case of~$n \ge s p$ and~$p < q < p^*_s$. Let
\begin{equation} \label{sigmamindef}
\sigma := \begin{cases}
s & \quad \mbox{if } n > s p, \\
\max \left\{ 2 s - 1, \dfrac{(2 q - p) n}{2 p q} \right\} & \quad \mbox{if } n = s p,
\end{cases}
\end{equation}
and notice that~$n > \sigma p$ and~$q < p^*_\sigma$ in both cases~$n > sp$ and~$n = sp$. Also set
\begin{equation} \label{epsigmamindef}
\varepsilon_\sigma := 1 - \frac{q}{p^*_\sigma} \in \left( 0, \frac{s p}{n} \right).
\end{equation}
We add to both sides of~\eqref{minareDG3} the quantity~$d_2 (1 - s)^{-1} \| u \|_{L^q(A^+(k, \rho))}^q$. We obtain
\begin{equation} \label{minareDG5}
\begin{aligned}
\Phi(\rho) & \le \frac{C_1}{1 - s} \Bigg[ (1 - s) \iint_{B_\tau^2 \setminus B_\rho^2} \frac{|w_+(x) - w_+(y)|^p}{|x - y|^{n + s p}} \, dx dy \\
& \quad + \frac{R^{n + s p}}{(\tau - \rho)^{n + s p}} \| w_+ \|_{L^1(B_R)} \overline{\Tail}(w_+; r)^{p - 1} + \frac{R^{(1 - s) p}}{(\tau - \rho)^p} \| w_+ \|_{L^p(B_R)}^p \\
& \quad + d_1 |A^+(k, R)| + d_2 \int_{A^+(k, \tau)} \left( |u(x)|^q + |v(x)|^q \right) \, dx \Bigg],
\end{aligned}
\end{equation}
with~$C_1 \ge 1$ depending on~$n$,~$p$ and~$\Lambda$. We proceed to estimate the last term of~\eqref{minareDG5}. By restricting ourselves to~$k \ge 0$, on~$A^+(k)$ we have
\begin{equation} \label{u+vest}
\begin{aligned}
|u|^q + |v|^q & = |(1 - \eta) u + \eta w_+ + \eta k|^q + |(1 - \eta) u + \eta k|^q \\
& \le 3^q \Big( (1 - \eta)^q |u|^q + (\eta w_+)^q + k^q \Big).
\end{aligned}
\end{equation}
On the one hand, it holds
$$
k^{p^*_{\sigma}} |A^+(k, R)| \le \int_{A^+(k, R)} u(x)^{p^*_\sigma} \, dx \le \int_{\Omega} |u(x)|^{p^*_\sigma} \, dx, 
$$
and thus
\begin{equation} \label{minarekest}
\begin{aligned}
3^q \int_{A^+(k, \tau)} k^q \, dx & = 3^q k^p \left( k^{p^*_\sigma} |A^+(k, R)| \right)^{\frac{q - p}{p^*_\sigma}} |A^+(k, R)|^{1 - \frac{q - p}{p^*_\sigma}} \\
& \le 3^q \| u \|_{L^{p^*_\sigma}(\Omega)}^{q - p} k^p |B_R|^{\frac{(s - \sigma) p}{n}} |A^+(k, R)|^{1 - \frac{s p}{n} + \varepsilon_\sigma} \\
& \le k^p R^{- n \varepsilon_\sigma} |A^+(k, R)|^{1 - \frac{sp}{n} + \varepsilon_\sigma},
\end{aligned}
\end{equation}
provided
$$
R \le R_0 \le \left[ \frac{1}{3^{q} |B_1|^{\frac{(s - \sigma) p}{n}} \| u \|_{L^{p^*_\sigma}(\Omega)}^{q - p}} \right]^{\frac{1}{n \varepsilon_\sigma + (s - \sigma) p}}.
$$
On the other hand, with the help of H\"older's inequality and Corollary~\ref{sobinecor}, we compute
\begin{align*}
\int_{A^+(k, \tau)} (\eta(x) w_+(x))^q \, dx & \le |B_R|^{\varepsilon_\sigma} \left[ \int_{B_\tau} w_+(x)^{p^*_\sigma} \, dx \right]^{\frac{q - p}{p^*_\sigma}} \left[ \int_{B_\tau} (\eta(x) w_+(x))^{p^*_\sigma} \, dx \right]^{\frac{p}{p^*_\sigma}} \\
& \le C_2 \frac{(1 - \sigma) R^{n \varepsilon_\sigma}}{(n - \sigma p)^{p - 1}} \| u \|_{L^{p^*_\sigma}(\Omega)}^{q - p} \left[ [w_+]_{W^{\sigma, p}(B_\tau)}^p + \frac{\| w_+ \|_{L^p(B_R)}^p}{(\tau - \rho)^{\sigma p}} \right],
\end{align*}
with~$C_2 \ge 1$ depending only on~$n$ and~$p$. Notice that~$1 - \sigma \le 2 (1 - s)$, by definition of~$\sigma$. By taking
$$
R \le R_0 \le \min \left\{ \frac{1}{2}, \left[ \frac{(n - \sigma p)^{p - 1}}{4 C_1 C_2 d_2 3^q \| u \|_{L^{p^*_\sigma}(\Omega)}^{q - p}} \right]^{\frac{1}{n \varepsilon_\sigma}} \right\},
$$
in the previous estimate and using Lemma~\ref{sobinclem0} (with~$\delta = 1$), we get
\begin{align*}
\frac{3^q C_1 d_2}{1 - s} \int_{A^+(k, \tau)} (\eta(x) w_+(x))^q \, dx & \le \frac{1}{2} \left( [w_+]_{W^{\sigma, p}(B_\tau)}^p + \frac{\| w_+ \|_{L^p(B_R)}^p}{(\tau - \rho)^{\sigma p}} \right) \\
& \le \frac{1}{2} \left( [w_+]_{W^{s, p}(B_\tau)}^p + \frac{R^{(1 - s) p}}{(\tau - \rho)^{p}} \| w_+ \|_{L^p(B_R)}^p \right).
\end{align*}
By this,~\eqref{u+vest},~\eqref{minarekest} and the definition~\eqref{Phidef} of~$\Phi$, we see that
\begin{align*}
&\frac{C_1 d_2}{1 - s} \int_{A^+(k, \tau)} \left( |u(x)|^q + |v(x)|^q \right) \, dx \\
& \hspace{15pt} \le \frac{1}{2} \left( [w_+]_{W^{s, p}(B_\tau)}^p + \frac{R^{(1 - s) p}}{(\tau - \rho)^{p}} \| w_+ \|_{L^p(B_R)}^p \right) \\
& \hspace{15pt} \quad + \frac{C_1 d_2}{1 - s} \left( 3^{q} \int_{B_\tau \setminus B_\rho} |u(x)|^q \, dx + \frac{k^p}{R^{n \varepsilon_\sigma}} |A^+(k, R)|^{1 - \frac{sp}{n} + \varepsilon_\sigma} \right) \\
& \hspace{15pt} \le \frac{1}{2} \Phi(\rho) + C \left[ \Phi(\tau) - \Phi(\rho) + \frac{R^{(1 - s) p}}{(\tau - \rho)^p} \| w_+ \|_{L^p(B_R)}^p + \frac{d_2 k^p |A^+(k, R)|^{1 - \frac{sp}{n} + \varepsilon_\sigma}}{(1- s) R^{n \varepsilon_\sigma}} \right],
\end{align*}
for some~$C \ge 1$ depending on~$n$,~$p$,~$q$ and~$\Lambda$. This and~\eqref{minareDG5} yield
\begin{align*}
\Phi(\rho) & \le \frac{1}{2} \Phi(\rho) + C \left[ \Phi(\tau) - \Phi(\rho) \right] + \frac{C}{1 - s} \Bigg[ \frac{R^{n + s p}}{(\tau - \rho)^{n + s p}} \| w_+ \|_{L^1(B_R)} \overline{\Tail}(w_+; r)^{p - 1} \\
& \quad + \frac{R^{(1 - s) p}}{(\tau - \rho)^p} \| w_+ \|_{L^p(B_R)}^p + (d_1 + d_2 k^p R^{- n \varepsilon_\sigma}) |A^+(k, R)|^{1 - \frac{s p}{n} + \varepsilon_\sigma} \Bigg],
\end{align*}
at least if
$$
R \le R_0 \le |B_1|^{-\frac{1}{n}},
$$
since~$\varepsilon_\sigma \le sp / n$. After subtracting to both sides the term~$\Phi(\rho) / 2$, we reach~\eqref{minareDG4}, with~$\varepsilon = \varepsilon_\sigma$.

By adding the quantity~$C_\flat \Phi(\rho)$ to both sides of~\eqref{minareDG4} and dividing by~$1 + C_\flat$, we get
\begin{align*}
\Phi(\rho) & \le \gamma \Phi(\tau) + \frac{\gamma}{1 - s} \Bigg[ \frac{R^{n + s p}}{(\tau - \rho)^{n + s p}} \| w_+ \|_{L^1(B_R)} \overline{\Tail}(w_+; r)^{p - 1} \\
& \quad + (1 + d_2) \frac{R^{(1 - s) p}}{(\tau - \rho)^p} \| w_+ \|_{L^p(B_R)}^p + (d_1 + d_2 k^p R^{- n \varepsilon}) |A^+(k, R)|^{1 - \frac{s p}{n} + \varepsilon} \Bigg],
\end{align*}
for any~$0 < r \le \rho < \tau \le R$, with~$\gamma = C_\flat / (1 + C_\flat) \in (0, 1)$. Inequality~\eqref{minareDGine} then follows after an application of Lemma~\ref{induclem}.
\end{proof}

In view of Proposition~\ref{minareDGprop} and the three main results of Section~\ref{DGsec}, the validity of Theorems~\ref{minboundthm}-\ref{minharthm} is easily deduced.

\begin{proof}[Proofs of Theorems~\ref{minboundthm},~\ref{minholdthm} and~\ref{minharthm}]
The local boundedness of~$u$ claimed in Theorem~\ref{minboundthm} is an immediate consequence of Proposition~\ref{minareDGprop} and Theorem~\ref{DGboundthm} combined.

To check that also Theorem~\ref{minholdthm} is true, we first observe that, since we already know that~$u$ is locally bounded, we may view it as a minimizer of~$\E$ in~$B_{8 R}(x_0) \subset \subset \Omega$, with potential~$F$ now fulfilling~\eqref{Fbounds} with~$d_1 = \| F(\cdot, u) \|_{L^\infty(B_{8 R}(x_0))}$ and~$d_2 = 0$. Theorem~\ref{minholdthm} then follows by a straightforward application of Proposition~\ref{minareDGprop} and Theorem~\ref{DGholdthm}.

Notice that the quantities~$\alpha$ and~$C$ are both claimed to be independent of~$s$ in the statement of Theorem~\ref{minholdthm}, even when~$n = 1$. Apparently, this is in contradiction with Theorem~\ref{DGholdthm}. However, going through the proofs of Lemma~\ref{growthlem} it can be checked that~$\alpha$ and~$C$ can be chosen to be uniform in~$s$, as long as~$s$ is far from~$1$. This is the case here, since, by assumption,~$s \le 1 / p < 1$, if~$n = 1$.

In an analogous way, we deduce Theorem~\ref{minharthm} from Theorem~\ref{DGharthm}. For the case~$n = 1$, recall the opening remark of Subsection~\ref{harsubsec}.
\end{proof}

\section{Applications to solutions} \label{solsec}

In the present section, we prove Theorems~\ref{boundmainthm},~\ref{holmainthm} and~\ref{harmainthm} in the case of solutions. Similarly to what we did in Section~\ref{minsec}, we show that weak solutions of the integral equation~\eqref{Lu=f} are in a fractional~De~Giorgi class, so that their boundedness, H\"older continuity and Harnack property are readily deduced from Theorems~\ref{DGboundthm},~\ref{DGholdthm} and~\ref{DGharthm}, respectively. The detailed statements of these results are reported here below.

\begin{theorem}[\bfseries Local boundedness of solutions] \label{solboundthm}
\textcolor{white}{ }\\
Let~$n \in \N$,~$s \in (0, 1)$ and~$p > 1$ be such that~$n \ge s p$. Let~$\Omega$ be an open bounded subset of~$\R^n$. Assume that~$K$ and~$f$ respectively satisfy hypotheses~\eqref{Ksimm},~\eqref{Kell} and~\eqref{fbounds}. If~$u$ is a weak solution of~\eqref{Lu=f} in~$\Omega$, then~$u \in L^\infty_\loc(\Omega)$. In particular, there exist four constants~$C \ge 1$,~$R_0 \in (0, r_0 / 2]$,~$\varepsilon \in (0, sp / n]$ and~$\kappa \in \{ 0, 1 \}$ such that, given any~$x_0 \in \Omega$ and~$0 < R \le \min \{ R_0, \dist \left( x_0, \partial \Omega \right) \} / 4$, it holds
$$
\| u \|_{L^\infty(B_R(x_0))} \le C R^{-\frac{n}{p}} \| u \|_{L^p(B_{2 R}(x_0))} + \Tail(u; x_0, R) + d_1^{\frac{1}{p - 1}} R^{\frac{n \varepsilon}{p} + \frac{s}{p - 1}} + 2 \kappa.
$$
Moreover, the constants can be chosen as follows:
\begin{enumerate}[label={$\bullet$},leftmargin=*]
\item if~$d_2 = 0$, then
$$
C = C(n, s, p, \Lambda), \, \, R_0 = \frac{r_0}{2}, \, \, \varepsilon = \frac{s p}{n} \, \mbox{ and } \, \kappa = 0;
$$
\item if~$d_2 > 0$ and~$1 \le q \le p$, then
$$
C = C(n, s, p, \Lambda, d_2), \, \, R_0 = \min \left\{ 1, \frac{r_0}{2} \right\}, \, \, \varepsilon = \frac{s p}{n} \, \mbox{ and } \, \kappa = 1;
$$
\item if~$d_2 > 0$ and~$p < q < p^*_s$, then
\begin{align*}
& C = C(n, s, p, q, \Lambda, d_2), \, \, R_0 = R_0(n, s, p, q, \Lambda, d_2, \| u \|_{L^{p^*_\sigma}(\Omega)}, r_0),\\
& \varepsilon = 1 - \frac{q}{p^*_\sigma} \, \mbox{ and } \, \kappa = 0, \, \mbox{ for some } \, \sigma = \sigma(n, s, p, q) \in (0, s).
\end{align*}
\end{enumerate}
When~$n > s p$ we can even take~$\sigma = s$, while when~$n > p$ both constants~$C$ and~$R_0$ do not blow up as~$s \rightarrow 1^-$.
\end{theorem}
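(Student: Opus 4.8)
The plan is to reduce Theorem~\ref{solboundthm} to Theorem~\ref{DGboundthm} by showing that every weak solution of~\eqref{Lu=f} lies in a suitable fractional De~Giorgi class, in close analogy with the treatment of minimizers in Section~\ref{minsec}. The only genuinely new ingredient is a Caccioppoli-type inequality obtained from the weak formulation of the equation, which plays here the role that the energy comparison of Proposition~\ref{minareDGprop} played for minimizers.

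First I would introduce weak sub- and supersolutions of~\eqref{Lu=f} (a weak solution being simultaneously both) and observe that $-u$ is a weak solution of the equation with right-hand side $\widetilde{f}(x,t):=-f(x,-t)$, which again satisfies~\eqref{fbounds}; thus it suffices to treat subsolutions and the class $\DG_+^{s,p}$, the statement for $\DG_-^{s,p}$ following by applying the result to $-u$. Given a subsolution $u$, a point $x_0\in\Omega$, radii $r\le\rho<\tau\le R\le\min\{R_0,\dist(x_0,\partial\Omega)\}$ with $R_0\le r_0/2$, and a level $k\ge k_0\ge0$, I would test the weak formulation with $\varphi:=\eta^p(u-k)_+$, where $\eta\in C_0^\infty(B_{(\tau+\rho)/2})$ satisfies $0\le\eta\le1$, $\eta\equiv1$ on $B_\rho$ and $|\nabla\eta|\le C/(\tau-\rho)$. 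Decomposing $\C_{B_\tau}$ into the interaction inside $B_\tau$ and that between $B_\tau$ and $\R^n\setminus B_\tau$, the monotonicity of $\L$ together with the numerical inequalities of Lemmas~\ref{numestlem1}--\ref{numestlem4} produces, exactly as in the derivation of~\eqref{minareDG2}--\eqref{minareDG3}, both the Gagliardo seminorm $[(u-k)_+]_{W^{s,p}(B_\rho)}^p$ and the cross term $\int_{B_\rho}(u(x)-k)_+\big(\int_{B_{r_0}(x)}(u(y)-k)_-^{p-1}|x-y|^{-n-sp}\,dy\big)\,dx$ on the left-hand side --- here one uses~\eqref{Kell} to bound $K$ from above and, near the diagonal, from below, which forces $R_0\le r_0/2$ --- while the cutoff errors and the long-range part are absorbed into $\iint_{B_\tau^2\setminus B_\rho^2}$-type quantities, $(\tau-\rho)^{-p}R^{(1-s)p}\|(u-k)_+\|_{L^p(B_R)}^p$ and $(\tau-\rho)^{-n-sp}R^{n+sp}\|(u-k)_+\|_{L^1(B_R)}\overline{\Tail}((u-k)_+;x_0,r)^{p-1}$.

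There remains the term $\int f(x,u)\varphi\,dx$. Using $|f(x,u)|\le d_1+d_2|u|^{q-1}$ from~\eqref{fbounds} and Young's inequality, the $d_1$-contribution produces a multiple of $d_1^{p/(p-1)}|A^+(k,R)|$ plus a term absorbable into the $L^p$ quantity above; here the exponent $1/(p-1)$ on $d_1$ and the radius weight $\lambda=sp/(p-1)$ reflect the $(p-1)$-homogeneity of $\L$. When $d_2=0$ nothing further is needed and $k_0=-\infty$; when $d_2>0$ and $1\le q\le p$ one proceeds as for minimizers, taking $R_0\le1$ and estimating $|u|^{q-1}\le|u|^{p-1}\le C((u-k)_+^{p-1}+k^{p-1})$ on $A^+(k)$ (whence $k_0=1$). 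The delicate regime is $d_2>0$ and $p<q<p^*_s$: there one first notes $|u|^{q-1}(u-k)\le|u|^q$ on $A^+(k)$ for $k\ge0$, adds $d_2(1-s)^{-1}\|u\|_{L^q(A^+(k,\rho))}^q$ to both sides so as to build the functional $\Phi$ of~\eqref{Phidef}, and estimates $\int_{A^+(k,\tau)}\eta^q(u-k)_+^q\,dx$ by H\"older's inequality followed by the fractional Sobolev inequality of Corollary~\ref{sobinecor} with the exponent $\sigma$ of~\eqref{sigmamindef} (passing from $W^{\sigma,p}$ to $W^{s,p}$ via Lemma~\ref{sobinclem0} when $n=sp$); the factor $\|u\|_{L^{p^*_\sigma}(\Omega)}^{q-p}$ thereby appears, and choosing $R_0$ small in terms of it permits the absorption of $\tfrac12\Phi(\rho)$, while the leftover $\int_{A^+(k,\tau)}k^q\,dx$ is treated exactly as in~\eqref{minarekest}, yielding $k^pR^{-n\varepsilon_\sigma}|A^+(k,R)|^{1-sp/n+\varepsilon_\sigma}$ with $\varepsilon_\sigma=1-q/p^*_\sigma$.

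In every case one is left with an inequality of the shape $\Phi(\rho)\le\gamma\Phi(\tau)+(\dots)$, with $\gamma\in(0,1)$ and remainder terms carrying powers of $(\tau-\rho)^{-1}$; the iteration Lemma~\ref{induclem} then removes the auxiliary radii and produces the defining inequality of $\DG_+^{s,p}$, so that $u\in\DG^{s,p}(\Omega;\,d_1^{1/(p-1)},H,k_0,\varepsilon,\,sp/(p-1),\,R_0)$ with $k_0\in\{-\infty,1,0\}$, $\varepsilon$, $R_0$ and $H$ as in the three listed cases (intersecting with the $\DG_-^{s,p}$ membership obtained from $-u$). Theorem~\ref{DGboundthm} then gives $u\in L^\infty_\loc(\Omega)$ together with the asserted estimate, the radius exponent $(\lambda+n\varepsilon)/p=s/(p-1)+n\varepsilon/p$ and the full list of dependencies of $C$ and $R_0$ --- including the non-degeneracy as $s\to1^-$ when $n>p$ --- being inherited from that theorem. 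I expect the supercritical range $p<q<p^*_s$ to be the main obstacle: the absorption argument hinges on the precise choice of $\sigma$ and $\varepsilon_\sigma$ and on the smallness of $R_0$ relative to $\|u\|_{L^{p^*_\sigma}(\Omega)}$, and the $(1-s)$ prefactors must be tracked carefully so that none of the constants blow up as $s\to1^-$ when $n>p$.
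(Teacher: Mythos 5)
Your proposal is correct and follows exactly the paper's strategy: Proposition~\ref{solareDGprop} (Caccioppoli inequality via testing with $\varphi=\eta^p(u-k)_+$, followed by the same case-splitting on $d_2$ and $q$, and the same use of Corollary~\ref{sobinecor}, Lemma~\ref{sobinclem0} and Lemma~\ref{induclem}) to place $u$ in a fractional De~Giorgi class, then Theorem~\ref{DGboundthm} to conclude. All the key identifications --- $d=d_1^{1/(p-1)}$, $\lambda=sp/(p-1)$, $\varepsilon=\varepsilon_\sigma=1-q/p^*_\sigma$ in the supercritical range, the smallness of $R_0$ in terms of $\|u\|_{L^{p^*_\sigma}(\Omega)}$, and the truncation level $k_0$ in each case --- match the paper's.
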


\begin{theorem}[\bfseries H\"older continuity of solutions] \label{solholdthm}
\textcolor{white}{ }\\
Let~$n \in \N$,~$0 < s_0 \le s < 1$ and~$p > 1$ be such that~$n \ge s p$. Let~$\Omega$ be an open bounded subset of~$\R^n$. Assume that~$K$ satisfies hypotheses~\eqref{Ksimm},~\eqref{Kell} and that~$f$ is locally bounded in~$u \in \R$, uniformly w.r.t.~$x \in \Omega$. If~$u$ is a weak solution of~\eqref{Lu=f} in~$\Omega$, then~$u \in C^\alpha_\loc(\Omega)$ for some~$\alpha \in (0, 1)$. In particular, there exists a constant~$C \ge 1$ such that, given any~$x_0 \in \Omega$ and~$0 < R \le \min \{ r_0 / 2, \dist \left(x_0, \partial \Omega \right) \} / 16$, it holds
$$
[ u ]_{C^\alpha(B_R(x_0))} \le \frac{C}{R^\alpha} \left( \| u \|_{L^\infty(B_{4 R}(x_0))} + \Tail(u; x_0, 4 R) + R^{\frac{s p}{p - 1}} \| f(\cdot, u) \|_{L^\infty(B_{8 R}(x_0))}^{1 / (p - 1)} \right).
$$
The constants~$\alpha$ and~$C$ depend only on~$n$,~$s_0$,~$p$ and~$\Lambda$.
\end{theorem}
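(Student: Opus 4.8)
\emph{Strategy.} As in Section~\ref{minsec}, the plan is to show that a weak solution of~\eqref{Lu=f} lies in a fractional De~Giorgi class and then invoke Theorem~\ref{DGholdthm}. Since the estimate asserted in Theorem~\ref{solholdthm} only features $\|f(\cdot,u)\|_{L^\infty}$ on a fixed ball, the first step is a reduction to bounded data: under the hypotheses in force $u$ is locally bounded in~$\Omega$ --- by the fractional Morrey embedding when $n<sp$, and by Theorem~\ref{solboundthm} otherwise --- so, fixing $x_0\in\Omega$ and $R$ with $B_{8R}(x_0)\subset\subset\Omega$, we may regard $u$ as a weak solution of $-\L u=g$ in $B_{8R}(x_0)$ with $g:=f(\cdot,u)\in L^\infty(B_{8R}(x_0))$. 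This bypasses all the Sobolev interpolation that the growth exponents $q,\sigma$ require in Proposition~\ref{minareDGprop}: the target will simply be $u\in\DG^{s,p}\bigl(B_{8R}(x_0);d,H,-\infty,\varepsilon,\lambda,r_0/2\bigr)$ with $\varepsilon:=sp/n$, $H=H(n,p,\Lambda)$, and $d,\lambda$ chosen so that the factor $R^{(\lambda+n\varepsilon)/p}d$ appearing in Theorem~\ref{DGholdthm} equals precisely $R^{sp/(p-1)}\|g\|_{L^\infty}^{1/(p-1)}$, i.e.\ $d=\|g\|_{L^\infty}^{1/(p-1)}$.

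\emph{The Caccioppoli inequality for solutions.} This is the analogue of Proposition~\ref{minareDGprop}. Fix $k\in\R$, radii $r\le\rho<\tau\le R\le r_0/2$, and a cutoff $\eta\in C^\infty_0(B_{(\tau+\rho)/2})$ with $\eta\equiv1$ on $B_\rho$ and $|\nabla\eta|\le C/(\tau-\rho)$. Test Definition~\ref{soldef} with $\varphi:=\eta^p(u-k)_+$ --- admissible because $u\in L^\infty_{\loc}\cap L^{p-1}_s(\R^n)\cap\W^{s,p}$ and $\eta$ has compact support --- and write $w_\pm:=(u-k)_\pm$; the weak identity reads $\tfrac12\iint_{\R^{2n}}|u(x)-u(y)|^{p-2}(u(x)-u(y))(\varphi(x)-\varphi(y))\,d\mu=-\int g\varphi\,dx\le\|g\|_{L^\infty}\|w_+\|_{L^1(B_\tau)}$. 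One decomposes $\R^{2n}$ into $B_\tau^2$, $B_\tau\times(\R^n\setminus B_\tau)$ and its mirror, and estimates the integrand region by region. On $\{x,y\in A^+(k)\}$ one has $u(x)-u(y)=w_+(x)-w_+(y)$, and the standard algebraic estimate for the fractional $p$-Laplacian (cf.~\cite{DKP14b}, proved with the numerical lemmas of Section~\ref{prepsec}) bounds the integrand below by $c\,|w_+(x)-w_+(y)|^p\eta(x)^p$ up to an error $\lesssim w_+(y)^p|\eta(x)-\eta(y)|^p$; integrating against $K\,dxdy$ and using~\eqref{Kell} yields the coercive term $\tfrac{1-s}{C}[w_+]^p_{W^{s,p}(B_\rho)}$, a hole-filling error over $B_\tau^2\setminus B_\rho^2$, and an $L^p$ term with weight $(\tau-\rho)^{-p}R^{(1-s)p}$. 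On the mixed region $\{x\in A^+(k),\,y\notin A^+(k)\}$ (and its mirror) $\varphi$ vanishes at the outer point and $u(x)-u(y)=w_+(x)+w_-(y)$, so the integrand equals $(w_+(x)+w_-(y))^{p-1}\eta(x)^pw_+(x)\ge w_-(y)^{p-1}\eta(x)^pw_+(x)\ge0$; keeping the part with $|x-y|<r_0$ and invoking the \emph{lower} bound in~\eqref{Kell} produces exactly the improved second term $\int_{B_\rho}w_+(x)\bigl[\int_{B_{r_0}(x)}\tfrac{w_-(y)^{p-1}}{|x-y|^{n+sp}}\,dy\bigr]dx$ of the definition of $\DG_+^{s,p}$ (with $R_0=r_0/2$). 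On $B_\tau\times(\R^n\setminus B_\tau)$ with $u(y)>k$ one uses the trivial lower bound $-|w_+(x)-w_+(y)|^{p-1}\eta(x)^pw_+(x)$ together with $|x-y|\gtrsim\tfrac{\tau-\rho}{\tau}|y-x_0|$, which contributes another $(\tau-\rho)^{-sp}$-weighted $L^p$ term and the tail term $\tfrac{R^{n+sp}}{(\tau-\rho)^{n+sp}}\|w_+\|_{L^1(B_R)}\overline{\Tail}(w_+;x_0,r)^{p-1}$. Collecting these bounds, performing the hole-filling absorption, dividing by $1-s$, and disposing of the right-hand side $\|g\|_{L^\infty}\|w_+\|_{L^1(B_\tau)}$ via Young's inequality --- which yields a small multiple of the $L^p$ term already present plus a term $\sim R^{sp/(p-1)}\|g\|_{L^\infty}^{p/(p-1)}|A^+(k,R)|$ fitting the slot $R^\lambda d^p|A^+(k,R)|^{1-sp/n+\varepsilon}$ --- one reaches an inequality of the type~\eqref{minareDG3}; an application of the iteration Lemma~\ref{induclem} removes the dependence on $\rho,\tau$ and gives~\eqref{DG+def}. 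Repeating with $\varphi=-\eta^p(u-k)_-$ gives~\eqref{DG-def}, so $u\in\DG^{s,p}(B_{8R}(x_0);\,\|g\|_{L^\infty}^{1/(p-1)},H,-\infty,sp/n,\lambda,r_0/2)$.

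\emph{Conclusion and main obstacle.} Granting this, Theorem~\ref{DGholdthm} applied on $B_{8R}(x_0)$ immediately gives $u\in C^\alpha_{\loc}$ with the quantitative bound: $\varepsilon=sp/n\ge s_0p/n=:\varepsilon_0>0$ and $H$ depend only on the allowed quantities, and the factor $R^{(\lambda+n\varepsilon)/p}d$ reproduces the $\F$-term $R^{sp/(p-1)}\|f(\cdot,u)\|_{L^\infty(B_{8R}(x_0))}^{1/(p-1)}$; for $n=1$ the assumption $n\ge sp$ forces $s\le1/p$, so $s$ stays away from~$1$ and $\alpha,C$ remain uniform, as asserted (a covering argument, as in Section~\ref{minsec}, upgrades the estimate to an arbitrary admissible $R$). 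The main obstacle, exactly as in the minimizer case, is the region-by-region dissection of the nonlinear bilinear form: one must pull out simultaneously the coercive Gagliardo seminorm \emph{and} the positive cross term while forcing every remaining contribution either to hole-fill or to match one of the four pieces on the right-hand side of~\eqref{DG+def}; the delicate points are the nonlocal mixed-region bookkeeping that separates the near interactions ($|x-y|<r_0$, feeding the improved term via the ellipticity of $K$) from the far interactions (feeding the tail), and the verification that $\eta^p(u-k)_\pm$ is a legitimate test function and that all integrals --- including the principal value defining $\L u$ --- converge, which rests on $u\in L^\infty_{\loc}\cap L^{p-1}_s(\R^n)\cap\W^{s,p}$ and~\eqref{Kell}.
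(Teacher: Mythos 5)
Your proposal is correct and follows essentially the same route as the paper: the paper's own proof of Theorem~\ref{solholdthm} is exactly the two-step argument you describe---first use local boundedness (Theorem~\ref{solboundthm} for $n\ge sp$, fractional Morrey otherwise) to regard $u$ as a weak solution in a fixed ball with bounded datum $g := f(\cdot,u)$, then invoke Proposition~\ref{solareDGprop} in the case $d_2=0$ to place $u$ in $\DG^{s,p}(\cdot;\,d_1^{1/(p-1)},H,-\infty,sp/n,sp/(p-1),r_0/2)$ with $d_1=\|g\|_{L^\infty}$, and finally apply Theorem~\ref{DGholdthm}; your identification of $\lambda=sp/(p-1)$ so that $R^{(\lambda+n\varepsilon)/p}d=R^{sp/(p-1)}\|g\|_{L^\infty}^{1/(p-1)}$ matches the paper exactly. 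Your sketch of the Caccioppoli inequality accurately reproduces the region-by-region dissection in Proposition~\ref{solareDGprop}, including the crucial extraction of the improved cross term from the near part of the mixed region via the lower bound in~\eqref{Kell} and the tail term from the far part; the one small imprecision---in the mixed region you retain only the cross term $w_-(y)^{p-1}\eta(x)^pw_+(x)$ while the paper's~\eqref{solareDGprop2} also keeps the $|w_+(x)-w_+(y)|^p\eta(x)^p$ piece needed to assemble the full Gagliardo seminorm of $w_+$ over $B_\rho$---is harmless for the final conclusion and is easily repaired by the very Lemma~\ref{numestlem1} you cite.
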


\begin{theorem}[\bfseries Harnack inequality for solutions] \label{solharthm}
\textcolor{white}{ }\\
Let~$n \in \N$,~$s \in (0, 1)$ and~$p > 1$. Let~$\Omega$ be an open bounded subset of~$\R^n$. Assume that~$K$ satisfies hypotheses~\eqref{Ksimm},~\eqref{Kell2} and that~$f$ is locally bounded in~$u \in \R$, uniformly w.r.t.~$x \in \Omega$. Let~$u$ be a weak solution of~\eqref{Lu=f} in~$\Omega$ such that~$u \ge 0$ in~$\Omega$. Then, for any~$x_0 \in \Omega$ and~$0 < R < \dist(x_0, \partial \Omega) / 2$, it holds
$$
\sup_{B_R(x_0)} u \le C \left( \inf_{B_R(x_0)} u + \Tail(u_-; x_0, R) + R^{\frac{s p}{p - 1}} \| f(\cdot, u) \|_{L^\infty(B_{2 R}(x_0))}^{1 / (p - 1)} \right),
$$
for some~$C \ge 1$ depending only on~$n$,~$s$,~$p$ and~$\Lambda$. When~$n \notin \{ 1, p \}$, the constant~$C$ does not blow up as~$s \rightarrow 1^-$.
\end{theorem}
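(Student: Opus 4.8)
The proof follows the template of Section~\ref{minsec}: the plan is to establish that every weak solution of~\eqref{Lu=f} belongs to a suitable fractional~De~Giorgi class with $R_0 = +\infty$, and then to invoke Theorem~\ref{DGharthm}. As a preliminary reduction, note that it suffices to prove the estimate on balls $B_{2R}(x_0) \subset\subset \Omega$. On such a ball $u$ is bounded (by the fractional Morrey embedding if $n < sp$, and otherwise as in Theorem~\ref{solboundthm}), hence so is $f(\cdot,u)$; we may therefore set $d := \|f(\cdot,u)\|_{L^\infty(B_{2R}(x_0))}^{1/(p-1)}$ and treat $f$ as a bounded right-hand side. Since the De~Giorgi property is local with respect to set inclusion, it is then enough to work on a fixed ball $B \subset\subset \Omega$ with $\overline{B_{2R}(x_0)} \subset B$.

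The heart of the matter is the analogue for solutions of Proposition~\ref{minareDGprop}, namely: \emph{a weak solution of~\eqref{Lu=f} with $K$ satisfying~\eqref{Ksimm} and~\eqref{Kell2} and $f$ bounded lies in} $\DG^{s,p}\big(B; c\,d, H, -\infty, sp/n, sp/(p-1), +\infty\big)$ \emph{for suitable} $c = c(n,p)$ \emph{and} $H = H(n,p,\Lambda)$. To prove this I would fix $x_0 = 0$, $k \in \R$, radii $r \le \rho < \tau \le R$, a cut-off function $\eta$ supported in $B_{(\tau+\rho)/2}$ with $\eta = 1$ on $B_\rho$ and $|\nabla\eta| \le C/(\tau-\rho)$, and insert into the weak formulation of Definition~\ref{soldef} a test function of the form $\pm\,\eta\,(u-k)_\pm$ (admissible precisely because $u$ and $f(\cdot,u)$ are bounded on $B$). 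Expanding $|u(x)-u(y)|^{p-2}(u(x)-u(y))(\varphi(x)-\varphi(y))K(x,y)$ and splitting the region of integration according to the position of $x$ and $y$ relative to $\{u < k\}$ and $\{u \ge k\}$, the numerical inequalities of Lemmas~\ref{numestlem1} and~\ref{numestlem2}---used exactly as the pointwise bounds~\eqref{minareDGprop-1}--\eqref{minareDGprop4} are used in the proof of Proposition~\ref{minareDGprop}---produce, with the correct signs, the ``good'' terms $[(u-k)_\pm]_{W^{s,p}(B_\rho)}^p$ and $\iint (u(y)-k)_\mp^{p-1}(u(x)-k)_\pm\, |x-y|^{-n-sp}\,dxdy$ together with error terms controlled by $(\tau-\rho)^{-p}\|(u-k)_\pm\|_{L^p(B_R)}^p$ and by a nonlocal tail contribution of the shape $(\tau-\rho)^{-n-sp}\|(u-k)_\pm\|_{L^1(B_R)}\,\overline{\Tail}((u-k)_\pm;0,r)^{p-1}$; the bounds in~\eqref{Kell2} convert the weighted kernel integrals into these Gagliardo-type quantities, and---crucially---its \emph{everywhere} lower bound is what lets the cross term run over all of $\R^n$, i.e.\ forces $R_0 = +\infty$. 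The right-hand side contributes $\int f(x,u)\varphi\,dx$, bounded by $d^{p-1}\!\int_{A^\pm(k,\tau)}(u-k)_\pm$; Hölder's and Young's inequalities then turn this into $\varepsilon\|(u-k)_\pm\|_{L^p}^p$ (absorbed into the $L^p$ error term) plus a term of the form $R^{sp/(p-1)} d^p |A^\pm(k,R)|$, which is exactly the summand $R^\lambda d^p |A^\pm(k,R)|^{1-sp/n+\varepsilon}$ with $\varepsilon = sp/n$ and $\lambda = sp/(p-1)$. Finally, letting $\Phi(t)$ denote the left-hand side computed over $B_t$, hole-filling (adding a fixed multiple of $\Phi(\rho)$ to both sides and dividing) yields $\Phi(\rho) \le \gamma \Phi(\tau) + (\cdots)$ with $\gamma \in (0,1)$, and the iteration Lemma~\ref{induclem} removes the $\gamma\Phi(\tau)$ term, producing~\eqref{DG-def}; the companion estimate~\eqref{DG+def} follows symmetrically.

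With $u \in \DG^{s,p}(B; c\,d, H, -\infty, sp/n, sp/(p-1), +\infty)$ in hand and $u \ge 0$, Theorem~\ref{DGharthm} applies with $\lambda_0 := p/(p-1)$ and gives
$$
\sup_{B_R(x_0)} u \le C\Big( \inf_{B_R(x_0)} u + \Tail(u_-; x_0, R) + R^{(\lambda + n\varepsilon)/p}\, c\, d \Big).
$$
Since $\varepsilon = sp/n$ and $\lambda = sp/(p-1)$ give $(\lambda + n\varepsilon)/p = sp/(p-1)$, and $d = \|f(\cdot,u)\|_{L^\infty(B_{2R}(x_0))}^{1/(p-1)}$, the last summand equals a constant times $R^{sp/(p-1)}\|f(\cdot,u)\|_{L^\infty(B_{2R}(x_0))}^{1/(p-1)}$, i.e.\ the quantity $\F$ of~\eqref{Fdef} in case~$(b)$; this is the asserted inequality, and the dependence of $C$ on $n,s,p,\Lambda$ only, together with its non-blow-up as $s\to 1^-$ when $n \notin\{1,p\}$, is inherited verbatim from Theorem~\ref{DGharthm}. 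I expect the main obstacle to lie entirely in the second step---obtaining the improved Caccioppoli inequality with the extra cross-term on the left---which requires the careful, sign-sensitive case analysis of the monotone integrand $|a-b|^{p-2}(a-b)(c-d)$ on the set of pairs with $x \in \{u>k\}$ and $y \in \{u<k\}$, all the while keeping every constant independent of $s$ and correctly accounting for the tail terms generated by the test function being only locally supported.
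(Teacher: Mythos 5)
Your proposal is correct and takes essentially the same route as the paper: one shows (the solution analogue of Proposition~\ref{minareDGprop}, namely Proposition~\ref{solareDGprop}) that weak solutions lie in a fractional De~Giorgi class with $d_2 = 0$ after the reduction via local boundedness, observes that the everywhere lower bound~\eqref{Kell2} is precisely what lets $r_0$, hence $R_0$, be taken $+\infty$, and then applies Theorem~\ref{DGharthm} with $\varepsilon = sp/n$, $\lambda = sp/(p-1)$, $d = \|f(\cdot,u)\|_{L^\infty}^{1/(p-1)}$, so that $R^{(\lambda+n\varepsilon)/p}d = R^{sp/(p-1)}\|f(\cdot,u)\|_{L^\infty}^{1/(p-1)}$. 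A small implementation remark: the paper tests~\eqref{weaksolineq} with $\varphi = \eta^p(u-k)_\pm$ rather than $\eta(u-k)_\pm$; the $p$-th power of the cut-off is what makes the pointwise bounds~\eqref{solareDGprop2}--\eqref{solareDGprop3} (via Lemmas~\ref{numestlem1} and~\ref{numestlem3}) close cleanly, though this does not alter the structure of your argument.
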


Observe that the statements of Theorems~\ref{solboundthm}-\ref{solharthm} are almost completely identical to those of Theorems~\ref{minboundthm}-\ref{minharthm}, in Section~\ref{minsec}. The only notable difference resides in the diverse powers to which the quantities involving the potential and the forcing term---including~$d_1$ in Theorems~\ref{minboundthm} and~\ref{solboundthm}---are raised. Of course, this is coherent with the different homogeneity properties of energy~$\E$ and 
equation~\eqref{Lu=f}.

\smallskip

The notion of weak solutions of equation~\eqref{Lu=f} that we take into account has already been specified in Definition~\ref{soldef}. We now introduce the concepts of weak sub- and supersolutions.

\begin{definition}
Let~$\Omega \subset \R^n$ be a bounded open set and~$u \in \W^{s, p}(\Omega)$. The function~$u$ is said to be a~\emph{weak subsolution} (\emph{supersolution}) of~\eqref{Lu=f} in~$\Omega$ if
\begin{equation} \label{weaksolineq}
\begin{multlined}
\int_{\R^n} \int_{\R^n} |u(x) - u(y)|^{p - 2} \left( u(x) - u(y) \right) \left( \varphi(x) - \varphi(y) \right) K(x, y) \, dx dy \\
\le \int_{\R^n} f(x, u(x)) \varphi(x) \, dx,
\end{multlined}
\end{equation}
for any non-negative (non-positive)~$\varphi \in W^{s, p}(\R^n)$ with~$\supp(\varphi) \subset \subset \Omega$.
\end{definition}

Of course, a function~$u$ is a solution of~\eqref{Lu=f} if and only if it is at the same time a sub- and a supersolution. Moreover,~$u$ is a subsolution (supersolution) of~\eqref{Lu=f} if and only if~$-u$ is a supersolution (subsolution) of the same equation, but with right-hand side given by~$\tilde{f}(x, u) := f(x, -u)$. Hence, we can restrict ourselves to, say, subsolutions, as supersolutions may be dealt with in a specular way.

\smallskip

In the next proposition, we show the crucial step in the proof of our regularity results, namely that solutions of~\eqref{Lu=f} are contained in a fractional De~Giorgi class.

\begin{proposition} \label{solareDGprop}
Let~$u$ be a weak subsolution of~\eqref{Lu=f} in a bounded open set~$\Omega \subset \R^n$. Then, there exist four constants~$R_0 \in (0, r_0 / 2]$,~$k_0 \in [-\infty, 1]$,~$H \ge 1$ and~$\varepsilon \in (0, sp / n]$, such that
\begin{equation} \label{solareDGine}
\begin{aligned}
& [(u - k)_+]_{W^{s, p}(B_r(x_0))}^p + \int_{B_r(x_0)} \left[ (u(x) - k)_+ \int_{B_{2 R_0}(x)} \frac{(u(y) - k)_-^{p - 1}}{|x - y|^{n + s p}} \, dy \right] \\
& \hspace{60pt} \le \frac{H}{1 - s} \Bigg[ \left( R^{\frac{s p}{p - 1}} d_1^{\frac{p}{p - 1}} + \frac{|k|^p}{R^{n \varepsilon}} \right) |A^+(k, x_0, R)|^{1 - \frac{s p}{n} + \varepsilon} \\
& \hspace{60pt} \quad + \frac{R^{(1 - s) p}}{(R - r)^p} \| (u - k)_+ \|_{L^p(B_R(x_0))}^p \\
& \hspace{60pt} \quad + \frac{R^{n + s p}}{(R - r)^{n + s p}} \| (u - k)_+ \|_{L^1(B_R(x_0))} \overline{\Tail}((u - k)_+; x_0, r)^{p - 1} \Bigg],
\end{aligned}
\end{equation}
for any~$x_0 \in \Omega$,~$0 < r < R \le \min \{ R_0, \dist (x_0, \partial \Omega) \}$ and~$k \ge k_0$. Consequently,~$u$ belongs to the following fractional De~Giorgi classes:
\begin{enumerate}[label={$\bullet$},leftmargin=*]
\item if~$d_2 = 0$, then
$$
u \in \DG^{s, p}_+ \left( \Omega; d_1^{\frac{1}{p - 1}}, H, -\infty, \frac{ s p }{n}, \frac{sp}{p - 1}, \frac{r_0}{2} \right),
$$
with~$H = H(n, p, \Lambda)$;
\item if~$d_2 > 0$ and~$1 < q \le p$, then
$$
u \in \DG^{s, p}_+ \left( \Omega; d_1^{\frac{1}{p - 1}}, H, 1, \frac{ s p }{n}, \frac{sp}{p - 1}, \min \left\{ 1, \frac{r_0}{2} \right\} \right),
$$
with~$H = H(n, p, \Lambda, d_2)$;
\item if~$d_2 > 0$,~$n \ge sp$ and~$p < q < p^*_s$, then
$$
u \in \DG^{s, p}_+ \left( \Omega; d_1^{\frac{1}{p - 1}}, H, 0, 1 - \frac{q}{p^*_\sigma}, \frac{sp}{p - 1}, R_0 \right),
$$
with~$H = H(n, p, q, \Lambda, d_2)$,~$R_0 = R_0(n, s, p, q, \Lambda, d_2, r_0, \| u \|_{L^{p^*_\sigma}(\Omega)})$ and for some constant~$\sigma = \sigma(n, s, p, q) \in (0, s)$. When~$n > sp$ we can even take~$\sigma = s$, while when~$n > p$ the constant~$R_0$ does not blow up as~$s \rightarrow 1^-$.
\end{enumerate}
An analogous statement holds for weak supersolutions and the classes~$\DG_-^{s, p}$.
\end{proposition}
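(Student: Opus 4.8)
The plan is to follow closely the strategy of Proposition~\ref{minareDGprop}, replacing the comparison of energies with a test-function argument in the weak formulation \eqref{weaksolineq}. Fix $x_0 \in \Omega$ (which we may take to be the origin), a level $k \geq k_0$, radii $r \leq \rho < \tau \leq R$, and a cutoff $\eta \in C^\infty_0(\R^n)$ with $0 \leq \eta \leq 1$, $\eta = 1$ on $B_\rho$, $\supp(\eta) \subseteq B_{(\tau + \rho)/2}$ and $|\nabla \eta| \leq C/(\tau - \rho)$. Writing $w_\pm := (u - k)_\pm$, I would test \eqref{weaksolineq} with $\varphi := \eta^p w_+$, which belongs to $W^{s,p}(\R^n)$, is compactly supported in $\Omega$, and (by the fractional Sobolev embedding and \eqref{fbounds}) makes the right-hand side of Definition~\ref{soldef} finite. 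This yields
\[
\iint_{\R^{2n}} |u(x) - u(y)|^{p - 2}\big(u(x) - u(y)\big)\big(\eta(x)^p w_+(x) - \eta(y)^p w_+(y)\big)\, K(x,y)\, dx\, dy \leq \int_{\R^n} f(x, u(x))\, \eta(x)^p w_+(x)\, dx.
\]
I would then split the double integral into the three pieces over $B_\tau \times B_\tau$, over $B_\tau \times (\R^n \setminus B_\tau)$, and its symmetric copy, exactly as in \eqref{minareDG1}.

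The analysis of the left-hand side is purely algebraic and parallels the minimizer computation, but is now based on pointwise estimates for the quantity $|a - b|^{p - 2}(a - b)(\eta_1^p a_+ - \eta_2^p b_+)$ with $a = u(x) - k$, $b = u(y) - k$; these are the standard inequalities of the type used in \cite{DKP14b}, sharpened by Lemmas~\ref{numestlem1}--\ref{numestlem2} and combined with the symmetry \eqref{Ksimm} and the ellipticity \eqref{Kell}. Following \cite{CCV11}, the crucial point is to keep, rather than discard, the nonnegative contribution coming from pairs $x \in A^+(k)$, $y \in A^-(k)$: there $u(x) > k > u(y)$, so $u(x) - u(y) > 0$ and $|u(x) - u(y)|^{p - 1} \geq w_-(y)^{p - 1}$, which produces precisely the term $\int_{B_\rho} w_+(x)\big[\int_{B_{r_0}(x)} w_-(y)^{p - 1} |x - y|^{-n - sp}\, dy\big]\, dx$ on the left-hand side—here one uses that $2 R_0 \leq r_0$, so that \eqref{Kell} still provides the lower bound for $K$ on that range. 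Symmetrically, the only unfavorable long-range contribution comes from pairs with $u(y) > u(x) > k$, where $|u(x) - u(y)| \leq (u(y) - k)_+$; after using $|x - y| \geq \tfrac{\tau - \rho}{2R}|y|$ for $x \in \supp(\eta)$ and $y \notin B_\tau$, it is bounded by $\frac{R^{n + sp}}{(\tau - \rho)^{n + sp}} \|w_+\|_{L^1(B_R)} \overline{\Tail}(w_+; x_0, r)^{p - 1}$, and the remaining long-range pairs contribute nonnegatively.

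Handling the right-hand side $\int f(x, u)\, \eta^p w_+\, dx$ is where the homogeneity of the equation enters and the powers $d_1^{1/(p - 1)}$, $R^{sp/(p - 1)}$, i.e.\ $\lambda = sp/(p-1)$, appear. Using $|f(x,u)| \leq d_1 + d_2 |u|^{q - 1}$, in the case $d_2 = 0$ I would estimate $d_1 \int_{A^+(k,\tau)} w_+$ by H\"older against $\big(\int (\eta w_+)^{p^*_\sigma}\big)^{1/p^*_\sigma} |A^+(k,\tau)|^{1 - 1/p^*_\sigma}$, bound the Sobolev norm via Corollary~\ref{sobinecor} (with $\sigma = s$ if $n > sp$, and $\sigma$ slightly below $s$ if $n = sp$, chosen exactly as in \eqref{sigmamindef}), and then apply Young's inequality with exponents $p$ and $p/(p - 1)$ to absorb a small multiple of $[w_+]_{W^{\sigma,p}}^p$ (converted back to $[w_+]_{W^{s,p}}^p$ via Lemma~\ref{sobinclem0}) into the left-hand side; the leftover is $C\, d_1^{p/(p - 1)} |A^+(k,R)|^{1 + sp/(n(p - 1))}$, which by $|A^+(k,R)| \leq |B_R|$ is $\leq C\, R^{sp/(p - 1)} d_1^{p/(p - 1)} |A^+(k,R)|^{1 - sp/n + \varepsilon}$ with $\varepsilon = sp/n$. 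The subcritical case $1 < q \leq p$ with $k \geq 1$ reduces to this one, since then $|u|^{q - 1} \leq |u|^{p - 1} \leq C(w_+^{p - 1} + k^{p - 1})$ on $A^+(k)$: the term $C d_2 \int_{A^+(k,\tau)} w_+^p \eta^p$ is absorbed by the fractional Sobolev--Poincar\'e inequality once $R_0$ is taken small, while $C d_2 k^{p - 1} \int_{A^+(k,\tau)} w_+$ is treated like the $d_1$ term. The supercritical case $p < q < p^*_s$ (which forces $n \geq sp$) is handled exactly as in Proposition~\ref{minareDGprop}: one splits $u = (1 - \eta) u + \eta(w_+ + k)$, carries the extra quantity $\Psi(t) := \frac{d_2}{1 - s}\|u\|_{L^q(A^+(k,t))}^q$ inside $\Phi(t)$, and chooses $R_0$ depending on $\|u\|_{L^{p^*_\sigma}(\Omega)}$.

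Collecting the above estimates yields a hole-filling inequality $\Phi(\rho) \leq C_\flat[\Phi(\tau) - \Phi(\rho)] + \frac{C_\flat}{1 - s}[\mathrm{tail} + L^p\text{-term} + \mathrm{measure}\text{-term}]$ for all $r \leq \rho < \tau \leq R$, with $\Phi(t) := [w_+]_{W^{s,p}(B_t)}^p + \int_{B_t} w_+(x)\big[\int_{B_{r_0}(x)} w_-(y)^{p - 1}|x - y|^{-n - sp}\, dy\big]\, dx + \Psi(t)$. Adding $C_\flat \Phi(\rho)$ to both sides, dividing through, and invoking the iteration Lemma~\ref{induclem} absorbs the $\Phi(\tau) - \Phi(\rho)$ term and produces \eqref{solareDGine}; reading off membership in the stated classes $\DG_+^{s,p}$ is then immediate, with $d = d_1^{1/(p - 1)}$, $\lambda = sp/(p - 1)$, and the values of $k_0$, $\varepsilon$, $R_0$, $H$ dictated by the three cases, and the statement for supersolutions follows by applying the result to $-u$. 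I expect the main obstacle to be the careful bookkeeping of the algebraic identities on the left-hand side—ensuring every favorable term ends up with the correct sign and that the interaction between the cutoff $\eta^p$ and the $p$-homogeneous kernel nonlinearity is controlled uniformly for $1 < p < 2$ and $p \geq 2$—together with the precise Young's-inequality balancing on the right-hand side needed to land exactly on the exponents $d_1^{1/(p - 1)}$, $R^{sp/(p - 1)}$ and $|A^+(k,R)|^{1 - sp/n + \varepsilon}$ appearing in \eqref{solareDGine}.
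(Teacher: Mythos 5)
Your proposal follows the paper's proof very closely: same test function $\varphi = \eta^p w_+$, same splitting into $B_\tau\times B_\tau$ and cross terms, same use of pointwise inequalities (culminating in the paper's \eqref{solareDGprop1}--\eqref{solareDGprop3}), same extraction of the favorable term $\int_{B_\rho} w_+(x)\bigl[\int_{B_{r_0}(x)} w_-(y)^{p-1}|x-y|^{-n-sp}dy\bigr]dx$, and same hole-filling iteration via Lemma~\ref{induclem}. The one place where you diverge is the treatment of the right-hand side $\int f(x,u)\,\eta^p w_+\,dx$, and there your route is valid but more elaborate than the paper's and does not quite recover the stated constants. For the $d_1$-term, you route through H\"older, Corollary~\ref{sobinecor}, and then Young, absorbing a small multiple of the Gagliardo seminorm; the paper instead applies a direct weighted Young's inequality $d_1\eta^p w_+ \leq C(\delta d_1)^{p/(p-1)} + C\delta^{-p} w_+^p$ with $\delta = (\tau - \rho) R^{s-1}$, which in one line produces exactly $R^{sp/(p-1)} d_1^{p/(p-1)}|A^+(k,R)|$ together with a multiple of $\frac{R^{(1-s)p}}{(\tau-\rho)^p}\|w_+\|_{L^p}^p$, without any Sobolev inequality and independently of whether $n\ge sp$. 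For the subcritical case $1 < q \leq p$, your plan to \emph{absorb} $d_2\int (\eta w_+)^p$ into the left-hand side via the fractional Sobolev inequality forces you to shrink $R_0$ in dependence on $d_2$; this is strictly weaker than what the proposition asserts (there $R_0 = \min\{1, r_0/2\}$, independent of $d_2$). The paper avoids absorption entirely: because $R\le 1$ and $\tau - \rho \le R$ imply $R^{(1-s)p}/(\tau-\rho)^p \ge 1$, the quantity $d_2\|w_+\|_{L^p(B_R)}^p$ is already dominated by the $L^p$ right-hand-side term at the cost of letting $H$ depend on $d_2$, which the statement permits. Your argument therefore proves a correct DG-class membership but not with the exact constants stated; I would replace your Sobolev-based estimates of the $f$-term by the paper's direct Young-inequality bounds to land precisely on \eqref{solareDGine}.
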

\begin{proof}
The argument is similar to that presented in Proposition~\ref{minareDGprop} for minimizers and the computations are simpler. Nevertheless, we report all the details, for the reader's convenience.

Clearly, we can suppose~$x_0 = 0$. Take~$r \le \rho < \tau \le R$ and let~$\eta \in C_0^\infty(\R^n)$ be a cut-off function satisfying~$0 \le \eta \le 1$ in~$\R^n$,~$\supp(\eta) = B_{(\tau + \rho) / 2}$,~$\eta = 1$ in~$B_\rho$ and~$|\nabla \eta| \le 4 / (\tau - \rho)$ in~$\R^n$. Fix~$k \in \R$ and write~$w_\pm := (u - k)_\pm$. By testing formulation~\eqref{weaksolineq} with~$\varphi := \eta^p w_+$ and recalling notation~\eqref{dmudef}, we obtain
\begin{equation} \label{solareDG1}
\iint_{\C_{B_\tau}} |u(x) - u(y)|^{p - 2} \left( u(x) - u(y) \right) \left( \varphi(x) - \varphi(y) \right) d\mu \le \int_{B_\tau} f(x, u(x)) \varphi(x) \, dx.
\end{equation}

We begin to study the term on the left-hand side of~\eqref{solareDG1}. In particular, we now deal with the contributions to the double integral coming from the set~$B_\tau \times B_\tau$. We claim that
\begin{align}
\label{solareDGprop1}
& \mbox{if } x, y \notin A^+(k), \mbox{ then } |u(x) - u(y)|^{p - 2} \left( u(x) - u(y) \right) \left( \varphi(x) - \varphi(y) \right) = 0,\\
\label{solareDGprop2}
\begin{split}
& \mbox{if } x \in A^+(k, \tau) \mbox{ and } y \in B_\tau \setminus A^+(k, \tau), \mbox{ then} \\
& \hspace{10pt} |u(x) - u(y)|^{p - 2} \left( u(x) - u(y) \right) \left( \varphi(x) - \varphi(y) \right) \\
& \hspace{15pt} \ge \min \{ 2^{p - 2}, 1 \} \Big[ |w_+(x) - w_+(y)|^p + w_-(y)^{p - 1} w_+(x) \Big] \eta(x)^p,
\end{split} \\
\label{solareDGprop3}
\begin{split}
& \mbox{if } x, y \in A^+(k, \tau), \mbox{ then} \\
& \hspace{10pt} |u(x) - u(y)|^{p - 2} \left( u(x) - u(y) \right) \left( \varphi(x) - \varphi(y) \right) \\
& \hspace{15pt} \ge \frac{1}{2} |w_+(x) - w_+(y)|^p \max \{ \eta(x), \eta(y) \}^p \\
& \hspace{15pt} \quad - C \max\{ w_+(x), w_+(y) \}^p |\eta(x) - \eta(y)|^p,
\end{split}
\end{align}
for some~$C \ge 1$ depending only on~$p$. Indeed,~\eqref{solareDGprop1} is a consequence of the fact that~$\supp(\varphi) \subset A^+(k)$. Inequality~\eqref{solareDGprop2} is also almost immediate, since, if~$x \in A^+(k, \tau)$ and~$y \in B_\tau \setminus A^+(k, \tau)$,
$$
|u(x) - u(y)|^{p - 2} \left( u(x) - u(y) \right) \left( \varphi(x) - \varphi(y) \right) = \eta(x)^p (w_+(x) + w_-(y))^{p - 1} w_+(x),
$$
and the conclusion follows by e.g.~Lemma~\ref{numestlem1} (with~$\theta = 0$) when~$p \ge 2$, and Jensen's inequality when~$p \in (1, 2)$. Finally, to prove~\eqref{solareDGprop3} we assume without loss of generality that~$u(x) \ge u(y)$. As~$x, y \in A^+(k, \tau)$, we have
\begin{align*}
& |u(x) - u(y)|^{p - 2} \left( u(x) - u(y) \right) \left( \varphi(x) - \varphi(y) \right) \\
& \hspace{50pt} = \left( w_+(x) - w_+(y) \right)^{p - 1} \left( \eta(x)^p w_+(x) - \eta(y)^p w_+(y) \right).
\end{align*}
Notice that, if~$\eta(x) \ge \eta(y)$, then
$$
|u(x) - u(y)|^{p - 2} \left( u(x) - u(y) \right) \left( \varphi(x) - \varphi(y) \right) \ge \left( w_+(x) - w_+(y) \right)^p \eta(x)^p,
$$
and~\eqref{solareDGprop3} trivially follows. On the other hand, if~$\eta(x) < \eta(y)$, we further compute
\begin{equation} \label{solareDGtech1}
\begin{aligned}
& |u(x) - u(y)|^{p - 2} \left( u(x) - u(y) \right) \left( \varphi(x) - \varphi(y) \right) \\
& \hspace{20pt} = \left( w_+(x) - w_+(y) \right)^p \eta(y)^p - \left( w_+(x) - w_+(y) \right)^{p - 1} w_+(x) \left( \eta(y)^p - \eta(x)^p \right).
\end{aligned}
\end{equation}
Then, we apply Lemma~\ref{numestlem3} with~$a = \eta(y)$,~$b = \eta(x)$ and
$$
\varepsilon = \frac{1}{2} \frac{w_+(x) - w_+(y)}{w_+(x)},
$$
to obtain that
\begin{align*}
& \left( w_+(x) - w_+(y) \right)^{p - 1} w_+(x) \left( \eta(y)^p - \eta(x)^p \right) \\
& \hspace{50pt} \le \frac{1}{2} \left( w_+(x) - w_+(y) \right)^p \eta(y)^p + \left[ 2 (p - 1) \right]^{p - 1} w_+(x)^p (\eta(y) - \eta(x))^p.
\end{align*}
This and~\eqref{solareDGtech1} lead to~\eqref{solareDGprop3} also when~$\eta(x) < \eta(y)$.

By virtue of~\eqref{solareDGprop1},~\eqref{solareDGprop2},~\eqref{solareDGprop3} and~\eqref{Kell}, we estimate
\begin{equation} \label{solareDGtech2}
\begin{aligned}
& \int_{B_\tau} \int_{B_\tau} |u(x) - u(y)|^{p - 2} \left( u(x) - u(y) \right) \left( \varphi(x) - \varphi(y) \right) d\mu \\
& \hspace{40pt} \ge \frac{1 - s}{C} \left[ [w_+]_{W^{s, p}(B_\rho)}^p + \int_{B_\rho} w_+(x) \left( \int_{B_\tau} \frac{w_-(y)^{p - 1}}{|x - y|^{n + s p}} \, dy \right) dx \right] \\
& \hspace{40pt} \quad - C (1 - s) \int_{B_\tau} \int_{B_\tau} \max \{ w_+(x), w_+(y) \}^p \, \frac{|\eta(x) - \eta(y)|^p}{|x - y|^{n + s p}} \, dx dy,
\end{aligned}
\end{equation}
for some~$C \ge 1$ depending on~$p$ and~$\Lambda$. Then, recalling the properties of~$\eta$, we get
\begin{align*}
& (1 - s) \int_{B_\tau} \int_{B_\tau} \max \{ w_+(x), w_+(y) \}^p \, \frac{|\eta(x) - \eta(y)|^p}{|x - y|^{n + s p}} \, dx dy \\
& \hspace{60pt} \le 2 (1 - s)\int_{B_\tau} w_+(x)^p \left( \int_{B_\tau} \frac{|\eta(x) - \eta(y)|^p}{|x - y|^{n + s p}} \, dy \right) dx \\
& \hspace{60pt} \le \frac{4^{1 + p} (1 - s)}{(\tau - \rho)^p} \int_{B_\tau} w_+(x)^p \left( \int_{B_\tau} \frac{dy}{|x - y|^{n - (1 - s) p}} \right) dx \\
& \hspace{60pt} \le C \frac{R^{(1 - s) p}}{(\tau - \rho)^p} \| w_+ \|_{L^p(B_R)}^p,
\end{align*}
with~$C$ now depending also on~$n$. By this, inequality~\eqref{solareDGtech2} becomes
\begin{equation} \label{solareDG2}
\begin{aligned}
& \int_{B_\tau} \int_{B_\tau} |u(x) - u(y)|^{p - 2} \left( u(x) - u(y) \right) \left( \varphi(x) - \varphi(y) \right) d\mu \\
& \hspace{20pt} \ge \frac{1 - s}{C} \left[ [w_+]_{W^{s, p}(B_\rho)}^p + \int_{B_\rho} w_+(x) \left( \int_{B_\tau} \frac{w_-(y)^{p - 1}}{|x - y|^{n + s p}} \, dy \right) dx \right] \\
& \hspace{20pt} \quad - C \frac{R^{(1 - s) p}}{(\tau - \rho)^p} \| w_+ \|_{L^p(B_R)}^p.
\end{aligned}
\end{equation}

We now turn our attention to the term integrated over~$\C_{B_\tau} \setminus B_\tau^2$ on the left-hand side of~\eqref{solareDG1}. By~\eqref{Ksimm},~\eqref{Kell} and the properties of~$\eta$, we have
\begin{equation} \label{solaretech2.1}
\begin{aligned}
& \iint_{\C_{B_\tau} \setminus B_\tau^2} |u(x) - u(y)|^{p - 2} \left( u(x) - u(y) \right) \left( \varphi(x) - \varphi(y) \right) d\mu \\
& \hspace{10pt} = 2 \int_{B_\tau} \eta(x)^p w_+(x) \left[ \int_{\R^n \setminus B_\tau} |u(x) - u(y)|^{p - 2} \left( u(x) - u(y) \right) K(x, y) \, dy \right] dx \\
& \hspace{10pt} \ge \frac{2 (1 - s)}{\Lambda} \int_{A^+(k, \rho)} w_+(x) \left[ \int_{ \left( B_{r_0}(x) \cap \{ u(x) \ge u(y) \} \right) \setminus B_\tau} \frac{\left( u(x) - u(y) \right)^{p - 1}}{|x - y|^{n + s p}} \, dy \right] dx \\
& \hspace{10pt} \quad - 2 (1 - s) \Lambda \int_{A^+ \left( k, \frac{\tau + \rho}{2} \right)} w_+(x) \left[ \int_{\{ u(y) > u(x) \} \setminus B_\tau} \frac{\left( u(y) - u(x) \right)^{p - 1}}{|x - y|^{n + s p}} \, dy \right] dx.
\end{aligned}
\end{equation}
Now, on the one hand
\begin{equation} \label{solaretech2.2}
\begin{aligned}
& \int_{A^+(k, \rho)} w_+(x) \left[ \int_{ \left( B_{r_0}(x) \cap \{ u(x) \ge u(y) \} \right) \setminus B_\tau} \frac{\left( u(x) - u(y) \right)^{p - 1}}{|x - y|^{n + s p}} \, dy \right] dx \\
& \hspace{50pt} \ge \int_{B_\rho} w_+(x) \left[ \int_{ \left( B_{r_0}(x) \cap A^-(k) \right) \setminus B_\tau} \frac{\left( w_+(x) + w_-(y) \right)^{p - 1}}{|x - y|^{n + s p}} \, dy \right] dx \\
& \hspace{50pt} \ge \int_{B_\rho} w_+(x) \left( \int_{ B_{r_0}(x) \setminus B_\tau} \frac{w_-(y) ^{p - 1}}{|x - y|^{n + s p}} \, dy \right) dx.
\end{aligned}
\end{equation}
On the other hand, for~$x \in B_{(\tau + \rho) / 2}$ and~$y \in \R^n \setminus B_\tau$, it holds 
$$
|x - y| \ge |y| - |x| \ge |y| - \frac{\tau + \rho}{2 \tau} |y| = \frac{\tau - \rho}{2 \tau} |y| \ge \frac{\tau - \rho}{2 R} |y|,
$$
and therefore, recalling definition~\eqref{nsiTailudef},
\begin{align*}
& \int_{A^+ \left( k, \frac{\tau + \rho}{2} \right)} w_+(x) \left[ \int_{\{ u(y) > u(x) \} \setminus B_\tau} \frac{\left( u(y) - u(x) \right)^{p - 1}}{|x - y|^{n + s p}} \, dy \right] dx \\
& \hspace{70pt} \le \left( \frac{2 R}{\tau - \rho} \right)^{n + s p} \int_{B_{\frac{\tau + \rho}{2}}} w_+(x) \left[ \int_{\R^n \setminus B_\tau} \frac{w_+(y)^{p - 1}}{|y|^{n + s p}} \, dy \right] dx \\
& \hspace{70pt} \le \frac{C}{1 - s} \frac{R^{n + s p}}{(\tau - \rho)^{n + s p}} \| w_+ \|_{L^1(B_R)} \overline{\Tail}(w_+; r)^{p - 1}.
\end{align*}
By this,~\eqref{solaretech2.1} and~\eqref{solaretech2.2}, it follows that
\begin{equation} \label{solareDG3}
\begin{aligned}
& \iint_{\C_{B_\tau} \setminus B_\tau^2} |u(x) - u(y)|^{p - 2} \left( u(x) - u(y) \right) \left( \varphi(x) - \varphi(y) \right) d\mu \\
& \hspace{70pt} \ge \frac{1 - s}{C} \int_{B_\rho} w_+(x) \left( \int_{ B_{r_0}(x) \setminus B_\tau} \frac{w_-(y) ^{p - 1}}{|x - y|^{n + s p}} \, dy \right) dx \\
& \hspace{70pt} \quad - C \frac{R^{n + s p}}{(\tau - \rho)^{n + s p}} \| w_+ \|_{L^1(B_R)} \overline{\Tail}(w_+; r)^{p - 1}.
\end{aligned}
\end{equation}

By putting together~\eqref{solareDG1},~\eqref{solareDG2} and~\eqref{solareDG3}, we find that
\begin{equation} \label{solareDG4}
\begin{aligned}
& [w_+] _{W^{s, p}(B_\rho)}^p + \int_{B_\rho} w_+(x)  \left( \int_{B_{r_0}(x)} \frac{w_-(y)^{p - 1}}{|x - y|^{n + s p}} \, dy \right) dx \\
& \hspace{20pt} \le \frac{C}{1 - s} \left[ \frac{R^{(1 - s) p}}{(\tau - \rho)^p} \| w_+ \|_{L^p(B_R)}^p + \frac{R^{n + s p}}{(\tau - \rho)^{n + s p}} \| w_+ \|_{L^1(B_R)} \overline{\Tail}(w_+; r)^{p - 1} \right] \\
& \hspace{20pt} \quad + \frac{C}{1 - s} \int_{\R^n} f(x, u(x)) \varphi(x) \, dx.
\end{aligned}
\end{equation}

To finish the proof, we now only need to control the term appearing on the third line of~\eqref{solareDG4}. By~\eqref{fbounds} and the properties of~$\eta$, we have
\begin{equation} \label{fest}
\int_{\R^n} f(x, u(x)) \varphi(x) \, dx \le \int_{A^+ \left( k, \frac{\tau + \rho}{2} \right)} \Big( d_1 + d_2 |u(x)|^{q - 1} \Big) \eta(x)^p w_+(x) \, dx.
\end{equation}
To estimate the term involving~$d_1$, we simply apply weighted Young's inequality to get
$$
\int_{A^+ \left( k, \frac{\tau + \rho}{2} \right)} d_1 \eta(x)^p w_+(x) \, dx \le C \left( (\delta d_1)^{\frac{p}{p - 1}} |A^+(k, R)| + \frac{\| w_+ \|_{L^p(B_R)}^p}{\delta^p} \right),
$$
for any~$\delta > 0$. By choosing~$\delta := (\tau - \rho) R^{s - 1} \le R^s$, this yields in turn
\begin{equation} \label{d1est}
\begin{aligned}
& \int_{A^+ \left( k, \frac{\tau + \rho}{2} \right)} d_1 \eta(x)^p w_+(x) \, dx \\
& \hspace{50pt} \le C \left( R^{\frac{s p}{p - 1}} d_1^{\frac{p}{p - 1}} |A^+(k, R)| + \frac{R^{(1 - s) p}}{(\tau - \rho)^{p}} \| w_+ \|_{L^p(B_R)}^p \right).
\end{aligned}
\end{equation}
Note that, when~$d_2 = 0$ we are already led to~\eqref{solareDGine}, with~$\varepsilon = s p / n$. On the other hand, when~$d_2 > 0$ the proof of~\eqref{solareDGine} is more involved. We consider separately the two possibilities~$1 < q \le p$ and~$p < q < p^*_s$.

Suppose that~$1 < q \le p$. In this case, we take~$k \ge 1$. For~$x \in A^+(k)$ we have that~$u(x) > k \ge 1$, and thus
$$
|u(x)|^{q - 1} \le |u(x)|^{p - 1} = |w_+(x) + k|^{p - 1} \le \max \left\{ 1, 2^{p - 2} \right\} \left( w_+(x)^{p - 1} + k^{p - 1} \right).
$$
Accordingly, by Young's inequality
\begin{align*}
\int_{A^+ \left( k, \frac{\tau + \rho}{2} \right)} d_2 |u(x)|^{q - 1} \eta(x)^p w_+(x) \, dx & \le C \int_{A^+ \left( k, \frac{\tau + \rho}{2} \right)} \Big( w_+(x)^p + k^{p - 1} w_+(x) \Big) \, dx \\
& \le C \left( \| w_+ \|_{L^p(B_R)}^p + k^p |A^+(k, R)| \right) \\
& \le C \left[ \frac{R^{(1 - s) p}}{(\tau - \rho)^p} \| w_+ \|_{L^p(B_R)}^p + \frac{k^p}{R^{s p}} |A^+(k, R)| \right],
\end{align*}
provided~$R \le 1$ and with~$C$ depending also on~$d_2$. This and estimates~\eqref{solareDG4},~\eqref{fest} yield~\eqref{solareDGine} when~$1 < q \le p$, again with~$\varepsilon = s p / n$.

Finally, we deal with the second case, when~$p < q < p^*_s$, with~$n \ge s p$. By adding the quantity~$d_2 (1 - s)^{- 1} \| u \|_{L^q(A^+(k, \rho))}^q$ to both sides of~\eqref{solareDG4} and recalling~\eqref{fest},~\eqref{d1est}, we get
\begin{equation} \label{solareDG5}
\begin{aligned}
& [w_+]_{W^{s, p}(B_\rho)}^p + \frac{d_2}{1 - s} \| u \|_{L^q(A^+(k, \rho))}^q + \int_{B_\rho} w_+(x) \left( \int_{B_{r_0}(x)} \frac{w_-(y)^{p - 1}}{|x - y|^{n + s p}} \, dy \right) dx \\
& \hspace{20pt} \le \frac{C_1}{1 - s} \Bigg[ \frac{R^{(1 - s) p}}{(\tau - \rho)^p} \| w_+ \|_{L^p(B_R)}^p + \frac{R^{n + s p}}{(\tau - \rho)^{n + s p}} \| w_+ \|_{L^1(B_R)} \overline{\Tail}(w_+; r)^{p - 1} \\
& \hspace{20pt} \quad + d_1^{\frac{p}{p - 1}} |A^+(k, R)| + d_2 \int_{A^+ \left( k, \frac{\tau + \rho}{2} \right)} |u(x)|^{q - 1} \Big( |u(x)| + \eta(x) w_+(x) \Big) dx \Bigg],
\end{aligned}
\end{equation}
for some~$C_1 \ge 1$ depending only on~$n$,~$p$ and~$\Lambda$. Then, we observe that, if~$k \ge 0$,
\begin{align*}
|u(x)|^q & = |(1 - \eta(x)) u(x) + \eta(x) w_+(x) + \eta(x) k|^q \\
& \le 3^{q - 1} \Big( (1 - \eta(x))^q |u(x)|^q + \left( \eta(x) w_+(x) \right)^q + k^q \Big),
\end{align*}
for any~$x \in A^+(k)$. Hence, using once again Young's inequality,
\begin{align*}
& d_2 \int_{A^+ \left( k, \frac{\tau + \rho}{2} \right)} |u(x)|^{q - 1} \Big( |u(x)| + \eta(x) w_+(x) \Big) dx \\
& \hspace{50pt} \le 2 d_2 \int_{A^+ \left( k, \frac{\tau + \rho}{2} \right)} \Big[ |u(x)|^q + \left( \eta(x) w_+(x) \right)^q \Big] dx \\
& \hspace{50pt} \le 4^q d_2 \int_{A^+(k, \tau)} \Big[ (1 - \eta(x))^q |u(x)|^q + \left( \eta(x) w_+(x) \right)^q + k^q \Big] \, dx.
\end{align*}
Consider now the two quantities~$\sigma \in (0, s]$ and~$\varepsilon_\sigma \in (0, sp / n)$ defined in~\eqref{sigmamindef} and~\eqref{epsigmamindef}, respectively. By arguing as in the last part of the proof of Proposition~\ref{minareDGprop}, we deduce from the estimate above that
\begin{align*}
& \frac{C_1 d_2}{1 - s} \int_{A^+ \left( k, \frac{\tau + \rho}{2} \right)} |u(x)|^{q - 1} \Big( |u(x)| + \eta(x) w_+(x) \Big) dx \\
& \hspace{40pt} \le \frac{1}{2} [w_+]_{W^{s, p}(B_\tau)}^p + \frac{C d_2}{1 - s} \| u \|_{L^q(B_\tau \setminus B_\rho)}^q \\
& \hspace{40pt} \quad + \frac{C}{1 - s} \left[ \frac{R^{(1 - s) p}}{(\tau - \rho)^p} \| w_+ \|_{L^p(B_R)}^p + d_2 k^p R^{- n \varepsilon_\sigma} |A^+(k, R)|^{1 - \frac{s p}{n} + \varepsilon_\sigma} \right],
\end{align*}
where~$C$ may now depend on~$q$ as well, and provided~$R$ is smaller than a quantity~$R_0$ depending only on~$n$,~$s$,~$p$,~$q$,~$\Lambda$,~$d_2$,~$\| u \|_{L^{p^*_\sigma}}$. Combining this with~\eqref{solareDG5} and rearranging appropriately the summands, we find that
\begin{equation} \label{solareDG6}
\begin{aligned}
\Phi(\rho) & \le C_\flat \left[ \Phi(\tau) - \Phi(\rho) \right] + \frac{C_\flat}{1 - s} \Bigg[ \left( d_1^{\frac{p}{p - 1}} + d_2 k^p R^{- n \varepsilon_\sigma} \right) |A^+(k, R)|^{1 - \frac{s p}{n} + \varepsilon_\sigma} \\
& \quad + \frac{R^{(1 - s) p}}{(\tau - \rho)^p} \| w_+ \|_{L^p(B_R)}^p + \frac{R^{n + s p}}{(\tau - \rho)^{n + s p}} \| w_+ \|_{L^1(B_R)} \overline{\Tail}(w_+; r)^{p - 1} \Bigg],
\end{aligned}
\end{equation}
for some~$C_\flat \ge 1$ depending on~$n$,~$p$,~$q$,~$\Lambda$, and with
$$
\Phi(t) := [w_+]_{W^{s, p}(B_t)}^p + \frac{d_2}{1 - s} \| u \|_{L^q(B_t)}^q + \int_{B_t} w_+(x) \left( \int_{B_{r_0}(x)} \frac{w_-(y)^{p - 1}}{|x - y|^{n + s p}} \, dy \right) dx,
$$
for any~$0 < t \le R$. Estimate~\eqref{solareDGine} (with~$\varepsilon = \varepsilon_\sigma$) now follows by adding the quantity~$C_\flat \Phi(\rho)$ to both sides of~\eqref{solareDG6}, dividing by~$1 + C_\flat$ and applying Lemma~\ref{induclem}.
\end{proof}

With the aid of Proposition~\ref{solareDGprop} and Theorems~\ref{DGboundthm},~\ref{DGholdthm}, the boundedness and H\"older continuity of the solutions of~\eqref{Lu=f} are readily established. Furthermore, the Harnack inequality for non-negative solutions follows by Theorem~\ref{DGharthm}. Thus, Theorems~\ref{solboundthm}-\ref{solharthm} are proved. For more details, see the proofs of the analogous Theorems~\ref{minboundthm}-\ref{minharthm} for minimizers, at the end of Section~\ref{minsec}.

\section*{Acknowledgments}

The author is supported by the~``Mar\'ia de Maeztu'' MINECO grant~MDM-2014-0445 and the MINECO grant~MTM2014-52402-C3-1-P.

The author also warmly thanks Xavier Cabr\'e, Moritz Kassmann and Enrico Valdinoci for several inspiring conversations on the subject of this paper.

Finally, the author wishes to express his sincere gratitude to the anonymous referee for his keen comments and observations.

\end{document}